\pgfplotsset{compat=newest} 
\pgfplotsset{plot coordinates/math parser=false}
\newlength\fwidth
\definecolor{myBlue}{rgb}{0.0,0.0,0.55}
  \newcounter{mnote}
  \let\oldmarginpar\marginpar
    \renewcommand\marginpar[1]{\-\oldmarginpar[\raggedleft\footnotesize #1]%
    {\raggedright\footnotesize #1}}
\newtheorem{theorem}{Theorem}[section]
\newtheorem{lemma}[theorem]{Lemma}
\newtheorem{example}[theorem]{Example}
\newtheorem{remark}[theorem]{Remark}
\newcommand{\dx}{\,{\rm d}x}
\newcommand{\dd}{\,{\rm d}}
\newcommand{\bs}{\boldsymbol}
\DeclareMathOperator*{\spa}{span}
\newcommand{\curl}{\operatorname{curl}}
\renewcommand{\div}{\operatorname{div}}
\newcommand{\grad}{\operatorname{grad}}
\newcommand{\rot}{\operatorname{rot}}
\newcommand{\tr}{\operatorname{tr}}
\newcommand{\dev}{\operatorname{dev}}
\newcommand{\sym}{\operatorname{sym}}
\newcommand{\skw}{\operatorname{skw}}
\newcommand{\mskw}{\operatorname{mskw}}
\newcommand{\vskw}{\operatorname{vskw}}
\newcommand{\defm}{\operatorname{def}}
\newcommand{\hess}{\operatorname{hess}}
\newcommand{\inc}{\operatorname{inc}}
\newcommand{\cott}{\operatorname{cott}}
\newcommand{\Oplus}{\ensuremath{\vcenter{\hbox{\scalebox{1.5}{$\oplus$}}}}}
\newcommand{\step}[1]{\noindent\raisebox{1.5pt}[10pt][0pt]{\tiny\framebox{$#1$}}\xspace}
\newcommand{\vertiii}[1]{{\left\vert\kern-0.25ex\left\vert\kern-0.25ex\left\vert #1 
    \right\vert\kern-0.25ex\right\vert\kern-0.25ex\right\vert}}
\begin{document}
\title[Finite Element Conformal Complex]{Finite element conformal complexes in three dimensions}

 \author{Xuehai Huang}%
 \address{
School of Mathematics, Shanghai University of Finance and Economics, Shanghai 200433, China}%
 \email{huang.xuehai@sufe.edu.cn}%

 \thanks{This work was supported by the National Natural Science Foundation of China Project 12171300.}

\makeatletter
\@namedef{subjclassname@2020}{\textup{2020} Mathematics Subject Classification}
\makeatother
\subjclass[2020]{
65N30;   
58J10;   
65N12;   
}

\begin{abstract}
This paper extends the Bernstein-Gelfand-Gelfand (BGG) framework to the construction of finite element conformal Hessian complexes and conformal elasticity complexes in three dimensions involving conformal tensors (i.e., symmetric and traceless tensors). These complexes incorporate higher-order differential operators, including the linearized Cotton-York operator, and require conformal tensor spaces with nontrivial smoothness and trace conditions. A novel application of the discrete BGG framework, combined with the geometric decomposition of bubble spaces and a reduction operation, to local bubble finite element complexes is introduced. This yields simpler and more tractable constructions than global BGG-based approaches, and leads to the bubble conformal complexes. Building on these bubble conformal complexes and the associated face bubble complexes, finite element conformal Hessian complexes and conformal elasticity complexes with varying degrees of smoothness are systematically developed. The resulting complexes support stable and structure-preserving numerical methods for applications in relativity, Cosserat elasticity, and fluid mechanics.
 \end{abstract}
\keywords{Conformal Hessian complex, Conformal elasticity complex, Finite element conformal complexes, bubble finite element complexes, Bernstein-Gelfand-Gelfand construction}

\maketitle


\section{Introduction}

Hilbert complexes play a fundamental role in the theoretical analysis and the development of stable and robust numerical methods for solving partial differential equations~\cite{ArnoldFalkWinther2006,ArnoldFalkWinther2010,Arnold2018,ChenHuang2018}. To extend this framework, Arnold and Hu~\cite{ArnoldHu2021} introduced a systematic methodology for generating new complexes by applying the Bernstein-Gelfand-Gelfand (BGG) construction to existing Hilbert complexes, such as the de Rham complex.

With smooth finite elements in \cite{HuLinWu2024,ChenHuang2021,ChenChenGaoHuangEtAl2025},
we constructed finite element de Rham and Stokes complexes with varying degrees of smoothness in two and three dimensions in \cite{ChenHuang2024,ChenHuang2024a}.
Recently, in~\cite{ChenHuang2025,ChenHuang2024a}, we applied the BGG framework to these smooth finite element de Rham complexes. By further incorporating techniques such as the 
$t$-$n$ decomposition and trace complexes, we systematically derived finite element Hessian, elasticity, and div-div complexes. These constructions unify and extend various tensor finite element complexes studied in~\cite{ChenHuang2020,ChenHuang2022b,ChenHuang2022,ChristiansenHuHu2018,HuLiang2021,HuMaZhang2021,HuLiangMa2022}. The discrete BGG construction of such tensor finite element complexes on cubical meshes was developed in~\cite{BonizzoniHuKanschatSap2025}.
For additional developments of conforming tensor finite element complexes, we refer to~\cite{ChenHuang2025a,ChenHuang2022d,ChristiansenHu2023,HuLiangMaZhang2024,HuLiangLin2024,ChristiansenGopalakrishnanGuzmanHu2024}. Distributional finite element complexes can be found in~\cite{ChenHuangZhang2025,ChenHuang2025a,ChenHuHuang2018,HuLinZhang2025,HuLin2025,Christiansen2011}.
Further tensor finite elements are discussed in~\cite{ChenHuang2022a,ChenHuang2024b,HuangZhangZhouZhu2024,HuangTang2025,ChenHuang2025b,BerchenkoKoganGawlik2025} and the references therein.

In this paper, we will extend the discrete BGG construction in \cite{ChenHuang2025,ChenHuang2024a} to
the conformal Hessian complex 
\begin{equation}\label{intro:conformalHesscomplex3d}
\resizebox{.935\hsize}{!}{$
{\rm CH}\xrightarrow{\subset} H^2\xrightarrow{\dev\hess} H(\sym\curl; \mathbb{S}\cap\mathbb{T})\xrightarrow{\sym\curl} H(\div\div; \mathbb{S}\cap\mathbb{T}) \xrightarrow{\div{\div}} L^2\xrightarrow{}0,
$}
\end{equation}
and
the conformal elasticity complex
\begin{equation}\label{intro:conformalElascomplex3d}
{\rm CK}\xrightarrow{\subset} H^1(\mathbb R^3)\xrightarrow{\dev\defm} H(\cott; \mathbb{S}\cap\mathbb{T})\xrightarrow{\cott} H(\div; \mathbb{S}\cap\mathbb{T}) \xrightarrow{\div} L^2(\mathbb R^3)\xrightarrow{}0,
\end{equation}
where ${\rm CH}:= \mathbb P_1+{\rm span}\{\boldsymbol{x}^{\intercal}\boldsymbol{x}\}$, and
the space of conformal Killing fields \cite{Dain2006,LewintanMuellerNeff2021,WilliamsHong2024}
\begin{equation*}
{\rm CK}:= \{(\boldsymbol{x}\cdot\boldsymbol{x})\boldsymbol{a}-2(\boldsymbol{a}\cdot\boldsymbol{x})\boldsymbol{x}+b\boldsymbol{x}+\boldsymbol{c}\times\boldsymbol{x}+\boldsymbol{d}: \boldsymbol{a},\boldsymbol{c},\boldsymbol{d}\in\mathbb{R}^3, b\in\mathbb{R} \}.
\end{equation*}
The Sobolev spaces appearing in the conformal complexes \eqref{intro:conformalHesscomplex3d}-\eqref{intro:conformalElascomplex3d} are defined in Section~\ref{subsec:spaces}.
The conformal Hessian complex~\eqref{intro:conformalHesscomplex3d} arises in general relativity~\cite{QuennevilleBelair2015,BeigChrusciel2020}, and the conformal elasticity complex~\eqref{intro:conformalElascomplex3d} has applications in general relativity~\cite{Dain2006}, Cosserat elasticity~\cite{NeffJeong2009}, and fluid mechanics~\cite{GopalakrishnanLedererSchoeberl2020,FeireislNovotny2017}.
The $Q$-tensor, which characterizes uniaxial and biaxial nematic liquid crystals and vanishes in the isotropic phase~\cite{GennesProst1993}, serves as a prototypical example of a conformal tensor—a tensor that is both symmetric and traceless.

The construction of finite element discretizations with varying degrees of smoothness for the conformal complexes~\eqref{intro:conformalHesscomplex3d}-\eqref{intro:conformalElascomplex3d} is significantly more challenging than those in~\cite{ChenHuang2025,ChenHuang2024a}, as the involved tensors are conformal—that is, simultaneously symmetric and traceless—rather than merely symmetric or traceless. Moreover, the linearized Cotton-York operator $\cott := \curl S^{-1} \inc = \inc S^{-1} \curl$ is a third-order differential operator, further complicating the discretization. 
To the best of our knowledge, the only known finite element conformal Hessian complex was recently constructed in~\cite{GuoHuLin2025}, and the only known finite element conformal elasticity complex was introduced in~\cite{HuLinShi2023}.

As a first step, we combine the polynomial elasticity complex \cite[(2.6)]{ArnoldAwanouWinther2008}, the polynomial Hessian complex \cite[(3.4.9)]{ChenHuang2022d}, and the polynomial de Rham complex \cite[(3.3)]{ArnoldFalkWinther2006} to form the following diagram:
\begin{equation*}
\begin{tikzcd}
{\rm RM} \longrightarrow \mathbb P_{k+4}(\mathbb R^3)
\arrow{r}{\defm}
&
\mathbb P_{k+3}(\mathbb S)
 \arrow{r}{\inc}
 &
\mathbb P_{k+1}(\mathbb S)
 \arrow{r}{\div}
 & \mathbb P_{k}(\mathbb R^3) \longrightarrow 0
 \\
\mathbb{P}_1 \longrightarrow \mathbb P_{k+3}
 \arrow[ur,swap,"\iota"'] \arrow{r}{\hess}
 & 
\mathbb P_{k+1}(\mathbb S) 
 \arrow[ur,swap,"S"'] \arrow{r}{\curl}
 & 
\mathbb P_{k}(\mathbb T) 
 \arrow[ur,swap,"-2\vskw"'] \arrow{r}{\div} \arrow[r] 
 & \mathbb P_{k-1}(\mathbb R^3) \longrightarrow 0 
 \\
\mathbb{R} \longrightarrow \mathbb P_{k+1}
 \arrow[ur,swap,"\iota"'] \arrow{r}{\grad}
 & 
\mathbb P_{k}(\mathbb R^3) 
 \arrow[ur,swap,"\mskw"'] \arrow{r}{\curl}
 & 
\mathbb P_{k-1}(\mathbb R^3) 
 \arrow[ur,swap,"{\rm id}"'] \arrow{r}{\div} \arrow[r] 
 & \mathbb P_{k-2} \longrightarrow 0. 
\end{tikzcd}
\end{equation*}
Applying the BGG framework to this diagram, we derive the polynomial conformal elasticity complex from the first two rows
\begin{equation}\label{intro:polyconformalElascomplex3d}
{\rm CK}\xrightarrow{\subset} \mathbb P_{k+4}(\mathbb R^3)\xrightarrow{\dev\defm} \mathbb P_{k+3}(\mathbb{S}\cap\mathbb{T})\xrightarrow{\cott} \mathbb P_{k}(\mathbb{S}\cap\mathbb{T}) \xrightarrow{\div} \mathbb P_{k-1}(\mathbb R^3)\xrightarrow{}0,
\end{equation}
and the polynomial conformal Hessian complex from the last two rows
\begin{equation}\label{intro:polyconformalHesscomplex3d}
{\rm CH}\xrightarrow{\subset} \mathbb P_{k+3}\xrightarrow{\dev\hess} \mathbb P_{k+1}(\mathbb{S}\cap\mathbb{T})\xrightarrow{\sym\curl} \mathbb P_{k}(\mathbb{S}\cap\mathbb{T}) \xrightarrow{\div{\div}} \mathbb P_{k-2}\xrightarrow{}0.
\end{equation}
These polynomial conformal complexes~\eqref{intro:polyconformalElascomplex3d}-\eqref{intro:polyconformalHesscomplex3d} are used to perform dimension counts in establishing the exactness of the corresponding finite element conformal complexes.

In contrast to the application of the BGG framework to global finite element complexes as in~\cite{ChenHuang2025,ChenHuang2024a,BonizzoniHuKanschatSap2025}, we apply the BGG framework to local bubble finite element conformal complexes, resulting in a construction that is significantly simpler and more tractable.

Let polynomial degree $k\geq 2r_2^{\texttt{v}}+4$ and smoothness vectors
\begin{equation*}
\boldsymbol{r}_0\geq (4,2,1)^{\intercal}, \quad
\boldsymbol r_1=\boldsymbol r_0-2  \geq (2,0,-1)^{\intercal},
\quad
 \boldsymbol{r}_2=\boldsymbol{r}_1\ominus1,
\quad
 \boldsymbol{r}_3=\boldsymbol{r}_2\ominus2.
\end{equation*}
By combining the bubble de Rham complex and the bubble div\,div complex, and applying the geometric decomposition of the bubble space $\mathbb B_{k+1}^{\sym\curl}(\boldsymbol{r}_1;\mathbb T)$, we construct the diagram
\begin{equation*}
\begin{tikzcd}[column sep=normal, row sep=normal]
&
\mathbb B_{k+1}^{\div}(\boldsymbol{r}_1)
  \arrow{r}{\div}
 &
\mathbb B_{k}(\boldsymbol{r}_2)
\arrow{r}{}
& \mathbb R \\
\mathbb B_{k+1}^{\sym\curl}(\boldsymbol{r}_1;\mathbb T)
 \arrow[ur,swap,"- 2\vskw"'] \arrow{r}{\sym\curl}
 & 
\mathbb B_{k}^{\div\div}(\boldsymbol{r}_2;\mathbb S)
 \arrow[ur,swap,"\tr"'] \arrow{r}{\div\div}
 & 
\mathbb B_{k-2}(\boldsymbol{r}_3)/\mathbb P_{1}(T)
\arrow[r] 
& 0,
\end{tikzcd}
\end{equation*}
and the diagram
\begin{equation*}
\adjustbox{scale=0.9,center}{%
\begin{tikzcd}[column sep=normal, row sep=normal]
\mathbb B_{k+3}(\boldsymbol{r}_0)\arrow{r}{\grad} 
&
\mathbb B_{k+2} (\boldsymbol{r}_1+1;\mathbb R^3)
  \arrow{r}{\curl}
 &
\mathbb B_{k+1}^{\div}(\boldsymbol{r}_1)
\arrow{r}{\div}
& \mathbb B_{k}(\boldsymbol{r}_2)/\mathbb R \\
\mathbb B_{k+2} (\boldsymbol{r}_1+1;\mathbb R^3)
 \arrow[ur,swap,"{\rm id}"'] \arrow{r}{\dev\grad}
 & 
\mathbb B_{k+1}^{\sym\curl}(\boldsymbol{r}_1;\mathbb T)
 \arrow[ur,swap,"- 2\vskw"'] \arrow{r}{\sym\curl}
 & 
\mathbb B_{k}^{\div\div}(\boldsymbol{r}_2;\mathbb S)\cap\ker(\div\div)
\arrow[r]
& 0.
\end{tikzcd}
}
\end{equation*}
Then apply the BGG framework to derive the bubble conformal Hessian complex 
\begin{align*}
0\xrightarrow{\subset} \mathbb B_{k+3}(\boldsymbol{r}_0)&\xrightarrow{\dev\hess} \mathbb B_{k+1}^{\sym\curl}(\boldsymbol{r}_1;\mathbb S\cap\mathbb T)\xrightarrow{\sym\curl} \mathbb B_{k}^{\div\div}(\boldsymbol{r}_2;\mathbb S\cap\mathbb T) \\
\notag
&\xrightarrow{\div{\div}} \mathbb B_{k-2}(\boldsymbol{r}_3)/{\rm CH}\xrightarrow{}0.
\end{align*}
Based on this bubble conformal Hessian complex and bubble complexes on faces, under the following inequality constraints
\begin{equation*}
\boldsymbol{r}_0\geq (4,2,1)^{\intercal}, \quad
\boldsymbol r_1=\boldsymbol r_0-2  \geq (2,0,-1)^{\intercal},
\quad
 \boldsymbol{r}_2\geq\boldsymbol{r}_1\ominus1,
\quad
 \boldsymbol{r}_3\geq\boldsymbol{r}_2\ominus2,
\end{equation*}
we further construct the following finite element conformal Hessian complex 
\begin{align*}
{\rm CH}\xrightarrow{\subset} \mathbb V_{k+3}(\boldsymbol{r}_0)&\xrightarrow{\dev\hess} \mathbb V_{k+1}^{\sym\curl}(\boldsymbol{r}_1,\boldsymbol{r}_2;\mathbb S\cap\mathbb T)\xrightarrow{\sym\curl} \mathbb V_{k}^{\div\div}(\boldsymbol{r}_2,\boldsymbol{r}_3;\mathbb S\cap\mathbb T) \\
\notag
&\xrightarrow{\div{\div}} \mathbb V_{k-2}(\boldsymbol{r}_3)\xrightarrow{}0.
\end{align*}
Taking $\boldsymbol{r}_0=(4,2,1)^{\intercal}$, $\boldsymbol{r}_1=\boldsymbol{r}_0-2$, $\boldsymbol{r}_2=\boldsymbol{r}_1\ominus1$, $\boldsymbol{r}_3=\boldsymbol{r}_2\ominus2$, and $k\geq6$, we obtain the finite element conformal Hessian complex
\begin{equation*}
{\rm CH} \hookrightarrow
\begin{pmatrix}
	 4\\
	 2\\
	 1
\end{pmatrix}
\xrightarrow{\dev\hess}
\begin{pmatrix}
 2\\
 0\\
 -1
\end{pmatrix}
\xrightarrow{\sym\curl}
\begin{pmatrix}
	1\\
	-1\\
	-1
\end{pmatrix}
\xrightarrow{\div\div} 
\begin{pmatrix}
	 -1\\
	 -1\\
	 -1
\end{pmatrix} \to 0,
\end{equation*}
which is the finite element conformal Hessian complex was recently constructed in~\cite{GuoHuLin2025}.

Let polynomial degree $k\geq 2r_2^{\texttt{v}}+2$ and smoothness vectors 
\begin{equation*}
\boldsymbol{r}_0\geq (7,1,0)^{\intercal},  \quad 
\boldsymbol{r}_1=\boldsymbol{r}_0-1\geq (6,0,-1)^{\intercal},\quad \boldsymbol{r}_2=\boldsymbol{r}_1\ominus3,\quad \boldsymbol{r}_3=\boldsymbol{r}_2\ominus1.
\end{equation*}
By combining the bubble elasticity complex and the bubble Hessian complex, and applying the tilde operation in \cite{ChenHuang2025}, we induce the diagram
\begin{equation*}
\begin{tikzcd}[column sep=small, row sep=normal]
&
\mathbb B^{\div}_{k+1}(\boldsymbol{r}_2+1; \mathbb S)
  \arrow{r}{\div}
 &
\mathbb B_{k}(\boldsymbol{r}_2; \mathbb R^3)
\arrow{r}{}
& {\rm RM} \\
{\mathbb B}^{\curl}_{k+1}(\boldsymbol{r}_2+1;\mathbb S)
 \arrow[ur,swap,"S"'] \arrow{r}{\curl}
 & 
\mathbb B^{\div}_{k}(\boldsymbol{r}_2;\mathbb T)
 \arrow[ur,swap,"-2\vskw "'] \arrow{r}{\div}
 & 
\mathbb B_{k-1}(\boldsymbol{r}_3;\mathbb R^3)/{\rm RT}
\arrow[r] 
&0,
\end{tikzcd}
\end{equation*}
and the diagram
\begin{equation*}
\begin{tikzcd}
\mathbb B_{k+4}(\boldsymbol{r}_0;\mathbb R^3)
\arrow{r}{\defm}
&
\widetilde{\mathbb B}_{k+3}^{\inc} (\boldsymbol{r}_1; \mathbb S)
 \arrow{r}{\inc}
 &
\widetilde{\mathbb B}^{\div}_{k+1}(\boldsymbol{r}_1-2; \mathbb S)
 \arrow{r}{\div}
 & 0
 \\
\mathbb B_{k+3}(\boldsymbol{r}_1)
 \arrow[ur,swap,"\iota"'] \arrow{r}{\hess}
 & 
\widetilde{\mathbb B}_{k+1}^{\curl}(\boldsymbol{r}_1-2;\mathbb S) 
 \arrow[ur,swap,"S"'] \arrow{r}{\curl}
 & 
\mathbb B^{\div}_{k}(\boldsymbol{r}_2; \mathbb S\cap \mathbb T)\cap\ker(\div). 
 \arrow[ur,swap,"-2\vskw"'] 
\end{tikzcd}
\end{equation*}
From this, the bubble conformal elasticity complex is derived as
\begin{align*}
0\xrightarrow{\subset}\mathbb B_{k+4}(\boldsymbol{r}_0;\mathbb R^3) &\xrightarrow{\dev\defm} \mathbb B_{k+3}^{\cott}(\boldsymbol{r}_1;\mathbb{S}\cap\mathbb{T}) \xrightarrow{\cott}  \mathbb B^{\div}_{k}(\boldsymbol{r}_2; \mathbb S\cap \mathbb T) \\
&\xrightarrow{\div} \mathbb B_{k-1}(\boldsymbol{r}_3;\mathbb R^3)/{\rm CK}\rightarrow 0. \notag
\end{align*}
Based on this bubble conformal elasticity complex and bubble complexes on faces, under the following inequality constraints
\begin{equation*}
\boldsymbol{r}_0\geq(7,1,0)^{\intercal},  \quad 
\boldsymbol{r}_1=\boldsymbol{r}_0-1\geq(6,0,-1)^{\intercal},\quad \boldsymbol{r}_2\geq\boldsymbol{r}_1\ominus3,\quad \boldsymbol{r}_3\geq\boldsymbol{r}_2\ominus1,
\end{equation*}
we further construct the following finite element conformal elasticity complex
\begin{align*}
{\rm CK}\xrightarrow{\subset} \mathbb V_{k+4}(\boldsymbol{r}_0;\mathbb R^3)&\xrightarrow{\dev\defm} \mathbb V_{k+3}^{\cott}(\boldsymbol{r}_1,\boldsymbol{r}_2;\mathbb S\cap\mathbb T)\xrightarrow{\cott} \mathbb V_{k}^{\div}(\boldsymbol{r}_2,\boldsymbol{r}_3;\mathbb S\cap\mathbb T) \\
\notag
&\xrightarrow{\div} \mathbb V_{k-1}(\boldsymbol{r}_3;\mathbb R^3)\xrightarrow{}0.
\end{align*}
Taking $\boldsymbol{r}_0=(7,1,0)^{\intercal}$, $\boldsymbol{r}_1=\boldsymbol{r}_0-1$, $\boldsymbol{r}_2=(3,0,-1)^{\intercal}$, $\boldsymbol{r}_3=\boldsymbol{r}_2\ominus1$, and $k\geq11$, we recover the complex in \cite{HuLinShi2023}.
Alternatively, taking $\boldsymbol{r}_0=(7,1,0)^{\intercal}$, $\boldsymbol{r}_1=\boldsymbol{r}_0-1$, $\boldsymbol{r}_2=\boldsymbol{r}_1\ominus3$, $\boldsymbol{r}_3=\boldsymbol{r}_2\ominus1$, and $k\geq11$, we obtain the finite element conformal elasticity complex
\begin{equation*}
{\rm CK} \hookrightarrow
\begin{pmatrix}
	 7\\
	 1\\
	 0
\end{pmatrix}
\xrightarrow{\dev\defm}
\begin{pmatrix}
 6\\
 0\\
 -1
\end{pmatrix}
\xrightarrow{\cott}
\begin{pmatrix}
	3\\
	-1\\
	-1
\end{pmatrix}
\xrightarrow{\div} 
\begin{pmatrix}
	 2\\
	 -1\\
	 -1
\end{pmatrix} \to 0,
\end{equation*}
which is less smooth than the complex in \cite{HuLinShi2023}.

The remainder of this paper is organized as follows. Section~\ref{sec:preliminary} introduces the notation used throughout the paper, including tangential and normal vectors, smoothness vectors, tensors, differential operators, polynomial and Sobolev spaces, as well as triangulations. Section~\ref{sec:conformalpolydiffcomplex} develops conformal polynomial and differential complexes via the BGG framework. Section~\ref{sec:bubblecomplex} presents the construction of the bubble conformal Hessian complex and the bubble conformal elasticity complex, also based on the BGG framework. The finite element conformal Hessian complex is discussed in Section~\ref{sec:FEconformalHesscomplex}, and the finite element conformal elasticity complex in Section~\ref{sec:FEconformalelascomplex}. Appendix~\ref{apdx:uinsol} addresses the number of degrees of freedom in \eqref{eq:cottSTdofs} for the $H(\cott;\mathbb S\cap\mathbb T)$-conforming finite element.

\section{Preliminary}\label{sec:preliminary}

In this section, we introduce the notation for tangential vectors, normal vectors, smoothness vectors, tensors, differential operators, polynomial spaces, Sobolev spaces, and triangulations, following the conventions in \cite{ChenHuang2025,ChenHuang2024,ChenHuang2024a}.


\subsection{Notation}
Let $\Omega\subset \mathbb{R}^3$ be a bounded
polyhedron, which is topologically trivial.
For a two-dimensional polygon $f$, let $\boldsymbol n_{f}$ be the unit normal vector, and $\boldsymbol t_1^f, \boldsymbol t_2^f$ be its orthonormal tangential vectors. 
For a one-dimensional edge $e$, let $\boldsymbol t_{e}$ be the unit tangential vector, and $\boldsymbol n_{1}^e, \boldsymbol n_{2}^e$ be its orthonormal normal vectors, such that $\boldsymbol n_{1}^e\times\boldsymbol n_{2}^e=\boldsymbol t_{e}$. 
To simplify notation, we will use $\boldsymbol n$, $\boldsymbol t_1$, and $\boldsymbol t_2$ to represent $\boldsymbol n_{f}$, $\boldsymbol t_1^f$, and $\boldsymbol t_2^f$, respectively. Similarly, $\boldsymbol t$, $\boldsymbol n_{1}$, and $\boldsymbol n_{2}$ will denote $\boldsymbol t_{e}$, $\boldsymbol n_{1}^e$, and $\boldsymbol n_{2}^e$, respectively, if it does not cause ambiguity.
For an edge $e$ of polygon $f$, denote by $\boldsymbol n_{f,e}$ the unit vector being parallel to $f$ and outward normal to $\partial f$. Set $\boldsymbol t_{f,e}:=\boldsymbol n_{f}\times \boldsymbol n_{f,e}$.
For a given plane $f$ with a normal vector $\boldsymbol n$, we define the projection matrix as
$
\Pi_f: = \boldsymbol I - \boldsymbol n\boldsymbol n^{\intercal}.
$

An integer vector $\boldsymbol r= (r^{\texttt{v}}, r^e)^{\intercal}$ is called a two-dimensional smoothness vector if $r^e\geq -1$ and $r^{\texttt{v}}\geq \max\{2r^e,-1\}$.
An integer vector $\boldsymbol r= (r^{\texttt{v}}, r^e, r^f)^{\intercal}$ is called a three-dimensional smoothness vector if $r^f\geq-1$, $r^e\geq \max\{2r^f,-1\}$ and $r^{\texttt{v}}\geq \max\{2r^e,-1\}$. Its restriction $(r^{\texttt{v}}, r^e)^{\intercal}$ is a two-dimensional smoothness vector. For a smoothness vector $\boldsymbol r$ and an integer $m\geq1$, define $\boldsymbol r\ominus m: = \max \{\boldsymbol r-m, -1\}$ and $\boldsymbol r_+ = \max\{\boldsymbol r, 0\}$. 

We will use Iverson bracket  $[\cdot]$~\cite{Iverson1962}, which extends the Kronecker delta function to a statement $P$
$$
[P]= 
\begin{cases}1 & \text { if } P \text { is true}, \\ 0 & \text { otherwise}.\end{cases}
$$

\subsection{Tensors}
Set $\mathbb M:=\mathbb R^{d\times d}$ for $d=2,3$.
Denote by $\mathbb S$, $\mathbb K$  and $\mathbb T$ the subspace of symmetric matrices, skew-symmetric matrices and traceless matrices of $\mathbb M$, respectively. 
A matrix $\boldsymbol{\tau}$ can be decomposed into symmetric and skew-symmetric parts as  
\[
\boldsymbol{\tau} = \sym\boldsymbol{\tau} + \skw\boldsymbol{\tau},
\]
where  
\[
\sym\boldsymbol{\tau} := \frac{1}{2}(\boldsymbol{\tau} + \boldsymbol{\tau}^{\intercal}), \quad  
\skw\boldsymbol{\tau} := \frac{1}{2}(\boldsymbol{\tau} - \boldsymbol{\tau}^{\intercal}).
\]  
Alternatively, $\boldsymbol{\tau}$ admits the decomposition  
\[
\boldsymbol{\tau} = \dev\boldsymbol{\tau} + \frac{1}{d} \iota(\tr\boldsymbol{\tau}),
\]
where the deviatoric part is given by $\dev\boldsymbol{\tau} := \boldsymbol{\tau} - \frac{1}{d} (\tr\boldsymbol{\tau}) \boldsymbol{I}$, the trace by $\tr\boldsymbol{\tau} := \boldsymbol{\tau} : \boldsymbol{I}$, and the operator $\iota(v)$ is defined as $\iota(v) := v\boldsymbol{I}$.  
Here, $\boldsymbol{\sigma}:\boldsymbol{\tau}$ means the Frobenius inner product of tensors $\boldsymbol{\sigma}$ and $\boldsymbol{\tau}$.

Introduce an isomorphism from $\mathbb R^3$ to the space of skew-symmetric matrices $\mathbb K$ as follows: for a vector $\boldsymbol  \omega =
( \omega_1, \omega_2, \omega_3)^{\intercal}
\in \mathbb R^3,$
\begin{equation*}
\mskw \boldsymbol  \omega :=
\begin{pmatrix}
 0 & -\omega_3 & \omega_2 \\
\omega_3 & 0 & - \omega_1\\
-\omega_2 & \omega_1 & 0
\end{pmatrix}. 
\end{equation*}
Obviously $\mskw: \mathbb R^3 \to \mathbb K$ is a bijection. Define $\vskw: \mathbb M\to \mathbb R^3$ by $\vskw := \mskw^{-1}\circ \skw$.

We define the dot product and the cross product from the left, denoted as $\boldsymbol b\cdot \boldsymbol A$ and $\boldsymbol b\times \boldsymbol A$ respectively. These operations are applied column-wise to the matrix $\boldsymbol A$. Conversely, when the vector is on the right of the matrix, i.e., $\boldsymbol A\cdot \boldsymbol b$ and $\boldsymbol A\times \boldsymbol b$, the operations are defined row-wise. The order of performing the row and column products is interchangeable, resulting in the associative rule of triple products:
$$
\boldsymbol b\times \boldsymbol A\times \boldsymbol c := (\boldsymbol b\times \boldsymbol A)\times \boldsymbol c = \boldsymbol b\times (\boldsymbol A\times \boldsymbol c).
$$
Similar rules apply for $\boldsymbol b\cdot \boldsymbol A\cdot \boldsymbol c$ and $\boldsymbol b\cdot \boldsymbol A\times \boldsymbol c$, allowing parentheses to be omitted.

\subsection{Differential operators}
Let $\nabla = (\partial_1, \partial_2, \partial_3)^{\intercal}$ be the gradient operator. For a vector $\boldsymbol{v}$, introduce gradient operator, curl operator, and divergence operator as follows:
\begin{equation*}
\nabla\boldsymbol{v}:=\nabla\otimes\boldsymbol{v},\;\; \grad\boldsymbol{v}:=(\nabla\boldsymbol{v})^{\intercal}=\boldsymbol{v}\otimes\nabla,\;\; \curl\boldsymbol{v}:=\nabla\times\boldsymbol{v},\;\; \div\boldsymbol{v}:=\nabla\cdot\boldsymbol{v}.
\end{equation*}
Set deformation tensor $\defm(\boldsymbol{v}):=\sym(\grad\boldsymbol{v})$.
For a matrix function, differential operators $\curl, \div$ in letter are applied row-wise, that is
\begin{equation*}
\curl\boldsymbol{\tau}=-\boldsymbol{\tau}\times\nabla,\quad \div\boldsymbol{\tau}=\boldsymbol{\tau}\cdot\nabla.
\end{equation*} 

We further introduce the incompatibility operator $\inc: \mathbb{M}\to\mathbb{S}$ and the linearized Cotton-York operator $\cott: \mathbb{M}\to\mathbb{S}\cap\mathbb{T}$ as
\begin{align*}
\inc\boldsymbol{\tau}:=\curl S^{-1}(\curl\boldsymbol{\tau}),\quad\cott\boldsymbol{\tau}:=\curl S^{-1}(\inc\boldsymbol{\tau})=\inc S^{-1}(\curl\boldsymbol{\tau}),
\end{align*}
where $S\boldsymbol{\sigma}:=\boldsymbol{\sigma}^{\intercal}-(\tr\boldsymbol{\sigma})\boldsymbol{I}$ and $S^{-1}\boldsymbol{\sigma}:=\boldsymbol{\sigma}^{\intercal}-\frac{1}{2}(\tr\boldsymbol{\sigma})\boldsymbol{I}$. It can be verified that $\inc\boldsymbol{\tau}=0$ for skew-symmetric $\boldsymbol{\tau}$.
When $\boldsymbol{\tau}$ is symmetric, we have
\begin{equation}\label{eq:incandcott}
\inc\boldsymbol{\tau}=\curl(\curl\boldsymbol{\tau})^{\intercal}=-\nabla\times\boldsymbol{\tau}\times\nabla,\quad \cott\boldsymbol{\tau}=\inc(\curl\boldsymbol{\tau})=\inc(\sym\curl\boldsymbol{\tau}).
\end{equation}
Hence, for general tensor $\boldsymbol{\tau}$,
\begin{align*}
\inc\boldsymbol{\tau}&=\inc(\sym\boldsymbol{\tau})=-\nabla\times(\sym\boldsymbol{\tau})\times\nabla, \\  
\cott\boldsymbol{\tau}&=\inc(S^{-1}(\sym\curl \boldsymbol{\tau}))=\sym\curl(\inc\boldsymbol{\tau})=\cott(\dev\sym\boldsymbol{\tau}).
\end{align*}

For face $f$, introduce the following definitions:
$$
\nabla_f: = \Pi_f \nabla, \quad
\nabla_f^{\bot} := \boldsymbol n \times \nabla.
$$
For a scalar function $v$, we have:
\begin{align*}
\grad_f v = \nabla_f v &= \Pi_f (\nabla v) = - \boldsymbol n \times (\boldsymbol n\times \nabla v), \\
\curl_f v = \nabla_f^{\bot} v &= \boldsymbol n \times \nabla v = \boldsymbol n\times \nabla_f v,
\end{align*}
where $\grad_f v$ is the surface gradient of $v$, and $\curl_f v$ is the surface $\curl$. 
For a vector function $\boldsymbol v$, the surface divergence is defined as:
$$
\div_f\boldsymbol v: = \nabla_f\cdot \boldsymbol v = \nabla_f\cdot (\Pi_f\boldsymbol v).
$$
Furthermore, the surface rot operator is given by:
\begin{equation*}
{\rm rot}_f \boldsymbol v := \nabla_f^{\bot}\cdot \boldsymbol v = (\boldsymbol n\times \nabla)\cdot \boldsymbol v = \boldsymbol n\cdot (\nabla \times \boldsymbol v),
\end{equation*}
representing the normal component of $\curl\boldsymbol{v}=\nabla \times \boldsymbol v$.
For a tensor $\boldsymbol{\tau}$, define the surface trace and the surface deviation as follows:
\begin{equation*}
\tr_f(\boldsymbol{\tau}):=\tr(\Pi_f\boldsymbol{\tau}\Pi_f),\quad \dev_f(\boldsymbol{\tau}):=\Pi_f\boldsymbol{\tau}\Pi_f-\frac{1}{2}\tr_f(\boldsymbol{\tau})I_f\quad\textrm{ with } I_f=\Pi_fI\Pi_f.
\end{equation*}

\subsection{Polynomial and Sobolev spaces}\label{subsec:spaces}
For a bounded domain $D\subset\mathbb{R}^{d}$ and 
a non-negative integer $k$, 
let $\mathbb P_k(D)$ stand for the set of all polynomials over $D$ with the total degree no more than $k$. When $k<0$, set $\mathbb P_k(D) := \{0\}.$
Let $\mathbb H_k(D):=\mathbb P_k(D)\backslash\mathbb P_{k-1}(D)$ be the space of homogeneous polynomials of degree $k$. 
In the binomial coefficient notation ${d\choose k}$, if $d\geq 0$ and $k<0$, we set ${d\choose k} := 0$.

Given a bounded domain $D\subset\mathbb{R}^{d}$ with $d=2,3$ and a
real number $s$, let $H^s(D)$ be the usual Sobolev space of functions
over $D$, whose norm is denoted by
$\Vert\cdot\Vert_{s,D}$. 
Define $H_0^k(D)$ as the closure of $C_{0}^{\infty}(D)$ with
respect to the norm $\Vert\cdot\Vert_{k,D}$.
Let $(\cdot, \cdot)_D$ be the standard inner product on $L^2(D)$. If $D$ is $\Omega$, 
we abbreviate $(\cdot, \cdot)_D$ by $(\cdot, \cdot)$.
We use $\bs n_{\partial D}$ to denote the unit outward normal vector of $\partial D$, which will be abbreviated as $\bs n$ if not causing any confusion. 
For a space $B(D)$ defined on $D$,
let $B(D; \mathbb{X}):=B(D)\otimes\mathbb{X}$ be its vector or tensor version for $\mathbb{X}$ being $\mathbb{R}^d$, $\mathbb{M}$, $\mathbb{S}$, $\mathbb{K}$ and $\mathbb{T}$.

Define Sobolev spaces
\begin{align*}
H(\curl,D)&:=\{\boldsymbol{v}\in L^2(D;\mathbb R^3): \curl\boldsymbol{v}\in L^2(D;\mathbb R^3)\},\\
H(\div,D)&:=\{\boldsymbol{v}\in L^2(D;\mathbb R^3): \div\boldsymbol{v}\in L^2(D)\},\\
H(\curl,D; \mathbb X)&:=\{\boldsymbol{\tau}\in L^{2}(D; \mathbb X): \curl\boldsymbol{\tau}\in L^{2}(D; \mathbb{M})\}, \\
H(\div,D; \mathbb X)&:=\{\boldsymbol{\tau}\in L^{2}(D; \mathbb X): \div\boldsymbol{\tau}\in L^{2}(D; \mathbb{R}^3)\}, \\
H(\sym\curl,D; \mathbb X)&:=\{\boldsymbol{\tau}\in L^{2}(D; \mathbb X): \sym\curl\boldsymbol{\tau}\in L^{2}(D; \mathbb{S})\}, \\
H(\div\div,D; \mathbb X)&:=\{\boldsymbol{\tau}\in L^{2}(D; \mathbb X): \div {\div}\boldsymbol{\tau}\in L^{2}(D)\}, \\
H(\inc,D; \mathbb X)&:=\{\boldsymbol{\tau}\in L^{2}(D; \mathbb X): \inc\boldsymbol{\tau}\in L^{2}(D; \mathbb{S})\}, \\
H(\cott,D; \mathbb{S}\cap\mathbb{T})&:=\{\boldsymbol{\tau}\in L^{2}(D;\mathbb{S}\cap\mathbb{T}): \cott\boldsymbol{\tau}\in L^{2}(D;\mathbb{S}\cap\mathbb{T})\},
\end{align*}
where $\mathbb X=\mathbb{S}, \mathbb{T}, \mathbb{S}\cap\mathbb{T}$.
Hereafter, we will skip $D$ in the space notation for save space if not causing any confusion. 
For example, we abbreviate $H^s(D)$ as $H^s$, and $\mathbb P_k(D)$ as $\mathbb P_k$.

\subsection{Triangulation}
Denote by $\mathcal{T}_h$ a conforming triangulation of $\Omega$ with each element being a simplex, where $h:=\max_{T\in\mathcal{T}_h}h_T$.
Let $\Delta_{\ell}(\mathcal{T}_h)$ be the set of all $\ell$-dimensional faces of $\mathcal{T}_h$ for $0\leq \ell \leq 3$. 
For a $d$-dimensional simplex $T$ with $d=2,3$, we let 
$\Delta_{\ell}(T)$ denote the set of all $\ell$-dimensional faces for $0\leq \ell \leq d$. 

It holds the Euler's formula \cite{Hatcher2002}
\begin{equation}\label{eulerformula}
|\Delta_0(\mathcal{T}_h)|-|\Delta_1(\mathcal{T}_h)|+|\Delta_2(\mathcal{T}_h)|-|\Delta_3(\mathcal{T}_h)|=1.
\end{equation}


\section{Conformal Polynomial and Differential Complexes}\label{sec:conformalpolydiffcomplex}

In this section, we will apply the BGG framework to derive conformal polynomial complexes and conformal differential complexes in three dimensions.
We also present traces for operators $\sym\curl$, $\div\div$, $\inc$ and $\cott$, and their properties for later uses.

\subsection{Conformal polynomial complexes in three dimensions}
Let
\begin{itemize}
  \item ${\rm RT}:=\mathbb R^3\oplus\spa\{\boldsymbol{x}\}=\{\boldsymbol{a}+b\boldsymbol{x}: \boldsymbol{a}\in\mathbb{R}^3, b\in\mathbb{R}\}$;
  \item ${\rm RM}:=(\mathbb{R}^3\times\boldsymbol{x})\oplus\mathbb{R}^3=\{\boldsymbol{a}\times\boldsymbol{x}+\boldsymbol{b}: \boldsymbol{a},\boldsymbol{b}\in\mathbb{R}^3\}$;
  \item ${\rm CH}:= \mathbb P_1+{\rm span}\{\boldsymbol{x}^{\intercal}\boldsymbol{x}\}$.
\end{itemize}
Here ${\rm RT}$ is the shape function space of the lowest order Raviart-Thomas element \cite{RaviartThomas1977,Nedelec1980}, ${\rm RM}$ is the rigid motion space \cite{Sokolnikoff1946}, and ${\rm CH}$ is the kernel space of the operator $\dev\hess$ with the Hessian operator $\hess:=\nabla^2$.
 
Define the conformal Killing field \cite{Dain2006,LewintanMuellerNeff2021,WilliamsHong2024}
\begin{equation*}
{\rm CK}:= \{(\boldsymbol{x}\cdot\boldsymbol{x})\boldsymbol{a}-2(\boldsymbol{a}\cdot\boldsymbol{x})\boldsymbol{x}+b\boldsymbol{x}+\boldsymbol{c}\times\boldsymbol{x}+\boldsymbol{d}: \boldsymbol{a},\boldsymbol{c},\boldsymbol{d}\in\mathbb{R}^3, b\in\mathbb{R} \}.
\end{equation*}
Clearly,
\begin{align*}
{\rm CK}&= {\rm RM} \oplus {\rm span}\{(\boldsymbol{x}\cdot\boldsymbol{x})\boldsymbol{a}-2(\boldsymbol{a}\cdot\boldsymbol{x})\boldsymbol{x}+b\boldsymbol{x}: \boldsymbol{a}\in\mathbb{R}^3, b\in\mathbb{R} \} \\
&= {\rm RT} \oplus {\rm span}\{(\boldsymbol{x}\cdot\boldsymbol{x})\boldsymbol{a}-2(\boldsymbol{a}\cdot\boldsymbol{x})\boldsymbol{x}+\boldsymbol{c}\times\boldsymbol{x}: \boldsymbol{a}, \boldsymbol{c}\in\mathbb{R}^3\}.
\end{align*}

\begin{lemma}
We have
\begin{equation}\label{eq:CKprop1}
\dev\grad{\rm CK}=\mskw{\rm RM},\quad \defm{\rm CK}=\mathbb P_1 I,\quad \div{\rm CK}=\mathbb P_1,
\end{equation}
and
\begin{equation}\label{eq:CKprop2}
\curl{\rm CK}={\rm RM}, \quad {\rm CK}\cap\ker(\curl)={\rm RT}.
\end{equation}
\end{lemma}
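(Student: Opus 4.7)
The plan is to reduce all five identities to a single computation of $\grad\boldsymbol{v}$ for a generic $\boldsymbol{v}\in{\rm CK}$. Write
\[
\boldsymbol v = (\boldsymbol x\cdot\boldsymbol x)\boldsymbol a - 2(\boldsymbol a\cdot\boldsymbol x)\boldsymbol x + b\boldsymbol x + \boldsymbol c\times\boldsymbol x + \boldsymbol d
\]
with $\boldsymbol a,\boldsymbol c,\boldsymbol d\in\mathbb R^3$, $b\in\mathbb R$. Using $\grad\bigl((\boldsymbol x\cdot\boldsymbol x)\boldsymbol a\bigr)=2\boldsymbol a\boldsymbol x^\intercal$, $\grad\bigl((\boldsymbol a\cdot\boldsymbol x)\boldsymbol x\bigr)=\boldsymbol x\boldsymbol a^\intercal+(\boldsymbol a\cdot\boldsymbol x)I$, $\grad(b\boldsymbol x)=bI$, $\grad(\boldsymbol c\times\boldsymbol x)=\mskw\boldsymbol c$ (easily checked component-wise on the basis of $\mathbb R^3$), and $\grad\boldsymbol d=0$, I get
\[
\grad\boldsymbol v \;=\; 2(\boldsymbol a\boldsymbol x^\intercal-\boldsymbol x\boldsymbol a^\intercal)\;+\;(b-2\boldsymbol a\cdot\boldsymbol x)I\;+\;\mskw\boldsymbol c.
\]

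Next I would read off the symmetric/skew and trace/deviatoric parts. The term $\boldsymbol a\boldsymbol x^\intercal-\boldsymbol x\boldsymbol a^\intercal$ is skew, so $\defm\boldsymbol v=\sym(\grad\boldsymbol v)=(b-2\boldsymbol a\cdot\boldsymbol x)I$, which ranges exactly over $\mathbb P_1 I$ as $(b,\boldsymbol a)$ varies. Taking the trace immediately gives $\div\boldsymbol v=3(b-2\boldsymbol a\cdot\boldsymbol x)$, spanning $\mathbb P_1$. For the deviatoric gradient, the only symmetric contribution is $(b-2\boldsymbol a\cdot\boldsymbol x)I$, so
\[
\dev\grad\boldsymbol v = 2(\boldsymbol a\boldsymbol x^\intercal-\boldsymbol x\boldsymbol a^\intercal)+\mskw\boldsymbol c.
\]
Using the BAC--CAB identity $(\boldsymbol a\boldsymbol x^\intercal-\boldsymbol x\boldsymbol a^\intercal)\boldsymbol w=(\boldsymbol x\cdot\boldsymbol w)\boldsymbol a-(\boldsymbol a\cdot\boldsymbol w)\boldsymbol x=-(\boldsymbol a\times\boldsymbol x)\times\boldsymbol w$, this rewrites as $\mskw(\boldsymbol c-2\boldsymbol a\times\boldsymbol x)$. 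Since $\boldsymbol c-2\boldsymbol a\times\boldsymbol x$ exhausts ${\rm RM}=\mathbb R^3\oplus(\mathbb R^3\times\boldsymbol x)$ as $(\boldsymbol a,\boldsymbol c)$ varies, we conclude $\dev\grad{\rm CK}=\mskw{\rm RM}$, establishing \eqref{eq:CKprop1}.

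For \eqref{eq:CKprop2}, I would use the standard identity $\curl\boldsymbol v=2\vskw(\grad\boldsymbol v)$ (verifiable on a monomial). From the previous computation, $\skw(\grad\boldsymbol v)=\mskw(\boldsymbol c-2\boldsymbol a\times\boldsymbol x)$, hence
\[
\curl\boldsymbol v = 2\boldsymbol c - 4\,\boldsymbol a\times\boldsymbol x \in {\rm RM},
\]
and every element of ${\rm RM}$ is attained, giving $\curl{\rm CK}={\rm RM}$. Finally, $\curl\boldsymbol v=0$ forces $\boldsymbol c=0$ and (since $\boldsymbol a\times\boldsymbol x\equiv 0$ on $\mathbb R^3$ forces $\boldsymbol a=0$) $\boldsymbol a=0$, leaving $\boldsymbol v=b\boldsymbol x+\boldsymbol d$, which is precisely ${\rm RT}$.

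There is no real obstacle here; the proof is a one-screen calculation. The only minor bookkeeping point is the identification $\boldsymbol a\boldsymbol x^\intercal-\boldsymbol x\boldsymbol a^\intercal=-\mskw(\boldsymbol a\times\boldsymbol x)$, which is where the specific coefficients $2$ and $-2$ in the definition of ${\rm CK}$ conspire to make the symmetric part purely spherical and the skew part a rigid-motion field—this is exactly why ${\rm CK}$ is the kernel of $\dev\defm$ and the conformal Killing field is the right object.
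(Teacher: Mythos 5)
Your proposal is correct and follows essentially the same route as the paper: compute $\grad$ of each generating term of ${\rm CK}$ (with the quadratic part yielding $2\mskw(\boldsymbol x\times\boldsymbol a)-2(\boldsymbol a\cdot\boldsymbol x)I$, which matches your $2(\boldsymbol a\boldsymbol x^{\intercal}-\boldsymbol x\boldsymbol a^{\intercal})-2(\boldsymbol a\cdot\boldsymbol x)I$), then read off the symmetric, deviatoric and trace parts for \eqref{eq:CKprop1} and use $\curl=2\vskw\grad$ for \eqref{eq:CKprop2}. The only difference is presentational: you carry the dyadic form and convert via $\boldsymbol a\boldsymbol x^{\intercal}-\boldsymbol x\boldsymbol a^{\intercal}=\mskw(\boldsymbol x\times\boldsymbol a)$, while the paper states the $\mskw$ form directly.
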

\begin{proof}
By a direct computation, 
\begin{equation*}
\grad\big((\boldsymbol{x}\cdot\boldsymbol{x})\boldsymbol{a}-2(\boldsymbol{a}\cdot\boldsymbol{x})\boldsymbol{x}\big)=2\mskw(\boldsymbol{x}\times\boldsymbol{a})-2(\boldsymbol{a}\cdot\boldsymbol{x})I, 
\end{equation*}
\begin{equation*}
\grad(b\boldsymbol{x})=bI, \quad \grad(\boldsymbol{c}\times\boldsymbol{x})=\mskw\boldsymbol{c}.
\end{equation*}
Then \eqref{eq:CKprop1} holds.
By $\curl=2\vskw\grad=2\mskw^{-1}\skw\grad$, we have 
\begin{equation*}
\curl((\boldsymbol{x}\cdot\boldsymbol{x})\boldsymbol{a}-2(\boldsymbol{a}\cdot\boldsymbol{x})\boldsymbol{x})=4\boldsymbol{x}\times\boldsymbol{a},\quad \curl(\boldsymbol{c}\times\boldsymbol{x})=2\boldsymbol{c}.
\end{equation*}
Thus, \eqref{eq:CKprop2} is true.
\end{proof}

Combine the polynomial elasticity complex \cite[(2.6)]{ArnoldAwanouWinther2008}, the polynomial Hessian complex \cite[(3.4.9)]{ChenHuang2022d}, and the polynomial de Rham complex \cite[(3.3)]{ArnoldFalkWinther2006} to have the diagram
\begin{equation*}
\begin{tikzcd}
{\rm RM} \longrightarrow \mathbb P_{k+4}(\mathbb R^3)
\arrow{r}{\defm}
&
\mathbb P_{k+3}(\mathbb S)
 \arrow{r}{\inc}
 &
\mathbb P_{k+1}(\mathbb S)
 \arrow{r}{\div}
 & \mathbb P_{k}(\mathbb R^3) \longrightarrow 0
 \\
\mathbb{P}_1 \longrightarrow \mathbb P_{k+3}
 \arrow[ur,swap,"\iota"'] \arrow{r}{\hess}
 & 
\mathbb P_{k+1}(\mathbb S) 
 \arrow[ur,swap,"S"'] \arrow{r}{\curl}
 & 
\mathbb P_{k}(\mathbb T) 
 \arrow[ur,swap,"-2\vskw"'] \arrow{r}{\div} \arrow[r] 
 & \mathbb P_{k-1}(\mathbb R^3) \longrightarrow 0 
 \\
\mathbb{R} \longrightarrow \mathbb P_{k+1}
 \arrow[ur,swap,"\iota"'] \arrow{r}{\grad}
 & 
\mathbb P_{k}(\mathbb R^3) 
 \arrow[ur,swap,"\mskw"'] \arrow{r}{\curl}
 & 
\mathbb P_{k-1}(\mathbb R^3) 
 \arrow[ur,swap,"{\rm id}"'] \arrow{r}{\div} \arrow[r] 
 & \mathbb P_{k-2} \longrightarrow 0. 
\end{tikzcd}
\end{equation*}
Applying the BGG framework to this diagram, we get the polynomial conformal elasticity complex from the first two rows
\begin{equation}\label{polyconformalElascomplex3d}
{\rm CK}\xrightarrow{\subset} \mathbb P_{k+4}(\mathbb R^3)\xrightarrow{\dev\defm} \mathbb P_{k+3}(\mathbb{S}\cap\mathbb{T})\xrightarrow{\cott} \mathbb P_{k}(\mathbb{S}\cap\mathbb{T}) \xrightarrow{\div} \mathbb P_{k-1}(\mathbb R^3)\xrightarrow{}0,
\end{equation}
and the polynomial conformal Hessian complex from the last two rows
\begin{equation}\label{polyconformalHesscomplex3d}
{\rm CH}\xrightarrow{\subset} \mathbb P_{k+3}\xrightarrow{\dev\hess} \mathbb P_{k+1}(\mathbb{S}\cap\mathbb{T})\xrightarrow{\sym\curl} \mathbb P_{k}(\mathbb{S}\cap\mathbb{T}) \xrightarrow{\div{\div}} \mathbb P_{k-2}\xrightarrow{}0.
\end{equation}

Combine the polynomial de Rham complex and the polynomial  div\,div complex \cite[(25)]{ChenHuang2022} to have the diagram
\begin{equation*}
\begin{tikzcd}[column sep=1cm, row sep=normal]
\mathbb{R} \longrightarrow \mathbb P_{k+3} \arrow{r}{\grad}&
\mathbb P_{k+2}(\mathbb R^3)
  \arrow{r}{\curl}
 &
\mathbb P_{k+1}(\mathbb R^3)
\arrow{r}{\div}
& \mathbb P_{k} \longrightarrow 0 \\
{\rm RT} \longrightarrow \mathbb P_{k+2}(\mathbb R^3)
 \arrow[ur,swap,"{\rm id}"'] \arrow{r}{\dev\grad}
 & 
\mathbb P_{k+1}(\mathbb T)
 \arrow[ur,swap,"- 2\vskw"'] \arrow{r}{\sym\curl}
 & 
\mathbb P_k(\mathbb S)
\arrow[ur,swap,"\tr"']\arrow{r}{\div\div}
& \mathbb P_{k-2} \longrightarrow 0,
\end{tikzcd}
\end{equation*}
which also implies the polynomial conformal Hessian complex \eqref{polyconformalHesscomplex3d}. 


\subsection{Conformal complexes in three dimensions}

Combine the de Rham complex \cite[(2.6)]{ArnoldFalkWinther2006} and the div\,div complex \cite{PaulyZulehner2020,ArnoldHu2021} to arrive at the diagram
\begin{equation*}
\begin{tikzcd}[column sep=1cm, row sep=normal]
\mathbb{R} \longrightarrow H^2 \arrow{r}{\grad}&
H^1(\mathbb R^3)
  \arrow{r}{\curl}
 &
H(\div)
\arrow{r}{\div}
& L^2 \longrightarrow 0 \\
{\rm RT} \longrightarrow H^1(\mathbb R^3)
 \arrow[ur,swap,"{\rm id}"'] \arrow{r}{\dev\grad}
 & 
H(\sym\curl;\mathbb T)
 \arrow[ur,swap,"- 2\vskw"'] \arrow{r}{\sym\curl}
 & 
H(\div\div;\mathbb S)
\arrow[ur,swap,"\tr"']\arrow{r}{\div\div}
& L^2 \longrightarrow 0.
\end{tikzcd}
\end{equation*}
Applying the BGG framework to this diagram will induce
the conformal Hessian complex 
\begin{equation}\label{conformalHesscomplex3d}
\resizebox{.935\hsize}{!}{$
{\rm CH}\xrightarrow{\subset} H^2\xrightarrow{\dev\hess} H(\sym\curl; \mathbb{S}\cap\mathbb{T})\xrightarrow{\sym\curl} H(\div\div; \mathbb{S}\cap\mathbb{T}) \xrightarrow{\div{\div}} L^2\xrightarrow{}0.
$}
\end{equation}
The conformal Hessian complex \eqref{conformalHesscomplex3d} has applications in general relativity \cite{QuennevilleBelair2015,BeigChrusciel2020}.

Next we derive the following conformal elasticity complex using the BGG framework, and the tilde and hat operations in \cite{ChenHuang2025} 
\begin{equation}\label{conformalElascomplex3d}
{\rm CK}\xrightarrow{\subset} H^1(\mathbb R^3)\xrightarrow{\dev\defm} H(\cott; \mathbb{S}\cap\mathbb{T})\xrightarrow{\cott} H(\div; \mathbb{S}\cap\mathbb{T}) \xrightarrow{\div} L^2(\mathbb R^3)\xrightarrow{}0.
\end{equation}
The conformal elasticity complex \eqref{conformalElascomplex3d} has applications in general relativity \cite{Dain2006}, Cosserat elasticity \cite{NeffJeong2009} and fluid mechanics problems \cite{GopalakrishnanLedererSchoeberl2020,FeireislNovotny2017}.

\begin{lemma}
The elasticity complex 
\begin{equation}\label{elascomplex3dtildehat}
{\rm RM}\xrightarrow{\subset} H^{1,2}(\div)\xrightarrow{\defm} \widehat{\widetilde{H}}(\inc; \mathbb S)\xrightarrow{\inc} \widetilde{H}(\div; \mathbb S) \xrightarrow{\div} L^2(\mathbb R^3)\xrightarrow{}0
\end{equation}
is exact, where
\begin{align*}
H^{1,2}(\div)&:=\{\boldsymbol{v}\in H^{1}(\mathbb R^3): \div\boldsymbol{v}\in H^2\},\\
\widehat{\widetilde{H}}(\inc; \mathbb S)&:=\{\boldsymbol{\tau}\in H(\inc; \mathbb S): \tr\boldsymbol{\tau}\in H^2, \inc\boldsymbol{\tau}\in \widetilde{H}(\div; \mathbb S)\},\\
\widetilde{H}(\div; \mathbb S)&:=S H(\curl;\mathbb S).
\end{align*}
\end{lemma}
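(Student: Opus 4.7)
The plan is to prove exactness at each position by tracking how the refined regularity conditions propagate through the standard elasticity complex
\[
{\rm RM}\xrightarrow{\subset} H^1(\mathbb R^3)\xrightarrow{\defm} H(\inc;\mathbb S)\xrightarrow{\inc} H(\div;\mathbb S)\xrightarrow{\div} L^2(\mathbb R^3)\to 0,
\]
whose exactness on topologically trivial domains is standard. Well-definedness of the maps in \eqref{elascomplex3dtildehat} is mostly tautological: for $\boldsymbol v\in H^{1,2}(\div)$ one has $\inc\defm\boldsymbol v=0\in\widetilde{H}(\div;\mathbb S)$ and $\tr\defm\boldsymbol v=\div\boldsymbol v\in H^2$; the clause $\inc\boldsymbol\tau\in\widetilde{H}(\div;\mathbb S)$ is built into the definition of $\widehat{\widetilde{H}}(\inc;\mathbb S)$; and for $\boldsymbol\mu\in H(\curl;\mathbb S)$ the identity $\div(S\boldsymbol\mu)=\div\boldsymbol\mu-\grad\tr\boldsymbol\mu=2\vskw\curl\boldsymbol\mu$, which follows from a direct index calculation together with the symmetry of $\boldsymbol\mu$, shows $\div(S\boldsymbol\mu)\in L^2$.

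Exactness at the three left-most positions then follows by pullback along the standard complex with a trace repair. At $H^{1,2}(\div)$, $\ker\defm={\rm RM}$ is immediate. At $\widehat{\widetilde{H}}(\inc;\mathbb S)$, if $\inc\boldsymbol\tau=0$ then $\boldsymbol\tau=\defm\boldsymbol v$ for some $\boldsymbol v\in H^1$, and $\div\boldsymbol v=\tr\boldsymbol\tau\in H^2$ by the hat condition, placing $\boldsymbol v$ in $H^{1,2}(\div)$. At $\widetilde{H}(\div;\mathbb S)$, if $\div\boldsymbol\sigma=0$ the standard complex yields $\boldsymbol\tau_0\in H(\inc;\mathbb S)$ with $\inc\boldsymbol\tau_0=\boldsymbol\sigma$; to upgrade $\boldsymbol\tau_0$ to $\widehat{\widetilde{H}}(\inc;\mathbb S)$ I would pick $\boldsymbol w\in H^1$ solving $\div\boldsymbol w=-\tr\boldsymbol\tau_0\in L^2$, using the surjectivity of $\div:H^1\to L^2$ on the underlying domain. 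Then $\boldsymbol\tau:=\boldsymbol\tau_0+\defm\boldsymbol w$ satisfies $\tr\boldsymbol\tau=0\in H^2$ and $\inc\boldsymbol\tau=\boldsymbol\sigma\in\widetilde{H}(\div;\mathbb S)$, so $\boldsymbol\tau\in\widehat{\widetilde{H}}(\inc;\mathbb S)$.

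Surjectivity of $\div$ onto $L^2(\mathbb R^3)$ is the main obstacle, and I would handle it by constructing a regular right inverse of the symmetric divergence. Given $\boldsymbol f\in L^2$, solve the Lam\'e-type equation $-\div\defm\boldsymbol u=\boldsymbol f$ to obtain $\boldsymbol u\in H^2$ via elliptic regularity, and set $\boldsymbol\sigma:=-\defm\boldsymbol u\in H^1(\mathbb S)$, so that $\div\boldsymbol\sigma=\boldsymbol f$. Since $\boldsymbol\sigma\in H^1\subset H(\curl;\mathbb S)$, the tensor $\boldsymbol\mu:=S^{-1}\boldsymbol\sigma=\boldsymbol\sigma-\tfrac12(\tr\boldsymbol\sigma)I$ is symmetric with $\curl\boldsymbol\mu=\curl\boldsymbol\sigma+\tfrac12\mskw\grad\tr\boldsymbol\sigma\in L^2$; thus $\boldsymbol\mu\in H(\curl;\mathbb S)$ and $\boldsymbol\sigma=S\boldsymbol\mu\in\widetilde{H}(\div;\mathbb S)$, giving the required preimage. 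The delicate technical point is the existence and $H^2$-regularity of the auxiliary displacement $\boldsymbol u$, which rests on standard coercivity (Korn's inequality) and elliptic regularity for the Lam\'e system on the underlying domain.
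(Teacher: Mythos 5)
Your overall strategy is sound and, at two of the three nontrivial positions, genuinely more elementary than the paper's. The paper proceeds abstractly: it first proves $\div\widetilde{H}(\div;\mathbb S)=L^2$ via $H^1(\mathbb S)=SH^1(\mathbb S)\subseteq\widetilde H(\div;\mathbb S)$ together with the cited fact $\div H^1(\mathbb S)=L^2$ from the regularity-refined elasticity complex of Arnold--Hu, then applies the ``tilde'' operation to \eqref{elascomplex3d}, identifies $\widetilde H(\div;\mathbb S)\cap\ker(\div)=\inc\widehat{\widetilde H}(\inc;\mathbb S)$ by invoking $\inc H^2(\mathbb S)=H(\div;\mathbb S)\cap\ker(\div)$ (so the $\inc$-potential comes with $H^2$ trace for free), and finally applies the ``hat'' operation. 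You instead verify exactness by hand, and at the $\widetilde H(\div;\mathbb S)$ position you replace the $H^2$-potential result by a trace repair: take the basic potential $\boldsymbol\tau_0\in H(\inc;\mathbb S)$ from \eqref{elascomplex3d}, choose $\boldsymbol w\in H^1$ with $\div\boldsymbol w=-\tr\boldsymbol\tau_0\in L^2$, and pass to $\boldsymbol\tau=\boldsymbol\tau_0+\defm\boldsymbol w$, which has $\tr\boldsymbol\tau=0\in H^2$ and $\inc\boldsymbol\tau=\boldsymbol\sigma$. That step is correct and needs only the elementary surjectivity of $\div\colon H^1(\Omega;\mathbb R^3)\to L^2(\Omega)$, so it trades the paper's citation of a deeper regularity statement for a short explicit argument; the same repair also handles exactness at $\widehat{\widetilde H}(\inc;\mathbb S)$ exactly as you describe.

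The one step that does not stand as written is the surjectivity of $\div$ onto $L^2(\mathbb R^3)$. You propose to solve $-\div\defm\boldsymbol u=\boldsymbol f$ and get $\boldsymbol u\in H^2$ ``via elliptic regularity for the Lam\'e system on the underlying domain''. The domain here is a bounded, topologically trivial \emph{polyhedron}, not assumed convex or smooth, and full $H^2$ regularity for the Lam\'e (or any second-order elliptic) system with standard boundary conditions fails on non-convex polyhedra; Korn plus Lax--Milgram only gives $\boldsymbol u\in H^1$, and corner/edge singularities limit the gain to $H^{1+s}$ with $s<1$ in general. So the claimed construction of $\boldsymbol\sigma=-\defm\boldsymbol u\in H^1(\mathbb S)$ is not justified as stated. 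The conclusion you need, $\div H^1(\Omega;\mathbb S)=L^2(\Omega;\mathbb R^3)$, is nevertheless true and is precisely what the paper cites from Arnold--Hu (their complex (34)/(50)); alternatively your argument is rescued by extending $\boldsymbol f$ by zero to a ball $B\supset\Omega$, solving the Lam\'e problem on $B$ (where $H^2$ regularity does hold), and restricting $-\defm\boldsymbol u$ to $\Omega$. With that repair (or the citation), the remaining step $H^1(\mathbb S)\subseteq SH(\curl;\mathbb S)=\widetilde H(\div;\mathbb S)$ via $\boldsymbol\mu=S^{-1}\boldsymbol\sigma$ is exactly the paper's observation, and your proof is complete.
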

\begin{proof}
By $\div S\boldsymbol{\sigma}=2\vskw\curl\boldsymbol{\sigma}$,
\begin{equation*}
\div\widetilde{H}(\div; \mathbb S)=\div S H(\curl;\mathbb S)=2\vskw\curl H(\curl;\mathbb S)\subseteq L^2(\mathbb R^3),
\end{equation*}
which means $\widetilde{H}(\div; \mathbb S)\subseteq H(\div; \mathbb S)$.
Then using the fact 
$$
H^1(\mathbb S)=SH^1(\mathbb S)\subseteq SH(\curl;\mathbb S)=\widetilde{H}(\div; \mathbb S),
$$ 
it holds $H^1(\mathbb S)\subseteq\widetilde{H}(\div; \mathbb S)\subseteq H(\div; \mathbb S)$.
Noting that $\div H^1(\mathbb S)=L^2(\mathbb R^3)$ (cf. elasticity complex (34) in \cite{ArnoldHu2021}), we have $\div\widetilde{H}(\div; \mathbb S)=L^2(\mathbb R^3)$. 

Recall the elasticity complex \cite[page 1232]{ArnoldAwanouWinther2008}
\begin{equation}\label{elascomplex3d}
{\rm RM}\xrightarrow{\subset} H^{1}(\mathbb R^3)\xrightarrow{\defm} H(\inc; \mathbb S)\xrightarrow{\inc} H(\div; \mathbb S) \xrightarrow{\div} L^2(\mathbb R^3)\xrightarrow{}0.
\end{equation}
Apply the tilde operation in \cite[Section 2.3]{ChenHuang2025} to elasticity complex \eqref{elascomplex3d}
to acquire the following exact elasticity complex
\begin{equation}\label{elascomplex3dtilde}
{\rm RM}\xrightarrow{\subset} H^{1}(\mathbb R^3)\xrightarrow{\defm} \widetilde{H}(\inc; \mathbb S)\xrightarrow{\inc} \widetilde{H}(\div; \mathbb S) \xrightarrow{\div} L^2(\mathbb R^3)\xrightarrow{}0,
\end{equation}
where
$
\widetilde{H}(\inc; \mathbb S):=\{\boldsymbol{\tau}\in H(\inc; \mathbb S): \inc\boldsymbol{\tau}\in \widetilde{H}(\div; \mathbb S)\}.
$

For $\boldsymbol{\tau}\in \widetilde{H}(\div;\mathbb S)\cap\ker(\div)\subseteq H(\div;\mathbb S)\cap\ker(\div)$, by $\inc H^2(\mathbb S)= H(\div;\mathbb S)\cap\ker(\div)$ (cf. elasticity complex (34) in \cite{ArnoldHu2021}), there exists a $\boldsymbol{\sigma}\in H^2(\mathbb S)$ such that $\inc\boldsymbol{\sigma}=\boldsymbol{\tau}$. Then $\boldsymbol{\sigma}\in \widehat{\widetilde{H}}(\inc; \mathbb S)$, which implies $\widetilde{H}(\div;\mathbb S)\cap\ker(\div)= \inc\widehat{\widetilde{H}}(\inc; \mathbb S)$.
Finally, we apply the hat operation in \cite[Lemma 2.1]{ChenHuang2025} to elasticity complex \eqref{elascomplex3dtilde} to derive elasticity complex \eqref{elascomplex3dtildehat}.
\end{proof}

\begin{lemma}
The conformal elasticity complex \eqref{conformalElascomplex3d} is exact.
\end{lemma}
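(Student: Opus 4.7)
The plan is to apply the BGG framework to a two-row diagram combining the modified elasticity complex \eqref{elascomplex3dtildehat} with a Sobolev Hessian complex, mirroring the polynomial derivation of \eqref{polyconformalElascomplex3d}. Specifically, I would assemble
\[
\begin{tikzcd}[column sep=small, row sep=normal]
{\rm RM} \to H^{1,2}(\div) \arrow{r}{\defm} & \widehat{\widetilde{H}}(\inc;\mathbb S) \arrow{r}{\inc} & \widetilde{H}(\div;\mathbb S) \arrow{r}{\div} & L^2(\mathbb R^3) \to 0 \\
\mathbb P_1 \to H^3 \arrow[ur,swap,"\iota"'] \arrow{r}{\hess} & H(\curl;\mathbb S) \arrow[ur,swap,"S"'] \arrow{r}{\curl} & H(\div;\mathbb T) \arrow[ur,swap,"-2\vskw"'] \arrow{r}{\div} & L^2(\mathbb R^3) \to 0
\end{tikzcd}
\]
with the top row exact by the previous lemma and the bottom row the standard exact Sobolev Hessian complex. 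The diagonal connecting maps $\iota$, $S$, $-2\vskw$ are identical to the polynomial choices and render the intervening squares anti-commuting via the algebraic identities $\div S = 2\vskw\curl$ and $\tr\defm = \div$.

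Applying the BGG recipe to this diagram, I would identify the resulting positions as follows. The leftmost position contracts to $H^1(\mathbb R^3)$: the auxiliary $H^2$-regularity on $\div$ baked into $H^{1,2}(\div)$ is absorbed by the coupling with the Hessian row. The two middle positions become $H(\cott;\mathbb S\cap\mathbb T)$ and $H(\div;\mathbb S\cap\mathbb T)$ respectively; the former arises as $\widehat{\widetilde{H}}(\inc;\mathbb S)$ quotiented by the trace image $\iota(H^3)$, after identifying $\cott = \inc\circ S^{-1}\circ\curl$ via \eqref{eq:incandcott}, while the latter arises as $\widetilde{H}(\div;\mathbb S)$ intersected with the kernel of $-2\vskw$ on skew components (forcing symmetric traceless values). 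The output operators are $\dev\defm$, $\cott$, and $\div$, and the kernel at the first position is precisely ${\rm CK}$ by \eqref{eq:CKprop1}.

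Exactness at each position then follows by a standard BGG diagram-chase from exactness of the two source rows. The main technical obstacle is regularity bookkeeping: ensuring that $\cott\sigma = \inc S^{-1}\curl\sigma$ is well-posed for $\sigma \in H(\cott;\mathbb S\cap\mathbb T)$ requires precisely the additional structure $\widetilde{H}(\div;\mathbb S) = S\,H(\curl;\mathbb S)$ in the top row, so that $S^{-1}\curl\sigma$ lies in $H(\curl;\mathbb S)$ and a subsequent application of $\curl$ is valid. This is exactly the reason the tilde and hat operations of \cite{ChenHuang2025} were introduced in the preceding lemma, and with this regularity in hand the BGG output coincides with \eqref{conformalElascomplex3d}, giving the desired exactness.
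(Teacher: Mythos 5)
Your diagram is the same one the paper uses, but the step where you assert that the BGG output ``coincides with \eqref{conformalElascomplex3d}'' hides the genuinely nontrivial part of the proof, and as stated it is a gap. Applying the BGG recipe to the two-row diagram does \emph{not} produce the spaces $H^1(\mathbb R^3)$ and $H(\cott;\mathbb S\cap\mathbb T)$; it produces the more regular spaces $H^{1,2}(\div)$ and
$H(\cott^+;\mathbb S\cap\mathbb T):=\{\boldsymbol\tau\in H(\inc;\mathbb S\cap\mathbb T):\cott\boldsymbol\tau\in L^2(\mathbb S\cap\mathbb T)\}$,
because the first row forces membership in $\widehat{\widetilde{H}}(\inc;\mathbb S)$, i.e.\ $\inc\boldsymbol\tau\in L^2$ plus the tilde condition. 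The resulting exact complex does give you $\div H(\div;\mathbb S\cap\mathbb T)=L^2(\mathbb R^3)$ and $H(\div;\mathbb S\cap\mathbb T)\cap\ker(\div)=\cott H(\cott;\mathbb S\cap\mathbb T)$ (since $H(\cott^+)\subseteq H(\cott)$), but it does \emph{not} give exactness at $H(\cott;\mathbb S\cap\mathbb T)$: a tensor $\boldsymbol\tau\in H(\cott;\mathbb S\cap\mathbb T)$ with $\cott\boldsymbol\tau=0$ need not satisfy $\inc\boldsymbol\tau\in L^2(\mathbb S)$, so it does not lie in the BGG-derived space and no diagram chase in your two rows applies to it. Your phrases ``the auxiliary $H^2$-regularity on $\div$ is absorbed'' and ``arises as $\widehat{\widetilde{H}}(\inc;\mathbb S)$ quotiented by $\iota(H^3)$'' are exactly the claims that would need proof, and they are not consequences of the BGG framework.

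The paper closes this gap with a separate argument that your proposal is missing: for $\boldsymbol\tau\in H(\cott;\mathbb S\cap\mathbb T)\cap\ker(\cott)$, one uses \eqref{eq:incandcott} to see $\sym\curl\boldsymbol\tau\in H^{-1}(\mathbb S)\cap\ker(\inc)$, invokes the elasticity complex in negative-order Sobolev spaces (complex (34) of Arnold--Hu) to write $\sym\curl\boldsymbol\tau=\defm\boldsymbol w$ with $\boldsymbol w\in L^2$, checks $\div\boldsymbol w=\tr(\curl\boldsymbol\tau)=2\div(\vskw\boldsymbol\tau)=0$, uses the de~Rham complex to get $\boldsymbol w=\curl\boldsymbol u$ with $\boldsymbol u\in H^1$, rewrites $\sym\curl\boldsymbol\tau=\sym\curl(\dev\grad\boldsymbol u)$, and finally uses exactness of the div\,div complex to conclude $\boldsymbol\tau=\dev\defm(\boldsymbol u+\boldsymbol v)$ for some $\boldsymbol v\in H^1(\mathbb R^3)$. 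Without this (or an equivalent regularity-reduction argument), your proof establishes exactness only of the auxiliary complex with $H^{1,2}(\div)$ and $H(\cott^+;\mathbb S\cap\mathbb T)$, not of \eqref{conformalElascomplex3d}. (Minor point: the bottom row of the paper's diagram uses $H^2$, not $H^3$; with $H^3$ the BGG output at the first position would carry yet more spurious regularity.)
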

\begin{proof}
Combine the elasticity complex \eqref{elascomplex3dtildehat} and the Hessian complex \cite{PaulyZulehner2020,ChenHuang2022d} to have the diagram	
\begin{equation*}
\begin{tikzcd}
{\rm RM} \longrightarrow H^{1,2}(\div)
\arrow{r}{\defm}
&
\widehat{\widetilde{H}}(\inc; \mathbb S)
 \arrow{r}{\inc}
 &
\widetilde{H}(\div; \mathbb S)
 \arrow{r}{\div}
 & L^2(\mathbb R^3) \longrightarrow 0
 \\
\mathbb{P}_1 \longrightarrow H^2
 \arrow[ur,swap,"\iota"'] \arrow{r}{\hess}
 & 
H(\curl;\mathbb S) 
 \arrow[ur,swap,"S"'] \arrow{r}{\curl}
 & 
H(\div; \mathbb T) 
 \arrow[ur,swap,"-2\vskw"'] \arrow{r}{\div} \arrow[r] 
 & L^2(\mathbb R^3) \longrightarrow 0. 
\end{tikzcd}
\end{equation*}
Using the BGG framework will induce the conformal elasticity complex
\begin{equation*}
{\rm CK}\xrightarrow{\subset} H^{1,2}(\div)\xrightarrow{\dev\defm} H(\cott^+; \mathbb{S}\cap\mathbb{T})\xrightarrow{\cott} H(\div; \mathbb{S}\cap\mathbb{T}) \xrightarrow{\div} L^2(\mathbb R^3)\xrightarrow{}0,
\end{equation*}
where  
$$
H(\cott^+; \mathbb{S}\cap\mathbb{T}):=\{\boldsymbol{\tau}\in H(\inc;\mathbb{S}\cap\mathbb{T}): \cott\boldsymbol{\tau}\in L^{2}(\mathbb{S}\cap\mathbb{T})\}.
$$
This complex implies $\div H(\div; \mathbb{S}\cap\mathbb{T})=L^2(\mathbb R^3)$ and $H(\div; \mathbb{S}\cap\mathbb{T})\cap\ker(\div)=\cott H(\cott; \mathbb{S}\cap\mathbb{T})$ as $H(\cott^+; \mathbb{S}\cap\mathbb{T})\subseteq H(\cott; \mathbb{S}\cap\mathbb{T})$. 

Next, we show $H(\cott; \mathbb{S}\cap\mathbb{T})\cap\ker(\cott)=\dev\defm H^1(\mathbb R^3)$.
For $\boldsymbol{\tau}\in H(\cott; \mathbb{S}\cap\mathbb{T})\cap\ker(\cott)$, by \eqref{eq:incandcott}, $\sym\curl\boldsymbol{\tau}\in H^{-1}(\mathbb{S})\cap\ker(\inc)$. Employing the  elasticity complex~(34) in \cite{ArnoldHu2021}, we have $\sym\curl\boldsymbol{\tau}=\defm\boldsymbol{w}$ for some $\boldsymbol{w}\in L^2(\mathbb R^3)$. Noting that 
\begin{equation*}
\div\boldsymbol{w}=\tr(\defm\boldsymbol{w})=\tr(\curl\boldsymbol{\tau})=2\div(\vskw\boldsymbol{\tau})=0,
\end{equation*}
apply the exactness of the de Rham complex to get $\boldsymbol{w}=\curl\boldsymbol{u}$ for some $\boldsymbol{u}\in H^1(\mathbb R^3)$. Then $\sym\curl\boldsymbol{\tau}=\defm(\curl\boldsymbol{u})=\sym\curl(\nabla\boldsymbol{u})=\sym\curl(\dev(\nabla\boldsymbol{u}))$. By the exactness of the div\,div complex, $\boldsymbol{\tau}=\dev(\nabla\boldsymbol{u})+\dev\grad\boldsymbol{v}$ for some $\boldsymbol{v}\in H^1(\mathbb R^3)$.
Finally, $\boldsymbol{\tau}=\sym\boldsymbol{\tau}=\dev\defm(\boldsymbol{u}+\boldsymbol{v})\in\dev\defm H^1(\mathbb R^3)$.
\end{proof}

\subsection{Traces and continuity}
We collect traces for operators $\sym\curl$, $\div\div$, $\inc$ and $\cott$ in \cite{ChenHuang2020,ChenHuang2022,ChenHuang2022b,HuLinShi2023}. 

For a smooth tensor $\boldsymbol{\tau}$, define the following trace operators on face $f$
\begin{itemize}
\item $\tr_1^{\sym\curl}(\boldsymbol{\tau}):=\sym(\Pi_f\boldsymbol{\tau}\times\boldsymbol{n})=\Pi_f\sym(\boldsymbol{\tau}\times\boldsymbol{n})\Pi_f$,
\smallskip
\item $\tr_2^{\sym\curl}(\boldsymbol{\tau}):=\boldsymbol n\cdot\boldsymbol \tau\times\boldsymbol n$,
\smallskip
\item $\tr_1^{\div\div}(\boldsymbol{\tau}):=\boldsymbol{n}\cdot\boldsymbol{\tau}\cdot\boldsymbol{n}$,
\smallskip
\item $\tr_2^{\div\div}(\boldsymbol{\tau}):=\boldsymbol n^{\intercal}\div \boldsymbol \tau +  \div_f(\boldsymbol\tau \boldsymbol n)$.
\end{itemize}
For a smooth symmetric tensor $\boldsymbol{\tau}$, define the following trace operators on face $f$
\begin{itemize}
\item $\tr_1^{\inc}(\boldsymbol{\tau}):=\boldsymbol{n}\times\boldsymbol{\tau}\times\boldsymbol{n}$,
\smallskip
\item $\tr_2^{\inc}(\boldsymbol{\tau}):=2\defm_f(\boldsymbol{n}\cdot\boldsymbol{\tau}\Pi_f)-\Pi_f\partial_n\boldsymbol{\tau}\Pi_f$,
\smallskip
\item $\tr_1^{\cott}(\boldsymbol{\tau}):=\tr_1^{\sym\curl}(\Pi_f\boldsymbol{\tau}\Pi_f)=\tr_1^{\sym\curl}(\boldsymbol{\tau})=\sym(\Pi_f\boldsymbol{\tau}\times\boldsymbol{n})$, 
\smallskip
\item $\tr_2^{\cott}(\boldsymbol{\tau}):=\tr_1^{\sym\curl}(\tr_2^{\inc}(\boldsymbol{\tau}))=\sym(\tr_2^{\inc}(\boldsymbol{\tau})\times\boldsymbol{n})$, 
\smallskip
\item $\tr_3^{\cott}(\boldsymbol{\tau}):=\dev_f(\tr_2^{\inc}(\sym\curl\boldsymbol{\tau}))$.
\end{itemize}
Since $\tr_f(\tr_2^{\inc}(\sym\curl\boldsymbol{\tau}))=\div_f\div_f\tr_1^{\cott}(\boldsymbol{\tau})$ for any smooth symmetric tensor $\boldsymbol{\tau}$~\cite[Lemma 4.2]{HuLinShi2023}, we have
\begin{equation}\label{eq:trtrcott3}
\tr_2^{\inc}(\sym\curl\boldsymbol{\tau})=\tr_3^{\cott}(\boldsymbol{\tau})+\frac{1}{2}\div_f\div_f\tr_1^{\cott}(\boldsymbol{\tau}) I_f. 
\end{equation}
For a symmetric tensor $\boldsymbol{\tau}$, it follows \cite[Lemma 4.1]{ChenHuang2022b}
\begin{equation}\label{eq:20240319tr2inc}
\tr_2^{\inc}(\boldsymbol{\tau})=\boldsymbol{n}\times(\curl\boldsymbol{\tau})^{\intercal}\Pi_f+\grad_f(\Pi_f\boldsymbol{\tau}\boldsymbol{n})=-\Pi_f(\curl\boldsymbol{\tau})\times\boldsymbol{n}+\nabla_f(\boldsymbol{n}\cdot\boldsymbol{\tau}\Pi_f),
\end{equation}
which implies \cite[Lemma 4.3]{HuLinShi2023}
\begin{equation}\label{eq:20240319tr2cott}
\tr_2^{\cott}(\boldsymbol{\tau})=\tr_1^{\inc}(\sym\curl\boldsymbol{\tau})-\sym(\curl_f(\Pi_f\boldsymbol{\tau}\boldsymbol{n})).
\end{equation}
Here, $\curl_f(\Pi_f\boldsymbol{\tau}\boldsymbol{n})$ means the tensor $(\Pi_f\boldsymbol{\tau}\boldsymbol{n})\otimes \curl_f$.

For a smooth tensor $\boldsymbol{\tau}$ and a face $f$, define the following trace operators on edge $e$
\begin{itemize}
\item $\tr_1^{\div_f\div_f}(\boldsymbol{\tau}):=\boldsymbol{n}_{f,e}\cdot\boldsymbol{\tau}\cdot\boldsymbol{n}_{f,e}$,
\smallskip
\item $\tr_2^{\div_f\div_f}(\bs \tau):=\partial_{\boldsymbol{t}_e}(\boldsymbol{t}_e^{\intercal}\bs \tau\boldsymbol{n}_{f,e})+\boldsymbol{n}_{f,e}^{\intercal}\div_f\bs \tau$,
\smallskip
\item $\tr_{e}^{\sym\curl}(\boldsymbol{\tau}):=\boldsymbol{n}_2^{\intercal}(\curl\boldsymbol{\tau})\boldsymbol{n}_1+\partial_{\boldsymbol{t}_e}(\boldsymbol{t}_e^{\intercal}\boldsymbol{\tau}\boldsymbol{t}_e)$,
\smallskip
\item $\tr_{e,1}^{\cott}(\boldsymbol{\tau}):=\boldsymbol{t}_e^{\intercal}(\sym\curl\boldsymbol{\tau})\boldsymbol{t}_e=\boldsymbol{t}_e^{\intercal}(\curl\boldsymbol{\tau})\boldsymbol{t}_e$,
\smallskip
\item $\tr_{e,2}^{\cott}(\boldsymbol{\tau}):=2\partial_{\boldsymbol{t}_e}(\sym\curl\boldsymbol{\tau})\boldsymbol{t}_e-\nabla(\tr_{e,1}^{\cott}(\boldsymbol{\tau}))$,
\smallskip
\item $\tr_{e,3}^{\cott}(\boldsymbol{\tau}):=-\boldsymbol{t}_e\cdot\nabla\times(\sym\curl\boldsymbol{\tau})\cdot\boldsymbol{t}_e-\frac{1}{2}\partial_{\boldsymbol{t}_e}(\boldsymbol{t}_e\cdot\div\boldsymbol{\tau})$.
\end{itemize}
It follows from Lemma 6.1 in \cite{ChenHuang2022} that on edge $e$, 
\begin{align}
\notag
&\tr_2^{\div_f\div_f}(\tr_1^{\sym\curl}(\boldsymbol{\tau}))=\boldsymbol{n}_{f,e}^{\intercal}(\curl\boldsymbol{\tau})\boldsymbol{n}_f+\partial_{\boldsymbol{t}_{f,e}}(\boldsymbol{t}_e^{\intercal}\boldsymbol{\tau}\boldsymbol{t}_e),\\
\label{eq:edgedofprop1}
&\tr_{e}^{\sym\curl}(\boldsymbol{\tau})
=\boldsymbol{t}_e\cdot\boldsymbol{t}_{f,e}\tr_2^{\div_f\div_f}(\tr_1^{\sym\curl}(\boldsymbol{\tau})) \\
\notag
&\qquad\qquad\qquad\;\; + 2(\boldsymbol n_f\cdot\boldsymbol n_2)^2\boldsymbol n_{1}^{\intercal}(\sym\curl\boldsymbol \tau)\boldsymbol n_2 \\
\notag
&\qquad\qquad\qquad\;\;  +(\boldsymbol n_f\cdot\boldsymbol n_1) (\boldsymbol n_f\cdot\boldsymbol n_2) \left [\boldsymbol n_{1}^{\intercal}(\sym\curl\boldsymbol \tau)\boldsymbol n_1-\boldsymbol n_{2}^{\intercal}(\sym\curl\boldsymbol \tau)\boldsymbol n_2\right ]. 
\end{align}

For a symmetric tensor $\boldsymbol{\tau}$, by \eqref{eq:trtrcott3}-\eqref{eq:20240319tr2cott}, we obtain \cite[Lemma~4.5]{HuLinShi2023}
\begin{align}
\label{eq:20240328}
\boldsymbol{n}\times(\cott\boldsymbol{\tau})\boldsymbol{n}&=\rot_f(\boldsymbol{n}\times(\curl(\sym\curl\boldsymbol{\tau}))^{\intercal}\Pi_f)=\rot_f(\tr_2^{\inc}(\sym\curl\boldsymbol{\tau}))\\
\notag
&=\rot_f(\tr_3^{\cott}(\boldsymbol{\tau})+\frac{1}{2}\div_f\div_f\tr_1^{\cott}(\boldsymbol{\tau}) I_f),\\
\notag
\boldsymbol{n}^{\intercal}(\cott\boldsymbol{\tau})\boldsymbol{n}&=-\div_f\div_f\tr_1^{\inc}(\sym\curl\boldsymbol{\tau})=-\div_f\div_f\tr_2^{\cott}(\boldsymbol{\tau}).
\end{align}
These two identities motivate the following trace identities on edges.

\begin{lemma}  
For a symmetric tensor $\boldsymbol{\tau}$ and a face $f$,
we have on edge $e\in\partial f$ that
\begin{align}
\label{eq:202403201}
\tr_1^{\div_f\div_f}(\tr_2^{\cott}(\boldsymbol{\tau}))&=-\tr_{e,1}^{\cott}(\boldsymbol{\tau}) + \partial_{\boldsymbol{t}_{f,e}}(\boldsymbol{n}_{f,e}^{\intercal}\boldsymbol{\tau}\boldsymbol{n}), \\
\label{eq:202403202}
\tr_2^{\div_f\div_f}(\tr_2^{\cott}(\boldsymbol{\tau}))&=\boldsymbol{n}_{f,e}\cdot\tr_{e,2}^{\cott}(\boldsymbol{\tau}) + \partial_{\boldsymbol{t}_e}^2(\boldsymbol{t}_{f,e}^{\intercal}\boldsymbol{\tau}\boldsymbol{n}), \\
\label{eq:202403203}
\boldsymbol{t}_e\cdot\tr_3^{\cott}(\boldsymbol{\tau})\cdot\boldsymbol{t}_e&=\boldsymbol{n}_{f}\cdot\tr_{e,2}^{\cott}(\boldsymbol{\tau}) - \frac{1}{2}\div_f\div_f\tr_1^{\cott}(\boldsymbol{\tau}), \\
\label{eq:202403204}
\boldsymbol{n}_{f,e}\cdot\tr_3^{\cott}(\boldsymbol{\tau})\cdot\boldsymbol{t}_{f,e}&=\tr_{e,3}^{\cott}(\boldsymbol{\tau})+\frac{1}{2}\partial_{\boldsymbol{t}_e}^2(\boldsymbol{n}^{\intercal}\boldsymbol{\tau}\boldsymbol{n} + 2\,\boldsymbol{t}_{f,e}\cdot\tr_1^{\cott}(\boldsymbol{\tau})\cdot\boldsymbol{n}_{f,e}) \\
\notag
&\quad - \partial_{\boldsymbol{t}_{f,e}}(\partial_{\boldsymbol{n}_{f,e}}(\boldsymbol{t}_{e}\cdot\tr_1^{\cott}(\boldsymbol{\tau})\cdot\boldsymbol{t}_{e})), \\
\label{eq:nfecottntr3cott}
\boldsymbol{n}_{f,e}^{\intercal}(\cott\boldsymbol{\tau})\boldsymbol{n}&=\boldsymbol{t}_{f,e}^{\intercal}\rot_f(\tr_3^{\cott}(\boldsymbol{\tau})+\frac{1}{2}\div_f\div_f\tr_1^{\cott}(\boldsymbol{\tau}) I_f).
\end{align}
\end{lemma}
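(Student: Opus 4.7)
The plan is to verify each of the five identities by direct expansion of the defining formulas of the trace operators, then reduce to the previously established identities \eqref{eq:20240319tr2inc}, \eqref{eq:20240319tr2cott}, \eqref{eq:trtrcott3}, and \eqref{eq:20240328}. Throughout, I work in the edge-adapted orthonormal frame $\{\boldsymbol{t}_e,\boldsymbol{n}_{f,e},\boldsymbol{n}_f\}$ with $\boldsymbol{t}_{f,e}=\boldsymbol{n}_f\times\boldsymbol{n}_{f,e}$, so that every cross product with $\boldsymbol n_f$ and every projection $\Pi_f$ reduces to a permutation of basis vectors and avoids abstract tensor manipulation.

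For \eqref{eq:202403201}, I would substitute $\tr_2^{\cott}(\boldsymbol{\tau})=\tr_1^{\inc}(\sym\curl\boldsymbol{\tau})-\sym(\curl_f(\Pi_f\boldsymbol{\tau}\boldsymbol{n}))$ from \eqref{eq:20240319tr2cott} and contract with $\boldsymbol{n}_{f,e}\otimes\boldsymbol{n}_{f,e}$. Since $\tr_1^{\inc}(\boldsymbol{\sigma})=\boldsymbol{n}\times\boldsymbol{\sigma}\times\boldsymbol{n}$ and $\boldsymbol{n}_{f,e}\times\boldsymbol{n}_f=\pm\boldsymbol{t}_e$, the first term collapses to $-\boldsymbol{t}_e^{\intercal}(\sym\curl\boldsymbol{\tau})\boldsymbol{t}_e=-\tr_{e,1}^{\cott}(\boldsymbol{\tau})$, and the surface-curl term produces the tangential-derivative correction $\partial_{\boldsymbol{t}_{f,e}}(\boldsymbol{n}_{f,e}^{\intercal}\boldsymbol{\tau}\boldsymbol{n})$. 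Identity \eqref{eq:202403202} uses the same substitution at one higher derivative order: I would compute $\partial_{\boldsymbol{t}_e}(\boldsymbol{t}_e^{\intercal}\tr_2^{\cott}(\boldsymbol{\tau})\boldsymbol{n}_{f,e})$ and $\boldsymbol{n}_{f,e}^{\intercal}\div_f\tr_2^{\cott}(\boldsymbol{\tau})$ separately, combine them, and recognize the outcome through the definition $\tr_{e,2}^{\cott}(\boldsymbol{\tau})=2\partial_{\boldsymbol{t}_e}(\sym\curl\boldsymbol{\tau})\boldsymbol{t}_e-\nabla\tr_{e,1}^{\cott}(\boldsymbol{\tau})$.

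For \eqref{eq:202403203} and \eqref{eq:202403204}, the starting point is \eqref{eq:trtrcott3}, namely $\tr_3^{\cott}(\boldsymbol{\tau})=\tr_2^{\inc}(\sym\curl\boldsymbol{\tau})-\tfrac{1}{2}\div_f\div_f\tr_1^{\cott}(\boldsymbol{\tau})I_f$, combined with \eqref{eq:20240319tr2inc} applied to $\sym\curl\boldsymbol{\tau}$. Contracting by $\boldsymbol{t}_e\otimes\boldsymbol{t}_e$ produces \eqref{eq:202403203}, because the $\nabla_f(\boldsymbol{n}\cdot\sym\curl\boldsymbol{\tau}\,\Pi_f)$ piece contributes $\partial_{\boldsymbol{t}_e}(\boldsymbol{n}\cdot\sym\curl\boldsymbol{\tau}\,\boldsymbol{t}_e)$, which matches $\boldsymbol{n}_f\cdot\tr_{e,2}^{\cott}(\boldsymbol{\tau})$ by definition of $\tr_{e,2}^{\cott}$. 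Contracting by $\boldsymbol{n}_{f,e}\otimes\boldsymbol{t}_{f,e}$ yields \eqref{eq:202403204} once the triple product $\boldsymbol{n}\times(\curl\sym\curl\boldsymbol{\tau})^{\intercal}\Pi_f$ is expanded componentwise. Finally, \eqref{eq:nfecottntr3cott} is read off directly from the first line of \eqref{eq:20240328}: since $\boldsymbol{n}\times(\cott\boldsymbol{\tau})\boldsymbol{n}$ is a vector in the plane $f$, its $\boldsymbol{t}_{f,e}$-component equals $\boldsymbol{n}_{f,e}^{\intercal}(\cott\boldsymbol{\tau})\boldsymbol{n}$, while the same component of the right-hand side is $\boldsymbol{t}_{f,e}^{\intercal}\rot_f(\tr_3^{\cott}(\boldsymbol{\tau})+\tfrac12\div_f\div_f\tr_1^{\cott}(\boldsymbol{\tau})I_f)$.

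The main obstacle is \eqref{eq:202403204}, which couples the third-order edge trace $\tr_{e,3}^{\cott}(\boldsymbol{\tau})$ with mixed second tangential/normal derivatives of $\boldsymbol{n}^{\intercal}\boldsymbol{\tau}\boldsymbol{n}$, $\boldsymbol{t}_e\cdot\tr_1^{\cott}(\boldsymbol{\tau})\cdot\boldsymbol{t}_e$, and $\boldsymbol{t}_{f,e}\cdot\tr_1^{\cott}(\boldsymbol{\tau})\cdot\boldsymbol{n}_{f,e}$, with signs that are highly sensitive to orientation. My plan is to expand $\sym\curl\boldsymbol{\tau}$ and $\nabla\times\sym\curl\boldsymbol{\tau}$ entry by entry in the frame $\{\boldsymbol{t}_e,\boldsymbol{n}_{f,e},\boldsymbol{n}_f\}$, isolate the combinations $\boldsymbol{t}_e\cdot\nabla\times(\sym\curl\boldsymbol{\tau})\cdot\boldsymbol{t}_e$ and $\partial_{\boldsymbol{t}_e}(\boldsymbol{t}_e\cdot\div\boldsymbol{\tau})$ forming $\tr_{e,3}^{\cott}$, and then verify cancellation of the remaining derivative terms using the symmetry of $\boldsymbol{\tau}$ together with the commutation of tangential derivatives on the smooth edge $e$.
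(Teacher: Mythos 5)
Your proposal is correct and follows essentially the same route as the paper's proof: substitute the established trace identities \eqref{eq:20240319tr2cott}, \eqref{eq:20240319tr2inc}, \eqref{eq:trtrcott3} and \eqref{eq:20240328} into the face traces, then contract with the edge frame $\{\boldsymbol{t}_e,\boldsymbol{n}_{f,e},\boldsymbol{n}_f\}$ and finish \eqref{eq:202403204} by a direct componentwise computation using the symmetry of $\boldsymbol{\tau}$. One small correction: in \eqref{eq:202403203} the identification with $\boldsymbol{n}_f\cdot\tr_{e,2}^{\cott}(\boldsymbol{\tau})$ requires \emph{both} terms of \eqref{eq:20240319tr2inc} — the curl term contributes $\partial_{\boldsymbol{t}_e}(\boldsymbol{n}_f^{\intercal}(\sym\curl\boldsymbol{\tau})\boldsymbol{t}_e)-\partial_{\boldsymbol{n}_f}(\boldsymbol{t}_e^{\intercal}(\sym\curl\boldsymbol{\tau})\boldsymbol{t}_e)$ in addition to the $\nabla_f$ piece — which your frame expansion will produce, but it is not the gradient piece alone as stated.
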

\begin{proof}
Let $\boldsymbol{\sigma}=\sym\curl\boldsymbol{\tau}$ and $\boldsymbol{\sigma}_f=\sym\curl_f(\Pi_f\boldsymbol{\tau}\boldsymbol{n})$ for simplicity.
First, the identity~\eqref{eq:202403201} follows from \eqref{eq:20240319tr2cott} and $\boldsymbol{n}_{f,e}^{\intercal}\boldsymbol{\sigma}_f\boldsymbol{n}_{f,e}=-\partial_{\boldsymbol{t}_{f,e}}(\boldsymbol{n}_{f,e}^{\intercal}\boldsymbol{\tau}\boldsymbol{n})$.
Since
\begin{align*}
\tr_2^{\div_f\div_f}(\tr_1^{\inc}(\boldsymbol{\sigma}))&=\partial_{\boldsymbol{t}_e}(\boldsymbol{n}_{f,e}\cdot\boldsymbol{\sigma}\cdot\boldsymbol{t}_{e})-(\boldsymbol{n}\times\nabla)\cdot\boldsymbol{\sigma}\cdot\boldsymbol{t}_{f,e}=\boldsymbol{n}_{f,e}\cdot\tr_{e,2}^{\cott}(\boldsymbol{\tau}), \\
\tr_2^{\div_f\div_f}(\boldsymbol{\sigma}_f) & = \partial_{\boldsymbol{t}_e}(\boldsymbol{t}_e\cdot\boldsymbol{\sigma}_f\cdot\boldsymbol{n}_{f,e}) -\frac{1}{2}\partial_{\boldsymbol{t}_{f,e}}(\div_f(\boldsymbol{\tau}\boldsymbol{n}))  = -\partial_{\boldsymbol{t}_e}^2(\boldsymbol{t}_{f,e}^{\intercal}\boldsymbol{\tau}\boldsymbol{n}),
\end{align*}
we acquire \eqref{eq:202403202} from \eqref{eq:20240319tr2cott}.

By \eqref{eq:20240319tr2inc},
\begin{equation*}
\tr_2^{\inc}(\boldsymbol{\sigma})\cdot\boldsymbol{t}_{f,e} = \boldsymbol{n}\times(\nabla\times\boldsymbol{\sigma})\boldsymbol{t}_{f,e}+\partial_{\boldsymbol{t}_{f,e}}(\Pi_f\boldsymbol{\sigma}\boldsymbol{n}).
\end{equation*}
Then
\begin{equation*}
\boldsymbol{t}_{e}\cdot\tr_2^{\inc}(\boldsymbol{\sigma})\cdot\boldsymbol{t}_{e} = \boldsymbol{n}_{f,e}^{\intercal}(\nabla\times\boldsymbol{\sigma})\boldsymbol{t}_{f,e}+\partial_{\boldsymbol{t}_{e}}(\boldsymbol{n}^{\intercal}\boldsymbol{\sigma}\boldsymbol{t}_{e})=\boldsymbol{n}_{f}\cdot\tr_{e,2}^{\cott}(\boldsymbol{\tau}),
\end{equation*}
\begin{equation*}
\boldsymbol{n}_{f,e}\cdot\tr_2^{\inc}(\boldsymbol{\sigma})\cdot\boldsymbol{t}_{f,e} = -\boldsymbol{t}_{e}^{\intercal}(\nabla\times\boldsymbol{\sigma})\boldsymbol{t}_{e}+\partial_{\boldsymbol{t}_{f,e}}(\boldsymbol{n}^{\intercal}\boldsymbol{\sigma}\boldsymbol{n}_{f,e}).
\end{equation*}
Hence, \eqref{eq:202403203} follows from \eqref{eq:trtrcott3}, and 
\begin{equation*}
\boldsymbol{n}_{f,e}\cdot\tr_3^{\cott}(\boldsymbol{\tau})\cdot\boldsymbol{t}_{f,e} =\tr_{e,3}^{\cott}(\boldsymbol{\tau})+\frac{1}{2}\partial_{\boldsymbol{t}_e}(\boldsymbol{t}_e\cdot\div\boldsymbol{\tau})+\partial_{\boldsymbol{t}_{f,e}}(\boldsymbol{n}^{\intercal}\boldsymbol{\sigma}\boldsymbol{n}_{f,e}).
\end{equation*}
By a direct computation,
\begin{align*}
\frac{1}{2}\partial_{\boldsymbol{t}_e}(\boldsymbol{t}_e\cdot\div\boldsymbol{\tau})+\partial_{\boldsymbol{t}_{f,e}}(\boldsymbol{n}^{\intercal}\boldsymbol{\sigma}\boldsymbol{n}_{f,e})&=\frac{1}{2}\partial_{\boldsymbol{t}_e}^2(\boldsymbol{n}^{\intercal}\boldsymbol{\tau}\boldsymbol{n}+\boldsymbol{t}_{f,e}^{\intercal}\boldsymbol{\tau}\boldsymbol{t}_{f,e}-\boldsymbol{n}_{f,e}^{\intercal}\boldsymbol{\tau}\boldsymbol{n}_{f,e}) \\
&\quad +\partial_{\boldsymbol{t}_{f,e}}(\partial_{\boldsymbol{n}_{f,e}}(\boldsymbol{n}_{f,e}^{\intercal}\boldsymbol{\tau}\boldsymbol{t}_{f,e})).
\end{align*}
Therefore,
the identity \eqref{eq:202403204} follows from $2\,\boldsymbol{t}_{f,e}\cdot\tr_1^{\cott}(\boldsymbol{\tau})\cdot\boldsymbol{n}_{f,e}=\boldsymbol{t}_{f,e}^{\intercal}\boldsymbol{\tau}\boldsymbol{t}_{f,e}-\boldsymbol{n}_{f,e}^{\intercal}\boldsymbol{\tau}\boldsymbol{n}_{f,e}$ and $\boldsymbol{t}_{e}\cdot\tr_1^{\cott}(\boldsymbol{\tau})\cdot\boldsymbol{t}_{e}=-\boldsymbol{n}_{f,e}^{\intercal}\boldsymbol{\tau}\boldsymbol{t}_{f,e}$. Finally, the identity \eqref{eq:nfecottntr3cott} holds from \eqref{eq:20240328}.
\end{proof}

The identities \eqref{eq:202403201}-\eqref{eq:202403204} have been proved in \cite[Lemma~7.5]{HuLinShi2023} for $\boldsymbol{\tau}$ satisfying $\boldsymbol{\tau}|_e=0$ and $\tr_1^{\cott}(\boldsymbol{\tau})=0$. Here, they hold for general symmetric tensors.

Next we consider the continuity of a piecewise smooth tensor function to be in the Sobolev space $H(\div{\div },\Omega; \mathbb{S}\cap \mathbb{T})$.

The trace $\tr^{\div\div}\bs \sigma$, as a distribution, is defined as the difference 
\begin{equation*}
\langle \tr^{\div\div}\bs \sigma, \tr^{\nabla^2} v \rangle_{\partial T}:= (\div\div\boldsymbol \sigma, v)_T - (\boldsymbol \sigma, \dev\hess v)_T.
\end{equation*}
We decompose $\tr^{\div\div}\bs \sigma$ and $\tr^{\dev\hess} v$ into two face-wise trace operators and one edge trace operator.

Applying the integration by part as Lemma 5.2 in~\cite{ChenHuang2022a}, we achieve the following Green's identity.
\begin{lemma}
For polytope $T$,
we have for any $\boldsymbol \sigma\in H^2(T; \mathbb S\cap \mathbb T)$ and $v\in H^2(T)$ that
\begin{align}
\label{eq:greenidentitydivdiv} 
 (\div\div\boldsymbol \sigma, v)_T&=(\boldsymbol \sigma, \dev\hess v)_T -\sum_{e\in\Delta_{1}(T)}(\tr_e^{\div\div}(\bs \sigma), v)_e \\
\notag 
& \quad - \sum_{f\in\partial T}\left[( \tr_1^{\div\div}(\bs \sigma), \partial_nv)_{f} -  ( \tr_2^{\div\div}(\bs \sigma), v)_f\right] , 
\end{align}
where $\tr_e^{\div\div}(\bs \sigma) = \sum_{f\in\partial T,e\in \partial f}\boldsymbol n_{f,e}^{\intercal}\boldsymbol \sigma \boldsymbol n_{\partial T}$.
\end{lemma}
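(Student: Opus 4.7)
\smallskip
\noindent\textbf{Plan of proof.}
The key observation is that the conformal assumption on $\boldsymbol{\sigma}$ lets us pass freely between $\hess v$ and $\dev\hess v$. Since $\boldsymbol{\sigma}\in\mathbb S\cap\mathbb T$ satisfies $\tr\boldsymbol{\sigma}=0$, for any $v\in H^2(T)$ we have
\begin{equation*}
(\boldsymbol{\sigma},\hess v)_T=(\boldsymbol{\sigma},\dev\hess v)_T + \tfrac{1}{3}(\tr\boldsymbol{\sigma},\tr\hess v)_T=(\boldsymbol{\sigma},\dev\hess v)_T.
\end{equation*}
Hence it suffices to establish the standard div-div Green's identity with $\dev\hess v$ replaced by $\hess v$ and then invoke this equality. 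My strategy is to reuse the integration-by-parts decomposition of Lemma 5.2 in~\cite{ChenHuang2022a}, specialized to the polytope $T$, and regroup the boundary contributions into the face and edge trace operators defined in the previous subsection.

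The plan is to proceed by two successive integrations by parts on $T$. First,
\begin{equation*}
(\div\div\boldsymbol{\sigma},v)_T = -(\div\boldsymbol{\sigma},\nabla v)_T + \sum_{f\in\partial T}(\boldsymbol{n}^{\intercal}\div\boldsymbol{\sigma},v)_f,
\end{equation*}
and then $-(\div\boldsymbol{\sigma},\nabla v)_T = (\boldsymbol{\sigma},\hess v)_T - \sum_{f}(\boldsymbol{\sigma}\boldsymbol{n},\nabla v)_f$. On each face $f$ I split $\nabla v = (\partial_n v)\boldsymbol{n} + \nabla_f v$, so that $(\boldsymbol{\sigma}\boldsymbol{n})\cdot\nabla v = \tr_1^{\div\div}(\boldsymbol{\sigma})\,\partial_n v + (\Pi_f\boldsymbol{\sigma}\boldsymbol{n})\cdot\nabla_f v$. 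The second tangential piece is then integrated by parts on the $2$-manifold $f$, producing a volume term $-\div_f(\boldsymbol{\sigma}\boldsymbol{n})\,v$ on $f$ and an edge contribution $\sum_{e\in\partial f}(\boldsymbol{n}_{f,e}\cdot\boldsymbol{\sigma}\boldsymbol{n},v)_e$. Collecting the two face-level $v$-terms reconstitutes $\tr_2^{\div\div}(\boldsymbol{\sigma}) = \boldsymbol{n}^{\intercal}\div\boldsymbol{\sigma} + \div_f(\boldsymbol{\sigma}\boldsymbol{n})$, and the face term involving $\partial_n v$ gives $\tr_1^{\div\div}(\boldsymbol{\sigma})$, yielding the face contributions in the claimed identity.

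The final step is reindexing the edge contributions. Each edge $e\in\Delta_1(T)$ is shared by exactly two faces of $\partial T$, so summing $\sum_{f\in\partial T}\sum_{e\in\partial f}(\boldsymbol{n}_{f,e}\cdot\boldsymbol{\sigma}\boldsymbol{n}_{\partial T},v)_e$ collapses to $\sum_{e\in\Delta_1(T)}(\tr_e^{\div\div}(\boldsymbol{\sigma}),v)_e$ with $\tr_e^{\div\div}(\boldsymbol{\sigma})=\sum_{f\supset e}\boldsymbol{n}_{f,e}^{\intercal}\boldsymbol{\sigma}\boldsymbol{n}_{\partial T}$. Substituting the equality $(\boldsymbol{\sigma},\hess v)_T = (\boldsymbol{\sigma},\dev\hess v)_T$ then gives \eqref{eq:greenidentitydivdiv}.

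The main obstacle I anticipate is purely bookkeeping rather than conceptual: one must make sure that the orientation conventions for $\boldsymbol{n}_{f,e}$ and $\boldsymbol{n}_{\partial T}$ on each face are consistent with the surface divergence theorem used on $f$, so that the edge terms add (rather than cancel or partly cancel) across the two faces meeting at $e$. Once the traceless reduction $\hess \to \dev\hess$ is invoked, all remaining manipulations are standard two-level integrations by parts on polygonal faces, precisely in the form of~\cite[Lemma 5.2]{ChenHuang2022a}.
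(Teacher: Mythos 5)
Your proposal is correct and follows essentially the same route as the paper, which simply invokes the integration-by-parts argument of Lemma 5.2 in \cite{ChenHuang2022a}: two successive integrations by parts (volume, then face-wise with the split $\nabla v=(\partial_n v)\boldsymbol n+\nabla_f v$), regrouping into $\tr_1^{\div\div}$, $\tr_2^{\div\div}$ and the edge terms, together with the observation that $(\boldsymbol\sigma,\hess v)_T=(\boldsymbol\sigma,\dev\hess v)_T$ since $\tr\boldsymbol\sigma=0$. Your sign and orientation bookkeeping for $\boldsymbol n_{f,e}$ is consistent with the paper's conventions, so no gap remains.
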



Applying the same argument as Proposition 3.6 in~\cite{FuehrerHeuerNiemi2019}, we can present an equivalent condition for a piecewise smooth tensor belonging to $H(\div\div,\Omega;\mathbb S\cap\mathbb T)$.
\begin{lemma}\label{lm:divdivconforming}
Let $\bs \sigma \in L^2(\Omega;\mathbb S\cap\mathbb T)$ and $\bs \sigma|_T\in H^{2}(T;\mathbb S\cap\mathbb T)$ for each $T\in \mathcal T_h$. Then $\bs \sigma \in H(\div\div,\Omega;\mathbb S\cap\mathbb T)$ if and only if 
\begin{enumerate}
\item $[\tr_1^{\div\div}(\bs \sigma)]_f = 0$ and $[\tr_2^{\div\div}(\bs \sigma)]_f = 0$ for all $f\in \  \mathring{\mathcal F}_h$;
\smallskip
\item $[ \tr_e^{\div\div}(\bs \sigma)]|_e = 0$ for all $e\in \ \mathring{\mathcal E}_h$. 
\end{enumerate} 
Here $[v]|_f$ means the jump of $v$ across $f$, and
\begin{equation*}
[ \tr_e^{\div\div}(\bs \sigma)]|_e := \sum_{T\in \omega_e} \sum_{f\in\partial T, e\in\partial f} (\boldsymbol n_{f,e}^{\intercal}\boldsymbol \sigma \boldsymbol n_{\partial T})|_e.
\end{equation*}
\end{lemma}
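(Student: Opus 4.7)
The plan is to follow the pattern of Proposition~3.6 in \cite{FuehrerHeuerNiemi2019}, using the elementwise Green's identity \eqref{eq:greenidentitydivdiv} as the sole bridge between the distributional and the piecewise $\div\div$. Summing \eqref{eq:greenidentitydivdiv} over all $T \in \mathcal{T}_h$ against $v \in C_c^\infty(\Omega)$, each interior face $f = T^+ \cap T^-$ is visited twice with opposite outward normals, so the face contributions collapse into the jump pairings $([\tr_1^{\div\div}(\bs \sigma)]_f, \partial_n v)_f$ and $([\tr_2^{\div\div}(\bs \sigma)]_f, v)_f$, using that $v$ and $\partial_n v$ are single-valued across $f$. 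Each interior edge $e$ accumulates into $([\tr_e^{\div\div}(\bs \sigma)]|_e, v)_e$ by the very definition of the edge trace. Boundary faces and edges drop out because $v$ has compact support.

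For sufficiency, assume both jump conditions. All boundary terms in the summed identity vanish, leaving
\begin{equation*}
\sum_{T \in \mathcal{T}_h}(\bs \sigma, \dev \hess v)_T = \sum_{T \in \mathcal{T}_h}(\div \div(\bs \sigma|_T), v)_T, \qquad v \in C_c^\infty(\Omega),
\end{equation*}
which identifies the distributional $\div \div \bs \sigma$ with the piecewise $\div \div$. Since the latter is in $L^2(\Omega)$ by the elementwise $H^2$ hypothesis, $\bs \sigma \in H(\div \div, \Omega; \mathbb S \cap \mathbb T)$.

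For necessity, assume $\bs \sigma \in H(\div \div, \Omega; \mathbb S \cap \mathbb T)$. Subtracting the distributional identity $\langle \div \div \bs \sigma, v \rangle = \sum_T (\bs \sigma, \dev \hess v)_T$ from the summed Green's identity produces
\begin{equation*}
\sum_{f \in \mathring{\mathcal F}_h} \bigl[([\tr_1^{\div\div}(\bs \sigma)]_f, \partial_n v)_f - ([\tr_2^{\div\div}(\bs \sigma)]_f, v)_f\bigr] + \sum_{e \in \mathring{\mathcal E}_h} ([\tr_e^{\div\div}(\bs \sigma)]|_e, v)_e = 0
\end{equation*}
for every $v \in C_c^\infty(\Omega)$. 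To extract condition~(1), fix an interior face $f_0$ and take $v$ supported in a thin slab around the relative interior of $f_0$ that misses every other face and edge of $\mathcal T_h$. Via a standard tensor-product lifting $v(x) = g(x') h(\delta(x))$, where $x'$ is tangential and $\delta$ is signed normal distance to $f_0$, the pair $(v|_{f_0}, \partial_n v|_{f_0})$ can be prescribed as arbitrary independent $C_c^\infty$ data on the relative interior of $f_0$; choosing $v|_{f_0} = 0$ with $\partial_n v|_{f_0}$ free yields $[\tr_1^{\div\div}(\bs \sigma)]_{f_0} = 0$, and choosing $\partial_n v|_{f_0} = 0$ with $v|_{f_0}$ free yields $[\tr_2^{\div\div}(\bs \sigma)]_{f_0} = 0$.

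Condition~(2) then follows cleanly: with (1) in hand, every face sum above is identically zero. Fixing an interior edge $e_0$ and choosing $v$ supported in a thin tubular neighborhood of the relative interior of $e_0$ that avoids all other edges (possible since they lie at positive distance from the interior of $e_0$), the identity reduces to $([\tr_{e_0}^{\div\div}(\bs \sigma)]|_{e_0}, v)_{e_0} = 0$ for arbitrary $v|_{e_0} \in C_c^\infty$ of the relative interior of $e_0$, giving~(2). The main obstacle, and the reason the face argument must precede the edge argument, is precisely this entanglement: a test function localized near an edge necessarily touches every incident face, so the face jumps must be eliminated first before the edge jump can be isolated. Everything else reduces to routine smooth localization, and the conformal constraint $\bs \sigma \in \mathbb S \cap \mathbb T$ plays no structural role here since the test function $v$ is scalar-valued.
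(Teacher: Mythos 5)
Your proposal is correct and follows essentially the same route the paper intends: it is the argument of Proposition~3.6 in the cited Führer--Heuer--Niemi work, i.e.\ summing the Green's identity \eqref{eq:greenidentitydivdiv} over elements against $v\in C_c^\infty(\Omega)$, collapsing face and edge contributions into the stated jumps, and then localizing test functions first at faces (prescribing $v|_f$ and $\partial_n v|_f$ independently) and then at edges. The only step left implicit is the identification of the distributional $\div\div\bs\sigma$ with the elementwise $\div\div(\bs\sigma|_T)$ in the necessity direction, which follows at once by testing with $v$ compactly supported in a single element, so there is no genuine gap.
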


Recall the sufficient condition in~\cite{HuLinShi2023} for a piecewise smooth tensor belonging to $H(\cott,\Omega;\mathbb S\cap\mathbb T)$.
\begin{lemma}[Theorem 4.1 in \cite{HuLinShi2023}]\label{lm:cottconforming}
Let $\bs \sigma \in L^2(\Omega;\mathbb S\cap\mathbb T)$ and $\bs \sigma|_T\in H^{3}(T;\mathbb S\cap\mathbb T)$ for each $T\in \mathcal T_h$. Assume 
\begin{enumerate}
\item $[\tr_1^{\cott}(\bs \sigma)]_f = 0$, $[\tr_2^{\cott}(\bs \sigma)]_f = 0$, and $[\tr_3^{\cott}(\bs \sigma)]_f = 0$ for all $f\in \  \mathring{\mathcal F}_h$;
\smallskip
\item $\bs \sigma|_e$ is continuous for all $e\in \ \mathring{\mathcal E}_h$. 
\end{enumerate} 
Then $\bs \sigma \in H(\cott,\Omega;\mathbb S\cap\mathbb T)$ 
\end{lemma}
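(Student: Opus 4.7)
The plan is to verify the distributional definition directly via element-wise Green's identities and cancellation of boundary contributions. Define $\boldsymbol{\mu} \in L^2(\Omega;\mathbb{S}\cap\mathbb{T})$ piecewise by $\boldsymbol{\mu}|_T := \cott(\boldsymbol{\sigma}|_T)$; this is well-defined since $\boldsymbol{\sigma}|_T \in H^3(T;\mathbb{S}\cap\mathbb{T})$. It then suffices to prove that, for every test field $\boldsymbol{\phi} \in C_c^\infty(\Omega;\mathbb{S}\cap\mathbb{T})$,
\begin{equation*}
\int_\Omega \boldsymbol{\mu}:\boldsymbol{\phi}\,dx = \int_\Omega \boldsymbol{\sigma}:\cott\boldsymbol{\phi}\,dx,
\end{equation*}
which identifies $\boldsymbol{\mu}$ as the distributional $\cott\boldsymbol{\sigma}$ (using the formal self-adjointness of $\cott$ on $\mathbb{S}\cap\mathbb{T}$-valued fields as the middle operator of the conformal elasticity BGG complex).

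The first major step is to establish an element-wise Green's identity for $\cott$ analogous to \eqref{eq:greenidentitydivdiv}. Using the factorization $\cott\boldsymbol{\sigma}=\inc(\sym\curl\boldsymbol{\sigma})=\sym\curl(\inc\boldsymbol{\sigma})$ from \eqref{eq:incandcott}, two successive integrations by parts on each $T$ (one for the outer operator, one for the inner) yield an identity of the form
\begin{equation*}
\int_T \cott\boldsymbol{\sigma}:\boldsymbol{\phi}\,dx = \int_T \boldsymbol{\sigma}:\cott\boldsymbol{\phi}\,dx + \sum_{f\in\partial T} \mathrm{F}_f(\boldsymbol{\sigma},\boldsymbol{\phi}) + \sum_{e\in\Delta_1(T)} \mathrm{E}_e(\boldsymbol{\sigma},\boldsymbol{\phi}),
\end{equation*}
where the face functional $\mathrm{F}_f$ is a linear combination of $\tr_1^{\cott}(\boldsymbol{\sigma})$, $\tr_2^{\cott}(\boldsymbol{\sigma})$, $\tr_3^{\cott}(\boldsymbol{\sigma})$ paired with suitable traces of $\boldsymbol{\phi}$ on $f$, and the edge functional $\mathrm{E}_e$ involves only $\boldsymbol{\sigma}|_e$ and its tangential derivatives paired against $\boldsymbol{\phi}|_e$. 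The identities \eqref{eq:trtrcott3}, \eqref{eq:20240319tr2cott}, \eqref{eq:20240328}, and \eqref{eq:202403201}--\eqref{eq:nfecottntr3cott} are precisely what is required to rewrite the intermediate boundary quantities that appear after integration by parts (such as $\tr_1^{\inc}(\sym\curl\boldsymbol{\sigma})$, $\tr_2^{\inc}(\sym\curl\boldsymbol{\sigma})$, and $\div_f\div_f$ pieces) in this canonical form.

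Sum this identity over $T\in\mathcal T_h$. Because $\boldsymbol{\phi}$ has compact support in $\Omega$, only interior faces and interior edges contribute. On each $f\in\mathring{\mathcal F}_h$, the two adjacent elements carry opposite outward normals, and hypothesis (1) — continuity of $\tr_1^{\cott},\tr_2^{\cott},\tr_3^{\cott}$ across $f$ — combined with single-valuedness of the traces of $\boldsymbol{\phi}$ forces the two $\mathrm{F}_f$ contributions to cancel in pairs. For each interior edge $e\in\mathring{\mathcal E}_h$, one sums $\mathrm{E}_e$ over $T \in \omega_e$; hypothesis (2) makes $\boldsymbol{\sigma}|_e$ single-valued, and $\boldsymbol{\phi}$ is smooth across $e$, so the sum around the edge star telescopes and vanishes. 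Hence all boundary terms drop out, and the desired identity is obtained, proving $\boldsymbol{\sigma}\in H(\cott,\Omega;\mathbb{S}\cap\mathbb{T})$.

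The principal obstacle is the Green's identity on a single polytope. Because $\cott$ is third order, two-step integration by parts produces a proliferation of intermediate boundary quantities involving surface divergences, normal derivatives, and tangential derivatives along edges. The delicate task is to reorganize these so that face contributions isolate precisely the three canonical $\cott$-traces, while residual surface terms — such as the $\sym(\curl_f(\Pi_f\boldsymbol{\tau}\boldsymbol{n}))$ correction in \eqref{eq:20240319tr2cott} and the $\tfrac{1}{2}\div_f\div_f\tr_1^{\cott}(\boldsymbol{\tau})I_f$ term in \eqref{eq:trtrcott3} — are absorbed either into surface divergences (which integrate to edge terms) or into $\mathrm{E}_e$. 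The bookkeeping at each edge is the most technically demanding component: one must rotate around $e$ through the faces of $\omega_e$, match the $\boldsymbol{n}_{f,e}$ and $\boldsymbol{t}_{f,e}$ conventions, and invoke \eqref{eq:202403201}--\eqref{eq:202403204} to recognize the edge sum as vanishing whenever $\boldsymbol{\sigma}|_e$ is continuous.
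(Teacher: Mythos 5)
The paper itself offers no proof of this lemma: it is quoted, with attribution, as Theorem~4.1 of \cite{HuLinShi2023}, so the only meaningful comparison is with that cited argument, whose strategy (identify the piecewise $\cott$ with the distributional one via an elementwise Green's identity and cancel the inter-element contributions) is exactly the one you outline. Your framing is sound in principle: $\cott$ is formally self-adjoint on $\mathbb S\cap\mathbb T$-valued fields, so it suffices to show $\sum_T(\cott\boldsymbol\sigma,\boldsymbol\phi)_T=\sum_T(\boldsymbol\sigma,\cott\boldsymbol\phi)_T$ for all $\boldsymbol\phi\in C_c^\infty(\Omega;\mathbb S\cap\mathbb T)$, and face contributions should cancel under hypothesis (1) (after also checking the parity of each trace pairing under $\boldsymbol n\to-\boldsymbol n$).

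The genuine gap is that the entire technical content of the statement is the asserted Green's identity $(\cott\boldsymbol\sigma,\boldsymbol\phi)_T=(\boldsymbol\sigma,\cott\boldsymbol\phi)_T+\sum_{f}\mathrm F_f+\sum_{e}\mathrm E_e$ with $\mathrm F_f$ built precisely from $\tr_1^{\cott},\tr_2^{\cott},\tr_3^{\cott}$ and, crucially, with $\mathrm E_e$ depending \emph{only} on $\boldsymbol\sigma|_e$ and its tangential derivatives; you postulate this structure rather than derive it. It is not automatic: integrating the third-order operator by parts twice and then performing the surface integrations by parts needed to absorb the corrections $\sym(\curl_f(\Pi_f\boldsymbol\sigma\boldsymbol n))$ from \eqref{eq:20240319tr2cott} and $\tfrac12\div_f\div_f\tr_1^{\cott}(\boldsymbol\sigma)I_f$ from \eqref{eq:trtrcott3} generically leaves edge terms containing first-order derivatives of $\boldsymbol\sigma$ transversal to $e$ (of the kind visible in \eqref{eq:202403204}), which continuity of $\boldsymbol\sigma|_e$ alone does not control. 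One must show that every such term either assembles into a jump of one of the three face traces (killed by hypothesis (1)) or reduces, through identities of the type \eqref{eq:202403201}--\eqref{eq:202403204} and \eqref{eq:nfecottntr3cott} with consistent $\boldsymbol n_{f,e},\boldsymbol t_{f,e}$ orientations around $\omega_e$, to tangential derivatives of single-valued edge quantities. That verification is precisely what Theorem~4.1 of \cite{HuLinShi2023} supplies; as written, your text is a plan that defers this essential step, so it does not yet constitute a proof.
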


\section{Bubble polynomial complexes}\label{sec:bubblecomplex}

In this section, we will apply the BGG framework to derive the bubble conformal Hessian complex 
\begin{align}\label{polybubbleCHcomplex3d}
0\xrightarrow{\subset} \mathbb B_{k+3}(\boldsymbol{r}_0)&\xrightarrow{\dev\hess} \mathbb B_{k+1}^{\sym\curl}(\boldsymbol{r}_1;\mathbb S\cap\mathbb T)\xrightarrow{\sym\curl} \mathbb B_{k}^{\div\div}(\boldsymbol{r}_2;\mathbb S\cap\mathbb T) \\
\notag
&\xrightarrow{\div{\div}} \mathbb B_{k-2}(\boldsymbol{r}_3)/{\rm CH}\xrightarrow{}0
\end{align}
with $k\geq 2r_2^{\texttt{v}}+4$ and smoothness vectors
\begin{equation}\label{eq:rCHbubble}
\boldsymbol r_1 \geq (1,0,-1)^{\intercal},
\quad
\boldsymbol{r}_0= \boldsymbol r_1+2, \quad
 \boldsymbol{r}_2=\boldsymbol{r}_1\ominus1,
\quad
 \boldsymbol{r}_3=\boldsymbol{r}_2\ominus2,
\end{equation}
and the bubble conformal elasticity complex
\begin{align}\label{polybubbleCEcomplex3d}
0\xrightarrow{\subset}\mathbb B_{k+4}(\boldsymbol{r}_0;\mathbb R^3) &\xrightarrow{\dev\defm} \mathbb B_{k+3}^{\cott}(\boldsymbol{r}_1;\mathbb{S}\cap\mathbb{T}) \xrightarrow{\cott}  \mathbb B^{\div}_{k}(\boldsymbol{r}_2; \mathbb S\cap \mathbb T) \\
&\xrightarrow{\div} \mathbb B_{k-1}(\boldsymbol{r}_3;\mathbb R^3)/{\rm CK}\rightarrow 0 \notag
\end{align}
with $k\geq 2r_2^{\texttt{v}}+2$ and smoothness vectors 
\begin{equation}\label{eq:rCE}
\boldsymbol{r}_0\geq (7,1,0)^{\intercal},  \quad 
\boldsymbol{r}_1=\boldsymbol{r}_0-1\geq (6,0,-1)^{\intercal},\quad \boldsymbol{r}_2=\boldsymbol{r}_1\ominus3,\quad \boldsymbol{r}_3=\boldsymbol{r}_2\ominus1.
\end{equation}
The bubble spaces involved in complexes \eqref{polybubbleCHcomplex3d} and \eqref{polybubbleCEcomplex3d}
are defined in Section~\ref{subsec:bubblespaces}.

\subsection{Bubble polynomial complexes in two dimensions}
For edge $e$, let $r^{\texttt{v}}\geq -1$ and $k\geq 2r^{\texttt{v}}+1$, define the edge bubble polynomial space
\begin{equation*}
\mathbb B_k(e; r^{\texttt{v}}) : = \{ u\in\mathbb P_k(e): \, \partial_t^ju \text{ vanishes at all vertices of } e \text{ for } j=0,\ldots, r^{\texttt{v}} \},
\end{equation*}
where $\partial_t$ is the tangential derivative along $e$. This bubble space can be easily characterized as
$\mathbb B_{k}(e; r^{\texttt{v}}) =  b_e^{r^{\texttt{v}}+1} \mathbb P_{k-2(r^{\texttt{v}}+1)}(e),$ where $b_e\in \mathbb P_2(e)$ vanishes at two vertices of $e$. 

For triangle $f$ and a smoothness vector $\boldsymbol r= (r^{\texttt{v}}, r^e)^{\intercal}$, define face bubble polynomial space
\begin{align*}
\mathbb B_{k}(f; (r^{\texttt{v}},r^e)^{\intercal})&:=\{u\in\mathbb P_k(f): \nabla_f^ju \textrm{ vanishes at all vertices of $f$ for $j=0,\ldots, r^{\texttt{v}}$}, \\
&\qquad\qquad\qquad \textrm{ and $\nabla_f^ju$ vanishes on all edges of $f$ for $j=0,\ldots, r^{e}$}\},
\end{align*}
where $\nabla_f$ is the surface gradient on $f$. All polynomials defined on $e$ and $f$ can be naturally extended to the whole tetrahedron using the Bernstein representation in the barycentric coordinate. 

For an $f\in\Delta_2(T)$ and a smooth vector $\boldsymbol{r}$, 
introduce the following bubble spaces on face $f$:
\begin{align*}
\mathbb B^{\div_f}_{k}(\boldsymbol{r}; \mathbb R^2):={}&\{\boldsymbol{\tau}\in \mathbb B_{k}(f;\boldsymbol{r})\otimes \mathbb R^2: \boldsymbol{\tau}\boldsymbol{n}|_{\partial f}=0\},\\
\mathbb B^{\rot_f}_{k}(\boldsymbol{r}; \mathbb R^2):={}&\{\boldsymbol{\tau}\in \mathbb B_{k}(f;\boldsymbol{r})\otimes \mathbb R^2: \boldsymbol{\tau}\boldsymbol{t}|_{\partial f}=0\},\\
\mathbb B^{\div_f\div_f}_{k}(\boldsymbol{r}; \mathbb S\cap\mathbb T):={}&\{\boldsymbol{\tau}\in \mathbb B_{k}(f;\boldsymbol{r})\otimes (\mathbb S(f)\cap\mathbb T(f)): (\boldsymbol n^{\intercal}\boldsymbol \tau\boldsymbol n)|_{\partial f}=0, \\
&\quad\quad\;\;\tr_2^{\div_f\div_f}(\bs \tau)=0,\; \boldsymbol \tau(\texttt{v})=0 \textrm{ for } \texttt{v}\in\Delta_{0}(f)\},
\end{align*}
where
\begin{equation*}
\mathbb S(f):=\textrm{span}\{\boldsymbol{t}_1\boldsymbol{t}_1^{\intercal}, \boldsymbol{t}_2\boldsymbol{t}_2^{\intercal}, \sym(\boldsymbol{t}_1\otimes\boldsymbol{t}_2)\},
\quad \mathbb T(f):=\textrm{span}\{\boldsymbol{t}_1\boldsymbol{t}_2^{\intercal}, \boldsymbol{t}_2\boldsymbol{t}_1^{\intercal}, \boldsymbol{t}_1\boldsymbol{t}_1^{\intercal}-\boldsymbol{t}_2\boldsymbol{t}_2^{\intercal}\}.
\end{equation*}

We first recall the bubble de Rham complex \eqref{eq:femderhambubblecomplex2d} proved in \cite[Lemma A.1]{ChenHuang2025}.
\begin{lemma}
Let smoothness vectors $\boldsymbol{r}_1\geq-1$ and $\boldsymbol{r}_2=\boldsymbol{r}_1\ominus1$. Let $f\in\Delta_2(T)$ and $k\geq\max\{2r_1^{\texttt{v}}+1,0\}$.
Then the bubble de Rham complex
\begin{equation}\label{eq:femderhambubblecomplex2d}
0\xrightarrow{\subset}\mathbb B_{k+2}(f;\boldsymbol{r}_1+1)\xrightarrow{\curl_f}\mathbb B^{\div_f}_{k+1}(\boldsymbol{r}_1;\mathbb R^2)\xrightarrow{\div_f}\mathbb B_{k}(f;\boldsymbol{r}_2)/\mathbb R\to0
\end{equation}
is exact.
\end{lemma}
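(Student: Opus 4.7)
The plan is to verify exactness of \eqref{eq:femderhambubblecomplex2d} in four steps: well-definedness of both differentials between the bubble spaces, the complex property, injectivity at the leftmost space, and surjectivity at the rightmost space, with exactness in the middle following from a dimension count.

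For well-definedness, applying $\curl_f$ or $\div_f$ drops Hermite vanishing orders at vertices and along edges by one, matching the shift $\boldsymbol{r}_1+1\mapsto\boldsymbol{r}_1\mapsto\boldsymbol{r}_2=\boldsymbol{r}_1\ominus 1$ in \eqref{eq:femderhambubblecomplex2d}. The normal trace condition $(\curl_f\varphi)\cdot\boldsymbol{n}_{f,e}|_e=\pm\partial_{\boldsymbol{t}_e}\varphi|_e=0$ is automatic on every edge because $\varphi|_{\partial f}=0$, which is guaranteed by $r_1^e+1\geq 0$. The complex property $\div_f\circ\curl_f=0$ is the standard two-dimensional identity. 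Injectivity at the left is immediate: $\curl_f\varphi=0$ forces $\varphi$ to be constant on $f$, and the vertex vanishing then forces $\varphi=0$.

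The main step is exactness at the middle and right. One clean route is to embed \eqref{eq:femderhambubblecomplex2d} into the classical exact polynomial de Rham complex on $f$,
\begin{equation*}
\mathbb{R}\xrightarrow{\subset}\mathbb P_{k+2}(f)\xrightarrow{\curl_f}\mathbb P_{k+1}(f;\mathbb R^2)\xrightarrow{\div_f}\mathbb P_k(f)\to 0.
\end{equation*}
Given $g\in\mathbb B_k(f;\boldsymbol{r}_2)$ with $\int_f g=0$, choose any $\boldsymbol{\tau}_0\in\mathbb P_{k+1}(f;\mathbb R^2)$ with $\div_f\boldsymbol{\tau}_0=g$ and then subtract $\curl_f\psi$ for a suitable $\psi\in\mathbb P_{k+2}(f)$ so that $\boldsymbol{\tau}:=\boldsymbol{\tau}_0-\curl_f\psi$ has vanishing normal trace on $\partial f$ and the prescribed vertex and edge Hermite vanishing of order $\boldsymbol{r}_1$. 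Such a $\psi$ exists because the boundary obstructions of $\boldsymbol{\tau}_0$ are compatible with the bubble vanishing of $g$ (checked edge-by-edge by integration by parts), and because the degree bound $k\geq 2r_1^{\texttt{v}}+1$ provides enough room in $\mathbb B_{k+2}(f;\boldsymbol{r}_1+1)$ to match the required boundary data. A less constructive alternative, once well-definedness, the complex property, and injectivity are in hand, is to verify the single dimension identity
\begin{equation*}
\dim\mathbb B_{k+2}(f;\boldsymbol{r}_1+1) - \dim\mathbb B^{\div_f}_{k+1}(\boldsymbol{r}_1;\mathbb R^2) + \dim\mathbb B_k(f;\boldsymbol{r}_2) = 1,
\end{equation*}
which upgrades the partial exactness to full exactness.

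The hard part is this surjectivity step: constructing $\psi$ that simultaneously removes the vertex, edge, and normal-trace obstructions of $\boldsymbol{\tau}_0$ without disturbing $\div_f\boldsymbol{\tau}_0=g$. The compatibility of these obstructions and the availability of enough Bernstein-type degrees of freedom to enforce them is exactly what the polynomial degree hypothesis $k\geq 2r_1^{\texttt{v}}+1$ guarantees; the dimension-count alternative sidesteps the explicit construction at the cost of a combinatorial identity on bubble-space dimensions.
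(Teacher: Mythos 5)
The paper does not prove this lemma at all: it is recalled verbatim from \cite[Lemma A.1]{ChenHuang2025}, so there is no internal argument to compare with, and your proposal has to stand on its own. As it stands, it does not. The routine parts (well-definedness of the two maps, $\div_f\circ\curl_f=0$, injectivity on the left) are fine, but you never actually establish exactness at either the middle or the right slot. In the ``constructive'' route you posit a $\psi\in\mathbb P_{k+2}(f)$ such that $\boldsymbol\tau_0-\curl_f\psi$ acquires vanishing normal trace on $\partial f$ together with the full vertex and edge Hermite vanishing of order $\boldsymbol r_1$; but this existence claim is essentially the statement being proved (surjectivity of $\div_f$ from the bubble space), and it is supported only by the phrases ``the obstructions are compatible'' and ``enough room,'' with no identification of the obstructions, no count of the boundary degrees of freedom of $\psi$ available after fixing $\div_f$, and no argument that the Hermite data of $\boldsymbol\tau_0$ up to order $r_1^{\texttt{v}}$ at the vertices can be matched by a single scalar potential. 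You yourself flag this as ``the hard part,'' which is an accurate description of a gap, not a proof.

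The fallback is also flawed as stated. With only injectivity at the left and the complex property in hand, the identity $\dim\mathbb B_{k+2}(f;\boldsymbol{r}_1+1)-\dim\mathbb B^{\div_f}_{k+1}(\boldsymbol{r}_1;\mathbb R^2)+\dim\mathbb B_{k}(f;\boldsymbol{r}_2)=1$ does \emph{not} upgrade to full exactness: it only shows that exactness in the middle and surjectivity on the right are equivalent. (The complex $0\to\{0\}\to\mathbb R\xrightarrow{\;0\;}\mathbb R\to0$ satisfies injectivity, the complex property, and the alternating-sum identity, yet is not exact.) One of the two exactness statements must still be proved directly; the natural easy one is the middle: given $\boldsymbol\tau\in\mathbb B^{\div_f}_{k+1}(\boldsymbol{r}_1;\mathbb R^2)$ with $\div_f\boldsymbol\tau=0$, write $\boldsymbol\tau=\curl_f\psi$ with $\psi\in\mathbb P_{k+2}(f)$ by the polynomial de Rham complex on $f$, observe that the vanishing normal trace gives $\partial_t\psi|_{\partial f}=0$ so that $\psi$ may be normalized to vanish on $\partial f$, and transfer the vanishing of $\nabla_f^{j}(\curl_f\psi)$ at vertices and edges into vanishing of $\nabla_f^{j+1}\psi$, which places $\psi\in\mathbb B_{k+2}(f;\boldsymbol{r}_1+1)$. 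Only after that does the dimension identity---which must itself be verified by explicitly counting the three bubble spaces, and this is precisely where $k\geq\max\{2r_1^{\texttt{v}}+1,0\}$ and $\boldsymbol{r}_2=\boldsymbol{r}_1\ominus1$ enter---deliver the surjectivity of $\div_f$ onto $\mathbb B_{k}(f;\boldsymbol{r}_2)/\mathbb R$. Until you carry out one exactness step and the dimension count (or, alternatively, make the correction-by-$\psi$ argument explicit), the proposal remains an outline that concedes the essential step rather than a proof.
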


With the aid of the bubble de Rham complex \eqref{eq:femderhambubblecomplex2d}, we drive the bubble conformal div\,div complex \eqref{eq:femCdivdivbubblecomplex2d}.
\begin{lemma}
Let smoothness vectors $\boldsymbol{r}_0\geq(2,1)^{\intercal}$, $\boldsymbol r_1=\boldsymbol r_0-2\geq(0,-1)^{\intercal}$ and $\boldsymbol{r}_2=\boldsymbol{r}_1\ominus1$. Let $f\in\Delta_2(T)$ and $k\geq2r_2^{\texttt{v}}+4$.
Then the bubble conformal div\,div complex 
\begin{equation}\label{eq:femCdivdivbubblecomplex2d}
\resizebox{.925\hsize}{!}{$
\mathbb B_{k+3}(f;\boldsymbol{r}_0)\xrightarrow{\sym\curl_f\grad_f}\mathbb B^{\div_f\div_f}_{k+1}(\boldsymbol{r}_1;\mathbb S\cap \mathbb T)\xrightarrow{\div_f\div_f}\mathbb B_{k-1}(f;\boldsymbol{r}_2\ominus1)/{\rm CH}(f)\to0
$}
\end{equation}
is exact,
where
$
{\rm CH}(f):=\{(\boldsymbol{v}\cdot\boldsymbol{n})|_f: \boldsymbol{v}\in{\rm CK}\}=\mathbb P_1(f)\oplus {\rm span}\{|\Pi_f\boldsymbol{x}|^2\}.$
\end{lemma}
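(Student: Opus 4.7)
The plan is to adapt to the plane the discrete BGG strategy used in three dimensions to derive \eqref{polybubbleCHcomplex3d}. In two dimensions, a symmetric matrix splits as $\mathbb S=(\mathbb S\cap\mathbb T)\oplus\mathbb R\,I_f$ via the trace, and $\sym\curl_f\grad_f u$ is (up to a rotation) the deviatoric Hessian $\dev_f\hess_f u$; its kernel is precisely $\text{CH}(f)=\mathbb P_1(f)\oplus\text{span}\{|\Pi_f\boldsymbol x|^2\}$. This algebraic structure is exactly what the BGG framework needs to convert a planar symmetric $\div_f\div_f$ complex into its conformal counterpart.

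I would arrange the bubble de Rham complex \eqref{eq:femderhambubblecomplex2d} together with a planar bubble $\div_f\div_f$ complex valued in $\mathbb S$ (the 2D analog of the bubble $\div\div$ complex used for the construction of \eqref{polybubbleCHcomplex3d}) into a two-row BGG diagram whose vertical connecting maps are the trace $\tr:\mathbb S\to\mathbb R$ at the matrix level and an identification between the two intermediate vector-valued spaces. Applying the BGG framework then eliminates the trace component, merges the two leftmost scalar potentials via the extra derivative picked up through the connecting map, and produces exactly \eqref{eq:femCdivdivbubblecomplex2d}: the middle space becomes $\mathbb B^{\div_f\div_f}_{k+1}(\boldsymbol r_1;\mathbb S\cap\mathbb T)$; the left end becomes $\mathbb B_{k+3}(f;\boldsymbol r_0)$ because the relation $\boldsymbol r_0=\boldsymbol r_1+2$ absorbs the two additional derivatives; and the right-end quotient is by $\text{CH}(f)$, since the $\mathbb P_1(f)$ quotient of the symmetric $\div_f\div_f$ complex combines with the $\mathbb R$-quotient of the de Rham complex transported through the connecting map to give the four-dimensional space $\text{CH}(f)$. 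The chain-complex identity $\div_f\div_f\circ\sym\curl_f\grad_f\equiv 0$ then follows from $\rot_f\grad_f\equiv 0$ and the BGG commutation rules.

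Exactness at each stage is inherited from the exactness of the two input bubble complexes via the standard BGG framework, which I would verify level by level; as a consistency check, the alternating dimension sum matches the polynomial 2D conformal $\div\div$ complex (the planar analog of~\eqref{polyconformalHesscomplex3d}) and yields Euler characteristic $\dim\text{CH}(f)=4$. The main obstacle is the smoothness-vector bookkeeping: the relations $\boldsymbol r_0=\boldsymbol r_1+2$, $\boldsymbol r_2=\boldsymbol r_1\ominus 1$, and the degree bound $k\geq 2r_2^{\texttt v}+4$ are calibrated precisely so that every induced operator stays within the bubble spaces and the two input complexes are nontrivial. Tracking the smoothness vectors through each edge of the BGG diagram, and confirming that the boundary-degree-of-freedom contributions of the three bubble spaces in \eqref{eq:femCdivdivbubblecomplex2d} cancel consistently, is the most technical part of the argument.
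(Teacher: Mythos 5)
Your proposal takes a genuinely different route from the paper (which verifies the complex property and the exactness at the middle slot directly, and then establishes surjectivity of $\div_f\div_f$ by the dimension identity \eqref{dimdivdivconformalbubble2d}, computed in four cases using \eqref{eq:femderhambubblecomplex2d} and explicit bubble-space dimension formulas such as \eqref{eq:20240317}), but as it stands it has a genuine gap. The central missing ingredient is the second row of your proposed BGG diagram: an exact two-dimensional bubble $\div_f\div_f$ complex valued in $\mathbb S$ with the matching smoothness vectors. No such complex is available in the paper (only the three-dimensional one \eqref{eq:fembubbledivdivcomplex} is quoted), and proving its exactness for bubble spaces with the boundary conditions built into $\mathbb B^{\div_f\div_f}_{k+1}(\boldsymbol r_1;\mathbb S)$ is essentially of the same difficulty as the lemma you are trying to prove; invoking it as "the 2D analog" is circular in effect, not a reduction.

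Even granting that input, the BGG reduction is not automatic at the bubble level: unlike the case of full polynomial or Sobolev complexes, the connecting maps between bubble spaces must be shown to be onto (respectively into) the correct constrained spaces, which is exactly why the paper, in the three-dimensional analogue \eqref{polybubbleCHcomplex3d}, had to prove the trace surjectivity \eqref{eq:trsymcurlTbubble} and the identity \eqref{eq:20240310} before running the diagram \eqref{eq:BGGdivdivST}. You do not identify, let alone verify, the corresponding planar facts. Finally, the admissible range $\boldsymbol r_1\geq(0,-1)^{\intercal}$ includes the degenerate cases $r_1^e=-1$ and $r_1^{\texttt v}\in\{0,1\}$, where the characterization of $\mathbb B^{\div_f\div_f}_{k+1}(\boldsymbol r_1;\mathbb S\cap\mathbb T)$ changes and where the paper needs a separate explicit dimension computation (for $\boldsymbol r_1=(1,-1)^{\intercal}$) together with inclusion arguments to reduce the $r_1^e=-1$ cases; a generic BGG argument would implicitly require stability hypotheses on $\boldsymbol r_1$ that fail precisely there. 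Your closing acknowledgment that the smoothness bookkeeping is "the most technical part" points at, but does not close, these gaps; the consistency check on the Euler characteristic does not substitute for the surjectivity and middle-exactness proofs.
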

\begin{proof}
It is easy to verify that \eqref{eq:femCdivdivbubblecomplex2d} is a complex, and $\mathbb B^{\div_f\div_f}_{k+1}(\boldsymbol{r}_1;\mathbb S\cap \mathbb T)\cap\ker(\div_f\div_f)=\sym\curl_f\grad_f\mathbb B_{k+3}(f;\boldsymbol{r}_0)$. 
We will finish the proof of the exactness of complex \eqref{eq:femCdivdivbubblecomplex2d} by checking the dimension identity 
\begin{equation}\label{dimdivdivconformalbubble2d}
\dim\mathbb B^{\div_f\div_f}_{k+1}(\boldsymbol{r}_1;\mathbb S\cap \mathbb T)-\dim\mathbb B_{k+3}(f;\boldsymbol{r}_0)=\dim\mathbb B_{k-1}(f;\boldsymbol{r}_2\ominus1)-4.
\end{equation}

\step 1  First consider case $\boldsymbol{r}_1\geq(2,0)^{\intercal}$.
Since $\boldsymbol{r}_1\geq(2,0)^{\intercal}$, we have
\begin{equation*}
\mathbb B^{\div_f\div_f}_{k+1}(\boldsymbol{r}_1;\mathbb S\cap \mathbb T)=\{\boldsymbol{\tau}\in\mathbb B_{k+1}(\boldsymbol{r}_1;\mathbb S\cap \mathbb T):\partial_n(\boldsymbol{n}^{\intercal}\boldsymbol{\tau}\boldsymbol{n})|_{\partial f}=0\}.
\end{equation*}
This means
\begin{equation}\label{eq:20240317}
\dim\mathbb B^{\div_f\div_f}_{k+1}(\boldsymbol{r}_1;\mathbb S\cap \mathbb T)=2\dim\mathbb B_{k+1}(f;\boldsymbol{r}_1)-3[r_2^e=-1]\dim\mathbb B_{k}(e;  r_2^{\texttt{v}}).
\end{equation}
On the other side, apply bubble complex \eqref{eq:femderhambubblecomplex2d} twice to get
\begin{align*}
\dim\mathbb B_{k+3}(f;\boldsymbol{r}_0)&=2\dim\mathbb B_{k+2}(f;\boldsymbol{r}_0-1)-\dim\mathbb B_{k+1}(f;\boldsymbol{r}_1)+1 \\
&=3\dim\mathbb B_{k+1}(f;\boldsymbol{r}_1)-2\dim\mathbb B_{k}(f;\boldsymbol{r}_2)+3.
\end{align*}
Combining the last two equations yields
\begin{equation*}
\dim\mathbb B^{\div_f\div_f}_{k+1}(\boldsymbol{r}_1;\mathbb S\cap \mathbb T)-\dim\mathbb B_{k+3}(f;\boldsymbol{r}_0)=\dim\mathbb B^{\div_f}_{k}(f;\boldsymbol{r}_2)-\dim\mathbb B_{k+1}(f;\boldsymbol{r}_1)-3.
\end{equation*}
Employ bubble complex \eqref{eq:femderhambubblecomplex2d} again to acquire \eqref{dimdivdivconformalbubble2d}.

\step 2 Then consider case $\boldsymbol{r}_1\geq(2,-1)^{\intercal}$ with $r_1^e=-1$.
As $(\boldsymbol{r}_1)_+\geq(2,0)^{\intercal}$, we have $\div_f\div_f\mathbb B^{\div_f\div_f}_{k+1}((\boldsymbol{r}_1)_+;\mathbb S\cap \mathbb T)=\mathbb B_{k-1}(f;\boldsymbol{r}_2\ominus1)/{\rm CH}(f)$. Noting that 
\begin{equation*}
\mathbb B^{\div_f\div_f}_{k+1}((\boldsymbol{r}_1)_+;\mathbb S\cap \mathbb T)\subseteq \mathbb B^{\div_f\div_f}_{k+1}(\boldsymbol{r}_1;\mathbb S\cap \mathbb T),
\end{equation*}
it holds $\div_f\div_f\mathbb B^{\div_f\div_f}_{k+1}(\boldsymbol{r}_1;\mathbb S\cap \mathbb T)=\mathbb B_{k-1}(f;\boldsymbol{r}_2\ominus1)/{\rm CH}(f)$.

\step 3 Next consider case $\boldsymbol{r}_1=(1,-1)^{\intercal}$. In this case, $\boldsymbol{r}_0=(3,1)^{\intercal}$ and $\boldsymbol{r}_2\ominus1=-1$. We have
\begin{align*}
\dim\mathbb B_{k+3}(f;\boldsymbol{r}_0)&=\dim\mathbb P_{k+3}(f)-6k-9=\frac{1}{2}(k^2-3k+2), \\
\dim\mathbb B^{\div_f\div_f}_{k+1}(\boldsymbol{r}_1;\mathbb S\cap \mathbb T)&=2\dim\mathbb P_{k+1}(f)-6k-9=k^2-k-3.
\end{align*}
Hence, \eqref{dimdivdivconformalbubble2d} is true.

\step 4 Finally, consider case $\boldsymbol{r}_1=(0,-1)^{\intercal}$.
Following the argument in Step 2, we can use the result in Step 3 to arrive at $\div_f\div_f\mathbb B^{\div_f\div_f}_{k+1}(\boldsymbol{r}_1;\mathbb S\cap \mathbb T)=\mathbb B_{k-1}(f;\boldsymbol{r}_2\ominus1)/{\rm CH}(f)$.
\end{proof}

\subsection{Bubble functions and smooth finite elements in three dimensions}\label{subsec:bubblespaces}
For tetrahedron $T$ and a smoothness vector $\boldsymbol r= (r^{\texttt{v}}, r^e, r^f)^{\intercal}$, define bubble polynomial space
\begin{align*}
\mathbb B_{k}(T;\boldsymbol r):=\{u\in\mathbb P_k(T):&\, \nabla^ju \textrm{ vanishes at all vertices of $T$ for $j=0,\ldots, r^{\texttt{v}}$}, \\
& \textrm{ $\nabla^ju$ vanishes on all edges of $T$ for $j=0,\ldots, r^{e}$}, \\
& \textrm{ and $\nabla^ju$ vanishes on all faces of $T$ for $j=0,\ldots, r^{f}$}\}.
\end{align*}
Notice that when $r^f = -1$, the bubble function may not vanish on the boundary of $T$. 
Precise characterization of bubble polynomial spaces $\mathbb B_{k}(f;\boldsymbol r)$ and $\mathbb B_{k}(T;\boldsymbol r)$ can be given by decompositions of simplicial lattice points; see~\cite{ChenHuang2024a,ChenHuang2024} for details. 

To simplify notation, for a three-dimensional smoothness vector $\boldsymbol r = (r^{\texttt{v}}, r^e, r^f)^{\intercal}$, $\mathbb B_{k}(f;\boldsymbol r) := \mathbb B_{k}(f; (r^{\texttt{v}}, r^e)^{\intercal})$ is the face bubble using the restriction of $\boldsymbol r$ on $f$. Similarly, $\mathbb B_{k}(e;\boldsymbol r) := \mathbb B_{k}(e; r^{\texttt{v}})$. 

For a vector space $V$, we abbreviate $V\otimes \mathbb R^3$ as $V^3$. Define bubble spaces
\begin{align*}
 \mathbb B^{\curl}_{k}(T; \boldsymbol{r}):={}&\{\boldsymbol{v}\in \mathbb B^{3}_{k}(T; \boldsymbol{r}): \boldsymbol{v}\times \boldsymbol{n}|_{\partial T}=\boldsymbol{0}\},\\
 \mathbb B^{\div}_{k}(T; \boldsymbol{r}):={}&\{\boldsymbol{v}\in \mathbb B^{3}_{k}(T; \boldsymbol{r}): \boldsymbol{v}\cdot\boldsymbol{n}|_{\partial T}=0\}.
\end{align*}
Hereafter, $T$ will be omitted in the notation, i.e., $ \mathbb B_{k}(\boldsymbol r) = \mathbb B_{k}(T;\boldsymbol r)$. 
When $r^f\geq 0$, functions in $ \mathbb B_{k}(\boldsymbol{r})$ vanish on $\partial T$, thus $\mathbb B^{\curl}_{k}(\boldsymbol{r}) = \mathbb B^{\div}_{k}(\boldsymbol{r}) =  \mathbb B^{3}_{k}(\boldsymbol{r})$. When $r^f = -1$, $\mathbb B^{3}_{k}(\boldsymbol{r}_+)\subset  \mathbb B^{\dd}_{k}(\boldsymbol{r}) \subset \mathbb B^{3}_{k}(\boldsymbol{r})$ for $\dd =\curl, \div$ as only tangential or normal direction vanishes respectively. Precise characterization of bubble spaces $ \mathbb B^{\curl}_{k}(\boldsymbol{r})$ and $ \mathbb B^{\div}_{k}(\boldsymbol{r})$ can be also found in~\cite{ChenHuang2024}.

For a tensor space $\mathbb X$ with $\mathbb X=\mathbb M, \mathbb S, \mathbb T, \mathbb S\cap\mathbb T$,  define bubble spaces
\begin{align*}
\mathbb B^{\div}_{k}(\boldsymbol{r}; \mathbb X):={}&\{\boldsymbol{\tau}\in \mathbb B_{k}(\boldsymbol{r}; \mathbb X): \boldsymbol{\tau}\boldsymbol{n}|_{\partial T}=0\},\\
\mathbb B^{\curl}_{k}(\boldsymbol{r};\mathbb X):={}&\{\boldsymbol{\tau}\in\mathbb B_{k}(\boldsymbol{r}; \mathbb X): \boldsymbol{\tau}\times\boldsymbol{n}|_{\partial T}=0\},\\
\mathbb B^{\div\div}_{k}(\boldsymbol{r}; \mathbb X):={}&\{\boldsymbol{\tau}\in \mathbb B_{k}(\boldsymbol{r}; \mathbb X): (\boldsymbol n^{\intercal}\boldsymbol \tau\boldsymbol n)|_{\partial T}=0, \tr_2^{\div\div}(\bs \tau)=0, \\
&\qquad\qquad\qquad\quad\;\; (\boldsymbol n_i^{\intercal}\boldsymbol \tau\boldsymbol n_j)|_{e}=0 \textrm{ for } e\in\Delta_{1}(T) \textrm{ and }i,j=1,2\},\\
\mathbb B^{\sym\curl}_{k}(\boldsymbol{r}; \mathbb X):={}&\{\boldsymbol{\tau}\in \mathbb B_{k}(\boldsymbol{r}; \mathbb X): (\boldsymbol{n}\times\sym(\boldsymbol{\tau}\times\boldsymbol{n})\times\boldsymbol{n})|_{\partial T}=0, \\
&\qquad\qquad\qquad\qquad\qquad\qquad\;\; (\boldsymbol{n}\cdot\boldsymbol{\tau}\times\boldsymbol{n})|_{\partial T}=0\},\\
\mathbb B^{\inc}_{k}(\boldsymbol{r};\mathbb X):={}&\{\boldsymbol{\tau}\in\mathbb B_{k}(\boldsymbol{r}; \mathbb X): (\boldsymbol{n}\times\boldsymbol{\tau}\times\boldsymbol{n})|_{\partial T}=0, \tr_2^{\inc}(\boldsymbol{\tau})=0\}, \\
\mathbb B^{\cott}_{k}(\boldsymbol{r};\mathbb S\cap\mathbb T):={}&\{\boldsymbol{\tau}\in\mathbb B_{k}(\boldsymbol{r}; \mathbb S\cap\mathbb T): \tr_1^{\cott}(\boldsymbol{\tau})=0, \tr_2^{\cott}(\boldsymbol{\tau})=0, \tr_3^{\cott}(\boldsymbol{\tau})=0\}.
\end{align*}
Precise characterization of bubble spaces $\mathbb B^{\div}_{k}(\boldsymbol{r}; \mathbb S)$ and $\mathbb B^{\div}_{k}(\boldsymbol{r}; \mathbb T)$ is shown in~\cite{ChenHuang2024}.



 Let $\boldsymbol r= (r^{\texttt{v}}, r^e, r^f)^{\intercal}$ be a smoothness vector, and nonnegative integer $k\geq 2r^{\texttt{v}}+1$. The shape function space $\mathbb P_{k}(T)$ is determined by the DoFs
\begin{subequations}\label{eq:Cr3D}
\begin{align}
\label{eq:C13d0}
\nabla^j u (\texttt{v}), & \quad j=0,1,\ldots,r^{\texttt{v}}, \texttt{v}\in \Delta_0(T), \\
\label{eq:C13d1}
\int_e \frac{\partial^{j} u}{\partial n_1^{i}\partial n_2^{j-i}} \, q \dd s, & \quad q \in \mathbb B_{k-j}(e; r^{\texttt{v}} - j), 0\leq i\leq j\leq r^{e}, e\in \Delta_1(T), \\
\label{eq:C13d2}
\int_f \frac{\partial^{j} u}{\partial n_f^{j}} \, q \dd S, & \quad q \in \mathbb B_{k-j}(f;\boldsymbol r-j), 0\leq j\leq r^{f},  f\in \Delta_2(T), \\
\label{eq:C13d3}
\int_T u \, q \dx, & \quad q \in \mathbb B_k(T;\boldsymbol r).
\end{align} 
\end{subequations}
By Lemma~3.11 in \cite{ChenHuang2024}, the number of DoF \eqref{eq:C13d0} at one vertex is ${r^{\texttt{v}}+3\choose3}=\frac{1}{6}(r^{\texttt{v}}+1)(r^{\texttt{v}}+2)(r^{\texttt{v}}+3)$.
The number of DoF \eqref{eq:C13d1} on one edge is $\frac{1}{6}(r^e+1)(r^e+2)(3k+2r^e-6r^{\texttt{v}}-3)$.
As $b_e\geq 0$, the test function space in~\eqref{eq:C13d1} can be changed to $q\in \mathbb P_{k - 2(r^{\texttt{v}}+1) + j}(e)$. 


When considering a mesh $ \mathcal T_h $, the DoFs defined by~\eqref{eq:Cr3D} define the global $ C^{r^f} $-continuous finite element space as follows:
\begin{align}\label{eq:scalarfem}
\mathbb V_k(\mathcal T_h; \boldsymbol{r}) = \{u\in C^{r^f}(\Omega): & \, u|_T\in\mathbb P_k(T)\textrm{ for all } T\in\mathcal T_h, \\
\notag
&\qquad\textrm{ and all the DoFs~\eqref{eq:Cr3D} are single-valued}\}.
\end{align}
The reference to the mesh $ \mathcal T_h $ will subsequently be omitted in the notation.

\subsection{Existing bubble complexes in three dimensions}
We recall several bubble complexes in three dimensions in \cite{ChenHuang2025}.


We say that the smoothness vector $\boldsymbol{r}$ satisfies the div-vector stability condition if it satisfies 
\begin{equation*}
\begin{cases}
 r^f\geq 0, & \,r^e\geq2r^f+1\geq 1, \quad r^{\texttt{v}}\geq 2r^e\geq 2,\\
 r^f = -1, & 
\begin{cases}
 r^e \geq 1, & r^{\texttt{v}}\geq 2r^e\geq 2,\\
 r^e \in \{0, -1\}, & r^{\texttt{v}}\geq 2r^e + 1.
\end{cases}
\end{cases}
\end{equation*}

We begin by revisiting the bubble de Rham complex \eqref{eq:femderhambubblecomplex} established in \cite[Lemma A.2]{ChenHuang2025}.

\begin{lemma}
Let 
$$
\boldsymbol r_0 \geq 0, \quad \bs r_1 = \bs r_0 - 1, \quad \boldsymbol r_2=\boldsymbol r_1\ominus1, \quad \boldsymbol r_3=\boldsymbol r_2\ominus1
$$ 
be smoothness vectors.
Assume $\boldsymbol{r}_2$ satisfies the div-vector stability condition, and $k\geq\max\{2r_{2}^{\texttt{v}}, 1\}$.
Then the bubble de Rham complex 
\begin{equation}\label{eq:femderhambubblecomplex}
0\xrightarrow{\subset}\mathbb B_{k+2}(\boldsymbol{r}_0)\xrightarrow{\grad}\mathbb B^{\curl}_{k+1}(\boldsymbol{r}_1)\xrightarrow{\curl}\mathbb B^{\div}_{k}(\boldsymbol{r}_2)\xrightarrow{\div}\mathbb B_{k-1}(\boldsymbol{r}_3)/\mathbb R\to0
\end{equation}
is exact.
\end{lemma}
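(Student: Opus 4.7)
The plan is to establish exactness of \eqref{eq:femderhambubblecomplex} by combining the polynomial de Rham complex on $T$ with the two-dimensional bubble de Rham complex \eqref{eq:femderhambubblecomplex2d} on each face of $T$, and closing the argument with a dimension count.

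First I would verify that \eqref{eq:femderhambubblecomplex} is a complex. Since $\bs r_1=\bs r_0-1$, $\bs r_2=\bs r_1\ominus 1$, and $\bs r_3=\bs r_2\ominus 1$, applying $\grad$, $\curl$, or $\div$ drops the smoothness by exactly the amount encoded in the next space, and the boundary-trace identities $(\grad u)\times\bs n|_{\partial T}=\nabla_f u|_{\partial T}$ together with $(\curl\bs v)\cdot\bs n|_{\partial T}=\rot_f(\bs v\times\bs n)|_{\partial T}$ propagate the prescribed vanishing from $u$ or $\bs v$ to the image. Injectivity of $\grad$ on $\mathbb B_{k+2}(\bs r_0)$ is immediate because nonzero constants are not bubbles.

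For exactness at $\mathbb B^{\curl}_{k+1}(\bs r_1)$, a $\curl$-free element $\bs v$ equals $\grad u$ for a unique-up-to-constant $u\in\mathbb P_{k+2}(T)$ by the polynomial de Rham complex; integrating the vanishing conditions on $\bs v$ along tangential directions from vertices out to edges and then to faces, and normalizing the additive constant at one vertex, yields $u\in\mathbb B_{k+2}(\bs r_0)$.

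The main obstacle is exactness at $\mathbb B^{\div}_{k}(\bs r_2)$. Given a divergence-free $\bs w$ in that space, the polynomial de Rham complex produces some $\bs v\in\mathbb P_{k+1}(T;\mathbb R^3)$ with $\curl\bs v=\bs w$. On each face $f$ the tangential trace $\bs v\times\bs n|_f$ is $\curl_f$-free, since $\bs w\cdot\bs n|_f=0$, so by the 2D complex \eqref{eq:femderhambubblecomplex2d} it is of the form $\nabla_f\phi_f$ for some face bubble scalar $\phi_f$. Lifting $\phi_f$ into $T$ with the smoothness prescribed by $\bs r_0$, which is possible precisely under the div-vector stability condition on $\bs r_2$, subtracting its gradient from $\bs v$, and iterating the same procedure along edges and then at vertices, produces a representative of $\bs v$ lying in $\mathbb B^{\curl}_{k+1}(\bs r_1)$. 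Surjectivity of the last $\div$ then follows from a dimension count: once exactness has been established at the three preceding positions, the alternating sum of the dimensions of \eqref{eq:femderhambubblecomplex} is forced to equal the corresponding alternating sum for the polynomial de Rham complex minus one (for the constant), which identifies $\dim\div\mathbb B^{\div}_{k}(\bs r_2)$ with $\dim\bigl(\mathbb B_{k-1}(\bs r_3)/\mathbb R\bigr)$. The delicate point throughout is the bookkeeping of trace smoothness in the face-by-face peeling argument; the remaining steps are routine.
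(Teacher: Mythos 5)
First, note that the paper does not prove this lemma at all: it is quoted verbatim from \cite[Lemma A.2]{ChenHuang2025}, so your argument has to stand on its own, and as written it has two genuine gaps precisely at the places where the cited proof does its real work. The first is exactness at $\mathbb B^{\div}_{k}(\boldsymbol r_2)$. You may not apply the two-dimensional bubble complex \eqref{eq:femderhambubblecomplex2d} to the tangential trace of the raw polynomial potential $\boldsymbol v\in\mathbb P_{k+1}(T;\mathbb R^3)$: that trace satisfies no vanishing conditions whatsoever, so it is not an element of $\mathbb B^{\div_f}_{k+1}(\boldsymbol r_1;\mathbb R^2)$ (nor of its rotated analogue); all that $\boldsymbol w\cdot\boldsymbol n|_f=0$ gives is $\rot_f(\Pi_f\boldsymbol v)=0$, hence $\Pi_f\boldsymbol v=\nabla_f\phi_f$ for some $\phi_f\in\mathbb P_{k+2}(f)$ with no boundary behavior. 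The actual difficulty is then to glue and extend the face potentials $\phi_f$ to a single scalar on $T$ whose gradient corrects $\boldsymbol v$ \emph{simultaneously} so that the corrected potential acquires the full jet conditions of $\boldsymbol r_1$ at vertices, edges and faces. This is not routine bookkeeping: $\curl\boldsymbol v=\boldsymbol w$ only controls the skew-symmetric part of $\nabla\boldsymbol v$ at vertices and edges, while a gauge correction $\grad\phi$ only changes the symmetric part $\hess\phi$, so one must carry out a genuine compatibility analysis of the jets, and this is exactly where the div-vector stability condition (and the lower bound on $k$, which you never use) enters. Asserting that the lifting "is possible precisely under the div-vector stability condition" is to assume the lemma's main content.

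The second gap is the surjectivity of $\div$. Exactness at the first three positions gives $\dim\div\mathbb B^{\div}_{k}(\boldsymbol r_2)=\dim\mathbb B^{\div}_{k}(\boldsymbol r_2)-\dim\mathbb B^{\curl}_{k+1}(\boldsymbol r_1)+\dim\mathbb B_{k+2}(\boldsymbol r_0)$, but nothing "forces" this to equal $\dim\bigl(\mathbb B_{k-1}(\boldsymbol r_3)/\mathbb R\bigr)$, and it is not the alternating sum of the full polynomial de Rham complex minus one: the bubble spaces are proper subspaces, and the required dimension identity is a separate statement that must be verified using the explicit characterizations of $\mathbb B_{k}(\boldsymbol r)$, $\mathbb B^{\curl}_{k+1}(\boldsymbol r_1)$ and $\mathbb B^{\div}_{k}(\boldsymbol r_2)$ via the geometric (simplicial-lattice) decompositions of \cite{ChenHuang2024}, or else replaced by an independent proof of the div stability $\div\mathbb B^{\div}_{k}(\boldsymbol r_2)=\mathbb B_{k-1}(\boldsymbol r_3)/\mathbb R$ (this stability is itself the key nontrivial ingredient, proved in \cite{ChenHuang2024,ChenHuang2025} under the div-vector stability condition, and it is how the rest of the present paper uses the lemma, e.g.\ in the proofs of \eqref{eq:trsymcurlTbubble} and Lemma \ref{lem:divdivontobubble}). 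The earlier steps of your outline (complex property, injectivity of $\grad$, exactness at $\mathbb B^{\curl}_{k+1}(\boldsymbol r_1)$ by integrating the vanishing jets and normalizing the constant) are fine, but the two steps above are the theorem, and they are missing.
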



We say that the smoothness vector $\boldsymbol{r}$ satisfies the div-tensor stability condition if it satisfies one of the following conditions:
\begin{enumerate}
\item $r^{\texttt{v}} \geq 2r^e + 1$ and $r^e \geq 2(r^f + 1)$, or
\item $r^{\texttt{v}} \geq 0$ and $r^e = r^f = -1$.
\end{enumerate}


We then present the bubble Hessian complex \eqref{eq:fembubblehessiancomplex} and the bubble elasticity complex~\eqref{eq:fembubbleelasticitycomplex}; see Lemmas A.4 and A.6 in \cite{ChenHuang2025}.

\begin{lemma}
Let smoothness vectors
$$
\boldsymbol r_0 \geq (4,2,1)^{\intercal},\quad \boldsymbol r_1 = \boldsymbol r_0 -2,\quad \boldsymbol r_2=\boldsymbol r_1\ominus1,\quad \boldsymbol r_3= \boldsymbol r_2\ominus1.$$ 
Assume $\boldsymbol{r}_2$ satisfies the div-tensor stability condition, and $k\geq\max\{2r_2^{\texttt{v}}+2,3\}$.
Then the bubble Hessian complex
\begin{equation}\label{eq:fembubblehessiancomplex}
\resizebox{.925\hsize}{!}{$
0\xrightarrow{\subset} \mathbb B_{k+2}(\boldsymbol{r}_0)\xrightarrow{\hess}\mathbb B^{\curl}_{k}(\boldsymbol{r}_1;\mathbb S)\xrightarrow{\curl} \mathbb B^{\div}_{k-1}(\boldsymbol{r}_2;\mathbb T) \xrightarrow{\div} \mathbb B_{k-2}(\boldsymbol{r}_3;\mathbb R^3)/{\rm RT}\xrightarrow{}0
$}
\end{equation}
is exact. 
\end{lemma}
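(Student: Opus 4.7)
My plan is to derive the bubble Hessian complex via the discrete BGG framework, applied to two bubble de Rham complexes playing the role of the two de Rham rows in the classical BGG construction of the polynomial Hessian complex. This is the bubble analogue of the last two rows of the polynomial BGG diagram displayed earlier in the excerpt, with the polynomial spaces replaced by their bubble counterparts from \eqref{eq:femderhambubblecomplex}.

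First, I would assemble a BGG diagram whose bottom row is the scalar bubble de Rham complex \eqref{eq:femderhambubblecomplex} starting at $\mathbb B_{k+2}(\boldsymbol r_0)$, and whose top row is a vector-valued bubble de Rham complex (the scalar one tensored with $\mathbb R^3$ and applied row-wise to $\mathbb M$-valued tensors) shifted up by one in polynomial degree, with smoothness vector $\boldsymbol r_0 - 1$. The two rows are joined by the diagonal connecting maps $\iota:u\mapsto uI$, $\mskw$, and ${\rm id}$, in exact parallel to the polynomial Hessian diagram in the excerpt. The hypotheses $\boldsymbol r_0\geq(4,2,1)^{\intercal}$, $\boldsymbol r_1=\boldsymbol r_0-2$, $\boldsymbol r_2=\boldsymbol r_1\ominus1$, $\boldsymbol r_3=\boldsymbol r_2\ominus1$, the div-tensor stability condition on $\boldsymbol r_2$, and $k\geq\max\{2r_2^{\texttt{v}}+2,3\}$ are exactly what is needed to ensure that both input rows satisfy the hypotheses of \eqref{eq:femderhambubblecomplex} and are therefore exact.

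Next, I would verify that the connecting maps restrict compatibly to the bubble subspaces, since the BGG algebra is purely formal once this compatibility is in place. The key facts are: $\iota(u)=uI$ sends a scalar bubble into the symmetric bubble slot (since $uI$ is symmetric and vanishes on $\partial T$ whenever $u$ does); $\mskw$ maps a curl-bubble vector into a div-bubble skew tensor, with the pointwise inverse $\vskw$ supplying the complementary $\mathbb T$-valued piece via the decompositions $\mathbb M=\mathbb S\oplus\mathbb K$ and $\mathbb M=\mathbb T\oplus{\rm span}\{I\}$; and the terminal identity on $\mathbb R^3$-valued bubbles is trivial. The BGG output then yields the asserted complex with $\mathbb B_{k+2}(\boldsymbol r_0)$ at the left, $\mathbb B^{\curl}_k(\boldsymbol r_1;\mathbb S)$ and $\mathbb B^{\div}_{k-1}(\boldsymbol r_2;\mathbb T)$ in the middle, and $\mathbb B_{k-2}(\boldsymbol r_3;\mathbb R^3)/{\rm RT}$ on the right; injectivity of $\hess$ on $\mathbb B_{k+2}(\boldsymbol r_0)$ is immediate because $\ker(\hess)=\mathbb P_1$ cannot lie in a bubble space with $r_0^f\geq 1$.

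The main obstacle I expect is bookkeeping the $\ominus$ smoothness operation and the boundary trace conditions under the connecting maps, in particular verifying that after $\mskw$ the resulting skew-tensor bubble carries exactly the smoothness and trace conditions of the $\mathbb M$-valued slot in the top row, and that the BGG output reproduces the $\mathbb S$- and $\mathbb T$-valued bubble slots with the correct trace conditions on $\partial T$. I would close the argument with a dimension count using the explicit bubble-space dimensions from \cite{ChenHuang2024}, the exactness of the two input bubble de Rham rows, and the polynomial Hessian complex; the alternating sum across the four bubble slots should equal $\dim{\rm RT}=4$, as forced by the ${\rm RT}$ quotient on the right.
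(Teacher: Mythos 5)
The paper itself does not prove this lemma; it is recalled from \cite{ChenHuang2025} (Lemma A.4 there), so your argument has to stand on its own — and it has a concrete structural error. Your BGG diagram is the wrong one: with the scalar bubble de Rham complex on the bottom, a vector/matrix-valued bubble de Rham row on top shifted \emph{up} one degree, and connecting maps $\iota$, $\mskw$, ${\rm id}$, the BGG output starts at the top row's first space and is a $\dev\grad$--$\sym\curl$--$\div\div$ complex, i.e.\ a bubble div\,div complex of the type \eqref{eq:fembubbledivdivcomplex}, with the $\mathbb T$-valued slot preceding the $\mathbb S$-valued one — not the Hessian complex \eqref{eq:fembubblehessiancomplex}, whose spaces run scalar, $\mathbb S$, $\mathbb T$, vector with operators $\hess$, $\curl$, $\div$. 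The maps $\iota,\mskw,{\rm id}$ in the paper's polynomial diagram connect the scalar de Rham row to the Hessian row, which is an \emph{input} of that diagram, not its output. To produce the Hessian complex by BGG you must instead pair the scalar de Rham row (supplying the leading space $\mathbb B_{k+2}(\boldsymbol r_0)$) with a matrix-valued de Rham row one degree \emph{lower}, connected by ${\rm id}$, $2\vskw$ and $\tr$, whose pointwise kernels are $\mathbb S$ and $\mathbb T$.

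Even after fixing the orientation, the step you describe as ``purely formal once compatibility is in place'' is precisely the content of the lemma. The middle spaces $\mathbb B^{\curl}_{k}(\boldsymbol r_1;\mathbb S)$ and $\mathbb B^{\div}_{k-1}(\boldsymbol r_2;\mathbb T)$ couple the matrix rows through the symmetry and tracelessness constraints, so they are not direct sums of copies of the vector bubble spaces entering \eqref{eq:femderhambubblecomplex}; one has to prove that the connecting maps are surjective between bubble spaces carrying mismatched trace conditions (compare the work the paper must do in the conformal case via the geometric decomposition \eqref{eq:symcurlTbubbledecomp} and the identity \eqref{eq:20240310}), and above all that $\div\mathbb B^{\div}_{k-1}(\boldsymbol r_2;\mathbb T)=\mathbb B_{k-2}(\boldsymbol r_3;\mathbb R^3)/{\rm RT}$ — this is where the div-tensor stability hypothesis enters and is the substantive part of the cited result; it does not follow from three scalar copies of \eqref{eq:femderhambubblecomplex} by formal algebra. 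A dimension count (and note the alternating sum should vanish for the exact sequence, i.e.\ the four bubble dimensions differ by $\dim{\rm RT}=4$ only after the quotient is accounted for) can only close the argument once exactness at all but one position is already established, so as written the proposal does not constitute a proof.
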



\begin{lemma}
Let smoothness vectors
\begin{equation*}
\boldsymbol{r}_0 \geq (2, 1, 0)^{\intercal},
\; 
\boldsymbol r_1 =\boldsymbol{r}_0-1, 
\;
 \boldsymbol{r}_2=\max\{\boldsymbol{r}_1\ominus2, (0, -1, -1)^{\intercal}\},
\;
 \boldsymbol{r}_3=\boldsymbol{r}_2\ominus1.
\end{equation*}
Assume $\boldsymbol{r}_2$ satisfies the div-tensor stability condition, and $k\geq\max\{2r_2^{\texttt{v}},1\}$. 
Then the bubble elasticity complex
\begin{equation}\label{eq:fembubbleelasticitycomplex}
\resizebox{.925\hsize}{!}{$
0\xrightarrow{\subset} \mathbb B_{k+4}(\boldsymbol{r}_0;\mathbb R^3)\xrightarrow{\defm}\mathbb B^{\inc}_{k+3}(\boldsymbol{r}_1;\mathbb S)\xrightarrow{\inc} \mathbb B^{\div}_{k+1}(\boldsymbol{r}_2;\mathbb S) \xrightarrow{\div} \mathbb B_{k}(\boldsymbol{r}_3;\mathbb R^3)/{\rm RM}\xrightarrow{}0
$}
\end{equation}
is exact.
\end{lemma}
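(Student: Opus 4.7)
The plan is to follow the same template used for the bubble Hessian complex \eqref{eq:fembubblehessiancomplex}: verify that \eqref{eq:fembubbleelasticitycomplex} is a complex with well-placed boundary conditions, establish exactness at the two endpoints by direct arguments, and close the middle two spots by an Euler--Poincar\'e dimension count that is anchored by the polynomial elasticity complex \cite[(2.6)]{ArnoldAwanouWinther2008}. First I would check that each arrow maps into the asserted target: for $\boldsymbol v \in \mathbb B_{k+4}(\boldsymbol r_0;\mathbb R^3)$, the smoothness $\boldsymbol r_0\geq(2,1,0)^{\intercal}$ together with $\boldsymbol r_1=\boldsymbol r_0-1$ forces $(\boldsymbol n\times\defm(\boldsymbol v)\times\boldsymbol n)|_{\partial T}=0$ and $\tr_2^{\inc}(\defm(\boldsymbol v))=0$, placing $\defm(\boldsymbol v)$ in $\mathbb B^{\inc}_{k+3}(\boldsymbol r_1;\mathbb S)$; an analogous trace computation, combined with $\boldsymbol r_2=\max\{\boldsymbol r_1\ominus2,(0,-1,-1)^{\intercal}\}$, shows $\inc$ sends that space into $\mathbb B^{\div}_{k+1}(\boldsymbol r_2;\mathbb S)$. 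The composition properties $\inc\circ\defm=0$ and $\div\circ\inc=0$ (on symmetric tensors) are then immediate.

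For exactness at the left, $\defm(\boldsymbol v)=0$ forces $\boldsymbol v\in{\rm RM}$, and the vertex vanishing from $r_0^{\texttt{v}}\geq 2\geq 1$ forces $\boldsymbol v=0$. For surjectivity of $\div$ onto $\mathbb B_k(\boldsymbol r_3;\mathbb R^3)/{\rm RM}$, I would start from the polynomial elasticity complex to produce $\widetilde{\boldsymbol\sigma}\in\mathbb P_{k+1}(\mathbb S)$ with $\div\widetilde{\boldsymbol\sigma}\equiv \boldsymbol u \pmod{\rm RM}$ for a given $\boldsymbol u\in\mathbb B_k(\boldsymbol r_3;\mathbb R^3)$, and then absorb the unwanted boundary traces of $\widetilde{\boldsymbol\sigma}$ via a correction of the form $\inc\boldsymbol\eta$, where $\boldsymbol\eta$ is built face by face and edge by edge from the bubble de Rham complex \eqref{eq:femderhambubblecomplex} and bubble Hessian complex \eqref{eq:fembubblehessiancomplex}. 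For the two middle spots, I would invoke the alternating-sum identity equivalent to exactness, namely
\begin{equation*}
\dim\mathbb B_{k+4}(\boldsymbol r_0;\mathbb R^3)-\dim\mathbb B^{\inc}_{k+3}(\boldsymbol r_1;\mathbb S)+\dim\mathbb B^{\div}_{k+1}(\boldsymbol r_2;\mathbb S)-\dim\mathbb B_k(\boldsymbol r_3;\mathbb R^3)+\dim{\rm RM}=0.
\end{equation*}
The first and last bubble dimensions follow from iterating \eqref{eq:femderhambubblecomplex}, $\dim\mathbb B^{\div}_{k+1}(\boldsymbol r_2;\mathbb S)$ can be read off from \eqref{eq:fembubblehessiancomplex}, and the middle $\inc$-bubble dimension is obtained by subtracting from $\dim\mathbb B_{k+3}(\boldsymbol r_1;\mathbb S)$ the contributions of the trace operators $\tr_1^{\inc}, \tr_2^{\inc}$ on faces (and the induced edge/vertex conditions). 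Matching against the polynomial elasticity complex delivers the identity.

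The main obstacle I anticipate is the dimension bookkeeping for $\mathbb B^{\inc}_{k+3}(\boldsymbol r_1;\mathbb S)$. The trace pair $(\tr_1^{\inc},\tr_2^{\inc})$ on each face assembles into a two-dimensional bubble complex whose range must be pinned down precisely, and the degenerate regime $\boldsymbol r_2=(0,-1,-1)^{\intercal}$ admitted by the second clause of the div-tensor stability condition collapses the standard face-bubble dimension formulas and therefore requires a separate case analysis. A secondary subtlety is arranging the face-by-face / edge-by-edge correction of $\widetilde{\boldsymbol\sigma}$ in the right-end surjectivity argument so that the traces are killed without disturbing $\div\widetilde{\boldsymbol\sigma}$; this cascading correction is the step where the hypotheses on $k$ relative to $r_2^{\texttt v}$ are actually consumed.
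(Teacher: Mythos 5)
This lemma is not proved in the paper at all: it is quoted verbatim from \cite[Lemma A.6]{ChenHuang2025}, so there is no in-paper proof to match your proposal against, and your outline has to stand on its own. Its overall template (check the complex property and the traces, prove exactness at the two ends, close the rest by a dimension count anchored in the polynomial elasticity complex) is the standard one, but as written it contains a genuine logical gap at the decisive step. Exactness at the first spot ($\defm$ injective on the bubble space) and at the last spot ($\div$ onto $\mathbb B_{k}(\boldsymbol{r}_3;\mathbb R^3)/{\rm RM}$) together with the alternating-sum identity do \emph{not} ``close the middle two spots'': the Euler--Poincar\'e identity only yields that the two middle homology groups have \emph{equal} dimension, not that both vanish. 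The identity is necessary for exactness, not equivalent to it. To make the argument work you must prove exactness at one interior spot directly --- the usual choice is $\ker(\inc)\cap\mathbb B^{\inc}_{k+3}(\boldsymbol{r}_1;\mathbb S)=\defm\,\mathbb B_{k+4}(\boldsymbol{r}_0;\mathbb R^3)$, obtained by taking a polynomial potential $\boldsymbol v$ with $\defm\boldsymbol v=\boldsymbol\tau$ from the polynomial elasticity complex and then showing, from the vanishing and trace conditions defining $\mathbb B^{\inc}_{k+3}(\boldsymbol{r}_1;\mathbb S)$, that $\boldsymbol v$ (after subtracting a rigid motion) lies in $\mathbb B_{k+4}(\boldsymbol{r}_0;\mathbb R^3)$ --- and only then does the dimension identity settle the remaining spot at $\mathbb B^{\div}_{k+1}(\boldsymbol{r}_2;\mathbb S)$.

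Two further points need repair or honest citation. First, $\dim\mathbb B^{\div}_{k+1}(\boldsymbol{r}_2;\mathbb S)$ cannot be ``read off'' from the bubble Hessian complex \eqref{eq:fembubblehessiancomplex}: that complex contains the traceless div bubble $\mathbb B^{\div}(\boldsymbol{r}_2;\mathbb T)$, not the symmetric one; the characterization/dimension of $\mathbb B^{\div}_{k+1}(\boldsymbol{r}_2;\mathbb S)$ has to come from the explicit bubble-space decompositions in \cite{ChenHuang2024}, and the bookkeeping for $\dim\mathbb B^{\inc}_{k+3}(\boldsymbol{r}_1;\mathbb S)$ additionally requires knowing that the constraints imposed by $\tr_1^{\inc},\tr_2^{\inc}$ (including the edge and degenerate $\boldsymbol{r}_2=(0,-1,-1)^{\intercal}$ cases you flag) are independent, which is itself a face-trace-complex statement, not a subtraction. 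Second, the right-end surjectivity sketch --- correcting $\widetilde{\boldsymbol\sigma}$ by $\inc\boldsymbol\eta$ to kill boundary traces without changing the divergence --- is precisely the $(\div;\mathbb S)$ bubble stability that is the hard content of the cited Lemma A.6; as stated it is a restatement of the goal rather than an argument, so either it must be carried out (face-by-face and edge-by-edge, with the constraint $k\geq\max\{2r_2^{\texttt{v}},1\}$ and the div-tensor stability condition entering exactly there) or cited from \cite{ChenHuang2024,ChenHuang2025}.
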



We say that the smoothness vector $\boldsymbol{r}$ satisfies the div\,div stability condition if $\boldsymbol r$ satisfies the div-tensor stability condition and $\boldsymbol r\ominus1$ satisfies the div-vector stability condition.

At the end of this subsection, we show the bubble div\,div complex \eqref{eq:fembubbledivdivcomplex}; see Lemma~A.8 in \cite{ChenHuang2025}.

\begin{lemma}
\label{lem:fembubbledivdiv+complex}
Let 
\begin{equation*}
\boldsymbol{r}_0\geq (2, 1, 0)^{\intercal}, \quad 
\boldsymbol{r}_1=\boldsymbol{r}_0-1,\quad 
\boldsymbol{r}_2= \max\{\boldsymbol{r}_1\ominus1, (0, -1, -1)^{\intercal}\},\quad 
\boldsymbol{r}_3=\boldsymbol{r}_2\ominus2.
\end{equation*}
Assume $\boldsymbol r_2$ satisfies the div\,div stability condition, and $k\geq \max\{2r_2^{\texttt{v}}+1,3\}$.
We have the following exact bubble div\,div complex   
\begin{align}
\label{eq:fembubbledivdivcomplex}
0\xrightarrow{\subset}\mathbb B_{k+2} (\boldsymbol{r}_0;\mathbb R^3) &\xrightarrow{\dev\grad} \mathbb B_{k+1}^{\sym\curl}(\boldsymbol{r}_1;\mathbb{T}) \\
&\xrightarrow{\sym\curl}  \mathbb B_{k}^{\div\div}(\boldsymbol{r}_2;\mathbb{S}) \xrightarrow{\div\div} \mathbb B_{k-2}(\boldsymbol{r}_3)/\mathbb P_1(T)\rightarrow 0. \notag
\end{align}
\end{lemma}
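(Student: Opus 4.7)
The plan is to mirror, at the bubble level, the polynomial BGG derivation shown earlier in the introduction that combines the polynomial de Rham complex with the polynomial div\,div complex. Concretely, I would set up the BGG diagram whose top row is the vector segment of the bubble de Rham complex \eqref{eq:femderhambubblecomplex}, namely
$$
\mathbb B_{k+2}(\boldsymbol{r}_0;\mathbb R^3) \xrightarrow{\curl} \mathbb B^{\div}_{k+1}(\boldsymbol{r}_1;\mathbb R^3) \xrightarrow{\div} \mathbb B_{k}(\boldsymbol{r}_2)/\mathbb R \rightarrow 0,
$$
and whose bottom row is the target complex \eqref{eq:fembubbledivdivcomplex}, linked by the connecting maps ${\rm id}$, $-2\vskw$, and $\tr$ going up-right, exactly analogous to the polynomial BGG diagram exhibited in the introduction. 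By hypothesis $\boldsymbol r_2$ satisfies the div\,div stability condition, which in particular means $\boldsymbol r_2$ satisfies the div-tensor stability condition and $\boldsymbol r_2 \ominus 1$ satisfies the div-vector stability condition, so that the bubble de Rham complex \eqref{eq:femderhambubblecomplex} can be invoked on the top row.

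The steps in order are: (i) verify that each operator in the bottom row preserves the prescribed bubble boundary conditions, using the trace identities collected in Section~\ref{sec:conformalpolydiffcomplex} (in particular \eqref{eq:edgedofprop1}); the crucial check is that $\sym\curl$ maps $\mathbb B^{\sym\curl}_{k+1}(\boldsymbol{r}_1;\mathbb T)$ into $\mathbb B^{\div\div}_k(\boldsymbol{r}_2;\mathbb S)$, which requires the vanishing of $\tr_2^{\div\div}$ and of the edge-wise $\boldsymbol n_i^{\intercal}\boldsymbol\tau\boldsymbol n_j$ terms; (ii) show that the connecting maps $-2\vskw \colon \mathbb B^{\sym\curl}_{k+1}(\boldsymbol{r}_1;\mathbb T) \to \mathbb B^{\div}_{k+1}(\boldsymbol{r}_1;\mathbb R^3)$ and $\tr \colon \mathbb B^{\div\div}_k(\boldsymbol{r}_2;\mathbb S) \to \mathbb B_k(\boldsymbol{r}_2)/\mathbb R$ are surjective and identify their kernels with the images of $\dev\grad$ and $\sym\curl$ restricted to the symmetric-traceless parts; (iii) run the standard BGG diagram chase to transfer exactness from the top row to the bottom row; (iv) cross-check by dimension counting, starting from the dimensions of bubble scalar and vector spaces implied by \eqref{eq:femderhambubblecomplex}.

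The main obstacle is step (ii). At the polynomial level this surjectivity is routine from the splittings $\boldsymbol\tau = \sym\boldsymbol\tau + \mskw(\vskw\boldsymbol\tau)$ and $\boldsymbol\tau = \dev\boldsymbol\tau + \tfrac{1}{3}(\tr\boldsymbol\tau)\boldsymbol I$; at the bubble level, however, one must exhibit right inverses whose outputs land in the tightly constrained spaces $\mathbb B^{\sym\curl}_{k+1}(\boldsymbol{r}_1;\mathbb T)$ and especially $\mathbb B^{\div\div}_k(\boldsymbol{r}_2;\mathbb S)$. This is where the div\,div stability hypothesis on $\boldsymbol r_2$ enters: it guarantees that the geometric decomposition of bubble tensor spaces from \cite{ChenHuang2024} provides enough freedom to realize these algebraic splittings inside the bubble spaces while respecting the boundary conditions. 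The case $r^f = -1$ is the most delicate, since the normal-normal face trace is not automatically zero and must be controlled through the $\tr_2^{\div\div}$ condition and the edge trace identities; this is precisely why these additional conditions are built into the definition of $\mathbb B^{\div\div}_k(\boldsymbol{r}_2;\mathbb S)$, and a case split on $r^f$ is likely unavoidable in a careful write-up.
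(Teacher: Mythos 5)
First, note what you are up against: the paper does not prove this lemma at all. It appears in the subsection ``Existing bubble complexes in three dimensions'' and is recalled verbatim from \cite[Lemma A.8]{ChenHuang2025}; within this paper it is an \emph{input} to the BGG constructions (it is one of the two rows of the diagram \eqref{eq:BGGdivdivST}), never an output. Measured against that, your proposal attempts an actual proof, but the mechanism you choose does not work. The diagram you set up --- top row the tail of the bubble de Rham complex \eqref{eq:femderhambubblecomplex}, bottom row the target complex \eqref{eq:fembubbledivdivcomplex}, linked by ${\rm id}$, $-2\vskw$ and $\tr$ --- is exactly the diagram the paper (and its polynomial and continuous analogues) uses to derive the \emph{conformal} Hessian complexes, and there the exactness of \emph{both} rows is a hypothesis. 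The BGG framework merges two exact rows into a new exact complex; it does not ``transfer exactness from the top row to the bottom row,'' and since every connecting map in your diagram points from the bottom row up to the top row, no diagram chase can manufacture elements of $\mathbb B_{k}^{\div\div}(\boldsymbol{r}_2;\mathbb S)$ or $\mathbb B_{k+1}^{\sym\curl}(\boldsymbol{r}_1;\mathbb T)$ out of the vector and scalar data in the top row. In particular, neither the surjectivity of $\div\div$ onto $\mathbb B_{k-2}(\boldsymbol{r}_3)/\mathbb P_1(T)$ nor the inclusion $\mathbb B_{k+1}^{\sym\curl}(\boldsymbol{r}_1;\mathbb T)\cap\ker(\sym\curl)\subseteq\dev\grad\,\mathbb B_{k+2}(\boldsymbol{r}_0;\mathbb R^3)$ can be deduced from exactness of the de Rham row; compare Lemma~\ref{lem:divdivontobubble}, where precisely these facts about \eqref{eq:fembubbledivdivcomplex} are what make the chase possible, not what the chase produces. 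So step (iii) is not a missing detail but a wrong mechanism, and steps (ii) and (iv) inherit the problem.

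If you insist on a BGG-style derivation, the correct pairing (as in \cite{ArnoldHu2021}, and for polynomials in \cite{ChenHuang2022}) is a matrix-valued de Rham row and a scalar de Rham row linked by $\iota$, $\mskw$, ${\rm id}$, with the div\,div complex emerging as the derived complex. At the bubble level this immediately runs into the real difficulty, which your proposal only gestures at: the natural bubble boundary conditions on an $\mathbb M$-valued de Rham row are full-trace conditions ($\boldsymbol\tau\times\boldsymbol n|_{\partial T}=0$ or $\boldsymbol\tau\boldsymbol n|_{\partial T}=0$ row-wise), which are strictly stronger than the trace conditions defining $\mathbb B_{k+1}^{\sym\curl}(\boldsymbol{r}_1;\mathbb T)$ (only $\tr_1^{\sym\curl}$, $\tr_2^{\sym\curl}$ vanish) and $\mathbb B_{k}^{\div\div}(\boldsymbol{r}_2;\mathbb S)$ (only $\boldsymbol n^{\intercal}\boldsymbol\tau\boldsymbol n$, $\tr_2^{\div\div}$ and the edge normal--normal traces vanish). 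Bridging that gap is the substance of the lemma and requires geometric decompositions of these trace-constrained bubble spaces together with the two-dimensional face bubble complexes (the pattern followed in \eqref{eq:symcurlTbubbledecomp} and \eqref{eq:femCdivdivbubblecomplex2d}) or a direct argument with trace identities and dimension counts, as in \cite[Lemma A.8]{ChenHuang2025}. Appealing to the div\,div stability condition as providing ``enough freedom'' is an assertion, not an argument, so as it stands the proposal does not prove the lemma.
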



\subsection{Bubble conformal Hessian complex in three dimensions}
We will now proceed to derive the bubble conformal Hessian complex \eqref{polybubbleCHcomplex3d} using the BGG framework.

First, we present the geometric decomposition of the bubble spaces $\mathbb B_{k+1}^{\sym\curl}(\boldsymbol{r}_1;\mathbb T)$ and $\mathbb B_{k+1}^{\sym\curl}(\boldsymbol{r}_1;\mathbb S\cap\mathbb T)$.
\begin{lemma}
For tetrahedron $T$, $\boldsymbol{r}_1\geq (1,0,-1)^{\intercal}$ and $k\geq2r_1^{\texttt{v}}$, we have the geometric decomposition 
\begin{align}
\label{eq:symcurlTbubbledecomp}
&\mathbb B_{k+1}^{\sym\curl}(\boldsymbol{r}_1;\mathbb T)=\mathbb B_{k+1}((\boldsymbol{r}_1)_+;\mathbb T) \\
\notag
&\qquad\qquad\qquad\quad \oplus\Oplus_{f\in\Delta_2(T)}[r_1^f=-1]\big(\mathbb B_{k+1}((\boldsymbol{r}_1)_+;f)\otimes{\rm span}\{\mathbb{T}_1, \mathbb{T}_2, \mathbb{T}_3\}\big),
\\
\label{eq:symcurlSTbubbledecomp}
&\mathbb B_{k+1}^{\sym\curl}(\boldsymbol{r}_1;\mathbb S\cap\mathbb T)=\mathbb B_{k+1}((\boldsymbol{r}_1)_+;\mathbb S\cap\mathbb T) \\
\notag
&\qquad\qquad\qquad\quad \oplus\Oplus_{f\in\Delta_2(T)}[r_1^f=-1]\big(\mathbb B_{k+1}((\boldsymbol{r}_1)_+;f)\otimes{\rm span}\{\mathbb{T}_1\}\big),
\end{align}
where $\mathbb{T}_1=\boldsymbol{t}_1\boldsymbol{t}_1^{\intercal}+\boldsymbol{t}_2\boldsymbol{t}_2^{\intercal}-2\boldsymbol{n}_f\boldsymbol{n}_f^{\intercal}$, $\mathbb{T}_2=\boldsymbol{t}_1\boldsymbol{n}_f^{\intercal}$, and  $\mathbb{T}_3=\boldsymbol{t}_2\boldsymbol{n}_f^{\intercal}$.
\end{lemma}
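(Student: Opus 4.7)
My plan is to reduce both claims to a local analysis of the $\sym\curl$ trace conditions in a face-aligned orthonormal frame $\{\boldsymbol t_1,\boldsymbol t_2,\boldsymbol n\}$ on each face $f\in\Delta_2(T)$, which will pin down exactly which face directions of $\mathbb T$ (or $\mathbb S\cap\mathbb T$) can survive as a non-vanishing trace, and then to build the geometric decomposition face by face by lifting the face-supported pieces into $T$.

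Step 1 (characterization of admissible face traces). Writing $\boldsymbol\tau=\sum_{i,j}\tau_{ij}\boldsymbol e_i\boldsymbol e_j^{\intercal}$ with $\boldsymbol e_1=\boldsymbol t_1,\,\boldsymbol e_2=\boldsymbol t_2,\,\boldsymbol e_3=\boldsymbol n$, I will expand $\tr_1^{\sym\curl}(\boldsymbol\tau)=\sym(\Pi_f\boldsymbol\tau\times\boldsymbol n)$ and $\tr_2^{\sym\curl}(\boldsymbol\tau)=\boldsymbol n\cdot\boldsymbol\tau\times\boldsymbol n$. A short computation shows that the pair of conditions $\tr_1^{\sym\curl}(\boldsymbol\tau)|_f=0$ and $\tr_2^{\sym\curl}(\boldsymbol\tau)|_f=0$ is equivalent to $\tau_{12}=\tau_{21}=\tau_{31}=\tau_{32}=0$ and $\tau_{11}=\tau_{22}$ on $f$. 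Combined with $\tr\boldsymbol\tau=0$, this yields $\tau_{33}=-2\tau_{11}$, so $\boldsymbol\tau|_f\in\spa\{\mathbb T_1,\mathbb T_2,\mathbb T_3\}$. Imposing additionally $\boldsymbol\tau\in\mathbb S$ forces $\tau_{13}=\tau_{23}=0$, collapsing the admissible face directions to $\spa\{\mathbb T_1\}$.

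Step 2 (the inclusion $\supseteq$ and directness). The interior summand $\mathbb B_{k+1}((\boldsymbol r_1)_+;\mathbb T)$ (resp.\ with $\mathbb S\cap\mathbb T$) vanishes on $\partial T$ and therefore lies in the bubble space trivially. For each face $f$ with $r_1^f=-1$, the scalar space $\mathbb B_{k+1}((\boldsymbol r_1)_+;f)$ extends naturally to $T$ via the Bernstein/barycentric representation, and any product $\varphi\,\mathbb T_j$ inherits the required vertex and edge smoothness from $\varphi$, while by Step 1 it satisfies both $\tr_1^{\sym\curl}$ and $\tr_2^{\sym\curl}$ conditions on $f$ and vanishes to the required order on the remaining faces. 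Directness is immediate because the face-supported summands have disjoint face traces and the interior piece vanishes on $\partial T$.

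Step 3 (surjectivity). For $\boldsymbol\tau\in\mathbb B_{k+1}^{\sym\curl}(\boldsymbol r_1;\mathbb T)$ I iterate over faces $f$ with $r_1^f=-1$: by Step 1, $\boldsymbol\tau|_f=\varphi_1^f\mathbb T_1+\varphi_2^f\mathbb T_2+\varphi_3^f\mathbb T_3$ for uniquely determined scalar functions $\varphi_j^f$ on $f$; lift each $\varphi_j^f$ to $T$, subtract the correction, and proceed to the next face. After all face corrections are removed, the residue vanishes on $\partial T$ and sits in the interior bubble space $\mathbb B_{k+1}((\boldsymbol r_1)_+;\mathbb T)$; the $\mathbb S\cap\mathbb T$ case is identical with only $\mathbb T_1$.

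The main obstacle is Step 3, specifically verifying that each scalar $\varphi_j^f$ extracted from a face trace genuinely belongs to $\mathbb B_{k+1}((\boldsymbol r_1)_+;f)$, i.e.\ has the correct vanishing at vertices and on edges of $f$. This requires tracking how the tetrahedral smoothness vector $\boldsymbol r_1$ constrains derivatives of $\boldsymbol\tau$ along lower-dimensional faces; the hypothesis $k\geq 2r_1^{\texttt v}$ is exactly what guarantees the scalar face-bubble spaces are nontrivial and the lifting is well defined, matching the geometric decomposition of simplicial lattice points used in \cite{ChenHuang2024a,ChenHuang2024}.
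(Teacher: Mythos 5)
Your proposal is correct and follows essentially the same route as the paper: the paper's (much terser) proof likewise observes that on each face the two $\sym\curl$ trace conditions annihilate exactly the components of $\boldsymbol\tau|_f$ along the orthogonal complement of $\{\mathbb T_1,\mathbb T_2,\mathbb T_3\}$ (resp.\ $\{\mathbb T_1\}$) in $\mathbb T$ (resp.\ $\mathbb S\cap\mathbb T$), and then invokes the standard Bernstein-extension geometric decomposition that you spell out in Steps 2--3. Your frame computation and face-by-face lifting are just an expanded version of what the paper leaves implicit.
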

\begin{proof}
For each face $f\in\Delta_2(T)$, $\{\mathbb{T}_1, \mathbb{T}_2, \mathbb{T}_3\}$ and $\{\boldsymbol{n}_f\boldsymbol{t}_1^{\intercal}, \boldsymbol{n}_f\boldsymbol{t}_2^{\intercal}, \boldsymbol{t}_1\boldsymbol{t}_2^{\intercal}, \boldsymbol{t}_2\boldsymbol{t}_1^{\intercal}, \boldsymbol{t}_1\boldsymbol{t}_1^{\intercal}-\boldsymbol{t}_2\boldsymbol{t}_2^{\intercal}\}$ are orthogonal to each other, and their combination forms a basis of $\mathbb T$.
Then the geometric decomposition \eqref{eq:symcurlTbubbledecomp} holds.

Similarly, $\{\mathbb{T}_1\}$ and $\{\sym(\boldsymbol{n}_f\boldsymbol{t}_1^{\intercal}), \sym(\boldsymbol{n}_f\boldsymbol{t}_2^{\intercal}), \sym(\boldsymbol{t}_1\boldsymbol{t}_2^{\intercal}), \boldsymbol{t}_1\boldsymbol{t}_1^{\intercal}-\boldsymbol{t}_2\boldsymbol{t}_2^{\intercal}\}$ are orthogonal to each other, and their combination forms a basis of $\mathbb S\cap\mathbb T$.
Hence, the geometric decomposition \eqref{eq:symcurlSTbubbledecomp} holds.
\end{proof}

\begin{lemma}
Assume $\boldsymbol{r}_1$ satisfies the div-vector stability condition, and $k\geq2r_{1}^{\texttt{v}}$.
For tetrahedron $T$ and $\boldsymbol{r}_1\geq (1,0,-1)^{\intercal}$, it holds 
\begin{equation}\label{eq:trsymcurlTbubble}
\tr\big(\sym\curl\mathbb B_{k+1}^{\sym\curl}(\boldsymbol{r}_1;\mathbb T)\big)=\mathbb B_{k}(\boldsymbol r_1\ominus1)/\mathbb R.
\end{equation}
\end{lemma}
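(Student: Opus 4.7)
The plan is to factor $\tr\circ\sym\curl$ as $2\,\div\circ\vskw$ and then invoke the bubble de Rham complex for the second factor.

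First, I would record the pointwise identity $\tr(\sym\curl\boldsymbol{\tau})=\tr(\curl\boldsymbol{\tau})=2\div(\vskw\boldsymbol{\tau})$, valid for any matrix field~$\boldsymbol{\tau}$: the symmetric part of $\boldsymbol{\tau}$ contributes nothing to $\tr(\curl\boldsymbol{\tau})$ by a direct index calculation, and $\tr\sym=\tr$. This reduces the claim to showing that $\vskw$ maps $\mathbb B_{k+1}^{\sym\curl}(\boldsymbol{r}_1;\mathbb T)$ onto $\mathbb B^{\div}_{k+1}(\boldsymbol{r}_1)$; the bubble de Rham complex~\eqref{eq:femderhambubblecomplex}, shifted by one in polynomial degree, then yields $\div\mathbb B^{\div}_{k+1}(\boldsymbol{r}_1)=\mathbb B_k(\boldsymbol{r}_1\ominus 1)/\mathbb R$, from which the identity follows.

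Second, I would establish surjectivity of $\vskw$ by applying it componentwise to the geometric decomposition~\eqref{eq:symcurlTbubbledecomp} and matching with the analogous decomposition of $\mathbb B^{\div}_{k+1}(\boldsymbol{r}_1)$, namely an interior piece $\mathbb B_{k+1}((\boldsymbol{r}_1)_+;\mathbb R^3)$ together with, for each face $f$ satisfying $r_1^f=-1$, a tangential face-bubble summand $\mathbb B_{k+1}((\boldsymbol{r}_1)_+;f)\otimes\spn\{\boldsymbol{t}_1,\boldsymbol{t}_2\}$. Using $\vskw(\boldsymbol{a}\boldsymbol{b}^{\intercal})=-\tfrac12\boldsymbol{a}\times\boldsymbol{b}$ and the orthonormal frame $(\boldsymbol{n}_f,\boldsymbol{t}_1,\boldsymbol{t}_2)$ with $\boldsymbol{n}_f=\boldsymbol{t}_1\times\boldsymbol{t}_2$, a short calculation gives $\vskw\mathbb T_1=0$, $\vskw\mathbb T_2=\tfrac12\boldsymbol{t}_2$, $\vskw\mathbb T_3=-\tfrac12\boldsymbol{t}_1$, so the face summands correspond bijectively. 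On the interior piece, $\mskw$ provides a right inverse of $\vskw$ landing in $\mathbb B_{k+1}((\boldsymbol{r}_1)_+;\mathbb K)\subset\mathbb B_{k+1}((\boldsymbol{r}_1)_+;\mathbb T)$ and recovering the interior div-bubble summand.

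The main obstacle will be verifying that $\vskw$ really lands in $\mathbb B^{\div}_{k+1}(\boldsymbol{r}_1)$, i.e., that the sym\,curl boundary conditions enforce $\vskw\boldsymbol{\tau}\cdot\boldsymbol{n}|_{\partial T}=0$. The key observation is that the potentially troublesome directions for a nonzero normal component, $\boldsymbol{t}_1\boldsymbol{t}_2^{\intercal}$ and $\boldsymbol{t}_2\boldsymbol{t}_1^{\intercal}$ (for which $\vskw$ produces $\mp\tfrac12\boldsymbol{n}_f$), are precisely among those excluded from the face-bubble summand in~\eqref{eq:symcurlTbubbledecomp}, so that on each face only tangential components of $\vskw\boldsymbol{\tau}$ survive. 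This reduces the obstruction to the explicit $(\boldsymbol{n}_f,\boldsymbol{t}_1,\boldsymbol{t}_2)$-frame decomposition of $\mathbb T$ already used to establish~\eqref{eq:symcurlTbubbledecomp}.
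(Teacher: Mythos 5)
Your proposal is correct and follows essentially the same route as the paper: the identity $\tr(\sym\curl\boldsymbol{\tau})=2\div(\vskw\boldsymbol{\tau})$, the geometric decomposition \eqref{eq:symcurlTbubbledecomp} with the computations $\vskw\mathbb{T}_1=0$, $\vskw\mathbb{T}_2=\tfrac12\boldsymbol{t}_2$, $\vskw\mathbb{T}_3=-\tfrac12\boldsymbol{t}_1$ to identify $\vskw\mathbb B_{k+1}^{\sym\curl}(\boldsymbol{r}_1;\mathbb T)=\mathbb B_{k+1}^{\div}(\boldsymbol{r}_1)$, and then the exactness of the bubble de Rham complex \eqref{eq:femderhambubblecomplex}. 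The only cosmetic difference is that the paper treats the case $r_1^f\geq0$ by a separate, simpler argument, whereas your use of the decomposition handles both cases uniformly.
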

\begin{proof}
When $r_1^f\geq0$, it holds that $\mathbb B_{k+1}^{\sym\curl}(\boldsymbol{r}_1;\mathbb T)=\mathbb B_{k+1}(\boldsymbol{r}_1;\mathbb T)$. By the bubble complex \eqref{eq:femderhambubblecomplex} and the identity $\tr\curl\boldsymbol{\tau}=2\div\vskw\boldsymbol{\tau}$, we have
\begin{align*}
\tr\big(\sym\curl\mathbb B_{k+1}^{\sym\curl}(\boldsymbol{r}_1;\mathbb T)\big)&=\tr\big(\curl\mathbb B_{k+1}(\boldsymbol{r}_1;\mathbb T)\big)=\div\vskw(\mathbb B_{k+1}(\boldsymbol{r}_1;\mathbb T))\\
&=\div\mathbb B_{k+1}(\boldsymbol{r}_1;\mathbb R^3)=\mathbb B_{k}(\boldsymbol r_1\ominus1)/\mathbb R.
\end{align*}

Next consider case $r_1^f=-1$.
Since $\vskw\mathbb T_1=0$, $\vskw\mathbb T_2=\frac{1}{2}\boldsymbol{n}_f\times\boldsymbol{t}_1=\frac{1}{2}\boldsymbol{t}_2$, $\vskw\mathbb T_3=\frac{1}{2}\boldsymbol{n}_f\times\boldsymbol{t}_2=-\frac{1}{2}\boldsymbol{t}_1$, and $\vskw\mathbb B_{k+1}((\boldsymbol{r}_1)_+;\mathbb T)=\mathbb B_{k+1}((\boldsymbol{r}_1)_+;\mathbb R^3)$, it follows from the geometric decomposition \eqref{eq:symcurlTbubbledecomp} that
\begin{equation*}
\vskw\mathbb B_{k+1}^{\sym\curl}(\boldsymbol{r}_1;\mathbb T)=\mathbb B_{k+1}((\boldsymbol{r}_1)_+;\mathbb R^3)\oplus\Oplus_{f\in\Delta_2(T)}\big(\mathbb B_{k+1}((\boldsymbol{r}_1)_+;f)\otimes{\rm span}\{\boldsymbol{t}_1, \boldsymbol{t}_2\}\big),
\end{equation*}
which implies \cite[Lemma 3.12]{ChenHuang2024}
\begin{equation}\label{eq:20240310}
\vskw\mathbb B_{k+1}^{\sym\curl}(\boldsymbol{r}_1;\mathbb T)=\mathbb B_{k+1}^{\div}(\boldsymbol{r}_1).
\end{equation}
By the identity $\tr\curl\boldsymbol{\tau}=2\div\vskw\boldsymbol{\tau}$,
\begin{equation*}
\tr\curl\mathbb B_{k+1}^{\sym\curl}(\boldsymbol{r}_1;\mathbb T)=\div\vskw\mathbb B_{k+1}^{\sym\curl}(\boldsymbol{r}_1;\mathbb T)=\div\mathbb B_{k+1}^{\div}(\boldsymbol{r}_1).
\end{equation*}
Thus, \eqref{eq:trsymcurlTbubble} follows from the exactness of the bubble complex \eqref{eq:femderhambubblecomplex}.
\end{proof}

\begin{lemma}\label{lem:divdivontobubble}
Let smoothness vectors $(\boldsymbol{r}_1, \boldsymbol{r}_2, \boldsymbol{r}_3)$ satisfy \eqref{eq:rCHbubble}, and $k\geq 2r_2^{\texttt{v}}+4$. 
Assume $\boldsymbol r_1$ satisfies the div-vector stability condition, and $\boldsymbol r_2$ satisfies the div\,div stability condition.
We have the $(\div\div; \mathbb S\cap \mathbb T)$ stability
\begin{equation}\label{eq:divdivontobubble}
\div\div\mathbb B_{k}^{\div\div}(\boldsymbol{r}_2;\mathbb S\cap\mathbb T)=\mathbb B_{k-2}(\boldsymbol{r}_3)/{\rm CH}.
\end{equation}
\end{lemma}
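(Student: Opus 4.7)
The plan is to prove the two inclusions of the equality \eqref{eq:divdivontobubble} separately. The forward inclusion $\subseteq$ follows from Green's identity \eqref{eq:greenidentitydivdiv}: for $\boldsymbol{\tau} \in \mathbb B_k^{\div\div}(\boldsymbol{r}_2;\mathbb S\cap\mathbb T)$ and $q \in {\rm CH}$, the boundary contributions vanish by the defining $\mathbb B^{\div\div}$ trace conditions, while the volume term $(\boldsymbol{\tau}, \dev\hess q)_T$ vanishes since ${\rm CH} = \ker(\dev\hess)$. Hence $\div\div\boldsymbol{\tau}$ represents a class in $\mathbb B_{k-2}(\boldsymbol{r}_3)/{\rm CH}$.

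For the reverse inclusion I would construct a preimage via a BGG-style correction. Take a representative $g \in \mathbb B_{k-2}(\boldsymbol{r}_3)$ orthogonal to ${\rm CH}$ in $L^2(T)$. Since $g \perp \mathbb P_1(T) \subset {\rm CH}$, the surjectivity asserted in the bubble div\,div complex \eqref{eq:fembubbledivdivcomplex} yields $\boldsymbol{\sigma} \in \mathbb B_k^{\div\div}(\boldsymbol{r}_2;\mathbb S)$ with $\div\div\boldsymbol{\sigma} = g$. By identity \eqref{eq:trsymcurlTbubble}, the map $\tr \circ \sym\curl$ sends $\mathbb B_{k+1}^{\sym\curl}(\boldsymbol{r}_1;\mathbb T)$ onto the mean-zero subspace $\mathbb B_k(\boldsymbol{r}_2)/\mathbb R$, so there exists $\boldsymbol{\eta} \in \mathbb B_{k+1}^{\sym\curl}(\boldsymbol{r}_1;\mathbb T)$ satisfying $\tr(\sym\curl\boldsymbol{\eta}) = \tr\boldsymbol{\sigma} - c$, where $c := |T|^{-1}\int_T \tr\boldsymbol{\sigma}$. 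Setting $\boldsymbol{\tau} := \boldsymbol{\sigma} - \sym\curl\boldsymbol{\eta}$, we have $\div\div\boldsymbol{\tau} = g$ (because $\div\div\circ\sym\curl \equiv 0$ on any tensor) and $\tr\boldsymbol{\tau} = c$ is constant on $T$.

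The decisive final step is to show $c = 0$, which upgrades $\boldsymbol{\tau}$ to a traceless bubble. Applying Green's identity to $\boldsymbol{\sigma}$ with test function $v := |\boldsymbol{x}|^2/2$ (so $\hess v = I$ and $\dev\hess v = 0$), whose boundary contributions vanish by the $\mathbb B^{\div\div}$ conditions on $\boldsymbol{\sigma}$, one obtains
\begin{equation*}
c|T| = \int_T \tr\boldsymbol{\sigma} = (\boldsymbol{\sigma}, \hess v)_T = (\div\div\boldsymbol{\sigma}, v)_T = \tfrac{1}{2}(g,|\boldsymbol{x}|^2)_T = 0,
\end{equation*}
where the last equality uses $|\boldsymbol{x}|^2 \in {\rm CH}$ together with $g \perp {\rm CH}$. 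Hence $\boldsymbol{\tau} \in \mathbb B_k^{\div\div}(\boldsymbol{r}_2;\mathbb S\cap\mathbb T)$ with $\div\div\boldsymbol{\tau} = g$, completing the proof. The main conceptual obstacle the argument overcomes is precisely this: enlarging the cokernel from $\mathbb P_1$ in the bubble div\,div complex to ${\rm CH}$ in the conformal version is calibrated exactly by the extra polynomial $|\boldsymbol{x}|^2$, whose Hessian is $I$, and Green's identity then converts the trace obstruction $c$ into the orthogonality pairing $(g,|\boldsymbol{x}|^2)_T$, forcing $c = 0$.
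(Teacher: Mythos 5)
Your proof is correct and is essentially the paper's own argument: lift $g$ through the symmetric bubble div\,div complex \eqref{eq:fembubbledivdivcomplex}, remove the trace by a $\sym\curl$ correction supplied by \eqref{eq:trsymcurlTbubble}, and use integration by parts against $\boldsymbol{x}^{\intercal}\boldsymbol{x}\in{\rm CH}$ to dispose of the constant obstruction. The only (cosmetic) difference is the ordering: the paper first deduces $\int_T\tr\widetilde{\boldsymbol{\sigma}}=0$ from $\div\div\widetilde{\boldsymbol{\sigma}}\perp\boldsymbol{x}^{\intercal}\boldsymbol{x}$, so that $\tr\widetilde{\boldsymbol{\sigma}}$ itself lies in $\mathbb B_{k}(\boldsymbol{r}_2)/\mathbb R$ before \eqref{eq:trsymcurlTbubble} is invoked, whereas you subtract the mean $c$ first and prove $c=0$ afterwards; since $\tr\boldsymbol{\sigma}-c$ belongs to the bubble space (and hence to the range in \eqref{eq:trsymcurlTbubble}) only once $c=0$ is known, your steps should simply be read in that order, after which the two proofs coincide.
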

\begin{proof}
Apply the integration by parts to get $\div\div\mathbb B_{k}^{\div\div}(\boldsymbol{r}_2;\mathbb S\cap\mathbb T)\subseteq\mathbb B_{k-2}(\boldsymbol{r}_3)/{\rm CH}$. Then we prove the other side by
the anti-commutative diagram
\begin{equation}\label{eq:BGGdivdivST}
\begin{tikzcd}[column sep=normal, row sep=normal]
&
\mathbb B_{k+1}^{\div}(\boldsymbol{r}_1)
  \arrow{r}{\div}
 &
\mathbb B_{k}(\boldsymbol{r}_2)
\arrow{r}{}
& \mathbb R \\
\mathbb B_{k+1}^{\sym\curl}(\boldsymbol{r}_1;\mathbb T)
 \arrow[ur,swap,"- 2\vskw"'] \arrow{r}{\sym\curl}
 & 
\mathbb B_{k}^{\div\div}(\boldsymbol{r}_2;\mathbb S)
 \arrow[ur,swap,"\tr"'] \arrow{r}{\div\div}
 & 
\mathbb B_{k-2}(\boldsymbol{r}_3)/\mathbb P_{1}(T)
\arrow[r] 
& 0,
\end{tikzcd}
\end{equation}
which is the combination of complex \eqref{eq:femderhambubblecomplex} and complex \eqref{eq:fembubbledivdivcomplex}.

For $q\in \mathbb B_{k-2}(\boldsymbol{r}_3)/{\rm CH} \subseteq \mathbb B_{k-2}(\boldsymbol{r}_3)/\mathbb P_{1}(T)$, by the bottom complex in diagram \eqref{eq:BGGdivdivST}, there exists a $\widetilde{\boldsymbol{\sigma}}\in\mathbb B_{k}^{\div\div}(\boldsymbol{r}_2;\mathbb S)$ such that $\div\div\widetilde{\boldsymbol{\sigma}}=q$. Then $\tr\widetilde{\boldsymbol{\sigma}}\in\mathbb B_{k}(\boldsymbol{r}_2)/\mathbb R$ follows from $\div\div\widetilde{\boldsymbol{\sigma}}\perp \boldsymbol{x}^{\intercal}\boldsymbol{x}$ and the integration by parts. By \eqref{eq:trsymcurlTbubble}, $\tr\widetilde{\boldsymbol{\sigma}}=\tr(\sym\curl\boldsymbol{\tau})$ with $\boldsymbol{\tau}\in\mathbb B_{k+1}^{\sym\curl}(\boldsymbol{r}_1;\mathbb T)$. Take $\boldsymbol{\sigma}=\widetilde{\boldsymbol{\sigma}}-\sym\curl\boldsymbol{\tau}\in \mathbb P_{k}(T;\mathbb S\cap\mathbb T)$. Apply the bottom complex in diagram \eqref{eq:BGGdivdivST} again to get $\boldsymbol{\sigma}\in\mathbb B_{k}^{\div\div}(\boldsymbol{r}_2;\mathbb S\cap\mathbb T)$. Finally, $\div\div\boldsymbol{\sigma}=\div\div\widetilde{\boldsymbol{\sigma}}=q$.
\end{proof}

\begin{lemma}
Let smoothness vectors $(\boldsymbol{r}_1, \boldsymbol{r}_2, \boldsymbol{r}_3)$ satisfy \eqref{eq:rCHbubble}, and $k\geq 2r_2^{\texttt{v}}+4$. 
Assume $\boldsymbol r_1$ satisfies the div-vector stability condition, and $\boldsymbol r_2$ satisfies the div\,div stability condition.
We have
\begin{equation}\label{eq:symcurlontobubble}
\sym\curl\mathbb B_{k+1}^{\sym\curl}(\boldsymbol{r}_1;\mathbb S\cap\mathbb T)=\mathbb B_{k}^{\div\div}(\boldsymbol{r}_2;\mathbb S\cap\mathbb T)\cap\ker(\div\div).
\end{equation}
\end{lemma}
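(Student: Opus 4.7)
I will prove the two inclusions separately. For the forward inclusion ($\subseteq$), given $\boldsymbol{\tau}\in\mathbb B^{\sym\curl}_{k+1}(\boldsymbol{r}_1;\mathbb S\cap\mathbb T)$, the image $\sym\curl\boldsymbol{\tau}$ is symmetric by construction, traceless because $\tr\sym\curl\boldsymbol{\tau}=\tr\curl\boldsymbol{\tau}=2\div\vskw\boldsymbol{\tau}=0$ (using $\boldsymbol{\tau}\in\mathbb S$), and the mapping property of $\sym\curl$ in the bubble div\,div complex \eqref{eq:fembubbledivdivcomplex} (applied to $\boldsymbol{\tau}\in\mathbb B^{\sym\curl}_{k+1}(\boldsymbol{r}_1;\mathbb T)$) supplies both the bubble trace conditions defining $\mathbb B^{\div\div}_k(\boldsymbol{r}_2;\mathbb S)$ and $\div\div\sym\curl\boldsymbol{\tau}=0$.

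For the reverse inclusion ($\supseteq$), I will perform a BGG correction mirroring the diagonal arrows $-2\vskw$ and $\tr$ of diagram \eqref{eq:BGGdivdivST}, by coupling \eqref{eq:fembubbledivdivcomplex} with the bubble de Rham complex \eqref{eq:femderhambubblecomplex}. Pick $\boldsymbol{\sigma}\in\mathbb B^{\div\div}_{k}(\boldsymbol{r}_2;\mathbb S\cap\mathbb T)\cap\ker(\div\div)\subseteq\mathbb B^{\div\div}_{k}(\boldsymbol{r}_2;\mathbb S)\cap\ker(\div\div)$. Exactness of \eqref{eq:fembubbledivdivcomplex} produces $\widetilde{\boldsymbol{\tau}}\in\mathbb B^{\sym\curl}_{k+1}(\boldsymbol{r}_1;\mathbb T)$ with $\sym\curl\widetilde{\boldsymbol{\tau}}=\boldsymbol{\sigma}$. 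Taking the trace and using $\tr\boldsymbol{\sigma}=0$ gives $\div\vskw\widetilde{\boldsymbol{\tau}}=0$; combined with \eqref{eq:20240310}, this places $-2\vskw\widetilde{\boldsymbol{\tau}}\in\mathbb B^{\div}_{k+1}(\boldsymbol{r}_1)\cap\ker(\div)$. Invoking \eqref{eq:femderhambubblecomplex} at the shift that places $\mathbb B^{\div}_{k+1}(\boldsymbol{r}_1)$ in its divergence slot yields $\boldsymbol{w}\in\mathbb B^{\curl}_{k+2}(\boldsymbol{r}_1+1)\subseteq\mathbb B_{k+2}(\boldsymbol{r}_1+1;\mathbb R^3)$ satisfying $\curl\boldsymbol{w}=-2\vskw\widetilde{\boldsymbol{\tau}}$.

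Now set $\boldsymbol{\tau}:=\widetilde{\boldsymbol{\tau}}+\dev\grad\boldsymbol{w}$. The pointwise identity $-2\vskw(\dev\grad\boldsymbol{w})=-2\vskw\grad\boldsymbol{w}=-\curl\boldsymbol{w}$ (using $\vskw I=0$ and $2\vskw\grad\boldsymbol{w}=\curl\boldsymbol{w}$) forces $-2\vskw\boldsymbol{\tau}=\curl\boldsymbol{w}-\curl\boldsymbol{w}=0$, so $\boldsymbol{\tau}\in\mathbb S$; tracelessness of both summands yields $\boldsymbol{\tau}\in\mathbb T$; and the complex property $\sym\curl\circ\dev\grad=0$ from \eqref{eq:fembubbledivdivcomplex} yields $\sym\curl\boldsymbol{\tau}=\sym\curl\widetilde{\boldsymbol{\tau}}=\boldsymbol{\sigma}$. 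Since $\dev\grad\boldsymbol{w}\in\mathbb B^{\sym\curl}_{k+1}(\boldsymbol{r}_1;\mathbb T)$ by the mapping property of \eqref{eq:fembubbledivdivcomplex} applied with first slot $\mathbb B_{k+2}(\boldsymbol{r}_1+1;\mathbb R^3)$, we conclude $\boldsymbol{\tau}\in\mathbb B^{\sym\curl}_{k+1}(\boldsymbol{r}_1;\mathbb T)\cap L^2(\mathbb S\cap\mathbb T)=\mathbb B^{\sym\curl}_{k+1}(\boldsymbol{r}_1;\mathbb S\cap\mathbb T)$, completing the argument.

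The most delicate part is the bookkeeping of smoothness vectors: I must invoke the bubble de Rham complex at the shifted grading so that its $\div$-slot is exactly $\mathbb B^{\div}_{k+1}(\boldsymbol{r}_1)$ (which demands the div-vector stability of $\boldsymbol{r}_1$), while simultaneously its middle slot $\mathbb B_{k+2}(\boldsymbol{r}_1+1;\mathbb R^3)$ aligns with the first slot of the bubble div\,div complex used to map $\dev\grad\boldsymbol{w}$ into $\mathbb B^{\sym\curl}_{k+1}(\boldsymbol{r}_1;\mathbb T)$. Both alignments are precisely engineered by the stated hypotheses ($\boldsymbol{r}_1$ div-vector stable, $\boldsymbol{r}_2$ div\,div stable, $k\geq 2r_2^{\texttt{v}}+4$), and no dimension count beyond the exactness of the invoked complexes and the auxiliary identities \eqref{eq:trsymcurlTbubble} and \eqref{eq:20240310} is required.
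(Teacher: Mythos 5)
Your proposal is correct and follows essentially the same route as the paper: the forward inclusion via the bubble div\,div complex \eqref{eq:fembubbledivdivcomplex} and the identity $\tr\curl\boldsymbol{\tau}=2\div\vskw\boldsymbol{\tau}$, and the reverse inclusion via the anti-commutative diagram coupling \eqref{eq:fembubbledivdivcomplex} with the shifted bubble de Rham complex \eqref{eq:femderhambubblecomplex}, using \eqref{eq:20240310} and a $\dev\grad$ correction to symmetrize the preimage. The only difference is a harmless rescaling of the de Rham potential (your $\boldsymbol{w}$ equals $-2\boldsymbol{v}$ in the paper's notation).
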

\begin{proof}
First, $\sym\curl\mathbb B_{k+1}^{\sym\curl}(\boldsymbol{r}_1;\mathbb S\cap\mathbb T)\subseteq\mathbb B_{k}^{\div\div}(\boldsymbol{r}_2;\mathbb S\cap\mathbb T)\cap\ker(\div\div)$ follows from complex \eqref{eq:fembubbledivdivcomplex}, and the identity $\tr\curl\boldsymbol{\tau}=2\div\vskw\boldsymbol{\tau}$. Then we prove the other side by
the anti-commutative diagram
\begin{equation*}
\adjustbox{scale=0.85,center}{%
\begin{tikzcd}[column sep=normal, row sep=normal]
&
\mathbb B_{k+2} (\boldsymbol{r}_1+1;\mathbb R^3)
  \arrow{r}{\curl}
 &
\mathbb B_{k+1}^{\div}(\boldsymbol{r}_1)
\arrow{r}{\div}
& \mathbb B_{k}(\boldsymbol{r}_2)/\mathbb R \\
\mathbb B_{k+2} (\boldsymbol{r}_1+1;\mathbb R^3)
 \arrow[ur,swap,"{\rm id}"'] \arrow{r}{\dev\grad}
 & 
\mathbb B_{k+1}^{\sym\curl}(\boldsymbol{r}_1;\mathbb T)
 \arrow[ur,swap,"- 2\vskw"'] \arrow{r}{\sym\curl}
 & 
\mathbb B_{k}^{\div\div}(\boldsymbol{r}_2;\mathbb S)\cap\ker(\div\div)
\arrow[r]
& 0.
\end{tikzcd}
}
\end{equation*}

For $\boldsymbol{\sigma}\in \mathbb B_{k}^{\div\div}(\boldsymbol{r}_2;\mathbb S\cap\mathbb T)\cap\ker(\div\div)$, by the exactness of the bottom complex in the last diagram, there exists a $\widetilde{\boldsymbol{\tau}}\in\mathbb B_{k+1}^{\sym\curl}(\boldsymbol{r}_1;\mathbb T)$ such that $\sym\curl\widetilde{\boldsymbol{\tau}}=\boldsymbol{\sigma}$. The fact $\tr\boldsymbol{\sigma}=0$ means $\div\vskw\widetilde{\boldsymbol{\tau}}=0$, which together with \eqref{eq:20240310} implies $\vskw\widetilde{\boldsymbol{\tau}}\in\mathbb B_{k+1}^{\div}(\boldsymbol{r}_1)\cap\ker(\div)$. By the top complex in the last diagram, there exists a $\boldsymbol{v}\in\mathbb B_{k+2}(\boldsymbol{r}_1+1;\mathbb R^3)$ such that $\vskw\widetilde{\boldsymbol{\tau}}=\curl\boldsymbol{v}$.
Take $\boldsymbol{\tau}=\widetilde{\boldsymbol{\tau}}-2\dev\grad\boldsymbol{v}\in \mathbb B_{k+1}^{\sym\curl}(\boldsymbol{r}_1;\mathbb T)$. Furthermore,
$\vskw\boldsymbol{\tau}=\vskw\widetilde{\boldsymbol{\tau}}-\curl\boldsymbol{v}=0$, i.e. $\boldsymbol{\tau}$ is symmetric. Finally, $\sym\curl\boldsymbol{\tau}=\sym\curl\widetilde{\boldsymbol{\tau}}=\boldsymbol{\sigma}$.
\end{proof}




\begin{theorem}\label{thm:bubbleCHcomplex3d}
Let smoothness vectors $(\boldsymbol{r}_0, \boldsymbol{r}_1, \boldsymbol{r}_2, \boldsymbol{r}_3)$ satisfy \eqref{eq:rCHbubble}.
Assume $\boldsymbol r_1$ satisfies the div-vector stability condition, $\boldsymbol r_2$ satisfies the div\,div stability condition, and $k\geq 2r_2^{\texttt{v}}+4$. 
Then the bubble conformal Hessian complex \eqref{polybubbleCHcomplex3d} is exact.
\end{theorem}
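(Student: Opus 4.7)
My plan is to assemble the three preceding lemmas into a short Bernstein--Gelfand--Gelfand chase. With the identities \eqref{eq:divdivontobubble} and \eqref{eq:symcurlontobubble} already in hand, only the head exactness (injectivity of $\dev\hess$) and the exactness at $\mathbb B_{k+1}^{\sym\curl}(\boldsymbol r_1;\mathbb S\cap\mathbb T)$ will require new argument. The complex property $\sym\curl\circ\dev\hess=0$ and $\div\div\circ\sym\curl=0$ is justified pointwise: $\curl\hess=0$ row-wise, $\curl((\Delta u)\boldsymbol I)$ is skew (hence killed by $\sym$), and $\partial_i\partial_j(\sym\curl\boldsymbol\tau)_{ij}=0$ by contracting the antisymmetric $\epsilon_{jkl}$ against the symmetric $\partial_j\partial_k$.

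For injectivity, any $u\in\mathbb B_{k+3}(\boldsymbol r_0)$ with $\dev\hess u=0$ must lie in ${\rm CH}=\mathbb P_1\oplus{\rm span}\{\boldsymbol x^{\intercal}\boldsymbol x\}$. The constraint \eqref{eq:rCHbubble} gives $\boldsymbol r_0\geq(3,2,1)^{\intercal}$, so the vertex DoFs force $u$, $\nabla u$ and $\nabla^2 u$ to vanish at every vertex of $T$. The constant Hessian of the $\boldsymbol x^{\intercal}\boldsymbol x$-component is then zero, eliminating that coefficient; $\nabla u(\texttt{v})=0$ eliminates the linear part; $u(\texttt{v})=0$ eliminates the constant.

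The heart of the argument is exactness at $\mathbb B_{k+1}^{\sym\curl}(\boldsymbol r_1;\mathbb S\cap\mathbb T)$. Given $\boldsymbol\tau$ in that space with $\sym\curl\boldsymbol\tau=0$, I view it inside the larger space $\mathbb B_{k+1}^{\sym\curl}(\boldsymbol r_1;\mathbb T)$. The bottom row of the anti-commutative diagram appearing just before Lemma~\ref{lem:divdivontobubble}, which is the bubble div\,div complex~\eqref{eq:fembubbledivdivcomplex} applied with shifted smoothness $\boldsymbol r_1+1$ on the vector slot, yields some $\boldsymbol v\in\mathbb B_{k+2}(\boldsymbol r_1+1;\mathbb R^3)$ with $\boldsymbol\tau=\dev\grad\boldsymbol v$. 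Since $\boldsymbol\tau$ is symmetric, $\vskw(\dev\grad\boldsymbol v)=\vskw(\grad\boldsymbol v)=\tfrac12\curl\boldsymbol v=0$, so $\curl\boldsymbol v=0$. Because $(\boldsymbol r_1+1)^f\geq 0$, the identification $\mathbb B_{k+2}(\boldsymbol r_1+1;\mathbb R^3)=\mathbb B_{k+2}^{\curl}(\boldsymbol r_1+1)$ holds, so the bubble de Rham complex \eqref{eq:femderhambubblecomplex} applied with outermost smoothness $\boldsymbol r_0=\boldsymbol r_1+2$ produces $u\in\mathbb B_{k+3}(\boldsymbol r_0)$ with $\grad u=\boldsymbol v$. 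Then $\boldsymbol\tau=\dev\grad\grad u=\dev\hess u$, as required.

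The main obstacle will be the bookkeeping needed to verify that the shifted smoothness vectors fed into \eqref{eq:fembubbledivdivcomplex} and \eqref{eq:femderhambubblecomplex} still satisfy the stability hypotheses of those lemmas: the div-vector stability assumed on $\boldsymbol r_1$ is exactly what makes \eqref{eq:femderhambubblecomplex} exact at the curl-position with outermost smoothness $\boldsymbol r_0$, the div\,div stability on $\boldsymbol r_2$ transfers directly to \eqref{eq:fembubbledivdivcomplex}, and the degree bound $k\geq 2r_2^{\texttt{v}}+4$ comfortably exceeds the thresholds $k+1\geq 2r_1^{\texttt{v}}$ and $k\geq 2r_2^{\texttt{v}}+1$ demanded by the two sub-complexes. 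Once those checks are verified, the diagram chase closes, and together with \eqref{eq:symcurlontobubble} and \eqref{eq:divdivontobubble} the exactness of \eqref{polybubbleCHcomplex3d} follows.
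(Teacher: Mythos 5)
Your proposal is correct and follows essentially the same route as the paper: after invoking \eqref{eq:divdivontobubble} and \eqref{eq:symcurlontobubble}, it reduces to exactness at $\mathbb B_{k+1}^{\sym\curl}(\boldsymbol r_1;\mathbb S\cap\mathbb T)$, which you settle exactly as the paper does—write $\boldsymbol\tau=\dev\grad\boldsymbol v$ via the bubble div\,div complex \eqref{eq:fembubbledivdivcomplex}, use symmetry of $\boldsymbol\tau$ to get $\curl\boldsymbol v=0$, and conclude $\boldsymbol v=\grad q$ from the bubble de Rham complex \eqref{eq:femderhambubblecomplex}. The extra details you supply (the complex property, injectivity of $\dev\hess$ on the bubble space, and the smoothness/degree bookkeeping for the shifted complexes) are points the paper treats as immediate, and your checks of them are accurate.
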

\begin{proof}
Thanks to \eqref{eq:divdivontobubble} and \eqref{eq:symcurlontobubble}, it suffices to prove
\begin{equation*}
\dev\hess\mathbb B_{k+3}(\boldsymbol{r}_0)=\mathbb B_{k+1}^{\sym\curl}(\boldsymbol{r}_1;\mathbb S\cap\mathbb T)\cap\ker(\sym\curl).
\end{equation*}
The inclusion $\dev\hess\mathbb B_{k+3}(\boldsymbol{r}_0)\subseteq\mathbb B_{k+1}^{\sym\curl}(\boldsymbol{r}_1;\mathbb S\cap\mathbb T)\cap\ker(\sym\curl)$ is trivial. For $\boldsymbol{\tau}\in\mathbb B_{k+1}^{\sym\curl}(\boldsymbol{r}_1;\mathbb S\cap\mathbb T)\cap\ker(\sym\curl)$, by complex \eqref{eq:fembubbledivdivcomplex}, $\boldsymbol{\tau}=\dev\grad\boldsymbol{v}$ with $\boldsymbol{v}\in\mathbb B_{k+2}(\boldsymbol{r}_1+1;\mathbb R^3)$. Since $\boldsymbol{\tau}$ is symmetric,
\begin{equation*}
\curl\boldsymbol{v}=2\vskw(\dev\grad\boldsymbol{v})=2\vskw\boldsymbol{\tau}=0.
\end{equation*}
Apply bubble complex \eqref{eq:femderhambubblecomplex} to get $\boldsymbol{v}=\grad q$ with $q\in\mathbb B_{k+3}(\boldsymbol{r}_0)$. That is $\boldsymbol{\tau}=\dev\hess q\in\dev\hess\mathbb B_{k+3}(\boldsymbol{r}_0)$.
\end{proof}

\begin{lemma}
Under the same assumption of Theorem~\ref{thm:bubbleCHcomplex3d},
we have 
\begin{align}
\label{eq:divdivbubbleSTdim}
\dim\mathbb B_{k}^{\div\div}(\boldsymbol{r}_2;\mathbb S\cap\mathbb T)&=\dim\mathbb B_{k}^{\div}(\boldsymbol{r}_2;\mathbb T)-\dim\mathbb B_{k}^{\curl}(\boldsymbol{r}_2) \\
\notag
&\quad -3\dim\mathbb B_{k-1}(\boldsymbol{r}_2\ominus1) +\dim\mathbb B_{k-1}^{\div}(\boldsymbol{r}_2\ominus1).
\end{align}
\end{lemma}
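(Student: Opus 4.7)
The identity \eqref{eq:divdivbubbleSTdim} mixes three different tensor types ($\mathbb S\cap\mathbb T$, $\mathbb T$, scalars/vectors) at two different indices ($k$ and $k-1$). My plan is to prove it in two conceptually separate steps: first an algebraic splitting that handles the $\mathbb T$--$\mathbb{S}\cap\mathbb T$ comparison at level $k$, and then a boundary-trace analysis that explains the mismatch between $\div$ and $\div\div$ and produces the level $k-1$ terms.

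\emph{Step 1: algebraic reduction via $\mathbb T = (\mathbb S\cap\mathbb T)\oplus\mskw(\mathbb R^3)$.} Every $\boldsymbol\tau\in\mathbb B_k(\boldsymbol r_2;\mathbb T)$ decomposes uniquely as $\boldsymbol\tau=\boldsymbol\sigma+\mskw\boldsymbol w$ with $\boldsymbol\sigma\in\mathbb B_k(\boldsymbol r_2;\mathbb S\cap\mathbb T)$ and $\boldsymbol w\in\mathbb B_k(\boldsymbol r_2;\mathbb R^3)$. The boundary condition $\boldsymbol\tau\boldsymbol n|_{\partial T}=0$ splits into a normal-normal scalar condition $\boldsymbol n^{\intercal}\boldsymbol\sigma\boldsymbol n|_{\partial T}=0$ on $\boldsymbol\sigma$ alone, and a tangential condition $\Pi_f\boldsymbol\sigma\boldsymbol n+\boldsymbol w\times\boldsymbol n|_{\partial T}=0$ coupling $\boldsymbol\sigma$ and $\boldsymbol w$. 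A polynomial lifting shows the tangential trace of vector bubbles is surjective onto tangential boundary data, and together with the obvious independence of normal and tangential components this yields, by rank--nullity, the identity
\begin{equation*}
\dim\mathbb B_k^{\div}(\boldsymbol r_2;\mathbb T)-\dim\mathbb B_k^{\curl}(\boldsymbol r_2) = \dim\mathcal F,
\end{equation*}
where $\mathcal F := \{\boldsymbol\sigma\in\mathbb B_k(\boldsymbol r_2;\mathbb S\cap\mathbb T): \boldsymbol n^{\intercal}\boldsymbol\sigma\boldsymbol n|_{\partial T}=0\}$. This uses $\mskw(\mathbb B_k^{\curl}(\boldsymbol r_2))$ as exactly the kernel of the tangential lifting on the $\boldsymbol w$-factor.

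\emph{Step 2: codimension of the $\div\div$ constraints inside $\mathcal F$.} By definition $\mathbb B_k^{\div\div}(\boldsymbol r_2;\mathbb S\cap\mathbb T)\subseteq\mathcal F$, since $\mathcal F$ only imposes the single scalar face condition $\boldsymbol n^{\intercal}\boldsymbol\sigma\boldsymbol n|_{\partial T}=0$. The extra constraints are $\tr_2^{\div\div}\boldsymbol\sigma|_{\partial T}=0$ and the edge conditions $(\boldsymbol n_i^{\intercal}\boldsymbol\sigma\boldsymbol n_j)|_e=0$. The plan is to show the corresponding trace map
\begin{equation*}
\mathcal F\;\longrightarrow\;\text{(face }\tr_2^{\div\div}\text{-data)}\;\oplus\;\text{(edge normal-normal data)}
\end{equation*}
is surjective onto a space of dimension $3\dim\mathbb B_{k-1}(\boldsymbol r_2\ominus1)-\dim\mathbb B_{k-1}^{\div}(\boldsymbol r_2\ominus1)$, equivalently the dimension of the normal-trace quotient $\mathbb B_{k-1}(\boldsymbol r_2\ominus 1;\mathbb R^3)/\mathbb B_{k-1}^{\div}(\boldsymbol r_2\ominus 1)$. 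The key observation is that $\tr_2^{\div\div}\boldsymbol\sigma = \boldsymbol n^{\intercal}\div\boldsymbol\sigma+\div_f(\boldsymbol\sigma\boldsymbol n)$ is built from $\div\boldsymbol\sigma$ (which lives at degree $k-1$, smoothness $\boldsymbol r_2\ominus 1$) together with surface-divergence data on $\boldsymbol\sigma\boldsymbol n$; reorganizing the face and edge pieces yields exactly the boundary-trace pattern of a vector bubble at that lower level. Surjectivity is established by lifting normal-trace data back through $\div$, using the stability assumptions on $\boldsymbol r_2$ and the div\,div stability condition.

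\emph{Conclusion.} Combining Steps 1 and 2 gives
$\dim\mathbb B_k^{\div\div}(\boldsymbol r_2;\mathbb S\cap\mathbb T)=\dim\mathcal F-(3D-E)=(B-C)-3D+E$, which is exactly \eqref{eq:divdivbubbleSTdim}. The main obstacle is Step 2: verifying that the three additional trace maps ($\tr_2^{\div\div}$ per face and normal-normal edge components) can be packaged into a surjection onto precisely the normal-trace quotient at level $k-1$. This requires identifying compatibilities at edges and vertices and is where the stability hypotheses on $\boldsymbol r_2$ are genuinely used; the result from Theorem~\ref{thm:bubbleCHcomplex3d} (Euler characteristic of the exact bubble conformal Hessian complex) can also be invoked as a consistency check on the final dimension count.
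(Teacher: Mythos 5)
Your proposal has a genuine gap: neither of its two key claims is actually proved, and the second is essentially the lemma itself in disguise. In Step 1, the identity $\dim\mathbb B_k^{\div}(\boldsymbol r_2;\mathbb T)-\dim\mathbb B_k^{\curl}(\boldsymbol r_2)=\dim\mathcal F$, with $\mathcal F=\{\boldsymbol\sigma\in\mathbb B_k(\boldsymbol r_2;\mathbb S\cap\mathbb T):(\boldsymbol n^{\intercal}\boldsymbol\sigma\boldsymbol n)|_{\partial T}=0\}$, follows by rank--nullity from the splitting $\boldsymbol\tau=\boldsymbol\sigma+\mskw\boldsymbol w$ only if the map $\boldsymbol\tau\mapsto\boldsymbol\sigma$ is onto $\mathcal F$, i.e.\ only if for every $\boldsymbol\sigma\in\mathcal F$ the tangential data $-\Pi_f\boldsymbol\sigma\boldsymbol n$ on the four faces is realizable as $\boldsymbol w\times\boldsymbol n|_{\partial T}$ for some $\boldsymbol w\in\mathbb B_k(\boldsymbol r_2;\mathbb R^3)$. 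That is a nontrivial trace characterization of vector bubbles: one must check the edge compatibility of the prescriptions coming from two adjacent faces (where only the two conditions $\boldsymbol n_{f_i}^{\intercal}\boldsymbol\sigma\boldsymbol n_{f_i}|_e=0$ are available) together with the vanishing orders at vertices and edges dictated by $\boldsymbol r_2$; the phrase ``a polynomial lifting shows'' is not an argument, and this is precisely the type of fact the paper imports from the bubble characterizations of \cite{ChenHuang2024} (compare the identity \eqref{eq:20240310}). Step 2 is worse: granting Step 1, the assertion that the residual constraints ($\tr_2^{\div\div}$ on faces plus the edge normal--normal values) cut $\mathcal F$ down by exactly $3\dim\mathbb B_{k-1}(\boldsymbol r_2\ominus1)-\dim\mathbb B_{k-1}^{\div}(\boldsymbol r_2\ominus1)$ is equivalent to \eqref{eq:divdivbubbleSTdim} itself. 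You offer only the heuristic that $\tr_2^{\div\div}$ is ``built from $\div\boldsymbol\sigma$'', you do not identify the target space of the combined trace map (in particular its edge and vertex compatibility conditions), and you do not prove surjectivity; indeed you explicitly flag this packaging as the unresolved obstacle. So the proof is not complete.

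The gap is also avoidable, because the hypothesis ``under the same assumption of Theorem~\ref{thm:bubbleCHcomplex3d}'' already puts the exactness of the bubble conformal Hessian complex \eqref{polybubbleCHcomplex3d} at your disposal, and the paper's proof is a short Euler-characteristic computation: the alternating sum of dimensions in \eqref{polybubbleCHcomplex3d} expresses $\dim\mathbb B_{k}^{\div\div}(\boldsymbol{r}_2;\mathbb S\cap\mathbb T)$ through $\dim\mathbb B_{k+1}^{\sym\curl}(\boldsymbol{r}_1;\mathbb S\cap\mathbb T)$, $\dim\mathbb B_{k+3}(\boldsymbol{r}_0)$ and $\dim\mathbb B_{k-2}(\boldsymbol{r}_3)$; the geometric decomposition \eqref{eq:symcurlSTbubbledecomp} evaluates $\dim\mathbb B_{k+1}^{\sym\curl}(\boldsymbol{r}_1;\mathbb S\cap\mathbb T)$, and the alternating-sum identities of the bubble Hessian complex \eqref{eq:fembubblehessiancomplex} and the bubble de Rham complex \eqref{eq:femderhambubblecomplex} eliminate the auxiliary dimensions, giving \eqref{eq:divdivbubbleSTdim} by pure arithmetic. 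You relegate Theorem~\ref{thm:bubbleCHcomplex3d} to a ``consistency check''; it is in fact the engine of the intended, and much shorter, proof.
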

\begin{proof}
By the exactness of bubble complex \eqref{polybubbleCHcomplex3d}, we have 
$
\dim\mathbb B_{k}^{\div\div}(\boldsymbol{r}_2;\mathbb S\cap\mathbb T)=\dim\mathbb B_{k+1}^{\sym\curl}(\boldsymbol{r}_1;\mathbb S\cap\mathbb T)+\dim\mathbb B_{k-2}(\boldsymbol{r}_3)-\dim\mathbb B_{k+3}(\boldsymbol{r}_0)-5.
$
It follows from \eqref{eq:symcurlSTbubbledecomp} that
\begin{equation*}
\dim\mathbb B_{k+1}^{\sym\curl}(\boldsymbol{r}_1;\mathbb S\cap\mathbb T)=2\dim\mathbb B_{k+1}((\boldsymbol{r}_1)_+)+\dim\mathbb B_{k+1}^{\curl}(\boldsymbol{r}_1).
\end{equation*}
Then 
\begin{align*}
\dim\mathbb B_{k}^{\div\div}(\boldsymbol{r}_2;\mathbb S\cap\mathbb T)&=2\dim\mathbb B_{k+1}((\boldsymbol{r}_1)_+)+\dim\mathbb B_{k+1}^{\curl}(\boldsymbol{r}_1) +\dim\mathbb B_{k-2}(\boldsymbol{r}_3) \\
&\quad-\dim\mathbb B_{k+3}(\boldsymbol{r}_0)-5.
\end{align*}
By the bubble Hessian complex \eqref{eq:fembubblehessiancomplex} and the bubble de Rham complex \eqref{eq:femderhambubblecomplex}, 
\begin{equation*}
\dim\mathbb B_{k+3}(\boldsymbol{r}_0)-\dim\mathbb B_{k+1}^{\curl}(\boldsymbol{r}_1;\mathbb S)+\dim\mathbb B_{k}^{\div}(\boldsymbol{r}_2;\mathbb T)-3\dim\mathbb B_{k-1}(\boldsymbol{r}_2\ominus1)+4=0,
\end{equation*}
\begin{equation*}
\dim\mathbb B_{k+1}((\boldsymbol{r}_1)_+)-\dim\mathbb B_{k}^{\curl}(\boldsymbol{r}_2)+\dim\mathbb B_{k-1}^{\div}(\boldsymbol{r}_2
\ominus1) -\dim\mathbb B_{k-2}(\boldsymbol{r}_3)+1=0.
\end{equation*}
Noting that $\dim\mathbb B_{k+1}^{\curl}(\boldsymbol{r}_1;\mathbb S)=3\dim\mathbb B_{k+1}((\boldsymbol{r}_1)_+)+\dim\mathbb B_{k+1}^{\curl}(\boldsymbol{r}_1)$,
we arrive at \eqref{eq:divdivbubbleSTdim} by summing the last three equations.
\end{proof}

\subsection{Bubble conformal elasticity complex in three dimensions}
Next, we will proceed to derive the bubble conformal elasticity complex \eqref{polybubbleCEcomplex3d}
using the tilde operation in \cite{ChenHuang2025} and the BGG framework.

\subsubsection{Characterization of $H(\cott)$ bubble space}
Any tensor in $\mathbb B^{\cott}_{k}(\boldsymbol{r};\mathbb S\cap\mathbb T)$ vanishes on edges
\cite{HuLinShi2023}.
When $\boldsymbol{r}\geq(8,4,2)^{\intercal}$, $\mathbb B_{k+3}^{\cott}(\boldsymbol{r};\mathbb{S}\cap\mathbb{T})=\mathbb B_{k+3}(\boldsymbol{r};\mathbb{S}\cap\mathbb{T})$. Next we give a characterization of $\mathbb B_{k+3}^{\cott}(\boldsymbol{r};\mathbb{S}\cap\mathbb{T})$ for $r^f=1$.

\begin{lemma}
Let $\boldsymbol{r}\geq(6,3,1)^{\intercal}$ with $r^f=1$. We have
\begin{align}
\label{eq:cottbubblerf1}
\mathbb B_{k+3}^{\cott}(\boldsymbol{r};\mathbb{S}\cap\mathbb{T})=\{\boldsymbol{\tau}\in\mathbb B_{k+3}(\boldsymbol{r};\mathbb{S}\cap\mathbb{T})&: \partial_{nn}(\boldsymbol{t}_1^{\intercal}\boldsymbol{\tau}\boldsymbol{t}_1-\boldsymbol{t}_2^{\intercal}\boldsymbol{\tau}\boldsymbol{t}_2)|_f=0, \\
\notag
&\;\;\; \partial_{nn}(\boldsymbol{t}_1^{\intercal}\boldsymbol{\tau}\boldsymbol{t}_2)|_f=0, f\in\Delta_2(T)\},
\end{align}
where $\boldsymbol{n}$ is a unit normal vector of $f$, and $\boldsymbol{t}_1$ and $\boldsymbol{t}_2$ are two orthonormal tangential vectors of $f$.
Then 
\begin{align}
\label{eq:cottbubblerf2}
\mathbb B_{k+3}^{\cott}(\boldsymbol{r};\mathbb{S}\cap\mathbb{T})&=\{\boldsymbol{\tau}=b_T^2\boldsymbol{q}: \boldsymbol{q}\in \mathbb B_{k-5}(\bar{\boldsymbol{r}};\mathbb{S}\cap\mathbb{T}),  \\
\notag
&\qquad (\boldsymbol{t}_1^{\intercal}\boldsymbol{q}\boldsymbol{t}_1-\boldsymbol{t}_2^{\intercal}\boldsymbol{q}\boldsymbol{t}_2)|_f=0, (\boldsymbol{t}_1^{\intercal}\boldsymbol{q}\boldsymbol{t}_2)|_f=0, f\in\Delta_2(T)\},
\end{align}
where $\bar{\boldsymbol{r}}:=(r^{\texttt{v}}-6, r^e-4, -1)^{\intercal}$.
\end{lemma}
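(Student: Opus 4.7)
The plan is to prove \eqref{eq:cottbubblerf1} first by showing that the three cott traces collapse dramatically under the hypothesis $\boldsymbol{r}\geq(6,3,1)^{\intercal}$ with $r^f=1$. Any $\boldsymbol{\tau}\in\mathbb B_{k+3}(\boldsymbol{r};\mathbb S\cap\mathbb T)$ satisfies $\boldsymbol{\tau}|_{\partial T}=0$ and $\partial_n\boldsymbol{\tau}|_f=0$ on each face $f$. Directly from the definitions, $\tr_1^{\cott}(\boldsymbol{\tau})=\sym(\Pi_f\boldsymbol{\tau}\times\boldsymbol{n})=0$ since $\boldsymbol{\tau}|_f=0$, and $\tr_2^{\inc}(\boldsymbol{\tau})=2\defm_f(\boldsymbol{n}\cdot\boldsymbol{\tau}\Pi_f)-\Pi_f\partial_n\boldsymbol{\tau}\Pi_f=0$ since both inputs vanish on $f$, forcing $\tr_2^{\cott}(\boldsymbol{\tau})=0$. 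So only $\tr_3^{\cott}(\boldsymbol{\tau})=0$ remains to be characterized.

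For the third trace, set $\boldsymbol{\sigma}=\sym\curl\boldsymbol{\tau}$. The vanishing of $\boldsymbol{\tau}$ and $\partial_n\boldsymbol{\tau}$ on $f$ implies that every first derivative of $\boldsymbol{\tau}$ vanishes on $f$, so $\boldsymbol{\sigma}|_f=0$. Therefore $\defm_f(\boldsymbol{n}\cdot\boldsymbol{\sigma}\Pi_f)=0$ and $\tr_2^{\inc}(\boldsymbol{\sigma})=-\Pi_f\partial_n\boldsymbol{\sigma}\Pi_f$. I would then compute $\Pi_f\partial_n(\curl\boldsymbol{\tau})\Pi_f$ in a local orthonormal frame $(\boldsymbol{t}_1,\boldsymbol{t}_2,\boldsymbol{n})$: using $\partial_{t_j}\partial_n\boldsymbol{\tau}|_f=0$, the only surviving entries come from $\partial_n^2\tau_{ij}|_f$. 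After symmetrization and use of $\boldsymbol{\tau}\in\mathbb S$, the resulting $2\times 2$ block is
\[
-\Pi_f\partial_n\boldsymbol{\sigma}\Pi_f|_f=\begin{pmatrix}-\partial_{nn}\tau_{12} & \tfrac12(\partial_{nn}\tau_{11}-\partial_{nn}\tau_{22})\\ \tfrac12(\partial_{nn}\tau_{11}-\partial_{nn}\tau_{22}) & \partial_{nn}\tau_{12}\end{pmatrix}\Big|_f,
\]
which is already traceless, so $\dev_f$ acts as the identity. Hence $\tr_3^{\cott}(\boldsymbol{\tau})=0$ is equivalent to $\partial_{nn}(\boldsymbol{t}_1^{\intercal}\boldsymbol{\tau}\boldsymbol{t}_1-\boldsymbol{t}_2^{\intercal}\boldsymbol{\tau}\boldsymbol{t}_2)|_f=0$ and $\partial_{nn}(\boldsymbol{t}_1^{\intercal}\boldsymbol{\tau}\boldsymbol{t}_2)|_f=0$, proving \eqref{eq:cottbubblerf1}.

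For \eqref{eq:cottbubblerf2}, since every $\boldsymbol{\tau}\in\mathbb B_{k+3}(\boldsymbol{r};\mathbb S\cap\mathbb T)$ vanishes to order $2$ on each of the four faces, it is divisible by $\lambda_i^2$ for each barycentric coordinate, hence by $b_T^2=\lambda_0^2\lambda_1^2\lambda_2^2\lambda_3^2$. Writing $\boldsymbol{\tau}=b_T^2\boldsymbol{q}$ with $\boldsymbol{q}\in\mathbb P_{k-5}(T;\mathbb S\cap\mathbb T)$, the vanishing orders of $b_T^2$ at vertices/edges/faces (respectively $6,4,2$) translate the smoothness $\boldsymbol{r}$ of $\boldsymbol{\tau}$ into smoothness $\bar{\boldsymbol{r}}=(r^{\texttt{v}}-6,r^e-4,-1)^{\intercal}$ of $\boldsymbol{q}$, so $\boldsymbol{q}\in\mathbb B_{k-5}(\bar{\boldsymbol{r}};\mathbb S\cap\mathbb T)$. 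Since $b_T|_f=\partial_{t_j}b_T|_f=0$ for $j=1,2$, the Leibniz rule gives $\partial_{nn}\boldsymbol{\tau}|_f=2(\partial_n b_T|_f)^2\boldsymbol{q}|_f$, and $\partial_n b_T$ does not vanish on the interior of $f$. Applying \eqref{eq:cottbubblerf1} then shows the two face conditions on $\boldsymbol{\tau}$ are equivalent to $(\boldsymbol{t}_1^{\intercal}\boldsymbol{q}\boldsymbol{t}_1-\boldsymbol{t}_2^{\intercal}\boldsymbol{q}\boldsymbol{t}_2)|_f=0$ and $(\boldsymbol{t}_1^{\intercal}\boldsymbol{q}\boldsymbol{t}_2)|_f=0$, establishing \eqref{eq:cottbubblerf2}.

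The main obstacle is the coordinate computation of $\Pi_f\partial_n(\curl\boldsymbol{\tau})\Pi_f$: one must track exactly which mixed derivatives $\partial_n\partial_k\tau_{ij}$ survive on $f$ under the two face-vanishing conditions, and then verify that the symmetric part is automatically traceless on $f$ so that $\dev_f$ is redundant. Once this identification is made, the characterization via the second normal derivatives falls out cleanly, and the factorization step is standard.
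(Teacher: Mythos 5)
Your proposal is correct and follows essentially the same route as the paper: with $r^f=1$ the traces $\tr_1^{\cott}$ and $\tr_2^{\cott}$ vanish automatically, $\tr_3^{\cott}(\boldsymbol\tau)$ reduces to $-\Pi_f\partial_n(\sym\curl\boldsymbol\tau)\Pi_f$ (the paper gets the $\dev_f$-redundancy from \eqref{eq:trtrcott3} with $\tr_1^{\cott}(\boldsymbol\tau)=0$, you verify tracelessness directly in a frame), and the frame computation yields exactly the two $\partial_{nn}$ conditions. Your $b_T^2$-factorization argument for \eqref{eq:cottbubblerf2}, including $\partial_{nn}\boldsymbol\tau|_f=2(\partial_n b_T|_f)^2\boldsymbol q|_f$, is a correct filling-in of the step the paper leaves implicit.
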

\begin{proof}
Let $\boldsymbol{\tau}\in\mathbb B_{k+3}^{\cott}(\boldsymbol{r};\mathbb{S}\cap\mathbb{T})$.
Noting that $\nabla\boldsymbol{\tau}|_f=0$ for $f\in\Delta_2(T)$, 
by $\tr_3^{\cott}(\boldsymbol{\tau})=0$ and \eqref{eq:trtrcott3}, we have $\Pi_f\partial_n(\sym\curl\boldsymbol{\tau})\Pi_f=0$. Using $\nabla=\boldsymbol{t}_1\partial_{t_1}+\boldsymbol{t}_2\partial_{t_2}+\boldsymbol{n}\partial_{n}$, we further have $\partial_{nn}(\boldsymbol{n}\times\boldsymbol{\tau}\Pi_f-\Pi_f\boldsymbol{\tau}\times\boldsymbol{n})|_f=0$.
Thus, \eqref{eq:cottbubblerf1} is true, and \eqref{eq:cottbubblerf2} follows.  
\end{proof}


\begin{remark}\rm
Let $e$ be the edge with vertices $\texttt{v}_0$ and $\texttt{v}_1$.
It can be verified that 
\begin{equation*}
\{\sym(\boldsymbol{t}_{02}\otimes\nabla_{f_1}\lambda_3), \sym(\boldsymbol{t}_{e}\otimes\boldsymbol{n}_{f_i,e}), \boldsymbol{t}_{e}\otimes\boldsymbol{t}_{e}-\boldsymbol{n}_{f_i,e}\otimes\boldsymbol{n}_{f_i,e},i=2,3\}
\end{equation*}
 is a basis of $\mathbb S\cap \mathbb T$ at vertex $\texttt{v}_0$. Hence, for $\boldsymbol{\tau}=b_T^2\boldsymbol{q}\in\mathbb B_{k+3}^{\cott}(\boldsymbol{r};\mathbb{S}\cap\mathbb{T})$ with $\boldsymbol{q}\in\mathbb B_{k-5}(\bar{\boldsymbol{r}};\mathbb{S}\cap\mathbb{T})$ and $\bar{\boldsymbol{r}}=(r^{\texttt{v}}-6, r^e-4, -1)^{\intercal}$, $\boldsymbol{q}$ vanishes at vertex $\texttt{v}_0$. This means $r^{\texttt{v}}-6\geq0$ is required, i.e. $r^{\texttt{v}}\geq6$.
\end{remark}

\begin{lemma}
Let $\boldsymbol{r}\geq(6,3,1)^{\intercal}$. We have
\begin{equation}\label{eq:cottbubblerf3}
\mathbb B_{k+3}^{\cott}(\boldsymbol{r};\mathbb{S}\cap\mathbb{T})=\{\boldsymbol{\tau}\in\mathbb B_{k+3} (\boldsymbol{r}; \mathbb S\cap \mathbb T): S^{-1}\inc\boldsymbol{\tau}\in \mathbb B_{k+1}^{\curl}(\boldsymbol{r}-2;\mathbb S)\}.
\end{equation}
\end{lemma}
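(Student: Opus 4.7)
The plan is to prove the set equality by identifying the face-wise active constraints on each side and matching them directly. On the left, combining \eqref{eq:cottbubblerf1} (which covers the case $r^f=1$) with the observation that $\tr_1^{\cott}$, $\tr_2^{\cott}$, and $\tr_3^{\cott}$ all vanish automatically as soon as $r^f \geq 2$ (since then $\nabla^j\boldsymbol\tau|_f = 0$ for $j \leq 2$ trivializes every expression defining those traces), the space $\mathbb B_{k+3}^{\cott}(\boldsymbol r;\mathbb S\cap\mathbb T)$ is precisely $\mathbb B_{k+3}(\boldsymbol r;\mathbb S\cap\mathbb T)$ cut out by the two scalar face conditions $\partial_{nn}(\boldsymbol t_1^{\intercal}\boldsymbol\tau\boldsymbol t_1 - \boldsymbol t_2^{\intercal}\boldsymbol\tau\boldsymbol t_2)|_f = 0$ and $\partial_{nn}(\boldsymbol t_1^{\intercal}\boldsymbol\tau\boldsymbol t_2)|_f = 0$ on each face $f$ where $r^f=1$. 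The task is to show that these same conditions characterize when $\boldsymbol\sigma := S^{-1}\inc\boldsymbol\tau$ lies in $\mathbb B_{k+1}^{\curl}(\boldsymbol r-2;\mathbb S)$.

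For the smoothness part of $\mathbb B_{k+1}^{\curl}(\boldsymbol r-2;\mathbb S)$, the vertex and edge vanishing conditions on $\boldsymbol\sigma$ follow automatically from $\boldsymbol\tau \in \mathbb B_{k+3}(\boldsymbol r;\mathbb S\cap\mathbb T)$ because $\inc$ is a second-order operator; the face-smoothness condition is vacuous when $r^f=1$ and direct when $r^f\geq 2$. The tangential trace $\boldsymbol\sigma\times\boldsymbol n|_f = 0$ is likewise automatic when $r^f\geq 2$, since then $\boldsymbol\sigma|_f = 0$. So the whole statement reduces to one face at a time in the case $r^f=1$.

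The key calculation is the following. When $r^f=1$, $\boldsymbol\tau|_f = 0$ and $\nabla\boldsymbol\tau|_f = 0$, so on $f$ the only surviving second derivative of $\boldsymbol\tau$ is $\partial_n^2\boldsymbol\tau|_f$. Writing $(\inc\boldsymbol\tau)_{ij} = -\epsilon_{iab}\epsilon_{jkl}\partial_a\partial_l\tau_{bk}$ in the orthonormal frame $(\boldsymbol t_1,\boldsymbol t_2,\boldsymbol n)$ and restricting to $f$, only the terms with $a = l = $ the normal index contribute, yielding $(\inc\boldsymbol\tau)\boldsymbol n|_f = 0$ and an explicit in-plane $2\times 2$ block. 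Applying $S^{-1}$ and using the traceless constraint $\tau_{11}+\tau_{22}+\tau_{33}=0$, one finds $\Pi_f\boldsymbol\sigma\boldsymbol n|_f = 0$ automatically, while $\Pi_f\boldsymbol\sigma\Pi_f|_f$ turns out to be a traceless symmetric $2\times 2$ tensor whose two independent entries are scalar multiples of $\partial_{nn}(\tau_{11}-\tau_{22})|_f$ and $\partial_{nn}\tau_{12}|_f$. For symmetric $\boldsymbol\sigma$, the condition $\boldsymbol\sigma\times\boldsymbol n|_f = 0$ is equivalent to $\Pi_f\boldsymbol\sigma\Pi_f|_f = 0$ together with $\Pi_f\boldsymbol\sigma\boldsymbol n|_f = 0$, so it reduces precisely to the two scalar conditions in \eqref{eq:cottbubblerf1}, closing the equivalence.

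The main obstacle will be the careful frame-wise computation of $\boldsymbol\sigma|_f$, especially tracking how the traceless constraint $\tr\boldsymbol\tau=0$ collapses three a priori scalar conditions on the symmetric $2\times 2$ block $\Pi_f\boldsymbol\sigma\Pi_f|_f$ down to two independent ones. The identity \eqref{eq:trtrcott3} offers a useful consistency check: it yields $\tr_2^{\inc}(\sym\curl\boldsymbol\tau) = \tr_3^{\cott}(\boldsymbol\tau)$ whenever $\tr_1^{\cott}(\boldsymbol\tau) = 0$, so the two face conditions in \eqref{eq:cottbubblerf1} can alternatively be read off as the deviatoric symmetric part of $\tr_2^{\inc}(\sym\curl\boldsymbol\tau)|_f$, confirming the expected codimension in the direct computation.
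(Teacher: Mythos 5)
Your proposal is correct and follows essentially the same route as the paper: dispose of the case $r^f\geq 2$ trivially, then for $r^f=1$ use $\nabla\boldsymbol{\tau}|_f=0$ to reduce the membership $S^{-1}\inc\boldsymbol{\tau}\in\mathbb B_{k+1}^{\curl}(\boldsymbol{r}-2;\mathbb S)$ to a face condition on $\partial_{nn}\boldsymbol{\tau}$, which via the traceless constraint becomes exactly the two scalar conditions of \eqref{eq:cottbubblerf1}. Your frame-wise $\epsilon$-computation of $\inc\boldsymbol{\tau}|_f$ is just an explicit version of the paper's identity $(\nabla\times\boldsymbol{\tau}\times\nabla)|_f=\partial_{nn}(\boldsymbol n\times\boldsymbol{\tau}\times\boldsymbol n)$, and your extra observations (automatic vertex/edge smoothness of $S^{-1}\inc\boldsymbol{\tau}$, automatic vanishing of $\Pi_f\boldsymbol{\sigma}\boldsymbol n$) make explicit what the paper leaves implicit.
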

\begin{proof}
When $r^f\geq2$, both sides in \eqref{eq:cottbubblerf3} are $\mathbb B_{k+3}(\boldsymbol{r};\mathbb{S}\cap\mathbb{T})$, so $\eqref{eq:cottbubblerf3}$ is true. Next we assume $r^f=1$.
For $\boldsymbol{\tau}\in\mathbb B_{k+3}(\boldsymbol{r}_1;\mathbb{S}\cap\mathbb{T})$, by $\nabla\boldsymbol{\tau}|_f=0$ for $f\in\Delta_2(T)$, $S^{-1}\inc\boldsymbol{\tau}\in \mathbb B_{k+1}^{\curl}(\boldsymbol{r}-2;\mathbb S)$ is equivalent to 
\begin{equation*}
\Pi_f\big(\inc\boldsymbol{\tau}-\frac{1}{2}\tr(\inc\boldsymbol{\tau})I\big)\Pi_f=0,\quad f\in\Delta_2(T).
\end{equation*}
Thanks to $\nabla\boldsymbol{\tau}|_f=0$ again, 
we have $(\nabla\times\boldsymbol{\tau}\times\nabla)|_f=\partial_{nn}(\boldsymbol{n}\times\boldsymbol{\tau}\times\boldsymbol{n})$. Then
the last equation becomes
\begin{equation}\label{eq:20240202}
\Pi_f\big(\partial_{nn}(\boldsymbol{n}\times\boldsymbol{\tau}\times\boldsymbol{n}) -\frac{1}{2}\tr(\partial_{nn}(\boldsymbol{n}\times\boldsymbol{\tau}\times\boldsymbol{n}))I\big)\Pi_f=0,\quad f\in\Delta_2(T).
\end{equation}
Notice that on face $f$,
\begin{equation*}
\tr(\boldsymbol{n}\times\boldsymbol{\tau}\times\boldsymbol{n})=\sum_{i=1}^2\boldsymbol{t}_i^{\intercal}(\boldsymbol{n}\times\boldsymbol{\tau}\times\boldsymbol{n})\boldsymbol{t}_i=-\sum_{i=1}^2\boldsymbol{t}_i^{\intercal}\boldsymbol{\tau}\boldsymbol{t}_i=\boldsymbol{n}^{\intercal}\boldsymbol{\tau}\boldsymbol{n}.
\end{equation*}
So equation \eqref{eq:20240202} is equivalent to
\begin{equation*}
\partial_{nn}(\boldsymbol{t}_1^{\intercal}\boldsymbol{\tau}\boldsymbol{t}_1)=\partial_{nn}(\boldsymbol{t}_2^{\intercal}\boldsymbol{\tau}\boldsymbol{t}_2)=-\frac{1}{2}\partial_{nn}(\boldsymbol{n}^{\intercal}\boldsymbol{\tau}\boldsymbol{n}), \quad \partial_{nn}(\boldsymbol{t}_1^{\intercal}\boldsymbol{\tau}\boldsymbol{t}_2)=0
\end{equation*}
on face $f$. Thus, \eqref{eq:cottbubblerf3} holds from \eqref{eq:cottbubblerf1}.
\end{proof}

\subsubsection{Bubble conformal elasticity complex}

\begin{lemma}
Assume that $\boldsymbol{r} \geq (4, 1, -1)^{\intercal}$ satisfies the div-tensor stability condition, and that $\boldsymbol{r} + 1$ also satisfies the div-tensor stability condition.
Let $k\geq 2r^{\texttt{v}}+2$. Then we have the $(\div; \mathbb S\cap \mathbb T)$ stability
\begin{equation}\label{eq:divSTbubblestability1}
\div \mathbb B^{\div}_{k}(\boldsymbol{r};\mathbb S\cap \mathbb T) = 
\mathbb B_{k-1}(\boldsymbol{r}\ominus 1;\mathbb R^3)/{\rm CK}.
\end{equation}
\end{lemma}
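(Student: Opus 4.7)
I plan to establish the two inclusions separately, following the pattern of the proof of Lemma~\ref{lem:divdivontobubble}. The inclusion $\div\mathbb B^{\div}_k(\boldsymbol{r};\mathbb S\cap\mathbb T)\subseteq\mathbb B_{k-1}(\boldsymbol{r}\ominus 1;\mathbb R^3)/{\rm CK}$ is an immediate consequence of integration by parts combined with the identity $\dev\defm\,{\rm CK}=\{\boldsymbol{0}\}$ from \eqref{eq:CKprop1}. Indeed, for $\boldsymbol{\tau}\in\mathbb B^{\div}_k(\boldsymbol{r};\mathbb S\cap\mathbb T)$ and $\boldsymbol{v}\in{\rm CK}$, the condition $\boldsymbol{\tau}\boldsymbol{n}|_{\partial T}=0$ kills the boundary terms, and the symmetry and tracelessness of $\boldsymbol{\tau}$ give $(\div\boldsymbol{\tau},\boldsymbol{v})=-(\boldsymbol{\tau},\defm\boldsymbol{v})=-(\boldsymbol{\tau},\dev\defm\boldsymbol{v})=0$.

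For the reverse inclusion, the plan is to apply the BGG framework to the anti-commutative diagram
\begin{equation*}
\begin{tikzcd}[column sep=small]
& \mathbb B^{\div}_{k+1}(\boldsymbol{r}+1;\mathbb S) \arrow{r}{\div} & \mathbb B_{k}(\boldsymbol{r};\mathbb R^3) \arrow{r} & {\rm RM} \\
\mathbb B^{\curl}_{k+1}(\boldsymbol{r}+1;\mathbb S) \arrow[ur,swap,"S"] \arrow{r}{\curl} & \mathbb B^{\div}_{k}(\boldsymbol{r};\mathbb T) \arrow[ur,swap,"-2\vskw"] \arrow{r}{\div} & \mathbb B_{k-1}(\boldsymbol{r}\ominus 1;\mathbb R^3)/{\rm RT} \arrow{r} & 0
\end{tikzcd}
\end{equation*}
formed by combining the appropriately index-shifted bubble elasticity complex \eqref{eq:fembubbleelasticitycomplex} with the bubble Hessian complex \eqref{eq:fembubblehessiancomplex}; the hypotheses on $\boldsymbol{r}$, the div-tensor stability of $\boldsymbol{r}$ and of $\boldsymbol{r}+1$, and $k\geq 2r^{\texttt{v}}+2$ are tuned so that both rows are exact, and the anti-commutativity $(-2\vskw)\circ\curl=-\div\circ S$ is the standard identity $\div S = 2\vskw\curl$. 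Given $\boldsymbol{v}\in\mathbb B_{k-1}(\boldsymbol{r}\ominus 1;\mathbb R^3)/{\rm CK}$, since ${\rm RT}\subseteq{\rm CK}$ the exactness of the bottom row yields $\widetilde{\boldsymbol{\tau}}\in\mathbb B^{\div}_k(\boldsymbol{r};\mathbb T)$ with $\div\widetilde{\boldsymbol{\tau}}=\boldsymbol{v}$.

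The next step is to certify $\vskw\widetilde{\boldsymbol{\tau}}\perp{\rm RM}$: the identity $\mskw\boldsymbol{b}=\grad(\boldsymbol{b}\times\boldsymbol{x})$ with $\boldsymbol{b}\times\boldsymbol{x}\in{\rm CK}$ and $\boldsymbol{v}\perp{\rm CK}$ handles the translational part, while the identity $\mskw(\boldsymbol{a}\times\boldsymbol{x})=\grad((\boldsymbol{a}\cdot\boldsymbol{x})\boldsymbol{x}-\tfrac12|\boldsymbol{x}|^2\boldsymbol{a})-(\boldsymbol{a}\cdot\boldsymbol{x})I$, whose vector potential lies in ${\rm CK}$, combined with $\tr\widetilde{\boldsymbol{\tau}}=0$, handles the rotational part. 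Hence by exactness of the top row there exists $\boldsymbol{\xi}\in\mathbb B^{\div}_{k+1}(\boldsymbol{r}+1;\mathbb S)$ with $\div\boldsymbol{\xi}=-2\vskw\widetilde{\boldsymbol{\tau}}$, and the BGG-type correction $\boldsymbol{\sigma}:=\widetilde{\boldsymbol{\tau}}+\curl(S^{-1}\boldsymbol{\xi})$ satisfies $\div\boldsymbol{\sigma}=\boldsymbol{v}$ by $\div\circ\curl=0$, $\vskw\boldsymbol{\sigma}=0$ by applying $\div\circ S=2\vskw\circ\curl$ to $S^{-1}\boldsymbol{\xi}$, and $\tr\boldsymbol{\sigma}=0$ because $\tr\curl=2\div\vskw$ annihilates symmetric tensors and $S^{-1}\boldsymbol{\xi}$ is symmetric.

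The main obstacle I anticipate is the boundary condition $\boldsymbol{\sigma}\boldsymbol{n}|_{\partial T}=0$. Since $\widetilde{\boldsymbol{\tau}}\boldsymbol{n}|_{\partial T}=0$ already, this reduces to $\curl(S^{-1}\boldsymbol{\xi})\cdot\boldsymbol{n}|_{\partial T}=0$. If $\boldsymbol{\mu}:=S^{-1}\boldsymbol{\xi}$ were in $\mathbb B^{\curl}_{k+1}(\boldsymbol{r}+1;\mathbb S)$, then $\boldsymbol{\mu}\times\boldsymbol{n}|_{\partial T}=0$ would force the tangential part of every row of $\boldsymbol{\mu}$ to vanish on each face $f$, whence $\rot_f$ of that row vanishes and $\curl\boldsymbol{\mu}\cdot\boldsymbol{n}|_{\partial T}=0$. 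However, a generic $\boldsymbol{\xi}\in\mathbb B^{\div}_{k+1}(\boldsymbol{r}+1;\mathbb S)$ does not satisfy $S^{-1}\boldsymbol{\xi}\times\boldsymbol{n}|_{\partial T}=0$. Resolving this is precisely the role of the tilde operation of \cite{ChenHuang2025}: one replaces $\mathbb B^{\div}_{k+1}(\boldsymbol{r}+1;\mathbb S)$ in the top row with the tilde variant $\widetilde{\mathbb B}^{\div}_{k+1}(\boldsymbol{r}+1;\mathbb S):=S\mathbb B^{\curl}_{k+1}(\boldsymbol{r}+1;\mathbb S)$, establishes the corresponding tilde surjectivity $\div\widetilde{\mathbb B}^{\div}_{k+1}(\boldsymbol{r}+1;\mathbb S)=\mathbb B_{k}(\boldsymbol{r};\mathbb R^3)/{\rm RM}$, and selects $\boldsymbol{\xi}$ from this tilde space, automatically giving $\boldsymbol{\mu}\in\mathbb B^{\curl}_{k+1}(\boldsymbol{r}+1;\mathbb S)$ and completing the construction.
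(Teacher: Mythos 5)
Up to its final step your argument is the paper's own: the same anti-commutative diagram obtained by combining the bubble elasticity complex \eqref{eq:fembubbleelasticitycomplex} with the bubble Hessian complex \eqref{eq:fembubblehessiancomplex}, the same lift $\widetilde{\boldsymbol{\tau}}\in\mathbb B^{\div}_{k}(\boldsymbol{r};\mathbb T)$ with $\div\widetilde{\boldsymbol{\tau}}=\boldsymbol{v}$ from the bottom row, the same verification that $\vskw\widetilde{\boldsymbol{\tau}}\perp{\rm RM}$ (the paper phrases this via $\dev\grad{\rm CK}=\mskw{\rm RM}$ from \eqref{eq:CKprop1} plus integration by parts; your explicit potentials in ${\rm CK}$ carry out the same computation), and the same correction by $\curl S^{-1}\boldsymbol{\xi}$ using the top row.

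Where you diverge is the treatment of the boundary condition, and there your premise is false under the stated hypotheses: since $\boldsymbol{r}\geq(4,1,-1)^{\intercal}$, we have $\boldsymbol{r}+1\geq(5,2,0)^{\intercal}$, so the face component of $\boldsymbol{r}+1$ is nonnegative and every element of $\mathbb B_{k+1}(\boldsymbol{r}+1;\mathbb S)$ vanishes on all of $\partial T$. Hence $\mathbb B^{\div}_{k+1}(\boldsymbol{r}+1;\mathbb S)=\mathbb B^{\curl}_{k+1}(\boldsymbol{r}+1;\mathbb S)=\mathbb B_{k+1}(\boldsymbol{r}+1)\otimes\mathbb S$, which is exactly the observation the paper makes at the start of its proof (and why $S$ is a bijection there). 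Consequently $S^{-1}\boldsymbol{\xi}$ vanishes on $\partial T$ together with the derivatives dictated by $\boldsymbol{r}+1$, so $\curl S^{-1}\boldsymbol{\xi}\in\mathbb B^{\div}_{k}(\boldsymbol{r};\mathbb T)$ automatically; there is no ``generic $\boldsymbol{\xi}$'' failing $S^{-1}\boldsymbol{\xi}\times\boldsymbol{n}|_{\partial T}=0$ in this regime. Your proposed detour through the tilde operation is therefore unnecessary, and as written it is the one genuine hole in the proposal: you assert, but do not prove, the surjectivity $\div\widetilde{\mathbb B}^{\div}_{k+1}(\boldsymbol{r}+1;\mathbb S)=\mathbb B_{k}(\boldsymbol{r};\mathbb R^3)/{\rm RM}$. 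The tilde machinery of \cite{ChenHuang2025} is needed in this paper only where boundary traces do not vanish automatically (e.g.\ in \eqref{eq:elasticitybottomcomplex1}--\eqref{eq:elasticitybubblecomplex}), not here. Once you note that under the present hypotheses the tilde space coincides with $\mathbb B_{k+1}(\boldsymbol{r}+1)\otimes\mathbb S$, that surjectivity is just the statement of the bubble elasticity complex \eqref{eq:fembubbleelasticitycomplex} (using that $\boldsymbol{r}+1$ satisfies the div-tensor stability condition), and your argument closes, coinciding with the paper's proof.
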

\begin{proof}
Combine the bubble elasticity complex \eqref{eq:fembubbleelasticitycomplex} and the bubble Hessian complex \eqref{eq:fembubblehessiancomplex} to arrive at the diagram
\begin{equation*}
\begin{tikzcd}[column sep=small, row sep=normal]
&
\mathbb B^{\div}_{k+1}(\boldsymbol{r}+1; \mathbb S)
  \arrow{r}{\div}
 &
\mathbb B_{k}(\boldsymbol{r}; \mathbb R^3)
\arrow{r}{}
& {\rm RM} \\
{\mathbb B}^{\curl}_{k+1}(\boldsymbol{r}+1;\mathbb S)
 \arrow[ur,swap,"S"'] \arrow{r}{\curl}
 & 
\mathbb B^{\div}_{k}(\boldsymbol{r};\mathbb T)
 \arrow[ur,swap,"-2\vskw "'] \arrow{r}{\div}
 & 
\mathbb B_{k-1}(\boldsymbol{r}\ominus 1;\mathbb R^3)/{\rm RT}
\arrow[r] 
&0.
\end{tikzcd}
\end{equation*}
We use this diagram to prove \eqref{eq:divSTbubblestability1}.
As $\boldsymbol{r}+1\geq 0$, both ${\mathbb B}^{\div}_{k+1}(\boldsymbol{r}+1;\mathbb S) = {\mathbb B}^{\curl}_{k+1}(\boldsymbol{r}+1;\mathbb S) = {\mathbb B}_{k+1}(\boldsymbol{r} +1)\otimes \mathbb S$. Therefore, $S$ is one-to-one.

Given $\boldsymbol{v} \in \mathbb B_{k-1}(\boldsymbol{r}\ominus 1;\mathbb R^3)/{\rm CK}$, since $\boldsymbol{r}$ satisfies the div-tensor stability condition, by the bubble Hessian complex \eqref{eq:fembubblehessiancomplex}, we can find $\boldsymbol{\tau} \in \mathbb B^{\div}_{k}(\boldsymbol{r};\mathbb T)$ such that $\div \boldsymbol{\tau} = \boldsymbol{v}$. 
Thanks to $\dev\grad{\rm CK}=\mskw{\rm RM}$ by \eqref{eq:CKprop1}, $\vskw\boldsymbol{\tau}\in\mathbb B_{k}(\boldsymbol{r}; \mathbb R^3)/{\rm RM}$. Noting that $\boldsymbol{r} + 1$ satisfies the div-tensor stability condition, by the bubble elasticity complex~\eqref{eq:fembubbleelasticitycomplex}, $2\vskw\boldsymbol{\tau}=\div\boldsymbol{\sigma}$ with $\boldsymbol{\sigma}\in\mathbb B^{\div}_{k+1}(\boldsymbol{r}+1; \mathbb S)$. We can see that $\boldsymbol{\tau}-\curl S^{-1}\boldsymbol{\sigma}\in\mathbb B^{\div}_{k}(\boldsymbol{r};\mathbb T)$, and
\begin{equation*}
\div(\boldsymbol{\tau}-\curl S^{-1}\boldsymbol{\sigma})=\boldsymbol{v}, \quad 2\vskw(\boldsymbol{\tau}-\curl S^{-1}\boldsymbol{\sigma})=0.
\end{equation*}
This ends the proof.
\end{proof}


Let $\boldsymbol{r}_1\geq(6,3,1)^{\intercal}$, $\boldsymbol{r}_2=\boldsymbol{r}_1\ominus3$, $\boldsymbol{r}_3=\boldsymbol{r}_2\ominus1$, and $k\geq 2r_2^{\texttt{v}}+2$.
Assume both $\boldsymbol r_1 - 2$ and $\boldsymbol r_2$ satisfy the div-tensor stability condition.
Applying a $\widetilde{\quad}$ operation to the bubble Hessian complex~\eqref{eq:fembubblehessiancomplex} with the transition from $\mathbb T$ to $\mathbb S\cap\mathbb T$, we obtain the exact sequence
\begin{equation}\label{eq:elasticitybottomcomplex1}
0\xrightarrow{\subseteq}\mathbb B_{k+3}(\boldsymbol{r}_1)\xrightarrow{\hess}\widetilde{\mathbb B}_{k+1}^{\curl}(\boldsymbol{r}_1-2;\mathbb S) \xrightarrow{\curl} \mathbb B^{\div}_{k}(\boldsymbol{r}_2; \mathbb S\cap \mathbb T)\cap\ker(\div) \xrightarrow{\div} 0,
\end{equation}
where 
\begin{equation*}
\widetilde{\mathbb B}_{k+1}^{\curl}(\boldsymbol{r}_1-2;\mathbb S) :=\{\boldsymbol{\tau}\in\mathbb B_{k+1}^{\curl}(\boldsymbol{r}_1-2;\mathbb S):\curl\boldsymbol{\tau}\in\mathbb B^{\div}_{k}(\boldsymbol{r}_2; \mathbb S\cap \mathbb T)\}.
\end{equation*}
Then define space
\begin{equation*}
\widetilde{\mathbb B}^{\div}_{k+1}(\boldsymbol{r}_1-2; \mathbb S):=S(\widetilde{\mathbb B}_{k+1}^{\curl}(\boldsymbol{r}_1-2;\mathbb S)).
\end{equation*}
\begin{lemma}\label{lem:div0Mfem1}
Let $\boldsymbol{r}_1\geq(6,3,1)^{\intercal}$ and $k\geq 2r_1^{\texttt{v}}-4$.
It holds
\begin{equation*}
\widetilde{\mathbb B}^{\div}_{k+1}(\boldsymbol{r}_1-2; \mathbb S)\subseteq \mathbb B^{\div}_{k+1}(\boldsymbol{r}_1-2; \mathbb S)\cap\ker(\div).
\end{equation*}
\end{lemma}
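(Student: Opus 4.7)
The plan is to unpack the definition of $\widetilde{\mathbb B}^{\div}_{k+1}(\boldsymbol{r}_1-2;\mathbb S)$. Take an arbitrary element of this space in the form $\boldsymbol{\tau}=S\boldsymbol{\sigma}$ for some $\boldsymbol{\sigma}\in\widetilde{\mathbb B}_{k+1}^{\curl}(\boldsymbol{r}_1-2;\mathbb S)$. I will verify three things: (i) $\boldsymbol{\tau}$ is a symmetric polynomial tensor inheriting the $\boldsymbol{r}_1-2$ smoothness of $\boldsymbol{\sigma}$; (ii) the normal trace $\boldsymbol{\tau}\boldsymbol{n}|_{\partial T}=0$; and (iii) $\div\boldsymbol{\tau}=0$.

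Step (i) is immediate from the pointwise algebraic nature of $S$: since $\boldsymbol{\sigma}$ is symmetric, $S\boldsymbol{\sigma}=\boldsymbol{\sigma}-(\tr\boldsymbol{\sigma})\boldsymbol{I}$ is symmetric, and all vertex, edge, and face smoothness constraints, along with the polynomial degree, are inherited. Step (iii) follows from the identity $\div S\boldsymbol{\sigma}=2\vskw\curl\boldsymbol{\sigma}$ (already used in the proof of the tilde elasticity complex in the excerpt above): by definition, $\curl\boldsymbol{\sigma}\in\mathbb B^{\div}_{k}(\boldsymbol{r}_2;\mathbb S\cap\mathbb T)$ is symmetric, so $\vskw(\curl\boldsymbol{\sigma})=0$ and $\div\boldsymbol{\tau}=0$.

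Step (ii) is where the argument requires care and is the main obstacle. By definition $\boldsymbol{\sigma}\in\mathbb B_{k+1}^{\curl}(\boldsymbol{r}_1-2;\mathbb S)$ satisfies $\boldsymbol{\sigma}\times\boldsymbol{n}|_{\partial T}=\boldsymbol 0$. Following the row-wise convention for $\boldsymbol A\times\boldsymbol b$ set in the preliminaries, this forces each row of $\boldsymbol{\sigma}$ to be parallel to $\boldsymbol{n}$ on $\partial T$, i.e., $\sigma_{ij}|_{\partial T}=c_i\,n_j$ for some scalars $c_i=c_i(x)$. The symmetry relation $\sigma_{ij}=\sigma_{ji}$ then yields $c_i n_j=c_j n_i$, so $c_i=\lambda n_i$ for a single scalar $\lambda$, and hence
\[
\boldsymbol{\sigma}|_{\partial T}=\lambda\,\boldsymbol{n}\boldsymbol{n}^{\intercal},\qquad \tr\boldsymbol{\sigma}|_{\partial T}=\lambda.
\]
A one-line computation then gives
\[
(S\boldsymbol{\sigma})\boldsymbol{n}=\boldsymbol{\sigma}\boldsymbol{n}-(\tr\boldsymbol{\sigma})\boldsymbol{n}=\lambda\boldsymbol{n}-\lambda\boldsymbol{n}=\boldsymbol 0\quad\text{on }\partial T.
\]
Combined with (i) this proves $\boldsymbol{\tau}\in\mathbb B^{\div}_{k+1}(\boldsymbol{r}_1-2;\mathbb S)$, and with (iii) it yields the claimed inclusion.

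The hypothesis $k\geq 2r_1^{\texttt{v}}-4$ is not used in this structural argument; it serves only to guarantee that the ambient bubble space $\mathbb B^{\curl}_{k+1}(\boldsymbol{r}_1-2;\mathbb S)$ is a well-defined nontrivial object (so that $\widetilde{\mathbb B}^{\div}_{k+1}(\boldsymbol{r}_1-2;\mathbb S)$ makes sense as in the preceding lemmas). No further dimensional accounting or exactness result is needed at this stage, since the lemma asserts only a one-sided inclusion; the matching surjectivity onto $\mathbb B^{\div}_{k+1}(\boldsymbol{r}_1-2;\mathbb S)\cap\ker(\div)$ would require the bubble Hessian and elasticity complexes and is presumably treated in the companion lemma that feeds into the derivation of~\eqref{polybubbleCEcomplex3d}.
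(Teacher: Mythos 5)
Your proposal is correct. The divergence part coincides with the paper's argument verbatim: both use $\div(S\boldsymbol{\sigma})=2\vskw(\curl\boldsymbol{\sigma})$ together with the fact that $\curl\boldsymbol{\sigma}\in\mathbb B^{\div}_{k}(\boldsymbol{r}_2;\mathbb S\cap\mathbb T)$ is symmetric, so $\vskw(\curl\boldsymbol{\sigma})=0$. For the normal trace, the paper is even shorter: it invokes the algebraic identity $(S\boldsymbol{\tau})\boldsymbol{n}=-2\vskw(\boldsymbol{\tau}\times\boldsymbol{n})$, valid for arbitrary tensors, so that $\boldsymbol{\sigma}\times\boldsymbol{n}|_{\partial T}=0$ (the defining condition of $\mathbb B^{\curl}_{k+1}$) immediately forces $(S\boldsymbol{\sigma})\boldsymbol{n}|_{\partial T}=0$. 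You instead exploit symmetry to characterize the boundary values explicitly, showing $\boldsymbol{\sigma}|_{f}=\lambda\,\boldsymbol{n}\boldsymbol{n}^{\intercal}$ on each face and then computing $(S\boldsymbol{\sigma})\boldsymbol{n}=\lambda\boldsymbol{n}-\lambda\boldsymbol{n}=0$ directly; this is a correct and more elementary route, at the price of using the symmetry of $\boldsymbol{\sigma}$ (harmless here, since $\boldsymbol{\sigma}\in\mathbb S$), whereas the paper's identity works for general tensors and is reused elsewhere in the BGG machinery. Your remarks that $S$ preserves the componentwise smoothness constraints and that $k\geq 2r_1^{\texttt{v}}-4$ plays no role beyond keeping the spaces meaningful are consistent with the paper, whose proof likewise makes no use of the degree restriction.
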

\begin{proof}
By $(S\boldsymbol\tau)\boldsymbol{n} = -2\vskw(\boldsymbol\tau\times\boldsymbol{n})$, $\widetilde{\mathbb B}^{\div}_{k+1}(\boldsymbol{r}_1-2; \mathbb S)\subseteq \mathbb B^{\div}_{k+1}(\boldsymbol{r}_1-2; \mathbb S)$.
And $\div\widetilde{\mathbb B}^{\div}_{k+1}(\boldsymbol{r}_1-2; \mathbb S)=0$ follows from $\div(S\boldsymbol\tau) = 2\vskw(\curl\boldsymbol\tau).$
\end{proof}

With the application of a $\widetilde{\quad}$ operation to the bubble elasticity complex \eqref{eq:fembubbleelasticitycomplex}, an exact sequence unfolds as follows:
\begin{equation}\label{eq:elasticitybubblecomplex}
0\xrightarrow{\subset}\mathbb B_{k+4}(\boldsymbol{r}_0;\mathbb R^3)\xrightarrow{\defm}\widetilde{\mathbb B}_{k+3}^{\inc} (\boldsymbol{r}_1; \mathbb S) \xrightarrow{\inc} \widetilde{\mathbb B}^{\div}_{k+1}(\boldsymbol{r}_1-2; \mathbb S) \xrightarrow{\div} 0,
\end{equation}
where 
\begin{align*}
\widetilde{\mathbb B}_{k+3}^{\inc} (\boldsymbol{r}_1; \mathbb S)&:=\{\boldsymbol{\tau}\in\mathbb B_{k+3} (\boldsymbol{r}_1; \mathbb S): \inc\boldsymbol{\tau}\in\widetilde{\mathbb B}^{\div}_{k+1}(\boldsymbol{r}_1-2; \mathbb S)\}.
\end{align*}
By the definition of $\widetilde{\mathbb B}^{\div}_{k+1}(\boldsymbol{r}_1-2; \mathbb S)$ and $\widetilde{\mathbb B}_{k+1}^{\curl}(\boldsymbol{r}_1-2;\mathbb S)$, we have
\begin{align*}
\widetilde{\mathbb B}_{k+3}^{\inc} (\boldsymbol{r}_1; \mathbb S)&=\{\boldsymbol{\tau}\in\mathbb B_{k+3} (\boldsymbol{r}_1; \mathbb S): S^{-1}\inc\boldsymbol{\tau}\in \widetilde{\mathbb B}_{k+1}^{\curl}(\boldsymbol{r}_1-2;\mathbb S)\} \\
&=\{\boldsymbol{\tau}\in\mathbb B_{k+3} (\boldsymbol{r}_1; \mathbb S): S^{-1}\inc\boldsymbol{\tau}\in \mathbb B_{k+1}^{\curl}(\boldsymbol{r}_1-2;\mathbb S)\}.
\end{align*}

We further give a characterization of the space $\widetilde{\mathbb B}_{k+3}^{\inc} (\boldsymbol{r}_1; \mathbb S)$.
\begin{lemma}
Let $\boldsymbol{r}_1\geq(6,3,1)^{\intercal}$ and $k\geq 2r_1^{\texttt{v}}-4$. We have
\begin{equation}\label{eq:VcurlMtildeDecomp}
\widetilde{\mathbb B}_{k+3}^{\inc} (\boldsymbol{r}_1; \mathbb S)=\mathbb B_{k+3}^{\cott}(\boldsymbol{r}_1;\mathbb{S}\cap\mathbb{T}) \oplus\iota\mathbb B_{k+3}(\boldsymbol{r}_1).
\end{equation}
\end{lemma}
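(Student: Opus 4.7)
The plan is to exploit the canonical orthogonal decomposition of symmetric tensors into traceless and pure-trace parts, $\boldsymbol{\tau} = \dev\boldsymbol{\tau} + \tfrac{1}{3}\iota(\tr\boldsymbol{\tau})$, and to use the characterization \eqref{eq:cottbubblerf3} of $\mathbb B_{k+3}^{\cott}(\boldsymbol{r}_1;\mathbb S\cap\mathbb T)$ in terms of $S^{-1}\inc$. The uniqueness of this decomposition will immediately give the directness of the sum: if $\boldsymbol{\tau}_0 + \iota v = 0$ with $\boldsymbol{\tau}_0$ traceless, taking the trace yields $v = 0$.

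The key computation is to evaluate $\inc(\iota v)$ for a scalar $v$. Working row-wise with $\curl(v\boldsymbol{I})$, one finds $\curl(v\boldsymbol{I}) = -\mskw(\nabla v)$, which is skew, so its trace vanishes and $S^{-1}\curl(v\boldsymbol{I}) = \mskw(\nabla v)$. Then using the identity $\curl(\mskw\boldsymbol{\omega}) = (\div\boldsymbol{\omega})\boldsymbol{I} - \nabla\boldsymbol{\omega}$ (a short index calculation), I get $\inc(\iota v) = (\Delta v)\boldsymbol{I} - \hess v$, which is symmetric with trace $2\Delta v$. Applying $S^{-1}$ yields the clean identity
\begin{equation*}
S^{-1}\inc(\iota v) = -\hess v.
\end{equation*}

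Next I verify that $\iota\mathbb B_{k+3}(\boldsymbol{r}_1) \subseteq \widetilde{\mathbb B}_{k+3}^{\inc}(\boldsymbol{r}_1;\mathbb S)$. Given $v \in \mathbb B_{k+3}(\boldsymbol{r}_1)$, clearly $\iota v \in \mathbb B_{k+3}(\boldsymbol{r}_1;\mathbb S)$, and $S^{-1}\inc(\iota v) = -\hess v \in \mathbb P_{k+1}(\mathbb S)$ has smoothness $\boldsymbol{r}_1-2$. The tangential boundary condition $(\hess v)\times\boldsymbol{n}|_{\partial T} = 0$ reduces, row by row, to requiring $\nabla(\partial_i v)\times\boldsymbol{n}|_f = 0$ for every face $f$; since $r_1^f\geq 1$, the gradient $\nabla v$ vanishes identically on $f$, hence so do all surface tangential derivatives of $\partial_i v$, and the purely normal contribution $\partial_n(\partial_i v)\boldsymbol{n}\times\boldsymbol{n}$ is automatically zero. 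Thus $\hess v \in \mathbb B_{k+1}^{\curl}(\boldsymbol{r}_1-2;\mathbb S)$, which is exactly the defining condition of $\widetilde{\mathbb B}_{k+3}^{\inc}(\boldsymbol{r}_1;\mathbb S)$.

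Finally, for arbitrary $\boldsymbol{\tau} \in \widetilde{\mathbb B}_{k+3}^{\inc}(\boldsymbol{r}_1;\mathbb S)$, set $v := \tfrac{1}{3}\tr\boldsymbol{\tau} \in \mathbb B_{k+3}(\boldsymbol{r}_1)$ and consider $\boldsymbol{\tau}_0 := \boldsymbol{\tau} - \iota v = \dev\boldsymbol{\tau} \in \mathbb B_{k+3}(\boldsymbol{r}_1;\mathbb S\cap\mathbb T)$. By linearity and the previous step,
\begin{equation*}
S^{-1}\inc\boldsymbol{\tau}_0 = S^{-1}\inc\boldsymbol{\tau} - S^{-1}\inc(\iota v) = S^{-1}\inc\boldsymbol{\tau} + \hess v \in \mathbb B_{k+1}^{\curl}(\boldsymbol{r}_1-2;\mathbb S),
\end{equation*}
so by \eqref{eq:cottbubblerf3}, $\boldsymbol{\tau}_0 \in \mathbb B_{k+3}^{\cott}(\boldsymbol{r}_1;\mathbb S\cap\mathbb T)$, proving the decomposition. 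The only delicate point is the boundary verification in the middle step; everything else follows from bookkeeping on the trace and the identity $S^{-1}\inc\iota = -\hess$.
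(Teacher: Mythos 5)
Your proposal is correct and follows essentially the same route as the paper: the identity $\inc(\iota v)=-S(\hess v)$ (equivalently $S^{-1}\inc(\iota v)=-\hess v$), the characterization \eqref{eq:cottbubblerf3}, and the splitting $\boldsymbol{\tau}=\dev\boldsymbol{\tau}+\frac{1}{3}\iota(\tr\boldsymbol{\tau})$. You merely spell out details the paper leaves implicit (directness via the trace, and the boundary check that $(\hess v)\times\boldsymbol{n}|_{\partial T}=0$ using $r_1^f\geq 1$), and the only unstated step, $\mathbb B_{k+3}^{\cott}(\boldsymbol{r}_1;\mathbb S\cap\mathbb T)\subseteq\widetilde{\mathbb B}_{k+3}^{\inc}(\boldsymbol{r}_1;\mathbb S)$, is immediate from \eqref{eq:cottbubblerf3} and the characterization of $\widetilde{\mathbb B}_{k+3}^{\inc}(\boldsymbol{r}_1;\mathbb S)$ you already invoke.
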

\begin{proof}
By \eqref{eq:cottbubblerf3}, $\mathbb B_{k+3}^{\cott}(\boldsymbol{r}_1;\mathbb{S}\cap\mathbb{T})\subseteq\widetilde{\mathbb B}_{k+3}^{\inc} (\boldsymbol{r}_1; \mathbb S)$.
For $\boldsymbol{\tau}=\iota v\in\iota\mathbb B_{k+3}(\boldsymbol{r}_1)$, it follows
\begin{equation*}
\inc\boldsymbol{\tau}=\inc(\iota v)=-S(\hess v)\in\widetilde{\mathbb B}^{\div}_{k+1}(\boldsymbol{r}_1-2; \mathbb S),
\end{equation*}
which gives $\iota\mathbb B_{k+3}(\boldsymbol{r}_1)\subseteq\widetilde{\mathbb B}_{k+3}^{\inc} (\boldsymbol{r}_1; \mathbb S)$. 
Then decomposition~\eqref{eq:VcurlMtildeDecomp} holds from $\boldsymbol{\tau}=\dev\boldsymbol{\tau}+\frac{1}{3}\iota(\tr\boldsymbol{\tau})$ and \eqref{eq:cottbubblerf3}.
\end{proof}

Now we are in the position to establish the exactness of the bubble conformal elasticity complex \eqref{polybubbleCEcomplex3d}.

\begin{lemma}\label{lem:bubbleCEcomplex3d}
Let $\boldsymbol{r}_1\geq(6,3,1)^{\intercal}$, $\boldsymbol{r}_0=\boldsymbol{r}_1+1$, $\boldsymbol{r}_2=\boldsymbol{r}_1\ominus3$, $\boldsymbol{r}_3=\boldsymbol{r}_2\ominus1$ and $k\geq 2r_2^{\texttt{v}}+2$.
Assume both $\boldsymbol r_1 - 2$ and $\boldsymbol r_2$ satisfy the div-tensor stability condition.
Then the bubble conformal elasticity complexes \eqref{polybubbleCEcomplex3d} is exact.
\end{lemma}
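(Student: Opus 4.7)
My plan is to assemble the complex~\eqref{polybubbleCEcomplex3d} by a BGG surgery on the two tilde-augmented complexes already in hand: the bottom row supplying the bubble Hessian complex~\eqref{eq:elasticitybottomcomplex1} refined to codomain $\mathbb S\cap\mathbb T$, the top row furnishing the bubble elasticity complex~\eqref{eq:elasticitybubblecomplex}, connected by the anti-commutative maps $\iota$, $S$, and $-2\vskw$. The structural decomposition $\widetilde{\mathbb B}_{k+3}^{\inc}(\boldsymbol{r}_1;\mathbb S)=\mathbb B_{k+3}^{\cott}(\boldsymbol{r}_1;\mathbb S\cap\mathbb T)\oplus\iota\mathbb B_{k+3}(\boldsymbol{r}_1)$ from~\eqref{eq:VcurlMtildeDecomp}, together with the pointwise identities $\cott=\curl\circ S^{-1}\circ\inc$, $\inc\circ\iota=-S\circ\hess$, $\div\circ S=2\vskw\circ\curl$, and $\dev\circ\iota=0$, furnishes the algebraic link needed to convert the BGG output into the conformal middle space.

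The four exactness checks go as follows. \textbf{Injectivity of $\dev\defm$:} any $\boldsymbol v\in\mathbb B_{k+4}(\boldsymbol{r}_0;\mathbb R^3)\cap\ker(\dev\defm)\subseteq{\rm CK}$ vanishes on $\partial T$ because $r_0^f\geq 0$, and no nonzero conformal Killing field vanishes on $\partial T$, so $\boldsymbol v=0$. \textbf{Surjectivity of $\div$:} this is the stability identity~\eqref{eq:divSTbubblestability1} applied with $\boldsymbol r=\boldsymbol r_2$, using $\boldsymbol{r}_3=\boldsymbol{r}_2\ominus 1$.

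\textbf{Exactness at $\mathbb B^{\div}_{k}(\boldsymbol{r}_2;\mathbb S\cap\mathbb T)$:} given $\boldsymbol\sigma$ with $\div\boldsymbol\sigma=0$, complex~\eqref{eq:elasticitybottomcomplex1} produces $\widetilde{\boldsymbol\tau}\in\widetilde{\mathbb B}_{k+1}^{\curl}(\boldsymbol{r}_1-2;\mathbb S)$ with $\curl\widetilde{\boldsymbol\tau}=\boldsymbol\sigma$; since $\div(S\widetilde{\boldsymbol\tau})=2\vskw\boldsymbol\sigma=0$, Lemma~\ref{lem:div0Mfem1} places $S\widetilde{\boldsymbol\tau}$ in $\widetilde{\mathbb B}^{\div}_{k+1}(\boldsymbol{r}_1-2;\mathbb S)\cap\ker(\div)$, so complex~\eqref{eq:elasticitybubblecomplex} supplies $\boldsymbol\eta\in\widetilde{\mathbb B}_{k+3}^{\inc}(\boldsymbol{r}_1;\mathbb S)$ with $\inc\boldsymbol\eta=S\widetilde{\boldsymbol\tau}$; decomposing $\boldsymbol\eta=\boldsymbol\tau+\iota v$ by~\eqref{eq:VcurlMtildeDecomp} and using $\inc(\iota v)=-S\hess v$ gives $\inc\boldsymbol\tau=S(\widetilde{\boldsymbol\tau}+\hess v)$, whence $\cott\boldsymbol\tau=\curl(\widetilde{\boldsymbol\tau}+\hess v)=\boldsymbol\sigma$. \textbf{Exactness at $\mathbb B_{k+3}^{\cott}(\boldsymbol{r}_1;\mathbb S\cap\mathbb T)$:} if $\cott\boldsymbol\tau=0$, then~\eqref{eq:cottbubblerf3} places $S^{-1}\inc\boldsymbol\tau$ in $\widetilde{\mathbb B}_{k+1}^{\curl}(\boldsymbol{r}_1-2;\mathbb S)\cap\ker(\curl)$, so~\eqref{eq:elasticitybottomcomplex1} yields $u\in\mathbb B_{k+3}(\boldsymbol{r}_1)$ with $\hess u=S^{-1}\inc\boldsymbol\tau$, whence $\inc(\boldsymbol\tau+\iota u)=0$; then~\eqref{eq:elasticitybubblecomplex} produces $\boldsymbol v\in\mathbb B_{k+4}(\boldsymbol{r}_0;\mathbb R^3)$ with $\defm\boldsymbol v=\boldsymbol\tau+\iota u$, and applying $\dev$ gives $\boldsymbol\tau=\dev\defm\boldsymbol v$.

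The main obstacle will be the middle-exactness step: the bubble elasticity complex only outputs an arbitrary $\boldsymbol\eta\in\widetilde{\mathbb B}_{k+3}^{\inc}(\boldsymbol{r}_1;\mathbb S)$, and it is only the fine structure of the decomposition~\eqref{eq:VcurlMtildeDecomp} together with the commutation $\inc\circ\iota=-S\circ\hess$ that permits extracting a representative in the much more constrained trace-free symmetric bubble space $\mathbb B_{k+3}^{\cott}(\boldsymbol{r}_1;\mathbb S\cap\mathbb T)$. Tracking the stability hypotheses on $\boldsymbol r_1-2$ and $\boldsymbol r_2$ through every invocation of the auxiliary bubble Hessian, elasticity, and de~Rham complexes is routine but must be verified at each step.
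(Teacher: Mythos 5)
Your BGG surgery for the first three stages is essentially the paper's argument: injectivity of $\dev\defm$, exactness at $\mathbb B_{k+3}^{\cott}(\boldsymbol{r}_1;\mathbb S\cap\mathbb T)$, and exactness at $\mathbb B^{\div}_{k}(\boldsymbol{r}_2;\mathbb S\cap\mathbb T)\cap\ker(\div)$ all follow, as you show, from the two tilde-augmented complexes \eqref{eq:elasticitybottomcomplex1} and \eqref{eq:elasticitybubblecomplex} together with the decomposition \eqref{eq:VcurlMtildeDecomp} and the identities $\inc(\iota v)=-S\hess v$, $\cott=\curl S^{-1}\inc$; this is exactly how the paper arrives at the truncated exact sequence \eqref{eq:202402023}. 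The gap is in your final step, the surjectivity $\div\mathbb B^{\div}_{k}(\boldsymbol{r}_2;\mathbb S\cap\mathbb T)=\mathbb B_{k-1}(\boldsymbol{r}_3;\mathbb R^3)/{\rm CK}$, which you dispose of by ``applying \eqref{eq:divSTbubblestability1} with $\boldsymbol{r}=\boldsymbol{r}_2$.'' That lemma requires $\boldsymbol{r}\geq(4,1,-1)^{\intercal}$ and that \emph{both} $\boldsymbol{r}$ and $\boldsymbol{r}+1$ satisfy the div-tensor stability condition, and these hypotheses fail precisely in a case the present lemma is designed to cover: when $r_1^e=3$ (the div-tensor stability of $\boldsymbol{r}_1-2$ then forces $r_1^f=1$), one has $\boldsymbol{r}_2=(r_2^{\texttt{v}},0,-1)^{\intercal}$, e.g.\ $\boldsymbol{r}_1=(6,3,1)^{\intercal}$ gives $\boldsymbol{r}_2=(3,0,-1)^{\intercal}$, so $\boldsymbol{r}_2\not\geq(4,1,-1)^{\intercal}$ and $\boldsymbol{r}_2+1=(4,1,0)^{\intercal}$ violates the div-tensor stability condition since $r^e=1<2(r^f+1)=2$. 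This is not a bookkeeping issue that ``routine tracking'' fixes: the proof of \eqref{eq:divSTbubblestability1} genuinely needs the bubble elasticity complex at level $\boldsymbol{r}_2+1$, which is unavailable here, and the borderline case is the one that produces the new, less smooth complex highlighted in the paper.

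The paper closes this case by a different mechanism: a dimension count. Using the characterization \eqref{eq:cottbubblerf2} it computes $\dim\mathbb B_{k+3}^{\cott}(\boldsymbol{r}_1;\mathbb S\cap\mathbb T)$ and $\dim\mathbb B_{k+4}(\boldsymbol{r}_0;\mathbb R^3)$ explicitly for $\boldsymbol{r}_1=(r_1^{\texttt{v}},3,1)^{\intercal}$, invokes the exactness of the truncated complex \eqref{eq:202402023} (which your argument does establish) to obtain $\dim\bigl(\mathbb B^{\div}_{k}(\boldsymbol{r}_2;\mathbb S\cap\mathbb T)\cap\ker(\div)\bigr)$, computes $\dim\mathbb B^{\div}_{k}(\boldsymbol{r}_2;\mathbb S\cap\mathbb T)$ directly, and then checks that $\dim\div\mathbb B^{\div}_{k}(\boldsymbol{r}_2;\mathbb S\cap\mathbb T)=\dim\mathbb B_{k-1}(\boldsymbol{r}_3;\mathbb R^3)/{\rm CK}$, which upgrades the obvious inclusion to equality. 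Your proposal is missing this ingredient (or any substitute for it), so as written it only proves the lemma under the stronger assumption $r_1^e\geq4$, where \eqref{eq:divSTbubblestability1} is indeed applicable.
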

\begin{proof}
Combining complex~\eqref{eq:elasticitybubblecomplex} and complex~\eqref{eq:elasticitybottomcomplex1} induces the diagram
\begin{equation*}
\begin{tikzcd}
\mathbb B_{k+4}(\boldsymbol{r}_0;\mathbb R^3)
\arrow{r}{\defm}
&
\widetilde{\mathbb B}_{k+3}^{\inc} (\boldsymbol{r}_1; \mathbb S)
 \arrow{r}{\inc}
 &
\widetilde{\mathbb B}^{\div}_{k+1}(\boldsymbol{r}_1-2; \mathbb S)
 \arrow{r}{\div}
 & 0
 \\
\mathbb B_{k+3}(\boldsymbol{r}_1)
 \arrow[ur,swap,"\iota"'] \arrow{r}{\hess}
 & 
\widetilde{\mathbb B}_{k+1}^{\curl}(\boldsymbol{r}_1-2;\mathbb S) 
 \arrow[ur,swap,"S"'] \arrow{r}{\curl}
 & 
\mathbb B^{\div}_{k}(\boldsymbol{r}_2; \mathbb S\cap \mathbb T)\cap\ker(\div). 
 \arrow[ur,swap,"-2\vskw"'] 
\end{tikzcd}
\end{equation*}
With the decomposition~\eqref{eq:VcurlMtildeDecomp} at our disposal, we apply the BGG framework to arrive at
\begin{equation}\label{eq:202402023}
0\xrightarrow{\subseteq}\mathbb B_{k+4}(\boldsymbol{r}_0;\mathbb R^3) \xrightarrow{\dev\defm} \mathbb B_{k+3}^{\cott}(\boldsymbol{r}_1;\mathbb{S}\cap\mathbb{T}) \xrightarrow{\cott}  \mathbb B^{\div}_{k}(\boldsymbol{r}_2; \mathbb S\cap \mathbb T)\cap\ker(\div).
\end{equation}

Then we prove $\div\mathbb B^{\div}_{k}(\boldsymbol{r}_2; \mathbb S\cap \mathbb T)=\mathbb B_{k-1}(\boldsymbol{r}_3;\mathbb R^3)/{\rm CK}$ to acquire the exactness of bubble complex \eqref{polybubbleCEcomplex3d}.
When $\boldsymbol r_1\geq(8,4,1)^{\intercal}$, it follows from the $(\div; \mathbb S\cap \mathbb T)$ stability~\eqref{eq:divSTbubblestability1}.
Now consider the $(\div; \mathbb S\cap \mathbb T)$ stability for case $\boldsymbol r_1=(r_1^{\texttt{v}},3,1)^{\intercal}$ with $r_1^{\texttt{v}}\geq6$, that is $\boldsymbol r_2=(r_2^{\texttt{v}},0,-1)^{\intercal}$ with $r_2^{\texttt{v}}\geq3$. Hence, $\mathbb B_{k+4}(\boldsymbol{r}_0;\mathbb R^3)=b_T^3\mathbb B_{k-8}(\bar{\boldsymbol{r}}_0;\mathbb R^3)$ with $\bar{\boldsymbol{r}}_0=(r_1^{\texttt{v}}\ominus8,-1,-1)^{\intercal}$, and 
\begin{equation*}
\dim\mathbb B_{k+4}(\boldsymbol{r}_0;\mathbb R^3)=3\dim\mathbb P_{k-8}(T)-12{r_1^{\texttt{v}}-5\choose3}
=3{k-5\choose3}-12{r_2^{\texttt{v}}-2\choose3}.
\end{equation*}
By \eqref{eq:cottbubblerf2},
\begin{align*}
\dim\mathbb B_{k+3}^{\cott}(\boldsymbol{r}_1;\mathbb{S}\cap\mathbb{T})&=5\dim\mathbb P_{k-5}(T) - 20{r_1^{\texttt{v}}-3\choose3} -8{k-3\choose2}+24{r_1^{\texttt{v}}-4\choose2} \\
&=5{k-2\choose3}-8{k-3\choose2} - 20{r_2^{\texttt{v}}\choose3} +24{r_2^{\texttt{v}}-1\choose2}.
\end{align*}
Employing complex \eqref{eq:202402023}, we get from the last two equalities that
\begin{align*}
\dim(\mathbb B^{\div}_{k}(\boldsymbol{r}_2; \mathbb S\cap \mathbb T)\cap\ker(\div))
&=\dim\mathbb B_{k+3}^{\cott}(\boldsymbol{r}_1;\mathbb{S}\cap\mathbb{T})-\dim\mathbb B_{k+4}(\boldsymbol{r}_0;\mathbb R^3) \\
&=\frac{1}{3}k^3-\frac{5}{2}k^2-\frac{23}{6}k+13-\frac{4}{3}r_2^{\texttt{v}}((r_2^{\texttt{v}})^2-3r_2^{\texttt{v}}-7).
\end{align*}
On the other side,
\begin{align*}
\dim\mathbb B^{\div}_{k}(\boldsymbol{r}_2; \mathbb S\cap \mathbb T)&=5{k+3\choose3}- 20{r_2^{\texttt{v}}+3\choose3} - 30(k-2r_2^{\texttt{v}}-1) \\
&\quad - 12{k-1\choose2}+36{r_2^{\texttt{v}}\choose2} \\
&=\frac{5}{6}k^3-k^2-\frac{17}{6}k+3-\frac{2}{3}r_2^{\texttt{v}}(5(r_2^{\texttt{v}})^2+3r_2^{\texttt{v}}-8).
\end{align*}
Then
\begin{align*}
\dim\div\mathbb B^{\div}_{k}(\boldsymbol{r}_2; \mathbb S\cap \mathbb T)&=\frac{1}{2}k(k+1)(k+2)-10-2r_2^{\texttt{v}}(r_2^{\texttt{v}}+1)(r_2^{\texttt{v}}+2)\\
&=\dim\mathbb B_{k-1}(\boldsymbol{r}_3;\mathbb R^3)/{\rm CK},
\end{align*}
which yields $\div\mathbb B^{\div}_{k}(\boldsymbol{r}_2; \mathbb S\cap \mathbb T)=\mathbb B_{k-1}(\boldsymbol{r}_3;\mathbb R^3)/{\rm CK}$.
\end{proof}

\begin{theorem}\label{thm:bubbleCEcomplex3d}
Let smoothness vectors $(\boldsymbol{r}_0, \boldsymbol{r}_1, \boldsymbol{r}_2, \boldsymbol{r}_3)$ satisfy \eqref{eq:rCE}.
Assume both $\boldsymbol r_1 \ominus 2$ and $\boldsymbol r_2$ satisfy the div-tensor stability condition, and $k\geq 2r_2^{\texttt{v}}+2$.
Then the bubble conformal elasticity complexes \eqref{polybubbleCEcomplex3d} is exact.
\end{theorem}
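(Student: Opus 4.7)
The plan is to extend the BGG argument of Lemma~\ref{lem:bubbleCEcomplex3d} to cover the broader range of smoothness vectors permitted by \eqref{eq:rCE}, where $r_1^e$ may now be as small as $0$ and $r_1^f$ as small as $-1$. Following that lemma, I would (a) apply the tilde operation to the bubble Hessian complex \eqref{eq:fembubblehessiancomplex} and the bubble elasticity complex \eqref{eq:fembubbleelasticitycomplex} to obtain \eqref{eq:elasticitybottomcomplex1} and \eqref{eq:elasticitybubblecomplex}; (b) combine these into the BGG diagram in the proof of Lemma~\ref{lem:bubbleCEcomplex3d} and invoke the decomposition \eqref{eq:VcurlMtildeDecomp}. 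This produces the first three stages of \eqref{polybubbleCEcomplex3d} as an exact sequence \eqref{eq:202402023}, reducing the entire theorem to establishing the final $(\div;\mathbb S\cap\mathbb T)$-surjectivity.

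Steps (a) and (b) transport to the looser smoothness bounds without substantial change. The characterization \eqref{eq:cottbubblerf3} of $\mathbb B_{k+3}^{\cott}(\boldsymbol r_1;\mathbb S\cap\mathbb T)$ was already verified for $r_1^f\geq 1$; when $r_1^f\in\{-1,0\}$ the face-normal second-derivative conditions do not enter the definition of $\mathbb B_{k+3}^{\cott}$, so the characterization is trivial on those faces, and the splitting \eqref{eq:VcurlMtildeDecomp} reduces to the standard deviator/trace decomposition $\boldsymbol\tau=\dev\boldsymbol\tau+\tfrac{1}{3}\iota(\tr\boldsymbol\tau)$. Under the div-tensor stability of $\boldsymbol r_1\ominus 2$ and $\boldsymbol r_2$ assumed in the theorem, the BGG framework then delivers
\begin{equation*}
0\xrightarrow{\subseteq}\mathbb B_{k+4}(\boldsymbol{r}_0;\mathbb R^3) \xrightarrow{\dev\defm} \mathbb B_{k+3}^{\cott}(\boldsymbol{r}_1;\mathbb{S}\cap\mathbb{T}) \xrightarrow{\cott} \mathbb B^{\div}_{k}(\boldsymbol{r}_2; \mathbb S\cap \mathbb T)\cap\ker(\div),
\end{equation*}
and what remains is the identity $\div\mathbb B^{\div}_{k}(\boldsymbol r_2;\mathbb S\cap\mathbb T)=\mathbb B_{k-1}(\boldsymbol r_3;\mathbb R^3)/\mathrm{CK}$.

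The hard part is precisely this final surjectivity in the low-smoothness regime. When $\boldsymbol r_2\geq(4,1,-1)^\intercal$ the stability \eqref{eq:divSTbubblestability1} applies directly; in terms of $\boldsymbol r_1$ this corresponds to $r_1^{\texttt v}\geq 7$ and $r_1^e\geq 4$. Otherwise $\boldsymbol r_2$ lies in a short list of boundary regimes (including $r_1^e\in\{0,1,2,3\}$ and $r_1^f\in\{-1,0\}$), and I would dispatch these by a dimension count in the spirit of the final paragraph of the proof of Lemma~\ref{lem:bubbleCEcomplex3d}. Concretely, once \eqref{eq:202402023} is exact, $\dim\bigl(\mathbb B^{\div}_{k}(\boldsymbol r_2;\mathbb S\cap\mathbb T)\cap\ker(\div)\bigr)$ is pinned down by $\dim\mathbb B^{\cott}_{k+3}(\boldsymbol r_1;\mathbb S\cap\mathbb T)-\dim\mathbb B_{k+4}(\boldsymbol r_0;\mathbb R^3)$, so it suffices to verify the bookkeeping identity
\begin{equation*}
\dim\mathbb B^{\div}_{k}(\boldsymbol r_2;\mathbb S\cap\mathbb T)-\dim\bigl(\mathbb B^{\div}_{k}(\boldsymbol r_2;\mathbb S\cap\mathbb T)\cap\ker(\div)\bigr)=\dim\mathbb B_{k-1}(\boldsymbol r_3;\mathbb R^3)-\dim\mathrm{CK}.
\end{equation*}
This identity can be assembled case by case from a geometric decomposition of $\mathbb B^{\div}_{k}(\boldsymbol r_2;\mathbb S\cap\mathbb T)$ into a high-smoothness core plus face/edge summands, in the spirit of \eqref{eq:symcurlSTbubbledecomp} and \eqref{eq:divdivbubbleSTdim}, calibrated against the polynomial conformal elasticity complex \eqref{polyconformalElascomplex3d}. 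The bookkeeping is laborious — each $(r_1^e,r_1^f)$ regime produces its own list of face- and edge-supported contributions and requires separate verification that the div map is onto the corresponding quotient modulo $\mathrm{CK}$ — but the alternating sum along \eqref{polybubbleCEcomplex3d} closes the equality and completes the proof.
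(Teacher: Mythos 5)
Your reduction of the theorem to the final div-surjectivity rests on the claim that the machinery of Section~4.5.2 --- the tilde operation producing \eqref{eq:elasticitybottomcomplex1} and \eqref{eq:elasticitybubblecomplex}, and the decomposition \eqref{eq:VcurlMtildeDecomp} --- ``transports without substantial change'' to $r_1^e$ as small as $0$ and $r_1^f\in\{-1,0\}$. That is where the proposal breaks. The exact sequence \eqref{eq:elasticitybottomcomplex1} is obtained from the bubble Hessian complex \eqref{eq:fembubblehessiancomplex}, whose leading smoothness vector must be at least $(4,2,1)^{\intercal}$; for $\boldsymbol r_1=(6,0,-1)^{\intercal}$ this hypothesis fails, and $\boldsymbol r_1-2$ is not even a smoothness vector. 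Likewise, the characterizations \eqref{eq:cottbubblerf1}--\eqref{eq:cottbubblerf3} and the splitting \eqref{eq:VcurlMtildeDecomp} are proved only for $\boldsymbol r_1\geq(6,3,1)^{\intercal}$, and their proofs use $\nabla\boldsymbol\tau|_f=0$, i.e.\ $r_1^f\geq 1$. Your assertion that for $r_1^f\in\{-1,0\}$ ``the characterization is trivial on those faces'' is not correct: the bubble space $\mathbb B^{\cott}_{k+3}(\boldsymbol r_1;\mathbb S\cap\mathbb T)$ is still cut out by the nontrivial trace conditions $\tr_1^{\cott}(\boldsymbol\tau)=\tr_2^{\cott}(\boldsymbol\tau)=\tr_3^{\cott}(\boldsymbol\tau)=0$, and relating it to a tilde-inc space in this regime would require new arguments you do not supply. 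Consequently the exactness of \eqref{eq:202402023} in the low-smoothness regime --- which your dimension count then takes as an input --- is not established, and the subsequent ``laborious bookkeeping'' for the div-surjectivity is only sketched, not carried out.

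The paper avoids all of this with a short monotonicity argument that is missing from your proposal: for $r_1^f=-1,0$ one picks a smoother $\tilde{\boldsymbol r}_1$, namely $(r_1^{\texttt{v}},r_1^{e},1)^{\intercal}$ if $r_1^e\geq3$ and $(r_1^{\texttt{v}},3,1)^{\intercal}$ otherwise, notes $\mathbb B^{\cott}_{k+3}(\tilde{\boldsymbol r}_1;\mathbb S\cap\mathbb T)\subseteq\mathbb B^{\cott}_{k+3}(\boldsymbol r_1;\mathbb S\cap\mathbb T)$ and, in the second case, $\mathbb B^{\div}_{k}(\tilde{\boldsymbol r}_2;\mathbb S\cap\mathbb T)=\mathbb B^{\div}_{k}(\boldsymbol r_2;\mathbb S\cap\mathbb T)$ with $\tilde{\boldsymbol r}_2=\tilde{\boldsymbol r}_1\ominus3$, so that Lemma~\ref{lem:bubbleCEcomplex3d} already forces $\cott\mathbb B^{\cott}_{k+3}(\boldsymbol r_1;\mathbb S\cap\mathbb T)=\mathbb B^{\div}_{k}(\boldsymbol r_2;\mathbb S\cap\mathbb T)\cap\ker(\div)$; the div stability is obtained the same way through $\bar{\boldsymbol r}_2=(r_2^{\texttt{v}},(r_2^e)_+,-1)^{\intercal}$, and the identity $\dev\defm\mathbb B_{k+4}(\boldsymbol r_0;\mathbb R^3)=\mathbb B^{\cott}_{k+3}(\boldsymbol r_1;\mathbb S\cap\mathbb T)\cap\ker(\cott)$ is checked directly. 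To salvage your route you would have to re-prove the Section~4.5.2 ingredients under the weaker hypotheses or adopt this sandwiching reduction; as written, the proposal has a genuine gap.
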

\begin{proof}
The case $\boldsymbol{r}_1\geq(6,3,1)^{\intercal}$ has been proved in Lemma~\ref{lem:bubbleCEcomplex3d}. We assume $r_1^f=-1,0$.

We can directly verify $\dev\defm\mathbb B_{k+4}(\boldsymbol{r}_0;\mathbb R^3) = \mathbb B_{k+3}^{\cott}(\boldsymbol{r}_1;\mathbb{S}\cap\mathbb{T})\cap\ker(\cott)$.
The div stability $\div \mathbb B^{\div}_{k}(\boldsymbol{r}_2;\mathbb S\cap \mathbb T) = 
\mathbb B_{k-1}(\boldsymbol{r}_3;\mathbb R^3)/{\rm CK}$ follows from 
\begin{equation*}
\div\mathbb B^{\div}_{k}(\bar{\boldsymbol{r}}_2;\mathbb S\cap \mathbb T)\subseteq \div\mathbb B^{\div}_{k}(\boldsymbol{r}_2;\mathbb S\cap \mathbb T)\subseteq \mathbb B_{k-1}(\boldsymbol{r}_3;\mathbb R^3)/{\rm CK},
\end{equation*}
and $\div \mathbb B^{\div}_{k}(\bar{\boldsymbol{r}}_2;\mathbb S\cap \mathbb T) = 
\mathbb B_{k-1}(\boldsymbol{r}_3;\mathbb R^3)/{\rm CK}$ with $\bar{\boldsymbol{r}}_2=(r_2^{\texttt{v}},(r_2^e)_+,-1)^{\intercal}$, which has been proved in Lemma~\ref{lem:bubbleCEcomplex3d}. Then we only need to prove 
\begin{equation}\label{eq:20240314}
\cott\mathbb B_{k+3}^{\cott}(\boldsymbol{r}_1;\mathbb{S}\cap\mathbb{T})=\mathbb B^{\div}_{k}(\boldsymbol{r}_2;\mathbb S\cap \mathbb T)\cap\ker(\div).
\end{equation}

When $r_1^e\geq3$, let $\tilde{\boldsymbol{r}}_1=(r_1^{\texttt{v}},r_1^{e},1)^{\intercal}\geq(6,3,1)^{\intercal}$. We have $\tilde{\boldsymbol{r}}_1\ominus3=\boldsymbol{r}_1\ominus3=\boldsymbol{r}_2$. Since $\mathbb B_{k+3}^{\cott}(\tilde{\boldsymbol{r}}_1;\mathbb{S}\cap\mathbb{T})\subseteq \mathbb B_{k+3}^{\cott}(\boldsymbol{r}_1;\mathbb{S}\cap\mathbb{T})$, it follows
\begin{equation*}
\cott\mathbb B_{k+3}^{\cott}(\tilde{\boldsymbol{r}}_1;\mathbb{S}\cap\mathbb{T})\subseteq \cott\mathbb B_{k+3}^{\cott}(\boldsymbol{r}_1;\mathbb{S}\cap\mathbb{T}) \subseteq \mathbb B^{\div}_{k}(\boldsymbol{r}_2;\mathbb S\cap \mathbb T)\cap\ker(\div).
\end{equation*}
Thus, \eqref{eq:20240314} follows from $\cott\mathbb B_{k+3}^{\cott}(\tilde{\boldsymbol{r}}_1;\mathbb{S}\cap\mathbb{T})=\mathbb B^{\div}_{k}(\boldsymbol{r}_2;\mathbb S\cap \mathbb T)\cap\ker(\div)$.

When $0\leq r_1^e\leq2$, let $\tilde{\boldsymbol{r}}_1=(r_1^{\texttt{v}},3,1)^{\intercal}$ and $\tilde{\boldsymbol{r}}_2=\tilde{\boldsymbol{r}}_1\ominus3$. Again,
\begin{equation*}
\cott\mathbb B_{k+3}^{\cott}(\tilde{\boldsymbol{r}}_1;\mathbb{S}\cap\mathbb{T})\subseteq \cott\mathbb B_{k+3}^{\cott}(\boldsymbol{r}_1;\mathbb{S}\cap\mathbb{T}) \subseteq \mathbb B^{\div}_{k}(\boldsymbol{r}_2;\mathbb S\cap \mathbb T)\cap\ker(\div).
\end{equation*}
Noting that $\mathbb B^{\div}_{k}(\tilde{\boldsymbol{r}}_2;\mathbb S\cap \mathbb T)=\mathbb B^{\div}_{k}(\boldsymbol{r}_2;\mathbb S\cap \mathbb T)$, \eqref{eq:20240314} follows from $\cott\mathbb B_{k+3}^{\cott}(\tilde{\boldsymbol{r}}_1;\mathbb{S}\cap\mathbb{T})=\mathbb B^{\div}_{k}(\tilde{\boldsymbol{r}}_2;\mathbb S\cap \mathbb T)\cap\ker(\div)$.
\end{proof}

\section{Finite element conformal Hessian complex in three dimensions}\label{sec:FEconformalHesscomplex}

In this section, we will devise the finite element conformal Hessian complex
\begin{align}\label{femCHcomplex3d}
{\rm CH}\xrightarrow{\subset} \mathbb V_{k+3}(\boldsymbol{r}_0)&\xrightarrow{\dev\hess} \mathbb V_{k+1}^{\sym\curl}(\boldsymbol{r}_1;\mathbb S\cap\mathbb T)\xrightarrow{\sym\curl} \mathbb V_{k}^{\div\div}(\boldsymbol{r}_2;\mathbb S\cap\mathbb T) \\
\notag
&\xrightarrow{\div{\div}} \mathbb V_{k-2}(\boldsymbol{r}_3)\xrightarrow{}0
\end{align}
with $k\geq 2r_2^{\texttt{v}}+4$ and smoothness vectors $(\boldsymbol{r}_0, \boldsymbol{r}_1, \boldsymbol{r}_2, \boldsymbol{r}_3)$ satisfying
\begin{equation}\label{eq:rCH}
\boldsymbol{r}_0\geq (4,2,1)^{\intercal}, \quad
\boldsymbol r_1=\boldsymbol r_0-2  \geq (2,0,-1)^{\intercal},
\quad
 \boldsymbol{r}_2=\boldsymbol{r}_1\ominus1,
\quad
 \boldsymbol{r}_3=\boldsymbol{r}_2\ominus2.
\end{equation}
Here spaces $\mathbb V_{k+3}(\boldsymbol{r}_0)$ and $\mathbb V_{k-2}(\boldsymbol{r}_3)$ are the smooth scalar finite element spaces defined as~\eqref{eq:scalarfem}.
We will define space $\mathbb V_{k}^{\div\div}(\boldsymbol{r}_2;\mathbb S\cap\mathbb T)$ in Section~\ref{subsec:divdivSTfem} and space $\mathbb V_{k+1}^{\sym\curl}(\boldsymbol{r}_1;\mathbb S\cap\mathbb T)$ in Section~\ref{subsec:symcurlSTfem}.

\subsection{$H(\div\div)$-conforming finite element}\label{subsec:divdivSTfem}
When $\boldsymbol{r}_2\geq(4,2,1)^{\intercal}$, define $\mathbb V_{k}^{\div\div}(\boldsymbol{r}_2;\mathbb S\cap \mathbb T):=\mathbb V_{k}(\boldsymbol{r}_2)\otimes (\mathbb S\cap \mathbb T)$.
Then we consider the construction of $H(\div\div)$-conforming $\mathbb V_{k}^{\div\div}(\boldsymbol{r}_2;\mathbb S\cap \mathbb T)$ for $\boldsymbol{r}_2\geq(0,-1,-1)^{\intercal}$ with $r_2^f=-1,0$.

Take $\mathbb P_k(T;\mathbb S\cap\mathbb T)$ as the space of shape functions. 
For $r_2^f=-1,0$, the DoFs are given by
\begin{subequations}\label{eq:divdivSTdofs}
\begin{align}
\label{eq:divdivSTdV}
\nabla^j\boldsymbol{\tau} (\texttt{v}), & \quad j=0,1,\ldots,r_2^{\texttt{v}}, \texttt{v}\in \Delta_0(T),\\
\label{eq:divdivSTdE1}
\int_e \frac{\partial^{j}\boldsymbol{\tau}}{\partial n_1^{i}\partial n_2^{j-i}}:\boldsymbol{q} \dd s, & \quad \boldsymbol{q} \in \mathbb P_{k - 2(r_2^{\texttt{v}}+1) + j}(e;\mathbb S\cap\mathbb T), 0\leq i\leq j\leq r_2^{e}, e\in \Delta_1(T), \\
\int_e (\boldsymbol{n}_i^{\intercal}\boldsymbol{\tau}\boldsymbol{n}_j)\,q \dd s, &\quad q\in \mathbb B_{k}(e; r_2^{\texttt{v}}), 1\leq i\leq j\leq 2, e\in \Delta_1(T), \textrm{ if } r_2^{e}=-1, \label{eq:divdivSTdE2}\\
\label{eq:divdivSTdF1}
\int_f \boldsymbol{\tau} :\boldsymbol{q} \dd S, & \quad  \boldsymbol{q}\in \mathbb B_{k}(f;\boldsymbol r_2)\otimes (\mathbb S\cap\mathbb T), f\in \Delta_2(T), \textrm{ if } r_2^f=0,\\
\label{eq:divdivSTdF2}
\int_f (\boldsymbol n^{\intercal}\boldsymbol{\tau}\boldsymbol n) \ q \dd S, & \quad  q\in \mathbb B_{k}(f;(\boldsymbol{r}_2)_+), f\in \Delta_2(T), \textrm{ if } r_2^f=-1,\\
\label{eq:divdivSTdF3}
\int_f \tr_2^{\div\div}(\boldsymbol \tau) \, q \dd S, & \quad  q\in \mathbb P_2(f)\oplus(\mathbb B_{k-1}(f;\boldsymbol r_2\ominus1)\cap\mathbb P_{2}^{\perp}(f)), f\in \Delta_2(T),\\
\label{eq:divdivSTdT1}
\int_T (\div\div\boldsymbol{\tau})\,q\dx, & \quad q\in \mathbb B_{k-2}(\boldsymbol{r}_2\ominus2)/{\rm CH}, \\
\label{eq:divdivSTdT2}
\int_T \boldsymbol{\tau}:\boldsymbol{q} \dx, & \quad \boldsymbol{q} \in \mathbb B_{k}^{\div\div}(\boldsymbol{r}_2;\mathbb S\cap\mathbb T)\cap\ker(\div\div).
\end{align}
\end{subequations}

\begin{lemma}\label{lem:20240316}
Let $\boldsymbol{r}_2=\boldsymbol{r}_1\ominus 1$ with $\boldsymbol{r}_1\geq(1,0,-1)$ and $r_2^f=-1,0$, and $k\geq 2r_2^{\texttt{v}}+4$. 
Assume $\boldsymbol r_1$ satisfies the div-vector stability condition, and $\boldsymbol r_2$ satisfies the div\,div stability condition.
The number of DoFs \eqref{eq:divdivSTdofs} equals $\dim\mathbb P_k(T;\mathbb S\cap\mathbb T)$.
\end{lemma}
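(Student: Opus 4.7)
The plan is a bookkeeping split: the two interior blocks \eqref{eq:divdivSTdT1} and \eqref{eq:divdivSTdT2} exhaust the bubble space $\mathbb B_k^{\div\div}(\boldsymbol r_2;\mathbb S\cap\mathbb T)$, while the remaining blocks \eqref{eq:divdivSTdV}--\eqref{eq:divdivSTdF3} must account for its codimension in $\mathbb P_k(T;\mathbb S\cap\mathbb T)$. First, Theorem~\ref{thm:bubbleCHcomplex3d} applied at the penultimate node of the complex \eqref{polybubbleCHcomplex3d} yields
$$
\dim\mathbb B_k^{\div\div}(\boldsymbol r_2;\mathbb S\cap\mathbb T)
=\dim\bigl(\mathbb B_k^{\div\div}(\boldsymbol r_2;\mathbb S\cap\mathbb T)\cap\ker(\div\div)\bigr)
+\dim\mathbb B_{k-2}(\boldsymbol r_3)/{\rm CH},
$$
which is precisely the sum of the DoFs in \eqref{eq:divdivSTdT1} and \eqref{eq:divdivSTdT2}. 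The task therefore reduces to verifying that the trace blocks \eqref{eq:divdivSTdV}--\eqref{eq:divdivSTdF3} sum to $5\binom{k+3}{3}-\dim\mathbb B_k^{\div\div}(\boldsymbol r_2;\mathbb S\cap\mathbb T)$.

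Next I would evaluate each trace block. The vertex block \eqref{eq:divdivSTdV} gives $4\cdot 5\binom{r_2^{\texttt v}+3}{3}$, using $\dim(\mathbb S\cap\mathbb T)=5$ and that derivatives of order at most $r_2^{\texttt v}$ furnish $\binom{r_2^{\texttt v}+3}{3}$ independent linear functionals. The edge block \eqref{eq:divdivSTdE1} contributes $6\cdot 5\sum_{j=0}^{r_2^e}(j+1)(k-2r_2^{\texttt v}-1+j)$; when $r_2^e=-1$ this sum is empty and is replaced by \eqref{eq:divdivSTdE2}, which adds $18\dim\mathbb B_k(e;r_2^{\texttt v})$ covering the three normal--normal scalars $\boldsymbol n_i^\intercal\boldsymbol\tau\boldsymbol n_j$ on each of the six edges. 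For the face blocks, precisely one of \eqref{eq:divdivSTdF1} and \eqref{eq:divdivSTdF2} fires according to $r_2^f\in\{0,-1\}$; in either case \eqref{eq:divdivSTdF3} supplies $\tr_2^{\div\div}(\boldsymbol\tau)$ on each face. The apparent asymmetry that \eqref{eq:divdivSTdF3} is paired with $\mathbb P_2(f)\oplus(\mathbb B_{k-1}(f;\boldsymbol r_2\ominus 1)\cap\mathbb P_2^\perp(f))$ rather than with $\mathbb P_{k-1}(f)$ is explained by the surface bubble complex \eqref{eq:femCdivdivbubblecomplex2d}: the range of $\div_f\div_f$ on the face conformal bubble already omits ${\rm CH}(f)$, so the extra $\mathbb P_2(f)$ exactly parameterizes the orthogonal complement of that bubble range.

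Finally I would match the assembled trace totals against $5\binom{k+3}{3}-\dim\mathbb B_k^{\div\div}(\boldsymbol r_2;\mathbb S\cap\mathbb T)$ obtained by expanding \eqref{eq:divdivbubbleSTdim} via the closed-form dimensions of $\mathbb B_k^{\div}(\boldsymbol r_2;\mathbb T)$, $\mathbb B_k^{\curl}(\boldsymbol r_2)$, $\mathbb B_{k-1}(\boldsymbol r_2\ominus 1)$, and $\mathbb B_{k-1}^{\div}(\boldsymbol r_2\ominus 1)$ from \cite{ChenHuang2024,ChenHuang2025}. A cleaner route piggy-backs on the analogous dimension count for the $H(\div\div;\mathbb S)$ element in \cite{ChenHuang2025}: using the orthogonal decomposition $\mathbb S=(\mathbb S\cap\mathbb T)\oplus\iota\mathbb R$, the proposed DoFs differ from the symmetric ones only by removing the scalar $C^{r_2^f}$-element data for $\tr\boldsymbol\tau$ and by inserting \eqref{eq:divdivSTdE2} on edges when $r_2^e=-1$, both of which can be tracked explicitly. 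The main obstacle is keeping the case analysis organized, since the two binary toggles $r_2^e\in\{-1,\ge 0\}$ and $r_2^f\in\{-1,0\}$ activate different subsets of \eqref{eq:divdivSTdE2}, \eqref{eq:divdivSTdF1}, and \eqref{eq:divdivSTdF2}, and the Iverson-style corrections implicit in \eqref{eq:divdivbubbleSTdim} must match these toggles term-for-term; once the identity is verified in one branch, the remaining branches follow by parallel computation.
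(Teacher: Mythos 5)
Your reduction is sound and its backbone coincides with the paper's: both proofs use the exactness of the bubble conformal Hessian complex \eqref{polybubbleCHcomplex3d} (i.e.\ Lemma~\ref{lem:divdivontobubble} and Theorem~\ref{thm:bubbleCHcomplex3d}) to see that the interior blocks \eqref{eq:divdivSTdT1}--\eqref{eq:divdivSTdT2} total exactly $\dim\mathbb B_{k}^{\div\div}(\boldsymbol{r}_2;\mathbb S\cap\mathbb T)$, and both invoke \eqref{eq:divdivbubbleSTdim} to convert that bubble dimension into computable pieces. Where you diverge is in how the remaining DoFs are tallied. You put \eqref{eq:divdivSTdF3} on the ``trace'' side and propose closed-form sums for every block, to be matched against $5\binom{k+3}{3}-\dim\mathbb B_{k}^{\div\div}(\boldsymbol{r}_2;\mathbb S\cap\mathbb T)$ by a branch-by-branch case analysis in $r_2^e,r_2^f$. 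The paper instead groups \eqref{eq:divdivSTdF3} with the interior blocks, so that \eqref{eq:divdivSTdF3}--\eqref{eq:divdivSTdT2} total $\dim\mathbb B_{k}^{\div}(\boldsymbol{r}_2;\mathbb T)-\dim\mathbb B_{k}^{\curl}(\boldsymbol{r}_2)$ by \eqref{eq:divdivbubbleSTdim}, and counts \eqref{eq:divdivSTdV}--\eqref{eq:divdivSTdF2} in a single stroke via the geometric decomposition of the scalar smooth element in \cite[(14)]{ChenHuang2024}, with the missing edge/face components recorded by Iverson brackets; this avoids explicit vertex/edge/face sums and all case-splitting, so your route is heavier arithmetically though not wrong.

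Two points to tighten if you execute your plan. First, the arithmetic is only sketched: the whole content of the lemma is the identity you defer to ``parallel computation,'' and your individual block counts (vertex $4\cdot5\binom{r_2^{\texttt{v}}+3}{3}$, edge $6\cdot5\sum_{j=0}^{r_2^e}(j+1)(k-2r_2^{\texttt{v}}-1+j)$, $18\dim\mathbb B_k(e;r_2^{\texttt{v}})$ when $r_2^e=-1$) are right but must actually be summed and matched against the expansion of \eqref{eq:divdivbubbleSTdim}. Second, your split forces you to compute $\dim\bigl(\mathbb P_2(f)\oplus(\mathbb B_{k-1}(f;\boldsymbol r_2\ominus1)\cap\mathbb P_{2}^{\perp}(f))\bigr)$ directly; this equals $\dim\mathbb B_{k-1}(f;\boldsymbol r_2\ominus1)$ only because the $L^2(f)$-projection of $\mathbb B_{k-1}(f;\boldsymbol r_2\ominus1)$ onto $\mathbb P_2(f)$ is onto (no nonzero $p\in\mathbb P_2(f)$ is $L^2(f)$-orthogonal to the bubble space, since $b^{2(r^{\texttt{v}}+1)}p$ lies in it for $k$ in the stated range), a fact you should state rather than the heuristic you give about \eqref{eq:femCdivdivbubblecomplex2d}, which does not by itself determine the dimension of that test space.
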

\begin{proof}
By the geometric decomposition of smooth finite elements \cite[(14)]{ChenHuang2024}, 
the number of DoFs \eqref{eq:divdivSTdV}-\eqref{eq:divdivSTdF2} 
is $5\dim\mathbb P_k(T)-5\dim\mathbb B_k((\boldsymbol{r}_2)_+) -16[r_2^f=-1]\dim\mathbb B_k(f;(\boldsymbol{r}_2)_+)-12[r_2^e=-1]\dim\mathbb B_k(e;r_2^{\texttt{v}})$.
By bubble complexes \eqref{polybubbleCHcomplex3d} and \eqref{eq:divdivbubbleSTdim}, the number of DoFs \eqref{eq:divdivSTdF3}-\eqref{eq:divdivSTdT2} is 
\begin{align*}
\dim\mathbb B_{k}^{\div}(\boldsymbol{r}_2;\mathbb T)-\dim\mathbb B_{k}^{\curl}(\boldsymbol{r}_2)&=5\dim\mathbb B_k((\boldsymbol{r}_2)_+) +16[r_2^f=-1]\dim\mathbb B_k(f;(\boldsymbol{r}_2)_+) \\
&\quad+12[r_2^e=-1]\dim\mathbb B_k(e;r_2^{\texttt{v}}).
\end{align*}
Hence, the number of DoFs \eqref{eq:divdivSTdofs} equals $\dim\mathbb P_k(T;\mathbb S\cap\mathbb T)$.
\end{proof}

\begin{lemma}
Let $\boldsymbol{r}_2=\boldsymbol{r}_1\ominus 1$ with $\boldsymbol{r}_1\geq(1,0,-1)$, and $k\geq 2r_2^{\texttt{v}}+4$. 
Assume $\boldsymbol r_1$ satisfies the div-vector stability condition, and $\boldsymbol r_2$ satisfies the div\,div stability condition.
DoFs \eqref{eq:divdivSTdofs} are unisolvent for $\mathbb P_k(T;\mathbb S\cap\mathbb T)$.
\end{lemma}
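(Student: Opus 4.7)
The plan is to combine the dimension identity from Lemma~\ref{lem:20240316} with a hierarchical vanishing argument that closes via the bubble conformal Hessian complex~\eqref{polybubbleCHcomplex3d}. Since the cardinality of the DoFs in \eqref{eq:divdivSTdofs} already matches $\dim\mathbb P_k(T;\mathbb S\cap\mathbb T)$, it suffices to show that any $\boldsymbol\tau\in\mathbb P_k(T;\mathbb S\cap\mathbb T)$ with all DoFs vanishing must be zero.

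In the first phase I would reduce $\boldsymbol\tau$ to the bubble space $\mathbb B_k^{\div\div}(\boldsymbol r_2;\mathbb S\cap\mathbb T)$. The vertex DoFs \eqref{eq:divdivSTdV} annihilate $\nabla^j\boldsymbol\tau(\texttt{v})$ for $0\leq j\leq r_2^{\texttt{v}}$ at every vertex, and the edge DoFs \eqref{eq:divdivSTdE1} then kill higher tangential derivatives along each edge up to order $r_2^e$. When $r_2^e=-1$ the supplementary DoFs \eqref{eq:divdivSTdE2} annihilate the three normal-normal components $\boldsymbol n_i^{\intercal}\boldsymbol\tau\boldsymbol n_j|_e$ for $1\leq i\leq j\leq 2$, after which the traceless constraint forces $\boldsymbol t^{\intercal}\boldsymbol\tau\boldsymbol t|_e=0$ as well. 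The face DoFs \eqref{eq:divdivSTdF1} (when $r_2^f=0$) eliminate $\boldsymbol\tau|_f$ outright, while \eqref{eq:divdivSTdF2} (when $r_2^f=-1$) eliminates the scalar $\boldsymbol n^{\intercal}\boldsymbol\tau\boldsymbol n|_f$. Combined with Lemma~\ref{lm:divdivconforming} and the trace identity \eqref{eq:edgedofprop1}, this forces the edge term $\tr_e^{\div\div}(\boldsymbol\tau)$ to vanish on every edge. Finally, the face DoFs \eqref{eq:divdivSTdF3} test $\tr_2^{\div\div}(\boldsymbol\tau)|_f$ against $\mathbb P_2(f)\oplus(\mathbb B_{k-1}(f;\boldsymbol r_2\ominus1)\cap\mathbb P_2^{\perp}(f))$; the $\mathbb P_2(f)$ summand covers the four-dimensional kernel ${\rm CH}(f)=\mathbb P_1(f)\oplus{\rm span}\{|\Pi_f\boldsymbol x|^2\}$ of the 2D bubble conformal div\,div complex \eqref{eq:femCdivdivbubblecomplex2d}, while the bubble summand covers the range of $\div_f\div_f$. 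Hence $\tr_2^{\div\div}(\boldsymbol\tau)|_f=0$ on every face, so $\boldsymbol\tau\in\mathbb B_k^{\div\div}(\boldsymbol r_2;\mathbb S\cap\mathbb T)$.

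In the second phase I would invoke Theorem~\ref{thm:bubbleCHcomplex3d}. By exactness of \eqref{polybubbleCHcomplex3d} the image $\div\div\boldsymbol\tau$ lives in $\mathbb B_{k-2}(\boldsymbol r_3)/{\rm CH}$, and the interior DoF \eqref{eq:divdivSTdT1} pairs this element against the entire quotient, forcing $\div\div\boldsymbol\tau=0$; the pairing is well defined because integration by parts via the Green's identity \eqref{eq:greenidentitydivdiv} and the boundary vanishing from the first phase together show that $\div\div\boldsymbol\tau$ is $L^2(T)$-orthogonal to ${\rm CH}$. Exactness of \eqref{polybubbleCHcomplex3d} then places $\boldsymbol\tau\in\mathbb B_k^{\div\div}(\boldsymbol r_2;\mathbb S\cap\mathbb T)\cap\ker(\div\div)$, and the final interior DoF \eqref{eq:divdivSTdT2} tests $\boldsymbol\tau$ against a basis of this kernel, giving $\boldsymbol\tau=0$. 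The main obstacle in carrying out this plan is the face step: one must verify meticulously, using the 2D bubble complex \eqref{eq:femCdivdivbubblecomplex2d} together with the edge trace identities of the preceding section, that the $\mathbb P_2(f)\oplus\mathbb P_2^{\perp}(f)$ splitting of the test space is exactly matched to the kernel/range splitting of $\div_f\div_f$ so that no low-order mode of $\tr_2^{\div\div}(\boldsymbol\tau)|_f$ escapes.
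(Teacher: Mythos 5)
Your proposal follows essentially the same route as the paper's proof: dimension count via Lemma~\ref{lem:20240316}, reduction of a DoF-annihilated $\boldsymbol\tau$ to the bubble space $\mathbb B_{k}^{\div\div}(\boldsymbol{r}_2;\mathbb S\cap\mathbb T)$ using the boundary DoFs \eqref{eq:divdivSTdV}--\eqref{eq:divdivSTdF3}, integration by parts to get $(\div\div\boldsymbol\tau,q)_T=0$ for $q\in{\rm CH}$, then the stability \eqref{eq:divdivontobubble} together with \eqref{eq:divdivSTdT1} to conclude $\div\div\boldsymbol\tau=0$, and finally \eqref{eq:divdivSTdT2} to conclude $\boldsymbol\tau=0$. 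Two small corrections to your write-up: Lemma~\ref{lm:divdivconforming} (a global conformity criterion) and the identity \eqref{eq:edgedofprop1} (which concerns $\tr_e^{\sym\curl}$) play no role here, since the edge conditions in the definition of $\mathbb B_{k}^{\div\div}$ follow directly from the vanishing of \eqref{eq:divdivSTdV}--\eqref{eq:divdivSTdE2} plus the traceless constraint; and the face step is not a matter of a kernel/range splitting of $\div_f\div_f$ — the point is that the vertex and edge DoFs already force $\tr_2^{\div\div}(\boldsymbol\tau)|_f\in\mathbb B_{k-1}(f;\boldsymbol r_2\ominus1)$, after which the $\mathbb P_2(f)$ tests make it $L^2(f)$-orthogonal to $\mathbb P_2(f)$ and the remaining tests in \eqref{eq:divdivSTdF3} annihilate it, so no low-order mode can escape.
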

\begin{proof}
Thanks to Lemma~\ref{lem:20240316}, the dimensions match.
Take $\boldsymbol{\tau}\in\mathbb P_k(T;\mathbb S\cap\mathbb T)$, and assume all the DoFs \eqref{eq:divdivSTdofs} vanish. The vanishing DoFs \eqref{eq:divdivSTdV}-\eqref{eq:divdivSTdF3} indicates $\boldsymbol{\tau}\in\mathbb B_{k}^{\div\div}(T;\mathbb S\cap\mathbb T)$. Then 
apply the integration by parts to get
\begin{equation*}
(\div\div\boldsymbol \tau, q)_T=0, \quad \forall~q\in  {\rm CH}.
\end{equation*}
This together with \eqref{eq:divdivontobubble} and the vanishing DoF \eqref{eq:divdivSTdT1} yields $\div\div\boldsymbol \tau=0$. Finally,
we conclude $\boldsymbol{\tau}=0$ from the vanishing DoF \eqref{eq:divdivSTdT2}.
\end{proof}

For $r_2^f=-1,0$, the finite element space $\mathbb V_{k}^{\div\div}(\boldsymbol{r}_2;\mathbb S\cap \mathbb T)$ is defined as follows
\begin{align*}
\mathbb V_{k}^{\div\div}(\boldsymbol{r}_2;\mathbb S\cap \mathbb T):=\{\boldsymbol \tau\in L^2(\Omega;\mathbb S\cap \mathbb T): &\, \boldsymbol \tau|_T\in\mathbb P_k(T;\mathbb S\cap\mathbb T) \textrm{ for each } T\in\mathcal T_h, \\
&\qquad\quad\;\;\; \textrm{ DoFs \eqref{eq:divdivSTdofs} are single-valued} \}.    
\end{align*}
The single-valued DoFs in \eqref{eq:divdivSTdV}-\eqref{eq:divdivSTdF3} imply $\mathbb V_{k}^{\div\div}(\boldsymbol{r}_2;\mathbb S\cap \mathbb T)\subset H(\div\div, \Omega; \mathbb S\cap \mathbb T)$ in view of Lemma \ref{lm:divdivconforming}.

\begin{lemma}
Let $\boldsymbol{r}_2=\boldsymbol{r}_1\ominus 1$ with $\boldsymbol{r}_1\geq(1,0,-1)$, and $k\geq 2r_2^{\texttt{v}}+4$. 
Assume $\boldsymbol r_1$ satisfies the div-vector stability condition, and $\boldsymbol r_2$ satisfies the div\,div stability condition.
We have $(\div\div; \mathbb S\cap\mathbb T)$ stability
\begin{equation}\label{eq:divdivSTstability}
\div\div\mathbb V_{k}^{\div\div}(\boldsymbol{r}_2;\mathbb S\cap \mathbb T)=\mathbb V_{k-2}(\boldsymbol{r}_2\ominus2).
\end{equation}
\end{lemma}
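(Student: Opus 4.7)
The plan is to prove the two inclusions separately. The direction $\subseteq$ is a smoothness-tracking argument that follows directly from the DoF structure \eqref{eq:divdivSTdofs}, while $\supseteq$ is a local-to-global construction anchored on the bubble stability \eqref{eq:divdivontobubble}, together with the two-dimensional bubble conformal div\,div complex \eqref{eq:femCdivdivbubblecomplex2d} on faces.

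For the inclusion $\subseteq$, given $\boldsymbol\tau\in\mathbb V_{k}^{\div\div}(\boldsymbol r_2;\mathbb S\cap\mathbb T)$, each piece $\div\div\boldsymbol\tau|_T$ lies in $\mathbb P_{k-2}(T)$; I would verify that the assembled $\div\div\boldsymbol\tau$ enjoys the regularity required by $\mathbb V_{k-2}(\boldsymbol r_3)$. The single-valued vertex DoFs \eqref{eq:divdivSTdV} force $\nabla^j(\div\div\boldsymbol\tau)$ to be single-valued at each vertex for $j\le r_2^{\texttt{v}}-2=r_3^{\texttt{v}}$. A parallel analysis on edges from \eqref{eq:divdivSTdE1}--\eqref{eq:divdivSTdE2}, and on faces from \eqref{eq:divdivSTdF1}--\eqref{eq:divdivSTdF3} via the trace operators $\tr_1^{\div\div}$ and $\tr_2^{\div\div}$, yields the required $C^{r_3^e}$ and $C^{r_3^f}$ continuities across edges and faces.

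For the inclusion $\supseteq$, given $v\in\mathbb V_{k-2}(\boldsymbol r_3)$, I would construct a preimage in two stages. First, build a non-bubble tensor $\boldsymbol\tau^{nb}\in\mathbb P_k(T;\mathbb S\cap\mathbb T)$ on each $T\in\mathcal T_h$ whose boundary DoFs \eqref{eq:divdivSTdV}--\eqref{eq:divdivSTdF3} are globally single-valued across $\mathcal T_h$ and are chosen so that the boundary traces of $\div\div\boldsymbol\tau^{nb}$ match those of $v$ on the skeleton. The face-level adjustment exploits the surjectivity in \eqref{eq:femCdivdivbubblecomplex2d}, whose cokernel is ${\rm CH}(f)$; vertex and edge data are set using the polynomial conformal Hessian complex \eqref{polyconformalHesscomplex3d} restricted to the appropriate strata, together with the trace identities of Section~\ref{subsec:spaces}. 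Second, invoke the bubble stability \eqref{eq:divdivontobubble}: the residual $v|_T-\div\div\boldsymbol\tau^{nb}|_T$, viewed modulo ${\rm CH}$, lies in $\mathbb B_{k-2}(\boldsymbol r_3)/{\rm CH}$ and is realized as $\div\div\boldsymbol\tau^b_T$ for some $\boldsymbol\tau^b_T\in\mathbb B_{k}^{\div\div}(\boldsymbol r_2;\mathbb S\cap\mathbb T)$. The five-dimensional ${\rm CH}$ obstruction per element is absorbed by residual freedom in the vertex DoFs chosen in the first stage. Setting $\boldsymbol\tau=\boldsymbol\tau^{nb}+\boldsymbol\tau^b$ then gives the desired preimage in $\mathbb V_{k}^{\div\div}(\boldsymbol r_2;\mathbb S\cap\mathbb T)$, with single-valuedness preserved because bubble corrections have vanishing boundary DoFs.

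The main obstacle is the coordination of the boundary DoFs in the first stage. One must simultaneously match face traces of $\div\div\boldsymbol\tau^{nb}$ with those of $v$, maintain global single-valuedness across mesh elements, and reconcile the ${\rm CH}(f)$-cokernels on faces with the interior ${\rm CH}$-cokernel appearing in \eqref{polybubbleCHcomplex3d}. I expect the resolution to proceed inductively over the skeleton (vertices, edges, faces, interiors), matching each cokernel with available freedoms at the higher-dimensional simplex, in the spirit of the constructions in \cite{ChenHuang2025,ChenHuang2024a}; a complementary dimension count based on the bubble identities \eqref{eq:divdivbubbleSTdim} and \eqref{polybubbleCHcomplex3d} then confirms surjectivity without gaps.
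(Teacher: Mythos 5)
Your first inclusion and the general two-stage shape (skeleton data plus interior bubble correction via \eqref{eq:divdivontobubble}) are reasonable, but the second stage has a genuine gap at exactly the point where the ${\rm CH}$-cokernel must be resolved. The moments $(\div\div\boldsymbol\tau_h,q)_T$ for $q\in{\rm CH}$ are, by the Green's identity \eqref{eq:greenidentitydivdiv} (where $\dev\hess q=0$), determined entirely by the \emph{tensor} traces $\tr_1^{\div\div}(\boldsymbol\tau_h)$, $\tr_2^{\div\div}(\boldsymbol\tau_h)$ and $\tr_e^{\div\div}(\boldsymbol\tau_h)$ on $\partial T$ — not by any trace of the scalar $\div\div\boldsymbol\tau_h$, which is what your first stage proposes to match to $v$ on the skeleton. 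Since these tensor traces are single-valued and therefore shared among neighbouring elements, the five-dimensional obstruction on each element cannot be "absorbed by residual freedom in the vertex DoFs" element by element: adjusting a vertex or face DoF changes the ${\rm CH}$-moments of all adjacent elements simultaneously, so you face a coupled global system for which you give no solvability argument. The closing suggestion that a dimension count via \eqref{eq:divdivbubbleSTdim} and \eqref{polybubbleCHcomplex3d} "confirms surjectivity" does not repair this: in the paper the dimension count (with Euler's formula) is used \emph{after} the $(\div\div;\mathbb S\cap\mathbb T)$ stability is available, so invoking it here would be circular, and by itself it cannot identify which space the image is.

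The paper's proof avoids this entirely by a device your proposal is missing: it lifts $v$ to a globally defined potential $\boldsymbol\tau\in H^2(\Omega;\mathbb S\cap\mathbb T)$ with $\div\div\boldsymbol\tau=v$ (continuous-level surjectivity, cf.\ Arnold--Hu), and then defines $\boldsymbol\tau_h$ by prescribing the DoFs \eqref{eq:divdivSTdofs}: the edge and face tensor-trace DoFs ($\boldsymbol n_i^{\intercal}\boldsymbol\tau_h\boldsymbol n_j$, $\boldsymbol n^{\intercal}\boldsymbol\tau_h\boldsymbol n$, $\tr_2^{\div\div}(\boldsymbol\tau_h)$) are set equal to those of the single $H^2$ field $\boldsymbol\tau$, the vertex/edge DoFs tied to $v$, and the interior DoFs match $(\div\div\boldsymbol\tau_h,q)_T=(v,q)_T$ for $q\in\mathbb B_{k-2}(\boldsymbol r_3)/{\rm CH}$. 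Single-valuedness is then automatic, and Green's identity applied to $\boldsymbol\tau_h-\boldsymbol\tau$ yields $(\div\div\boldsymbol\tau_h,q)_T=(v,q)_T$ for $q\in{\rm CH}$ on every $T$, so $\div\div\boldsymbol\tau_h=v$ by unisolvence of the scalar space. If you want to salvage your local-to-global construction without the global potential, you would have to prove directly that the inter-element trace data can be chosen consistently so that all the per-element ${\rm CH}$-moments of $v$ are reproduced — a nontrivial global statement your sketch does not address (you would also need to check the case $r_2^f\geq1$, which the paper handles separately by the BGG argument of Lemma \ref{lem:divdivontobubble}).
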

\begin{proof}
When $r_2^f\geq1$, 
we can apply the BGG framework, i.e. the argument in Lemma~\ref{lem:divdivontobubble}, to conclude \eqref{eq:divdivSTstability}.

Now we assume $r_2^f=-1,0$.
By definition, $\div\div\mathbb V_{k}^{\div\div}(\boldsymbol{r}_2;\mathbb S\cap \mathbb T)\subseteq\mathbb V_{k-2}(\boldsymbol{r}_2\ominus2)$.
For the other side, take $v\in\mathbb V_{k-2}(\boldsymbol{r}_2\ominus2)$.
There exists a tensor-valued function $\bs\tau\in H^2(\Omega;\mathbb S\cap\mathbb T)$~(cf. \cite[(52)]{ArnoldHu2021}) such that $\div\div\bs\tau=v$.

Let $\boldsymbol{\tau}_h\in\mathbb V_{k}^{\div\div}(\boldsymbol{r}_2;\mathbb S\cap \mathbb T)$ satisfy all the DoFs \eqref{eq:divdivSTdofs} vanish except
\begin{align*}
\nabla^j(\partial_{11}(\boldsymbol{\tau}_h)_{11}) (\texttt{v})&=\nabla^jv(\texttt{v}),  \qquad\quad\quad\;\; j=0,1,\ldots,r_2^{\texttt{v}}\ominus2,\\
\big(\frac{\partial^{j}(\partial_{11}(\boldsymbol{\tau}_h)_{11})}{\partial n_1^{i}\partial n_2^{j-i}}, q\big)_e&=\big(\frac{\partial^{j}v}{\partial n_1^{i}\partial n_2^{j-i}}, q\big)_e,  \quad \forall~q \in \mathbb P_{k - 2r_2^{\texttt{v}} + j}(e),  0\leq i\leq j\leq r_2^{e}\ominus2, \\
(\boldsymbol  n_i^{\intercal}\boldsymbol{\tau}_h\boldsymbol n_j, q)_e&=(\boldsymbol  n_i^{\intercal}\boldsymbol \tau\boldsymbol n_j, q)_e,  \quad\quad\;\; \forall~q\in\mathbb B_{k}(e; r_2^{\texttt{v}}), i,j=1,2,\\
(\boldsymbol n^{\intercal}\boldsymbol{\tau}_h\boldsymbol n, q)_f&=(\boldsymbol n^{\intercal}\boldsymbol \tau\boldsymbol n, q)_f,  \qquad\quad \forall~q\in\mathbb B_{k}(f;(\boldsymbol{r}_2)_+), \\
( \tr_2^{\div\div}(\boldsymbol{\tau}_h), q)_f&=( \tr_2^{\div\div}(\bs \tau), q)_f, \quad \forall~q\in\mathbb B_{k-1}(f;\boldsymbol r_2\ominus1), \\
(\div\div\boldsymbol{\tau}_h, q)_T&=(v, q)_T, \qquad\qquad\quad \forall~q\in \mathbb B_{k-2}(\boldsymbol{r}_2\ominus2)/{\rm CH},
\end{align*}
for $\texttt{v}\in\Delta_0(\mathcal{T}_h)$, $e\in\Delta_1(\mathcal{T}_h)$, $f\in\Delta_2(\mathcal{T}_h)$, and $T\in\mathcal T_h$.
Applying the Green's identity~\eqref{eq:greenidentitydivdiv} to get 
\begin{equation*}
(\div\div\boldsymbol{\tau}_h, q)_T=(v, q)_T, \quad \forall~q\in {\rm CH}, T\in\mathcal T_h.
\end{equation*}
Thus, the divdiv stability \eqref{eq:divdivSTstability} follows from the unisolvence of $\mathbb V_{k-2}(\boldsymbol{r}_2\ominus2)$, and the definition of $\boldsymbol{\tau}_h$.
\end{proof}

\subsection{$H(\sym\curl)$-conforming finite element}\label{subsec:symcurlSTfem}
Now we design $H(\sym\curl)$-conforming finite element space $\mathbb V_{k+1}^{\sym\curl}(\boldsymbol{r}_1;\mathbb S\cap\mathbb T)$.
When $\boldsymbol{r}_1\geq(2,1,0)^{\intercal}$, define $\mathbb V_{k+1}^{\sym\curl}(\boldsymbol{r}_1;\mathbb S\cap\mathbb T):=\mathbb V_{k+1}(\boldsymbol{r}_1)\cap (\mathbb S\cap \mathbb T)$.
Then we consider the construction of $H(\sym\curl)$-conforming $\mathbb V_{k+1}^{\sym\curl}(\boldsymbol{r}_1;\mathbb S\cap\mathbb T)$ for $\boldsymbol{r}_1\geq(2,0,-1)^{\intercal}$ with $r_1^f=-1$.

Take $\mathbb P_{k+1}(T;\mathbb S\cap\mathbb T)$ as the space of shape functions. 
For $r_1^f=-1$, the DoFs are given by
\begin{subequations}\label{eq:symcurlSTdofs}
\begin{align}
\nabla^i\boldsymbol{\tau}(\texttt{v}), & \quad i=0,\ldots, r_1^{\texttt{v}}, \label{eq:3dCrsymcurlSTfemdofV1}\\
\int_e \frac{\partial^{j}\boldsymbol{\tau}}{\partial n_1^{i}\partial n_2^{j-i}}:\boldsymbol{q} \dd s, &\quad \boldsymbol{q}\in \mathbb B_{k+1-j}(e; r_1^{\texttt{v}} - j)\otimes (\mathbb S\cap\mathbb T), \label{eq:3dCrsymcurlSTfemdofE1}\\
\notag
&\quad 0\leq i\leq j\leq r_1^e, \\
\int_e \tr_{e}^{\sym\curl}(\boldsymbol{\tau})\,q \dd s, &\quad q\in \mathbb B_{k}(e;  r_1^{\texttt{v}}-1), \textrm{ if } r_1^{e}=0, \label{eq:3dCrsymcurlSTfemdofE2}\\
\int_e (\boldsymbol{n}_i^{\intercal}(\sym\curl\boldsymbol{\tau})\boldsymbol{n}_j)\,q \dd s, &\quad q\in \mathbb B_{k}(e;  r_2^{\texttt{v}}), 1\leq i\leq j\leq 2, \textrm{ if } r_2^{e}=-1, \label{eq:3dCrsymcurlSTfemdofE3}\\
\int_f (\boldsymbol{n}\cdot\boldsymbol{\tau}\times\boldsymbol{n})\cdot\boldsymbol{q} \dd S, &\quad \boldsymbol{q}\in \curl_f\mathbb B_{k+2}(f;\boldsymbol r_1+1), \label{eq:3dCrsymcurlSTfemdofF1}\\
\int_f \div_f(\boldsymbol{n}\cdot\boldsymbol{\tau}\times\boldsymbol{n})\,q \dd S, &\quad q\in \mathbb B_{k}(f;(\boldsymbol r_2)_+)/\mathbb R, \label{eq:3dCrsymcurlSTfemdofF2}\\
\int_f \Pi_f\sym(\boldsymbol{\tau}\times\boldsymbol{n})\Pi_f:\boldsymbol{q} \dd S, &\quad \boldsymbol{q}\in \sym\curl_f\grad_f\mathbb B_{k+3}(f;\boldsymbol r_0), \label{eq:3dCrsymcurlSTfemdofF3}\\
\int_f \div_f\div_f(\sym(\boldsymbol{\tau}\times\boldsymbol{n}))\,q \dd S, &\quad q\in \mathbb B_{k-1}(f;\boldsymbol r_2\ominus 1)/{\rm CH}(f), \label{eq:3dCrsymcurlSTfemdofF4}\\
\int_T \boldsymbol{\tau}:\boldsymbol{q} \dx, &\quad \boldsymbol{q}\in \mathbb B_{k+1}^{\sym\curl}(\boldsymbol{r}_1;\mathbb S\cap\mathbb T), \label{eq:3dCrsymcurlSTfemdofT1}
\end{align}
\end{subequations}
for each $\texttt{v}\in \Delta_{0}(T)$, $e\in \Delta_{1}(T)$ and $f\in \Delta_{2}(T)$.

\begin{lemma}\label{lem:symcurlSTfemunisol}
Let smoothness vectors $(\boldsymbol{r}_0, \boldsymbol{r}_1, \boldsymbol{r}_2)$ satisfy \eqref{eq:rCH} with $r_1^f=-1$, and $k\geq 2r_2^{\texttt{v}}+4$. DoFs \eqref{eq:symcurlSTdofs} are unisolvent for $\mathbb P_{k+1}(T;\mathbb S\cap\mathbb T)$.
\end{lemma}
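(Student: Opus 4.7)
The plan is the standard two-part strategy: first verify the dimension identity $\#\{\text{DoFs \eqref{eq:symcurlSTdofs}}\} = \dim\mathbb P_{k+1}(T;\mathbb S\cap\mathbb T) = 5\binom{k+4}{3}$, and then show that vanishing of all DoFs forces $\boldsymbol{\tau}=0$, working from vertices outward to edges, then faces, and finally the interior. The setting $r_1^f=-1$ (minimal normal smoothness across faces) is the substantive case; the higher-smoothness cases reduce to direct polynomial counting.

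For the dimension count, I would split the DoF tally into the scalar vertex/edge contributions from~\eqref{eq:3dCrsymcurlSTfemdofV1}--\eqref{eq:3dCrsymcurlSTfemdofE1} (using that $\dim(\mathbb S\cap\mathbb T)=5$ and the scalar vertex/edge formulas from~\cite{ChenHuang2024}), the extra edge contributions~\eqref{eq:3dCrsymcurlSTfemdofE2}--\eqref{eq:3dCrsymcurlSTfemdofE3} activated in the low-smoothness cases, and the face/interior contributions. The face counts are governed by the exactness of the face bubble de Rham complex~\eqref{eq:femderhambubblecomplex2d} (which computes $\dim\curl_f\mathbb B_{k+2}(f;\boldsymbol{r}_1+1)$ and identifies $\mathbb B_k(f;(\boldsymbol{r}_2)_+)/\mathbb R$ as the image of $\div_f$) and the face bubble conformal div--div complex~\eqref{eq:femCdivdivbubblecomplex2d} (which identifies $\sym\curl_f\grad_f\mathbb B_{k+3}(f;\boldsymbol{r}_0)$ and $\mathbb B_{k-1}(f;\boldsymbol{r}_2\ominus 1)/{\rm CH}(f)$). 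The interior bubble dimension is supplied by~\eqref{eq:divdivbubbleSTdim} and the geometric decomposition~\eqref{eq:symcurlSTbubbledecomp}; combining these and telescoping with the bubble Hessian complex~\eqref{eq:fembubblehessiancomplex} yields the target total.

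For unisolvence, let $\boldsymbol{\tau}\in\mathbb P_{k+1}(T;\mathbb S\cap\mathbb T)$ have all DoFs zero. Vanishing \eqref{eq:3dCrsymcurlSTfemdofV1}--\eqref{eq:3dCrsymcurlSTfemdofE1} forces $\nabla^j\boldsymbol{\tau}|_e=0$ for $0\le j\le r_1^e$ on every edge, so on each face $f$ the two face traces
$\tr_2^{\sym\curl}(\boldsymbol{\tau})=\boldsymbol n\cdot\boldsymbol{\tau}\times\boldsymbol n$ and $\tr_1^{\sym\curl}(\boldsymbol{\tau})=\Pi_f\sym(\boldsymbol{\tau}\times\boldsymbol n)\Pi_f$ land in the correct boundary-vanishing face polynomial spaces, specifically $\mathbb B_{k+1}^{\div_f}(\boldsymbol r_1;\mathbb R^2)$ and $\mathbb B_{k+1}^{\div_f\div_f}(\boldsymbol r_1;\mathbb S\cap\mathbb T)$ respectively (after possibly invoking~\eqref{eq:3dCrsymcurlSTfemdofE2}, \eqref{eq:3dCrsymcurlSTfemdofE3} and the identities~\eqref{eq:edgedofprop1}, \eqref{eq:202403201}--\eqref{eq:202403202} to kill the boundary-of-face contributions). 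Then the vanishing of \eqref{eq:3dCrsymcurlSTfemdofF1}--\eqref{eq:3dCrsymcurlSTfemdofF2} together with the exactness of~\eqref{eq:femderhambubblecomplex2d} (decomposing a $\div_f$-bubble into a $\curl_f$-part and a $\div_f$-surjective part) gives $\tr_2^{\sym\curl}(\boldsymbol{\tau})|_f=0$, while the vanishing of \eqref{eq:3dCrsymcurlSTfemdofF3}--\eqref{eq:3dCrsymcurlSTfemdofF4} together with the exactness of~\eqref{eq:femCdivdivbubblecomplex2d} gives $\tr_1^{\sym\curl}(\boldsymbol{\tau})|_f=0$; in both arguments one uses the fact that the constants/${\rm CH}(f)$ quotiented out are already annihilated by $\boldsymbol{\tau}$ vanishing on $\partial f$. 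Therefore $\boldsymbol{\tau}\in\mathbb B_{k+1}^{\sym\curl}(\boldsymbol{r}_1;\mathbb S\cap\mathbb T)$, and the interior DoF~\eqref{eq:3dCrsymcurlSTfemdofT1} tested against $\boldsymbol{\tau}$ itself yields $\boldsymbol{\tau}=0$.

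The main obstacle, and where I expect the proof to do real work, is the face step under $r_1^f=-1$: the trace $\tr_1^{\sym\curl}(\boldsymbol{\tau})$ is $\mathbb S(f)\cap\mathbb T(f)$-valued and must be cleanly separated from $\tr_2^{\sym\curl}(\boldsymbol{\tau})$ (a tangential vector) so that each face bubble complex can be applied independently; the bookkeeping requires the boundary-of-face identities~\eqref{eq:edgedofprop1} and \eqref{eq:202403201}--\eqref{eq:202403202} and a careful check that the edge DoFs~\eqref{eq:3dCrsymcurlSTfemdofE2}--\eqref{eq:3dCrsymcurlSTfemdofE3} precisely supply the data needed at $\partial f$ for the two face complexes to reduce the traces to zero. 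Once this separation is done, the remainder is the routine BGG-style reduction to the bubble space.
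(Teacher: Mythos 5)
Your proposal is correct and follows essentially the same route as the paper: count the DoFs via the face bubble complexes \eqref{eq:femderhambubblecomplex2d} and \eqref{eq:femCdivdivbubblecomplex2d} together with the geometric decomposition \eqref{eq:symcurlSTbubbledecomp}, then show that vanishing DoFs force $\boldsymbol n\cdot\boldsymbol\tau\times\boldsymbol n$ and $\Pi_f\sym(\boldsymbol\tau\times\boldsymbol n)\Pi_f$ into the kernels of the two face bubble complexes and hence to zero, reducing everything to the interior bubble DoF \eqref{eq:3dCrsymcurlSTfemdofT1}. One small correction: the edge identities needed to control the boundary-of-face data for $\tr_1^{\sym\curl}(\boldsymbol\tau)$ are \eqref{eq:edgedofprop1} and its companion formula for $\tr_2^{\div_f\div_f}(\tr_1^{\sym\curl}(\boldsymbol{\tau}))$, not \eqref{eq:202403201}--\eqref{eq:202403202}, which pertain to the $\cott$ traces and play no role here.
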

\begin{proof}
By complex \eqref{eq:femderhambubblecomplex2d} and complex \eqref{eq:femCdivdivbubblecomplex2d},
the number of DoFs \eqref{eq:3dCrsymcurlSTfemdofF1}-\eqref{eq:3dCrsymcurlSTfemdofF4} is 
\begin{equation*}
4\dim\mathbb B^{\div_f}_{k+1}(\boldsymbol{r}_1;\mathbb R^2) + 4\dim\mathbb B^{\div_f\div_f}_{k+1}(\boldsymbol{r}_1;\mathbb S\cap \mathbb T) - 12[r_2^e=-1]\dim\mathbb B_{k}(e;  r_2^{\texttt{v}}).
\end{equation*}
Hence, 
the number of DoFs \eqref{eq:3dCrsymcurlSTfemdofE2}-\eqref{eq:3dCrsymcurlSTfemdofF4} is 
\begin{equation*}
8\dim\mathbb B_{k+1}(f;\boldsymbol{r}_1) + 4\dim\mathbb B^{\div_f\div_f}_{k+1}(\boldsymbol{r}_1;\mathbb S\cap \mathbb T) + 12[r_2^e=-1]\dim\mathbb B_{k}(e;  r_2^{\texttt{v}}),
\end{equation*}
by \eqref{eq:20240317}, which equals $16\dim\mathbb B_{k+1}(f;\boldsymbol{r}_1)$.
Then we get from \eqref{eq:symcurlSTbubbledecomp} that the number of DoFs~\eqref{eq:symcurlSTdofs} equals $\dim\mathbb P_{k+1}(T;\mathbb S\cap\mathbb T)$.

Take $\boldsymbol{\tau}\in\mathbb P_{k+1}(T;\mathbb S\cap\mathbb T)$, and assume all the DoFs \eqref{eq:symcurlSTdofs} vanish. 
Following the proof of Lemma 6.2 in \cite{ChenHuang2022}, by \eqref{eq:edgedofprop1}, we get from the vanishing DoFs \eqref{eq:3dCrsymcurlSTfemdofV1}-\eqref{eq:3dCrsymcurlSTfemdofE3} and \eqref{eq:3dCrsymcurlSTfemdofF4} that $\Pi_f\sym(\boldsymbol{\tau}\times\boldsymbol{n})\Pi_f\in\mathbb B^{\div_f\div_f}_{k+1}(\boldsymbol{r}_1;\mathbb S\cap \mathbb T)\cap\ker(\div_f\div_f)$, and from the vanishing DoFs \eqref{eq:3dCrsymcurlSTfemdofV1}-\eqref{eq:3dCrsymcurlSTfemdofE1}, \eqref{eq:3dCrsymcurlSTfemdofE3} and \eqref{eq:3dCrsymcurlSTfemdofF2} that $(\boldsymbol{n}\cdot\boldsymbol{\tau}\times\boldsymbol{n})|_f\in\mathbb B^{\div_f}_{k+1}(\boldsymbol{r}_1;\mathbb R^2)\cap\ker(\div_f)$.
Using bubble complex \eqref{eq:femderhambubblecomplex2d} and bubble complex \eqref{eq:femCdivdivbubblecomplex2d}, the vanishing DoFs \eqref{eq:3dCrsymcurlSTfemdofF1} and \eqref{eq:3dCrsymcurlSTfemdofF3} that $(\boldsymbol{n}\cdot\boldsymbol{\tau}\times\boldsymbol{n})|_{\partial T}=0$ and $\Pi_f\sym(\boldsymbol{\tau}\times\boldsymbol{n})\Pi_f=0$.
That is $\boldsymbol{\tau}\in\mathbb B_{k+1}^{\sym\curl}(\boldsymbol{r}_1;\mathbb S\cap\mathbb T)$, which together with the vanishing DoF \eqref{eq:3dCrsymcurlSTfemdofT1} gives $\boldsymbol{\tau}=0$.
\end{proof}

For $\boldsymbol{r}_1\geq(2,0,-1)^{\intercal}$ with $r_1^f=-1$, the finite element space $\mathbb V_{k+1}^{\sym\curl}(\boldsymbol{r}_1;\mathbb S\cap\mathbb T)$ is defined as follows
\begin{align*}
\mathbb V_{k+1}^{\sym\curl}(\boldsymbol{r}_1;\mathbb S\cap\mathbb T):=\{\boldsymbol \tau\in L^2(\Omega;\mathbb S\cap \mathbb T): &\, \boldsymbol \tau|_T\in\mathbb P_{k+1}(T;\mathbb S\cap\mathbb T) \textrm{ for each } T\in\mathcal T_h, \\
&\qquad\qquad\;\;\; \textrm{ DoFs \eqref{eq:symcurlSTdofs} are single-valued} \}.    
\end{align*}
According to the proof of Lemma~\ref{lem:symcurlSTfemunisol}, $(\boldsymbol{n}\cdot\boldsymbol{\tau}\times\boldsymbol{n})|_{f}$ and $\Pi_f\sym(\boldsymbol{\tau}\times\boldsymbol{n})\Pi_f$ are determined by DoFs \eqref{eq:3dCrsymcurlSTfemdofV1}-\eqref{eq:3dCrsymcurlSTfemdofF4} on face $f$, so $\mathbb V_{k+1}^{\sym\curl}(\boldsymbol{r}_1;\mathbb S\cap\mathbb T)\subset H(\div\div, \Omega; \mathbb S\cap \mathbb T)$.

\subsection{Finite element conformal Hessian complex}

\begin{theorem}
Let smoothness vectors $(\boldsymbol r_0, \boldsymbol r_1, \boldsymbol r_2, \boldsymbol r_3)$ be given by~\eqref{eq:rCH}, and $k\geq 2r_2^{\texttt{v}}+4$.
Then the finite element conformal Hessian complex \eqref{femCHcomplex3d} is exact.
\end{theorem}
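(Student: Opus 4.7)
The plan is to prove exactness of \eqref{femCHcomplex3d} in four steps, leveraging the bubble conformal Hessian complex (Theorem \ref{thm:bubbleCHcomplex3d}), the divdiv stability \eqref{eq:divdivSTstability}, and the two-dimensional bubble complexes \eqref{eq:femderhambubblecomplex2d} and \eqref{eq:femCdivdivbubblecomplex2d}. First, I would verify that \eqref{femCHcomplex3d} is a complex: the pointwise identities $\sym\curl\,\dev\hess = 0$ and $\div\div\,\sym\curl = 0$ come from the polynomial conformal Hessian complex \eqref{polyconformalHesscomplex3d}, while the $H(\sym\curl)$- and $H(\div\div;\mathbb S\cap\mathbb T)$-conformity of the finite element spaces (Lemma \ref{lm:divdivconforming}) rules out distributional jump terms across inter-element interfaces. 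Second, I would show $\ker(\dev\hess)\cap\mathbb V_{k+3}(\boldsymbol{r}_0) = {\rm CH}$: since $\boldsymbol{r}_0 \geq (4,2,1)^{\intercal}$ gives $\mathbb V_{k+3}(\boldsymbol{r}_0)\subset H^2(\Omega)$, the claim follows from the exactness of the conformal Hessian complex \eqref{conformalHesscomplex3d} at the leftmost slot. Surjectivity $\div\div\,\mathbb V_{k}^{\div\div}(\boldsymbol{r}_2;\mathbb S\cap\mathbb T) = \mathbb V_{k-2}(\boldsymbol{r}_3)$ is already the identity \eqref{eq:divdivSTstability}.

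The main step is establishing exactness at the two middle slots via a dimension count based on the geometric decomposition of each finite element space into DoFs hosted by vertices, edges, faces, and tetrahedron interiors. Writing the alternating sum
\begin{equation*}
\chi := \dim \mathbb V_{k+3}(\boldsymbol{r}_0) - \dim \mathbb V_{k+1}^{\sym\curl}(\boldsymbol{r}_1;\mathbb S\cap\mathbb T) + \dim \mathbb V_{k}^{\div\div}(\boldsymbol{r}_2;\mathbb S\cap\mathbb T) - \dim \mathbb V_{k-2}(\boldsymbol{r}_3),
\end{equation*}
the tetrahedron-interior contribution sums to $-|\Delta_3(\mathcal T_h)|\,\dim {\rm CH}$ by exactness of the bubble conformal Hessian complex \eqref{polybubbleCHcomplex3d}; the face-interior contribution reduces via the 2D face bubble conformal div\,div complex \eqref{eq:femCdivdivbubblecomplex2d} paired with the 2D bubble de Rham complex \eqref{eq:femderhambubblecomplex2d} and the geometric decomposition \eqref{eq:symcurlSTbubbledecomp}; edge and vertex contributions cancel by matching the DoFs for $\mathbb S\cap\mathbb T$-valued tensors against those for scalar-valued data via the trace structure of $\sym\curl$ and $\div\div$. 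Applying Euler's formula \eqref{eulerformula} and the dimension split of ${\rm CH} = \mathbb P_1 \oplus \spa\{\boldsymbol{x}^{\intercal}\boldsymbol{x}\}$ (total dimension $5$), one arrives at $\chi = \dim {\rm CH} = 5$. Combined with the complex property, the left-kernel identification, and the divdiv surjectivity, this Euler characteristic identity forces exactness at the two middle slots by a rank-nullity argument.

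The main obstacle will be the face-level bookkeeping when $\boldsymbol r_1$ or $\boldsymbol r_2$ has $r^f = -1$: in that case the face DoFs of $\mathbb V_{k+1}^{\sym\curl}$ split into the $\boldsymbol{n}\cdot\boldsymbol\tau\times\boldsymbol n$ and $\Pi_f\sym(\boldsymbol\tau\times\boldsymbol n)\Pi_f$ pieces tested against the 2D face bubble de Rham and conformal div\,div complexes respectively, and a delicate matching with the face DoFs of $\mathbb V_{k}^{\div\div}$ (the $\boldsymbol n^{\intercal}\boldsymbol\tau\boldsymbol n$ data and $\tr_2^{\div\div}$ data) is required for the cancellation to go through. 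Handling this case invokes the decomposition \eqref{eq:symcurlSTbubbledecomp} together with the 2D exact sequences \eqref{eq:femderhambubblecomplex2d}--\eqref{eq:femCdivdivbubblecomplex2d}, and it is here that the constraint $k \geq 2r_2^{\texttt{v}} + 4$ is used most stringently to keep all invoked bubble complexes exact.
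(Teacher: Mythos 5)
There is a genuine logical gap in the final step. You establish three pieces of information: exactness at the leftmost slot ($\ker(\dev\hess)\cap\mathbb V_{k+3}(\boldsymbol{r}_0)={\rm CH}$, so $\dim H^0=5$), surjectivity of $\div\div$ (so $H^3=0$), and the Euler characteristic identity $\chi=5$. Writing $H^1=\ker(\sym\curl)/\img(\dev\hess)$ and $H^2=\ker(\div\div)/\img(\sym\curl)$, the identity $\chi=\dim H^0-\dim H^1+\dim H^2-\dim H^3$ then reads $5=5-\dim H^1+\dim H^2$, i.e.\ $\dim H^1=\dim H^2$. This does \emph{not} force $H^1=H^2=0$; a rank--nullity count with two unknown middle cohomologies cannot close both at once. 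You must independently establish exactness at one of the two middle slots before the dimension count can deliver the other.

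The paper resolves this by proving $\mathbb V_{k+1}^{\sym\curl}(\boldsymbol{r}_1;\mathbb S\cap\mathbb T)\cap\ker(\sym\curl)=\dev\hess\mathbb V_{k+3}(\boldsymbol{r}_0)$ directly (via the exactness of the continuous complex \eqref{conformalHesscomplex3d}: a piecewise polynomial $\boldsymbol\tau$ in the global $H(\sym\curl)$ space with $\sym\curl\boldsymbol\tau=0$ equals $\dev\hess v$ for some $v\in H^2$, which is then checked to lie in $\mathbb V_{k+3}(\boldsymbol{r}_0)$), and then uses $H^1=0$, $H^3=0$, ${\rm CH}\subseteq H^0$ and $\chi=5$ to conclude $\dim H^0+\dim H^2=5$, hence $H^0={\rm CH}$ and $H^2=0$. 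Your step 2 is thus redundant in the paper's scheme but, more importantly, it is not a substitute for exactness at the $H(\sym\curl)$ slot. A secondary remark: your plan to compute the face contribution $C_2$ directly from the two-dimensional bubble complexes \eqref{eq:femderhambubblecomplex2d}--\eqref{eq:femCdivdivbubblecomplex2d} is workable but unnecessary; the paper computes $C_0$, $C_1$, $C_3$ and recovers $C_2$ from the polynomial complex identity $4C_0+6C_1+4C_2+C_3=5$, which sidesteps exactly the delicate $r^f=-1$ face bookkeeping you flag as the main obstacle.
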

\begin{proof}
It can be immediately verified that \eqref{femCHcomplex3d} is a complex, and $\mathbb V_{k+1}^{\sym\curl}(\boldsymbol{r}_1;\mathbb S\cap\mathbb T)\cap\ker(\sym\curl)=\dev\hess\mathbb V_{k+3}(\boldsymbol{r}_0)$. Since the divdiv stability $\div\div\mathbb V_{k}^{\div\div}(\boldsymbol{r}_2;\mathbb S\cap\mathbb T)=\mathbb V_{k-2}(\boldsymbol{r}_3)$ has been shown in \eqref{eq:divdivSTstability}, we only need to prove
\begin{align}
\label{eq:CHfemcomplexdim}
\dim\mathbb V_{k+3}(\boldsymbol{r}_0)&-\dim\mathbb V_{k+1}^{\sym\curl}(\boldsymbol{r}_1;\mathbb S\cap\mathbb T)\\
\notag
& +\dim\mathbb V_{k}^{\div\div}(\boldsymbol{r}_2;\mathbb S\cap\mathbb T)-\dim\mathbb V_{k-2}(\boldsymbol{r}_3)=5.
\end{align}

Corresponding to the left-hand side of \eqref{eq:CHfemcomplexdim}, let $C_i=C_{i,1}-C_{i,2}$ for $i=0,1,2,3$, where $C_{i,1}$ is the sum of the number of DoFs of $\mathbb V_{k+3}(\boldsymbol{r}_0)$ and $\mathbb V_{k}^{\div\div}(\boldsymbol{r}_2;\mathbb S\cap\mathbb T)$ on one $i$-dimensional simplex, and $C_{i,2}$ is the sum of the number of DoFs of $\mathbb V_{k+1}^{\sym\curl}(\boldsymbol{r}_1;\mathbb S\cap\mathbb T)$ and $\mathbb V_{k-2}(\boldsymbol{r}_3)$ on one $i$-dimensional simplex. Then \eqref{eq:CHfemcomplexdim} is equivalent to 
\begin{equation}\label{eq:CHfemcomplexdimC}
C_0|\Delta_0(\mathcal{T}_h)|+C_1|\Delta_1(\mathcal{T}_h)|+C_2|\Delta_2(\mathcal{T}_h)|+C_3|\Delta_3(\mathcal{T}_h)|=5.
\end{equation}
By the bubble complex \eqref{polybubbleCHcomplex3d}, $C_3=-5$.
We compute $C_0$ and $C_1$ as follows
\begin{align*}
C_0&={r_0^{\texttt{v}}+3\choose3}-5{r_1^{\texttt{v}}+3\choose3}+5{r_2^{\texttt{v}}+3\choose3}-{r_3^{\texttt{v}}+3\choose3}=5, \\
C_1&= \frac{1}{6}(r_0^e+1)(r_0^e+2)(3k+2r_0^e-6r_0^{\texttt{v}}+6)-\frac{5}{6}(r_1^e+1)(r_1^e+2)(3k+2r_1^e-6r_1^{\texttt{v}}) \\
&\quad-[r_1^e=0](k-2r_1^{\texttt{v}}+1)+\frac{5}{6}(r_2^e+1)(r_2^e+2)(3k+2r_2^e-6r_2^{\texttt{v}}-3) \\
&\quad -\frac{1}{6}(r_3^e+1)(r_3^e+2)(3k+2r_3^e-6r_3^{\texttt{v}}-9) =-5.
\end{align*}
Notice that
\begin{align*}
4C_0+6C_1+4C_2+C_3&=\dim\mathbb P_{k+3}(T)-\dim\mathbb P_{k+1}(T;\mathbb S\cap\mathbb T)\\
&\quad +\dim\mathbb P_{k}(T;\mathbb S\cap\mathbb T)-\dim\mathbb P_{k-2}(T)=5,
\end{align*}
in which we have used the polynomial conformal Hessian complex \eqref{polyconformalHesscomplex3d}. Hence, $C_2=5$.
Therefore, \eqref{eq:CHfemcomplexdimC} follows from the Euler's formula \eqref{eulerformula}.
\end{proof}

\begin{example}\rm 
Taking $\boldsymbol{r}_0=(4,2,1)^{\intercal}$, $\boldsymbol{r}_1=\boldsymbol{r}_0-2$, $\boldsymbol{r}_2=\boldsymbol{r}_1\ominus1$, $\boldsymbol{r}_3=\boldsymbol{r}_2\ominus2$, and $k\geq6$, we obtain the finite element conformal Hessian complex
\begin{equation*}
{\rm CH} \hookrightarrow
\begin{pmatrix}
	 4\\
	 2\\
	 1
\end{pmatrix}
\xrightarrow{\dev\hess}
\begin{pmatrix}
 2\\
 0\\
 -1
\end{pmatrix}
\xrightarrow{\sym\curl}
\begin{pmatrix}
	1\\
	-1\\
	-1
\end{pmatrix}
\xrightarrow{\div\div} 
\begin{pmatrix}
	 -1\\
	 -1\\
	 -1
\end{pmatrix} \to 0,
\end{equation*}
which is the finite element conformal Hessian complex was recently constructed in~\cite{GuoHuLin2025}.\end{example}

\begin{remark}\rm
Let $\boldsymbol{r}_0, \boldsymbol{r}_1, \boldsymbol{r}_2$ and $\boldsymbol{r}_3$ be smooth vectors satisfying
\begin{equation*}
\boldsymbol{r}_0\geq (4,2,1)^{\intercal}, \quad
\boldsymbol r_1=\boldsymbol r_0-2  \geq (2,0,-1)^{\intercal},
\quad
 \boldsymbol{r}_2\geq\boldsymbol{r}_1\ominus1,
\quad
 \boldsymbol{r}_3\geq\boldsymbol{r}_2\ominus2.
\end{equation*}
We can apply the tilde operation in \cite{ChenHuang2025,ChenHuang2024} to complex \eqref{femCHcomplex3d} to derive the following exact finite element conformal Hessian complex with inequality constraints for sufficiently large $k$
\begin{align*}
{\rm CH}\xrightarrow{\subset} \mathbb V_{k+3}(\boldsymbol{r}_0)&\xrightarrow{\dev\hess} \mathbb V_{k+1}^{\sym\curl}(\boldsymbol{r}_1,\boldsymbol{r}_2;\mathbb S\cap\mathbb T)\xrightarrow{\sym\curl} \mathbb V_{k}^{\div\div}(\boldsymbol{r}_2,\boldsymbol{r}_3;\mathbb S\cap\mathbb T) \\
\notag
&\xrightarrow{\div{\div}} \mathbb V_{k-2}(\boldsymbol{r}_3)\xrightarrow{}0,
\end{align*}
where
\begin{align*}
\mathbb V_{k+1}^{\sym\curl}(\boldsymbol{r}_1,\boldsymbol{r}_2;\mathbb S\cap\mathbb T)&:=\{\boldsymbol{\tau}\in\mathbb V_{k+1}^{\sym\curl}(\boldsymbol{r}_1;\mathbb S\cap\mathbb T): \sym\curl\boldsymbol{\tau}\in\mathbb V_{k}^{\div\div}(\boldsymbol{r}_2;\mathbb S\cap\mathbb T)\}, \\
\mathbb V_{k}^{\div\div}(\boldsymbol{r}_2,\boldsymbol{r}_3;\mathbb S\cap\mathbb T)&:=\{\boldsymbol{\tau}\in\mathbb V_{k}^{\div\div}(\boldsymbol{r}_2;\mathbb S\cap\mathbb T): \div\div\boldsymbol{\tau}\in\mathbb V_{k-2}(\boldsymbol{r}_3)\}.
\end{align*}
\end{remark}

\section{Finite element conformal elasticity complex in three dimensions}\label{sec:FEconformalelascomplex}

In this section, we will construct the finite element conformal elasticity complex
\begin{align}\label{femCEcomplex3d}
{\rm CK}\xrightarrow{\subset} \mathbb V_{k+4}(\boldsymbol{r}_0;\mathbb R^3)&\xrightarrow{\dev\defm} \mathbb V_{k+3}^{\cott}(\boldsymbol{r}_1;\mathbb S\cap\mathbb T)\xrightarrow{\cott} \mathbb V_{k}^{\div}(\boldsymbol{r}_2;\mathbb S\cap\mathbb T) \\
\notag
&\xrightarrow{\div} \mathbb V_{k-1}(\boldsymbol{r}_3;\mathbb R^3)\xrightarrow{}0
\end{align}
with $k\geq 2r_2^{\texttt{v}}+5$ and smoothness vectors $(\boldsymbol{r}_0, \boldsymbol{r}_1, \boldsymbol{r}_2, \boldsymbol{r}_3)$ satisfying \eqref{eq:rCE}.
Here spaces $\mathbb V_{k+4}(\boldsymbol{r}_0;\mathbb R^3)=\mathbb V_{k+4}(\boldsymbol{r}_0)\otimes\mathbb R^3$ and $\mathbb V_{k-1}(\boldsymbol{r}_3;\mathbb R^3)=\mathbb V_{k-1}(\boldsymbol{r}_3)\otimes\mathbb R^3$ are the smooth vector finite element spaces defined as~\eqref{eq:scalarfem}.
We will define space $\mathbb V_{k}^{\div}(\boldsymbol{r}_2;\mathbb S\cap\mathbb T)$ in Section~\ref{subsec:divSTfem} and space $\mathbb V_{k+3}^{\cott}(\boldsymbol{r}_1;\mathbb S\cap\mathbb T)$ in Section~\ref{subsec:cottSTfem}.

\subsection{$H(\div)$-conforming finite element}\label{subsec:divSTfem}
Define $\mathbb V_{k}^{\div}(\boldsymbol{r}_2;\mathbb S\cap \mathbb T):=\mathbb V_{k}(\boldsymbol{r}_2)\otimes (\mathbb S\cap \mathbb T)$ for $\boldsymbol{r}_2\geq0$.
Then we focus on the construction of $H(\div)$-conforming $\mathbb V_{k}^{\div}(\boldsymbol{r}_2;\mathbb S\cap \mathbb T)$ for $\boldsymbol{r}_2\geq(3,-1,-1)^{\intercal}$ with $r_2^f=-1$.

Take $\mathbb P_{k}(T;\mathbb S\cap \mathbb T)$ as the space of shape functions. 
For $r_2^f=-1$,
the DoFs are 
\begin{subequations}\label{eq:divSTdof}
\begin{align}
\nabla^i\boldsymbol{\tau}(\texttt{v}), & \quad i=0,\ldots, r_2^{\texttt{v}}, \texttt{v}\in \Delta_{0}(T), \label{eq:3dCrdivSTfemdofV}\\
\int_e \frac{\partial^{j}\boldsymbol{\tau}}{\partial n_1^{i}\partial n_2^{j-i}}:\boldsymbol{q} \dd s, &\quad \boldsymbol{q}\in \mathbb P_{k - 2(r_2^{\texttt{v}}+1) + j}(e;\mathbb S\cap\mathbb T), 0\leq i\leq j\leq r_2^{e}, e\in \Delta_1(T), \label{eq:3dCrdivSTfemdofE1}\\
\int_e (\boldsymbol{n}_i^{\intercal}\boldsymbol{\tau}\boldsymbol{n}_j)\,q \dd s, &\quad q\in \mathbb B_{k}(e; r_2^{\texttt{v}}), 1\leq i\leq j\leq 2, e\in \Delta_{1}(T), \textrm{ if } r_2^{e}=-1, \label{eq:3dCrdivSTfemdofE2}\\
\int_f (\boldsymbol n^{\intercal}\boldsymbol{\tau}\boldsymbol{n}) \, q \dd S, &\quad q\in \mathbb B_{k}(f;(\boldsymbol{r}_2)_+), f\in \Delta_{2}(T), \label{eq:3dCrdivSTfemdofF1}\\
\int_f (\Pi_f \boldsymbol{\tau}\boldsymbol{n})\cdot\boldsymbol{q} \dd S, &\quad \boldsymbol{q}\in \mathbb B_{k}^{\div} (f;\boldsymbol r_2), f\in \Delta_{2}(T), \label{eq:3dCrdivSTfemdofF2}\\
\int_T \boldsymbol{\tau}:\boldsymbol{q} \dx, &\quad \boldsymbol{q}\in \mathbb B_{k}^{\div}(\boldsymbol{r}_2;\mathbb S\cap\mathbb T), \label{eq:3dCrdivSTfemdofT}
\end{align}
\end{subequations}

\begin{lemma}\label{lm:divST}
Let $\boldsymbol r_2$ be a smoothness vector with $r_2^f = -1, r_2^{\texttt{v}}\geq 0$, and let $k\geq 2r_2^{\texttt{v}}+1$. DoFs~\eqref{eq:divSTdof} are unisolvent for $\mathbb P_{k}(T;\mathbb S\cap\mathbb T)$. 
\end{lemma}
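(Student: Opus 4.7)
The argument follows the standard two-step template used for all the partial-continuity tensor elements in this paper: first count DoFs, then show that all vanishing DoFs force $\boldsymbol\tau=0$.

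\emph{Dimension count.} I would verify that the total number of DoFs in \eqref{eq:divSTdof} matches $\dim\mathbb P_k(T;\mathbb S\cap\mathbb T)=5\binom{k+3}{3}$. Comparing against the scalar smooth element~\eqref{eq:Cr3D} tensored with $\mathbb S\cap\mathbb T$: the vertex DoFs \eqref{eq:3dCrdivSTfemdofV} and interior edge DoFs \eqref{eq:3dCrdivSTfemdofE1} contribute exactly the counts of the scalar element times $\dim(\mathbb S\cap\mathbb T)=5$; the extra edge DoFs \eqref{eq:3dCrdivSTfemdofE2} account for the three normal-normal scalar components on each edge only when $r_2^e=-1$; the face DoFs \eqref{eq:3dCrdivSTfemdofF1}--\eqref{eq:3dCrdivSTfemdofF2} contribute $\dim\mathbb B_k(f;(\boldsymbol r_2)_+)+\dim\mathbb B_k^{\div_f}(\boldsymbol r_2;\mathbb R^2)$ per face, matching the three scalar components of the normal trace $\boldsymbol\tau\boldsymbol n$; and the interior DoF \eqref{eq:3dCrdivSTfemdofT} contributes $\dim\mathbb B_k^{\div}(\boldsymbol r_2;\mathbb S\cap\mathbb T)$. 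Summing and rearranging via the geometric decomposition of $\mathbb P_k(T)$ into vertex/edge/face/interior bubbles yields the total $5\binom{k+3}{3}$.

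\emph{Uniqueness.} Assume all DoFs vanish. A standard bubble induction on \eqref{eq:3dCrdivSTfemdofV}--\eqref{eq:3dCrdivSTfemdofE1} gives $\nabla^j\boldsymbol\tau(\texttt v)=0$ for $j\leq r_2^{\texttt v}$ at every vertex and the vanishing of all mixed normal derivatives up to order $r_2^e$ on every edge. When $r_2^e=-1$, \eqref{eq:3dCrdivSTfemdofE2} further yields $\boldsymbol n_i^\intercal\boldsymbol\tau\boldsymbol n_j|_e=0$ for $1\leq i\leq j\leq 2$ on each edge. Now fix a face $f$ with normal $\boldsymbol n_f$ and an edge $e\subset\partial f$: since $\boldsymbol n_f\perp\boldsymbol t_e$ and $\boldsymbol n_{f,e}\perp\boldsymbol t_e$, both $\boldsymbol n_f$ and $\boldsymbol n_{f,e}$ lie in $\operatorname{span}\{\boldsymbol n_1^e,\boldsymbol n_2^e\}$. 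Hence $\boldsymbol n_f^\intercal\boldsymbol\tau\boldsymbol n_f|_e=0$ and $\boldsymbol n_{f,e}^\intercal(\boldsymbol\tau\boldsymbol n_f)|_e=0$. Combined with the vertex smoothness, the scalar $\boldsymbol n_f^\intercal\boldsymbol\tau\boldsymbol n_f|_f$ belongs to $\mathbb B_k(f;(\boldsymbol r_2)_+)$ and the tangential vector $\Pi_f\boldsymbol\tau\boldsymbol n_f$ belongs to $\mathbb B_k^{\div_f}(\boldsymbol r_2;\mathbb R^2)$. Plugging these traces themselves as test functions into \eqref{eq:3dCrdivSTfemdofF1}--\eqref{eq:3dCrdivSTfemdofF2} forces them to vanish, so $\boldsymbol\tau\boldsymbol n|_{\partial T}=0$ and $\boldsymbol\tau\in\mathbb B_k^{\div}(\boldsymbol r_2;\mathbb S\cap\mathbb T)$. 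Taking $\boldsymbol q=\boldsymbol\tau$ in \eqref{eq:3dCrdivSTfemdofT} then gives $\boldsymbol\tau=0$.

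\emph{Main obstacle.} The subtlest step is identifying $\boldsymbol n_f^\intercal\boldsymbol\tau\boldsymbol n_f|_f$ and $\Pi_f\boldsymbol\tau\boldsymbol n_f|_f$ as elements of their respective face-bubble test spaces; this depends essentially on the extra edge DoFs \eqref{eq:3dCrdivSTfemdofE2} in the $r_2^e=-1$ regime and on the geometric fact that $\boldsymbol n_f,\boldsymbol n_{f,e}\in\operatorname{span}\{\boldsymbol n_1^e,\boldsymbol n_2^e\}$ along each boundary edge of $f$. The dimension bookkeeping is equally delicate because the boundary DoFs replace the usual scalar face DoFs of \eqref{eq:Cr3D} by the three components of $\boldsymbol\tau\boldsymbol n$, and the trade-off between removed and added DoFs must be exact for the total to reach $5\binom{k+3}{3}$.
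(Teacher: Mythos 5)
Your proposal is correct, and the unisolvence half is essentially the argument one would expect; but the organization differs from the paper's. The paper does not separate the count from the kernel argument at all: it writes down two explicit direct-sum decompositions,
\begin{align*}
\mathbb B_{k}^{\div}(\boldsymbol{r}_2;\mathbb S\cap\mathbb T)&=\mathbb B_{k}(T;(\boldsymbol{r}_2)_+)\otimes(\mathbb S\cap\mathbb T)\oplus\Oplus_{f\in\Delta_2(T)}\mathbb B_{k}(f;(\boldsymbol{r}_2)_+)\otimes{\rm span}\{\sym(\boldsymbol{t}_1\boldsymbol{t}_2^{\intercal}), \boldsymbol{t}_1\boldsymbol{t}_1^{\intercal}-\boldsymbol{t}_2\boldsymbol{t}_2^{\intercal}\},\\
\mathbb B_{k}^{\div} (f;\boldsymbol r_2)&=\mathbb B_{k}^2(f;(\boldsymbol{r}_2)_+) \oplus[r_2^e=-1]\Oplus_{e\in\Delta_1(f)}\big(\mathbb B_{k}(e;r_2^{\texttt{v}})\otimes{\rm span}\{\boldsymbol{t}_e\}\big),
\end{align*}
and then observes that the DoF set \eqref{eq:divSTdof} is exactly a re-bundling, in face- and edge-adapted frames, of the five componentwise copies of the scalar DoFs \eqref{eq:Cr3D}; unisolvence is then inherited in one stroke from the scalar element. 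Your route — count the DoFs, then show the kernel is trivial — is the classical alternative, and it is sound: your identification of $\boldsymbol n_f^{\intercal}\boldsymbol\tau\boldsymbol n_f|_f$ and $\Pi_f\boldsymbol\tau\boldsymbol n_f|_f$ as members of the face test spaces via $\boldsymbol n_f,\boldsymbol n_{f,e}\in\operatorname{span}\{\boldsymbol n_1^e,\boldsymbol n_2^e\}$ is precisely the geometric fact that makes the paper's regrouping work, and your final reduction to $\boldsymbol\tau\in\mathbb B_k^{\div}(\boldsymbol r_2;\mathbb S\cap\mathbb T)$ is correct. What you gain is an explicit kernel argument that the paper leaves implicit; what you lose is that your dimension count is only sketched — to close it you would in any case have to prove the two decompositions above (they are what makes the face and interior contributions sum to $5\binom{k+3}{3}$), at which point the paper's shorter proof becomes available. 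I verified that with those decompositions your bookkeeping does balance, so there is no gap, only an unexpanded step.
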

\begin{proof}
For $\boldsymbol{\tau}\in\mathbb B_{k}^{\div}(\boldsymbol{r}_2;\mathbb S\cap\mathbb T)$, $\boldsymbol{\tau}|_e=0$ for $e\in\Delta_1(T)$. This means
\begin{align*}
\mathbb B_{k}^{\div}(\boldsymbol{r}_2;\mathbb S\cap\mathbb T)&=\mathbb B_{k}(T;(\boldsymbol{r}_2)_+)\otimes(\mathbb S\cap\mathbb T)\\
&\quad\oplus\Oplus_{f\in\Delta_2(T)}\mathbb B_{k}(f;(\boldsymbol{r}_2)_+)\otimes{\rm span}\{\sym(\boldsymbol{t}_1\boldsymbol{t}_2^{\intercal}), \boldsymbol{t}_1\boldsymbol{t}_1^{\intercal}-\boldsymbol{t}_2\boldsymbol{t}_2^{\intercal}\}.
\end{align*}
On the other side,
\begin{align*}
\mathbb B_{k}^{\div} (f;\boldsymbol r_2)&=\mathbb B_{k}^2(f;(\boldsymbol{r}_2)_+) \oplus[r_2^e=-1]\Oplus_{e\in\Delta_1(f)}\big(\mathbb B_{k}(e;r_2^{\texttt{v}})\otimes{\rm span}\{\boldsymbol{t}_e\}\big).
\end{align*}
We conclude the result from the last two decompositions and the unisolvence of DoFs \eqref{eq:Cr3D} for $\mathbb P_k(T)$.
\end{proof}

Thanks to the bubble conformal elasticity complex \eqref{polybubbleCEcomplex3d}, under the same assumptions in Theorem~\ref{thm:bubbleCEcomplex3d},
DoF \eqref{eq:3dCrdivSTfemdofT} can be recast as
\begin{subequations}\label{eq:divSTdofinter}
\begin{align}
\label{eq:divSTdofinter1}
\int_T \div\boldsymbol{\tau}\cdot\boldsymbol{q} \dx, &\quad \boldsymbol{q}\in \mathbb B_{k-1}(\boldsymbol{r}_2\ominus 1;\mathbb R^3)/{\rm CK}, \\
\label{eq:divSTdofinter2}
\int_T \boldsymbol{\tau}:\boldsymbol{q} \dx, &\quad \boldsymbol{q}\in \mathbb B_{k}^{\div}(\boldsymbol{r}_2;\mathbb S\cap\mathbb T)\cap\ker(\div).
\end{align}
\end{subequations}
When $\dim\mathbb B_{k}(f;(\boldsymbol{r}_2)_+)\geq4$ and $\dim\mathbb B_{k}^{\div} (f;\boldsymbol r_2)\geq6$, DoFs \eqref{eq:3dCrdivSTfemdofF1}-\eqref{eq:3dCrdivSTfemdofF2}
can be decomposed into
\begin{align*}
\int_f (\boldsymbol n^{\intercal}\boldsymbol{\tau}\boldsymbol{n}) \, q \dd S, &\quad q\in (\mathbb B_{k}(f;(\boldsymbol{r}_2)_+)/{\rm CH}(f))\oplus {\rm CH}(f), f\in \Delta_{2}(T), \\
\int_f (\Pi_f \boldsymbol{\tau}\boldsymbol{n})\cdot\boldsymbol{q} \dd S, &\quad \boldsymbol{q}\in (\mathbb B_{k}^{\div} (f;\boldsymbol r_2)/{\rm RM}^+(f)) \oplus {\rm RM}^+(f), f\in \Delta_{2}(T), 
\end{align*}
where
\begin{align*}
{\rm RM}^+(f)&:=\{\Pi_f\boldsymbol{v}: \boldsymbol{v}\in{\rm CK}\}={\rm RM}(f)\oplus {\rm span}\{\Pi_f\boldsymbol{x}\}\oplus {\rm span}\{(v_1, v_2)^{\intercal}, (v_2, -v_1)^{\intercal}\}
\end{align*}
with $v_1=(\Pi_f\boldsymbol{x})^{\intercal}\begin{pmatrix}1 & 0\\
0 &-1
\end{pmatrix}\Pi_f\boldsymbol{x}$ and $v_2=(\Pi_f\boldsymbol{x})^{\intercal}\begin{pmatrix}0 & 1\\
1 &0
\end{pmatrix}\Pi_f\boldsymbol{x}$. 

For $r_2^f=-1$, the finite element space $\mathbb V_{k}^{\div}(\boldsymbol{r}_2;\mathbb S\cap \mathbb T)$ is defined as follows
\begin{align*}
\mathbb V^{\div}_{k}(\boldsymbol{r}_2;\mathbb S\cap \mathbb T):=\{\boldsymbol{\tau}\in L^2(\Omega;\mathbb S\cap\mathbb T)&: \boldsymbol{\tau}|_T\in\mathbb P_{k}(T;\mathbb S\cap \mathbb T) \textrm{ for all } T\in\mathcal T_h, \\
&\;\; \textrm{ and all the DoFs \eqref{eq:divSTdof} are single-valued}\}.
\end{align*}
The single-valued DoFs in \eqref{eq:3dCrdivSTfemdofV}-\eqref{eq:3dCrdivSTfemdofF2} imply $\mathbb V_{k}^{\div}(\boldsymbol{r}_2;\mathbb S\cap \mathbb T)\subset H(\div, \Omega; \mathbb S\cap \mathbb T)$.

\begin{lemma}
Let $\boldsymbol r_1\geq(6,0,-1)^{\intercal}$, $\boldsymbol r_2=\boldsymbol r_1\ominus3\geq(3,-1,-1)^{\intercal}$ and $\boldsymbol r_3=\boldsymbol{r}_2\ominus 1$ be smoothness vectors, and $k\geq 2r_2^{\texttt{v}}+2$. 
Assume both $\boldsymbol r_1 \ominus 2$ and $\boldsymbol r_2$ satisfy the div-tensor stability condition.
Then we have the $(\div; \mathbb S\cap\mathbb T)$ stability
\begin{equation}\label{eq:divSTstability}
\div \mathbb V^{\div}_{k}(\boldsymbol{r}_2;\mathbb S\cap\mathbb T) = 
\mathbb V_{k-1}(\boldsymbol{r}_3;\mathbb R^3).
\end{equation}
\end{lemma}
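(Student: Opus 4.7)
The inclusion $\div \mathbb V^{\div}_{k}(\boldsymbol{r}_2;\mathbb S\cap\mathbb T) \subseteq \mathbb V_{k-1}(\boldsymbol{r}_3;\mathbb R^3)$ is immediate: elementwise $\div \boldsymbol{\tau}_h \in \mathbb P_{k-1}(T;\mathbb R^3)$, and the relation $\boldsymbol r_3 = \boldsymbol r_2 \ominus 1$ absorbs the loss of exactly one order of smoothness across each vertex, edge, and face.

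For the reverse inclusion, the plan is a two-step construction: a boundary-matching step via a continuous lift, followed by an interior correction using the bubble divergence surjectivity \eqref{eq:divSTbubblestability1}. Given $\boldsymbol{v}_h \in \mathbb V_{k-1}(\boldsymbol{r}_3;\mathbb R^3)$, the exactness of the conformal elasticity complex \eqref{conformalElascomplex3d} produces $\boldsymbol{\tau} \in H(\div,\Omega;\mathbb S\cap\mathbb T)$, smoothed enough so that every DoF of $\mathbb V^{\div}_k(\boldsymbol{r}_2;\mathbb S\cap\mathbb T)$ is well defined on $\boldsymbol{\tau}$, with $\div\boldsymbol{\tau} = \boldsymbol{v}_h$. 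I would then define $\boldsymbol{\tau}_h^{(0)} \in \mathbb V^{\div}_k(\boldsymbol{r}_2;\mathbb S\cap\mathbb T)$ by matching all boundary DoFs \eqref{eq:3dCrdivSTfemdofV}--\eqref{eq:3dCrdivSTfemdofF2} against $\boldsymbol{\tau}$, imposing the face DoFs \eqref{eq:3dCrdivSTfemdofF1}--\eqref{eq:3dCrdivSTfemdofF2} in the \emph{enhanced splitting} recorded just before the lemma, which isolates the test components dual to ${\rm CH}(f)$ and ${\rm RM}^{+}(f)$, i.e.\ to the normal and tangential traces of $\mathrm{CK}$ on each face. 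The interior DoF \eqref{eq:3dCrdivSTfemdofT} is taken arbitrarily, say equal to that of $\boldsymbol{\tau}$.

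Consider next the discrepancy $\boldsymbol{w}_h := \boldsymbol{v}_h - \div \boldsymbol{\tau}_h^{(0)} \in \mathbb V_{k-1}(\boldsymbol{r}_3;\mathbb R^3)$. By the matched boundary DoFs, every vertex and edge DoF of $\boldsymbol{w}_h$ vanishes, and since $r_3^f = -1$ there are no face DoFs, so $\boldsymbol{w}_h|_T \in \mathbb B_{k-1}(T;\boldsymbol{r}_3;\mathbb R^3)$ for every $T\in \mathcal T_h$. Moreover, for any $\boldsymbol{q}\in{\rm CK}$,
$$
(\boldsymbol{w}_h,\boldsymbol{q})_T = (\div(\boldsymbol{\tau}-\boldsymbol{\tau}_h^{(0)}),\boldsymbol{q})_T = -(\boldsymbol{\tau}-\boldsymbol{\tau}_h^{(0)},\dev\defm\boldsymbol{q})_T + \langle(\boldsymbol{\tau}-\boldsymbol{\tau}_h^{(0)})\boldsymbol{n},\boldsymbol{q}\rangle_{\partial T}.
$$
The first term vanishes by \eqref{eq:CKprop1}, since $\dev\defm\, {\rm CK}=0$ and $\boldsymbol{\tau}-\boldsymbol{\tau}_h^{(0)}\in \mathbb S\cap\mathbb T$; the boundary term vanishes face by face because, on each $f$, $\boldsymbol{n}^{\intercal}\boldsymbol{q}|_f\in{\rm CH}(f)$ and $\Pi_f\boldsymbol{q}|_f\in{\rm RM}^{+}(f)$, both of which are exactly the test directions captured by the enhanced face DoFs. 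Hence $\boldsymbol{w}_h|_T\in\mathbb B_{k-1}(T;\boldsymbol{r}_3;\mathbb R^3)/{\rm CK}$, and the bubble surjectivity \eqref{eq:divSTbubblestability1} — whose hypotheses are subsumed by the assumptions of the current lemma — furnishes $\boldsymbol{\sigma}_T\in\mathbb B^{\div}_k(T;\boldsymbol{r}_2;\mathbb S\cap\mathbb T)$ with $\div\boldsymbol{\sigma}_T=\boldsymbol{w}_h|_T$. Setting $\boldsymbol{\tau}_h:=\boldsymbol{\tau}_h^{(0)} + \boldsymbol{\sigma}$, with $\boldsymbol{\sigma}|_T=\boldsymbol{\sigma}_T$, the normal-trace vanishing of each $\boldsymbol{\sigma}_T$ keeps $\boldsymbol{\tau}_h\in\mathbb V^{\div}_k(\boldsymbol{r}_2;\mathbb S\cap\mathbb T)$, and $\div\boldsymbol{\tau}_h = \div\boldsymbol{\tau}_h^{(0)} + \boldsymbol{w}_h = \boldsymbol{v}_h$.

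The main obstacle is ensuring that the discrepancy $\boldsymbol{w}_h|_T$ is compatible with \eqref{eq:divSTbubblestability1}, i.e.\ that it is $L^2(T)$-orthogonal to ${\rm CK}$. This is precisely why the enhanced splitting of the face DoFs into ${\rm CH}(f)$- and ${\rm RM}^{+}(f)$-dual pieces — which exhausts ${\rm CK}|_f$ — is required; absent that splitting, the integration-by-parts identity above would carry nonzero boundary pairings against ${\rm CK}$, and the element-local correction in the second step could not be performed within the bubble space.
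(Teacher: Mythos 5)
Your overall architecture (continuous potential, matching of the face moments against ${\rm CH}(f)$ and ${\rm RM}^+(f)$, integration by parts to secure orthogonality to ${\rm CK}$, then a local bubble correction) is close in spirit to the paper's argument, but there is a genuine gap at the very first step: the smooth potential you need does not exist. Since $r_3^f=-1$, a target $\boldsymbol{v}_h\in\mathbb V_{k-1}(\boldsymbol{r}_3;\mathbb R^3)$ is in general discontinuous across interior faces, so any $\boldsymbol{\tau}$ with $\div\boldsymbol{\tau}=\boldsymbol{v}_h$ cannot even be $C^1$ in a neighborhood of an interior face — in particular not near the vertices and edges where the DoFs \eqref{eq:3dCrdivSTfemdofV}--\eqref{eq:3dCrdivSTfemdofE1} (derivatives up to order $r_2^{\texttt{v}}\geq 3$ and $r_2^e$) must be evaluated. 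The regularity actually available from the continuous-level surjectivity (e.g.\ $\div H^1(\Omega;\mathbb S\cap\mathbb T)=L^2(\Omega;\mathbb R^3)$) is only $H^1$, which suffices for the face moments \eqref{eq:3dCrdivSTfemdofF1}--\eqref{eq:3dCrdivSTfemdofF2} but not for vertex values or edge moments of derivatives. Because your whole construction of $\boldsymbol{\tau}_h^{(0)}$ and the claim ``every vertex and edge DoF of $\boldsymbol{w}_h$ vanishes'' hinge on applying those DoFs to $\boldsymbol{\tau}$, the argument collapses here. The paper's proof circumvents exactly this: it applies to $\boldsymbol{\tau}\in H^1(\Omega;\mathbb S\cap\mathbb T)$ only the face integrals against ${\rm CH}(f)$ and ${\rm RM}^+(f)$, and prescribes the vertex, edge and interior data of $\div\boldsymbol{\tau}_h$ directly from the piecewise polynomial $\boldsymbol{v}_h$ (for which these functionals are well defined), so that all DoFs of $\div\boldsymbol{\tau}_h$ in $\mathbb V_{k-1}(\boldsymbol{r}_3;\mathbb R^3)$ coincide with those of $\boldsymbol{v}_h$ and unisolvence finishes the proof in one shot, with no bubble correction needed.

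A secondary inaccuracy: the hypotheses of the bubble $(\div;\mathbb S\cap\mathbb T)$ stability \eqref{eq:divSTbubblestability1} are \emph{not} subsumed by the present assumptions. For instance $\boldsymbol{r}_2=(3,-1,-1)^{\intercal}$ is admissible here, yet it violates $\boldsymbol{r}\geq(4,1,-1)^{\intercal}$ and $\boldsymbol{r}_2+1=(4,0,0)^{\intercal}$ fails the div-tensor stability condition, so that lemma does not apply. The local surjectivity $\div\mathbb B^{\div}_{k}(\boldsymbol{r}_2;\mathbb S\cap\mathbb T)=\mathbb B_{k-1}(\boldsymbol{r}_3;\mathbb R^3)/{\rm CK}$ under the present assumptions comes instead from the bubble conformal elasticity complex \eqref{polybubbleCEcomplex3d} (Theorem~\ref{thm:bubbleCEcomplex3d}), which is what the paper invokes when recasting the interior DoF as \eqref{eq:divSTdofinter}; if you repair the potential issue above, you should cite that result rather than \eqref{eq:divSTbubblestability1}.
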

\begin{proof}
%
It is obvious that $\div \mathbb V^{\div}_{k}(\boldsymbol{r}_2;\mathbb S\cap\mathbb T)\subseteq 
\mathbb V_{k-1}(\boldsymbol{r}_3;\mathbb R^3)$. For $\boldsymbol{v}\in\mathbb V_{k-1}(\boldsymbol{r}_3;\mathbb R^3)$, by $\div H^1(\Omega;\mathbb S\cap\mathbb T)=L^2(\Omega;\mathbb R^3)$, cf.~\cite[(50)]{ArnoldHu2021}, we have $\boldsymbol{v}=\div\boldsymbol{\tau}$ with $\boldsymbol{\tau}\in H^1(\Omega;\mathbb S\cap\mathbb T)$.
The condition $k\geq 2r_2^{\texttt{v}}+2$ ensures $\dim\mathbb B_{k}(f;(\boldsymbol{r}_2)_+)\geq4$ and $\dim\mathbb B_{k}^{\div} (f;\boldsymbol r_2)\geq6$.
Let $\boldsymbol{\tau}_h\in\mathbb V^{\div}_{k}(\boldsymbol{r}_2;\mathbb S\cap\mathbb T)$ satisfying all the DoFs in \eqref{eq:divSTdof} and \eqref{eq:divSTdofinter} vanish except
\begin{align*}
\nabla^i\div\boldsymbol{\tau}_h(\texttt{v})&=\nabla^i\boldsymbol{v}(\texttt{v}),  \qquad\quad\quad\;\, i=0,\ldots, r_3^{\texttt{v}}, \\
(\frac{\partial^{j}\div\boldsymbol{\tau}_h}{\partial n_1^{i}\partial n_2^{j-i}}, \boldsymbol{q})_e&=(\frac{\partial^{j}\boldsymbol{v}}{\partial n_1^{i}\partial n_2^{j-i}}, \boldsymbol{q})_e, \quad \forall~\boldsymbol{q}\in \mathbb B_{k-1-j}^3(e; r_3^{\texttt{v}}-j), 0\leq i\leq j\leq r_3^e,\\
(\boldsymbol{n}^{\intercal}\boldsymbol{\tau}_h\boldsymbol{n}, q)_f&=(\boldsymbol{n}^{\intercal}\boldsymbol{\tau}\boldsymbol{n}, q)_f, \quad\quad\;\;\;\, \forall~q\in{\rm CH}(f), \\
(\Pi_f \boldsymbol{\tau}_h\boldsymbol{n}, \boldsymbol{q})_f&=(\Pi_f \boldsymbol{\tau}\boldsymbol{n}, \boldsymbol{q})_f, \quad\quad\;\;\; \forall~\boldsymbol{q}\in{\rm RM}^+(f), \\
(\partial_n^j(\div\boldsymbol{\tau}_h), \boldsymbol{q})_f&=(\partial_n^j\boldsymbol{v}, \boldsymbol{q})_f, \qquad\quad\;\;\; \forall~\boldsymbol{q}\in\mathbb B_{k-1-j}^3(f;\boldsymbol{r}_3-j),  0\leq j\leq r_3^f, \\
(\div\boldsymbol{\tau}_h, \boldsymbol{q})_T&=(\boldsymbol{v}, \boldsymbol{q})_T, \quad\quad\quad\quad\;\;\; \forall~\boldsymbol{q}\in\mathbb B_{k-1}(\boldsymbol{r}_3;\mathbb R^3)/{\rm CK}, 
\end{align*}
for $\texttt{v}\in\Delta_0(\mathcal{T}_h)$, $e\in\Delta_1(\mathcal{T}_h)$, $f\in\Delta_2(\mathcal{T}_h)$, and $T\in\mathcal T_h$.
Apply the integration by parts to get
\begin{equation*}
(\div\boldsymbol{\tau}_h, \boldsymbol{q})_T=(\boldsymbol{v}, \boldsymbol{q})_T, \quad \forall~\boldsymbol{q}\in\mathbb B_{k-1}(\boldsymbol{r}_3;\mathbb R^3), T\in\mathcal T_h.
\end{equation*}
Therefore, $\div\boldsymbol{\tau}_h=\boldsymbol{v}$.
\end{proof}

\begin{example}\rm 
The space $\mathbb V_k^{\div}(\boldsymbol r_2; \mathbb S\cap\mathbb T)$ and the $(\div; \mathbb S\cap\mathbb T)$ stability \eqref{eq:divSTstability} for $\boldsymbol r_2 =
\begin{pmatrix}
 3\\
 0\\
 -1
\end{pmatrix}
$ and $k\geq 8$ has been constructed recently in~\cite{HuLinShi2023}.
We also devise the less smooth space $\mathbb V_k^{\div}(\boldsymbol r_2; \mathbb S\cap\mathbb T)$ and the $(\div; \mathbb S\cap\mathbb T)$ stability \eqref{eq:divSTstability} for $\boldsymbol r_2 =
\begin{pmatrix}
 3\\
 -1\\
 -1
\end{pmatrix}
$ and $k\geq 8$, which is only normal-normal continuous on edges, rather than fully continuous on edges in~\cite{HuLinShi2023}.
\end{example}

\subsection{$H(\cott)$-conforming finite element}\label{subsec:cottSTfem}
When $\boldsymbol{r}_1\geq(8,4,2)^{\intercal}$, define $\mathbb V_{k+3}^{\cott}(\boldsymbol{r}_1;\mathbb S\cap\mathbb T):=\mathbb V_{k+3}(\boldsymbol{r}_1)\cap (\mathbb S\cap \mathbb T)$.
Then we consider the construction of $H(\cott)$-conforming $\mathbb V_{k+3}^{\cott}(\boldsymbol{r}_1;\mathbb S\cap\mathbb T)$ for $\boldsymbol{r}_1\geq(6,0,-1)^{\intercal}$ with $r_1^f=-1,0,1$.

Take $\mathbb P_{k+3}(T;\mathbb S\cap\mathbb T)$ as the space of shape functions. 
For $r_1^f=-1,0,1$, the DoFs are given by
\begin{subequations}\label{eq:cottSTdofs}
\begin{align}
\nabla^i\boldsymbol{\tau}(\texttt{v}), & \quad i=0,\ldots, r_1^{\texttt{v}}, \label{eq:3dCrcottSTfemdofV1}\\
\int_e \frac{\partial^{j}\boldsymbol{\tau}}{\partial n_1^{i}\partial n_2^{j-i}}:\boldsymbol{q} \dd s, &\quad \boldsymbol{q}\in \mathbb B_{k+3-j}(e; r_1^{\texttt{v}} - j)\otimes (\mathbb S\cap\mathbb T), \label{eq:3dCrcottSTfemdofE1}\\
\notag
&\quad 0\leq i\leq j\leq r_1^e, \\
\int_e \tr_{e,1}^{\cott}(\boldsymbol{\tau})\,q \dd s, &\quad q\in \mathbb B_{k+2}(e;  r_1^{\texttt{v}}-1), \textrm{ if } r_1^{e}=0, \label{eq:3dCrcottSTfemdofE2}\\
\int_e \tr_{e,2}^{\cott}(\boldsymbol{\tau})\cdot\boldsymbol{n}_i\,q \dd s, &\quad q\in \mathbb B_{k+1}(e;  r_1^{\texttt{v}}-2), i=1,2, \textrm{ if } r_1^{e}=0,1, \label{eq:3dCrcottSTfemdofE3}\\
\int_e \tr_{e,3}^{\cott}(\boldsymbol{\tau})\,q \dd s, &\quad q\in \mathbb B_{k+1}(e;  r_1^{\texttt{v}}-2), \textrm{ if } r_1^{e}=0,1, \label{eq:3dCrcottSTfemdofE4}\\
\int_e (\boldsymbol{n}_i^{\intercal}(\cott\boldsymbol{\tau})\boldsymbol{n}_j)\,q \dd s, &\quad q\in \mathbb B_{k}(e;  r_2^{\texttt{v}}), 1\leq i\leq j\leq 2, \textrm{ if } r_2^{e}=-1, \label{eq:3dCrcottSTfemdofE5}\\
\int_f \frac{\partial^{j}\boldsymbol{\tau}}{\partial n^{j}}:\boldsymbol{q} \dd S, &\quad \boldsymbol{q}\in \mathbb B_{k+3-j}(f; \boldsymbol{r}_1 - j)\otimes (\mathbb S\cap\mathbb T), 0\leq j\leq r_1^f,\label{eq:3dCrcottSTfemdofF1}\\
\int_f \tr_{1}^{\cott}(\boldsymbol{\tau}):\boldsymbol{q} \dd S, &\quad \boldsymbol{q}\in \mathbb B_{k+3}(f;\boldsymbol r_1)\otimes (\mathbb S(f)\cap\mathbb T(f)), \textrm{ if } r_1^{f}=-1, \label{eq:3dCrcottSTfemdofF2}\\
\int_f \tr_{2}^{\cott}(\boldsymbol{\tau}):\boldsymbol{q} \dd S, &\quad \boldsymbol{q}\in \sym\curl_f\grad_f\mathbb B_{k+4}(f;\boldsymbol r_0), \textrm{ if } r_1^{f}=-1,0, \label{eq:3dCrcottSTfemdofF3}\\
\int_f \div_f\div_f\tr_{2}^{\cott}(\boldsymbol{\tau})\,q \dd S, &\quad q\in \mathbb B_{k}(f;(\boldsymbol{r}_2)_+)/{\rm CH}(f), \textrm{ if } r_1^{f}=-1,0, \label{eq:3dCrcottSTfemdofF4}\\
\int_f \tr_{3}^{\cott}(\boldsymbol{\tau}):\boldsymbol{q} \dd S, &\quad \boldsymbol{q}\in 
\mathbb B_{k+1}^{\rot_f\rot_f}((\boldsymbol r_1\ominus2)_+;\mathbb S\cap\mathbb T), \label{eq:3dCrcottSTfemdofF5}\\
\int_T \boldsymbol{\tau}:\boldsymbol{q} \dx, &\quad \boldsymbol{q}\in \mathbb B_{k+3}^{\cott}(\boldsymbol{r}_1;\mathbb S\cap\mathbb T), \label{eq:3dCrcottSTfemdofT1}
\end{align}
\end{subequations}
for each $\texttt{v}\in \Delta_{0}(T)$, $e\in \Delta_{1}(T)$ and $f\in \Delta_{2}(T)$, where
\begin{equation*}
\mathbb B_{k+1}^{\rot_f\rot_f}(\boldsymbol r; \mathbb S\cap\mathbb T):= \{\boldsymbol{\tau}\in\mathbb B_{k+1}(f;\boldsymbol r_+)\otimes (\mathbb S(f)\cap\mathbb T(f)): (\boldsymbol{t}\cdot\rot_f\boldsymbol{\tau})|_{\partial f}=0\}.
\end{equation*}
We can see that $\mathbb B_{k+1}^{\rot_f\rot_f}(\boldsymbol r; \mathbb S\cap\mathbb T)$ is just the rotation of $\mathbb B_{k+1}^{\div_f\div_f}(\boldsymbol r_+; \mathbb S\cap\mathbb T)$.

\begin{lemma}\label{lem:cottSTfemunisol}
Let smoothness vectors $(\boldsymbol{r}_0, \boldsymbol{r}_1, \boldsymbol{r}_2)$ satisfy \eqref{eq:rCE} with $r_1^f=-1,0,1$, and $k\geq 2r_2^{\texttt{v}}+5$. 
DoFs \eqref{eq:cottSTdofs} are unisolvent for $\mathbb P_{k+3}(T;\mathbb S\cap\mathbb T)$.
\end{lemma}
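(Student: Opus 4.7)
The plan is the standard two-step unisolvence argument: first verify that the total number of DoFs in \eqref{eq:cottSTdofs} equals $\dim\mathbb P_{k+3}(T;\mathbb S\cap\mathbb T)$, and then show that if every DoF vanishes on a shape function $\boldsymbol\tau$, then $\boldsymbol\tau=0$. The dimension count is delicate because the face DoFs \eqref{eq:3dCrcottSTfemdofF2}--\eqref{eq:3dCrcottSTfemdofF5} range over nontrivial subspaces drawn from the surface bubble complexes \eqref{eq:femderhambubblecomplex2d} and \eqref{eq:femCdivdivbubblecomplex2d}, and the various edge DoFs are subject to the compatibility identities \eqref{eq:202403201}--\eqref{eq:202403204}. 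Consistent with the paper's own roadmap, I would defer this accounting to Appendix~\ref{apdx:uinsol}.

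For the vanishing direction, suppose $\boldsymbol\tau\in\mathbb P_{k+3}(T;\mathbb S\cap\mathbb T)$ makes every DoF in \eqref{eq:cottSTdofs} vanish. The ``scalar'' DoFs \eqref{eq:3dCrcottSTfemdofV1}--\eqref{eq:3dCrcottSTfemdofE1} and \eqref{eq:3dCrcottSTfemdofF1} put $\boldsymbol\tau$ in the $(r_1^{\texttt{v}},r_1^e,r_1^f)$-smooth bubble space, so the remaining task is to eliminate the three surface traces $\tr_1^{\cott}(\boldsymbol\tau)$, $\tr_2^{\cott}(\boldsymbol\tau)$, $\tr_3^{\cott}(\boldsymbol\tau)$ on each $f\in\Delta_2(T)$. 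I would first extract the boundary data of these traces on $\partial f$ using the edge identities: via \eqref{eq:202403201} the vanishing of \eqref{eq:3dCrcottSTfemdofE2} controls $\tr_{e,1}^{\cott}(\boldsymbol\tau)$ and hence the edge normal--normal component of $\tr_2^{\cott}(\boldsymbol\tau)$; via \eqref{eq:202403202}--\eqref{eq:202403203} the vanishing of \eqref{eq:3dCrcottSTfemdofE3} supplies the remaining edge components of $\tr_2^{\cott}(\boldsymbol\tau)$ and the diagonal part of $\tr_3^{\cott}(\boldsymbol\tau)$; via \eqref{eq:202403204} the vanishing of \eqref{eq:3dCrcottSTfemdofE4} gives the off--diagonal part of $\tr_3^{\cott}(\boldsymbol\tau)$; and via \eqref{eq:nfecottntr3cott} combined with \eqref{eq:edgedofprop1} the DoF \eqref{eq:3dCrcottSTfemdofE5} handles the residual $\boldsymbol n_i^\intercal(\cott\boldsymbol\tau)\boldsymbol n_j$ when $r_2^e=-1$.

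With the boundary data killed on every $\partial f$, the face DoFs force the three surface traces to vanish on $f$: \eqref{eq:3dCrcottSTfemdofF2} eliminates $\tr_1^{\cott}(\boldsymbol\tau)$ directly; \eqref{eq:3dCrcottSTfemdofF3} together with \eqref{eq:3dCrcottSTfemdofF4}, combined with the exactness of the surface bubble conformal div\,div complex \eqref{eq:femCdivdivbubblecomplex2d}, force $\tr_2^{\cott}(\boldsymbol\tau)\in\mathbb B_{k+1}^{\div_f\div_f}(\boldsymbol r_1-2;\mathbb S\cap\mathbb T)\cap\ker(\div_f\div_f)$ to be $L^2$-orthogonal to the image of $\sym\curl_f\grad_f$ and hence to vanish; and \eqref{eq:3dCrcottSTfemdofF5} uses the rotated bubble space $\mathbb B_{k+1}^{\rot_f\rot_f}$ to kill $\tr_3^{\cott}(\boldsymbol\tau)$. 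This places $\boldsymbol\tau\in\mathbb B_{k+3}^{\cott}(\boldsymbol r_1;\mathbb S\cap\mathbb T)$, and the interior DoF \eqref{eq:3dCrcottSTfemdofT1} then gives $\boldsymbol\tau=0$.

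The main obstacle will be the face step: aligning each edge trace with the correct component on the face side, and certifying that the interior test spaces in \eqref{eq:3dCrcottSTfemdofF3}--\eqref{eq:3dCrcottSTfemdofF5} are exactly large enough to pin down $\tr_2^{\cott}$ and $\tr_3^{\cott}$ once their $\partial f$-data has been removed. This is where the hypothesis $k\geq 2r_2^{\texttt{v}}+5$ is essential, since it guarantees the surface complexes \eqref{eq:femderhambubblecomplex2d}--\eqref{eq:femCdivdivbubblecomplex2d} are exact on each $f$; without this, vanishing of the stated face integrals would only place the traces in a kernel rather than make them zero. A secondary technical point is that the two regimes $r_1^f\in\{-1,0\}$ and $r_1^f=1$ require slightly different bookkeeping, since several of the face DoFs \eqref{eq:3dCrcottSTfemdofF2}--\eqref{eq:3dCrcottSTfemdofF4} are only present when $r_1^f=-1$ or $r_1^f\leq 0$; these cases should be handled in parallel using the characterization~\eqref{eq:cottbubblerf3} of $\mathbb B_{k+3}^{\cott}(\boldsymbol r_1;\mathbb S\cap\mathbb T)$ so that the final interior DoF \eqref{eq:3dCrcottSTfemdofT1} carries the same meaning throughout.
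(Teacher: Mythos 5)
Your proposal is correct and follows essentially the same route as the paper: defer the dimension count to Appendix~\ref{apdx:uinsol}, use the edge identities \eqref{eq:202403201}--\eqref{eq:202403204} and \eqref{eq:nfecottntr3cott} to strip the boundary data of the three face traces, invoke the surface bubble complexes \eqref{eq:femderhambubblecomplex2d}--\eqref{eq:femCdivdivbubblecomplex2d} to kill $\tr_1^{\cott},\tr_2^{\cott},\tr_3^{\cott}$ via the face DoFs, and finish with \eqref{eq:3dCrcottSTfemdofT1}. The only slip is cosmetic: the trace $\tr_2^{\cott}(\boldsymbol\tau)$ of a degree-$(k+3)$ tensor lands in $\mathbb B_{k+2}^{\div_f\div_f}(\boldsymbol r_1-1;\mathbb S\cap\mathbb T)$ rather than the degree-$(k+1)$ space you wrote, which does not affect the argument.
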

\begin{proof}
By Lemma~\ref{lem:cottSTfemDoFsdim},
the number of DoFs \eqref{eq:cottSTdofs} equals $\dim\mathbb P_{k+3}(T;\mathbb S\cap\mathbb T)$.

Take $\boldsymbol{\tau}\in\mathbb P_{k+3}(T;\mathbb S\cap\mathbb T)$, and assume all the DoFs \eqref{eq:cottSTdofs} vanish. 
The trace $\tr_{1}^{\cott}(\boldsymbol{\tau})$ is determined by DoFs \eqref{eq:3dCrcottSTfemdofV1}-\eqref{eq:3dCrcottSTfemdofE1} and \eqref{eq:3dCrcottSTfemdofF1}-\eqref{eq:3dCrcottSTfemdofF2} on each face, which gives $\tr_{1}^{\cott}(\boldsymbol{\tau})=0$.
Applying the integration by parts, we obtain from \eqref{eq:202403201}-\eqref{eq:202403202} and the vanishing DoFs \eqref{eq:3dCrcottSTfemdofV1}-\eqref{eq:3dCrcottSTfemdofE3} that 
\begin{equation*}
(\div_f\div_f\tr_{2}^{\cott}(\boldsymbol{\tau}), q)_f=0, \quad \forall~q\in {\rm CH}(f), \quad \textrm{ if }\; r_1^{f}=-1,0.
\end{equation*}
Hence, according to \eqref{eq:202403201}-\eqref{eq:202403202} and the bubble complex \eqref
{eq:femCdivdivbubblecomplex2d},
the trace $\tr_{2}^{\cott}(\boldsymbol{\tau})$ is determined by DoFs \eqref{eq:3dCrcottSTfemdofV1}-\eqref{eq:3dCrcottSTfemdofE3}, \eqref{eq:3dCrcottSTfemdofF1} and \eqref{eq:3dCrcottSTfemdofF3}-\eqref{eq:3dCrcottSTfemdofF4} on each face, which means $\tr_{2}^{\cott}(\boldsymbol{\tau})=0$.
Thanks to \eqref{eq:202403203}-\eqref{eq:202403204}, the vanishing DoFs \eqref{eq:3dCrcottSTfemdofV1}-\eqref{eq:3dCrcottSTfemdofE1}, \eqref{eq:3dCrcottSTfemdofE3}-\eqref{eq:3dCrcottSTfemdofE4} and the fact $\tr_{1}^{\cott}(\boldsymbol{\tau})=0$ indicate $\tr_{3}^{\cott}(\boldsymbol{\tau})|_f\in(\mathbb B_{k+1}(f;(\boldsymbol r_1\ominus2)_+)\otimes (\mathbb S(f)\cap\mathbb T(f)))$ on face $f$.
Further, by~\eqref{eq:nfecottntr3cott}, we acquire from the vanishing DoFs \eqref{eq:3dCrcottSTfemdofV1}-\eqref{eq:3dCrcottSTfemdofE1}, \eqref{eq:3dCrcottSTfemdofE5} and the fact $\tr_{1}^{\cott}(\boldsymbol{\tau})=0$ that $\tr_{3}^{\cott}(\boldsymbol{\tau})|_f\in\mathbb B_{k+1}^{\rot_f\rot_f}((\boldsymbol r_1\ominus2)_+;\mathbb S\cap\mathbb T)$, which together with the vanishing DoF \eqref{eq:3dCrcottSTfemdofF5} yields $\tr_{3}^{\cott}(\boldsymbol{\tau})=0$.
Consequently, $\boldsymbol{\tau}\in\mathbb B_{k+3}^{\cott}(\boldsymbol{r}_1;\mathbb S\cap\mathbb T)$, and we conclude $\boldsymbol{\tau}=0$ by the vanishing DoF \eqref{eq:3dCrcottSTfemdofT1}.
\end{proof}

For $\boldsymbol{r}_1\geq(6,0,-1)^{\intercal}$ with $r_1^f=-1,0,1$, the finite element space $\mathbb V_{k+3}^{\cott}(\boldsymbol{r}_1;\mathbb S\cap\mathbb T)$ is defined as follows
\begin{align*}
\mathbb V_{k+3}^{\cott}(\boldsymbol{r}_1;\mathbb S\cap\mathbb T):=\{\boldsymbol \tau\in L^2(\Omega;\mathbb S\cap \mathbb T): &\, \boldsymbol \tau|_T\in\mathbb P_{k+3}(T;\mathbb S\cap\mathbb T) \textrm{ for each } T\in\mathcal T_h, \\
&\qquad\qquad\;\;\; \textrm{ DoFs \eqref{eq:cottSTdofs} are single-valued} \}.    
\end{align*}
According to the proof of Lemma~\ref{lem:cottSTfemunisol}, traces $\tr_{1}^{\cott}(\boldsymbol{\tau})|_{f}$, $\tr_{2}^{\cott}(\boldsymbol{\tau})|_{f}$ and $\tr_{3}^{\cott}(\boldsymbol{\tau})|_{f}$ are determined by DoFs \eqref{eq:3dCrcottSTfemdofV1}-\eqref{eq:3dCrcottSTfemdofF5} on face $f$. According to Lemma~\ref{lm:cottconforming}, $\mathbb V_{k+3}^{\cott}(\boldsymbol{r}_1;\mathbb S\cap\mathbb T)\subset H(\cott, \Omega; \mathbb S\cap \mathbb T)$.

\subsection{Finite element conformal elasticity complex}

\begin{theorem}
Let smoothness vectors $(\boldsymbol r_0, \boldsymbol r_1, \boldsymbol r_2, \boldsymbol r_3)$ be given by~\eqref{eq:rCE}, and $k\geq 2r_2^{\texttt{v}}+5$.
Then the finite element conformal elasticity complex \eqref{femCEcomplex3d} is exact.
\end{theorem}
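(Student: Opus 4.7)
The plan mirrors the proof of the finite element conformal Hessian complex. First, \eqref{femCEcomplex3d} is a complex because each $\mathbb V_{k}^{\bullet}$ embeds into the corresponding Sobolev space, so $\cott\circ\dev\defm=0$ and $\div\circ\cott=0$ descend from the continuous conformal elasticity complex \eqref{conformalElascomplex3d}. The terminal surjectivity $\div\mathbb V_{k}^{\div}(\boldsymbol{r}_2;\mathbb S\cap\mathbb T)=\mathbb V_{k-1}(\boldsymbol{r}_3;\mathbb R^3)$ is the $(\div;\mathbb S\cap\mathbb T)$ stability \eqref{eq:divSTstability}, which is already established.

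Next, I verify the kernel identity $\mathbb V_{k+3}^{\cott}(\boldsymbol{r}_1;\mathbb S\cap\mathbb T)\cap\ker(\cott)=\dev\defm\mathbb V_{k+4}(\boldsymbol{r}_0;\mathbb R^3)$. The inclusion $\supseteq$ is immediate. For $\subseteq$, given such a $\boldsymbol{\tau}$, exactness of \eqref{conformalElascomplex3d} furnishes $\boldsymbol{v}\in H^1(\mathbb R^3)$ with $\dev\defm\boldsymbol{v}=\boldsymbol{\tau}$, unique modulo $\rm CK$; element-wise polynomiality of $\boldsymbol{\tau}$ together with \eqref{polyconformalElascomplex3d} forces $\boldsymbol{v}|_T\in\mathbb P_{k+4}(T;\mathbb R^3)$ modulo $\rm CK\subset\mathbb P_{k+4}(T;\mathbb R^3)$; and the smoothness matching $\boldsymbol{r}_0=\boldsymbol{r}_1+1$ is transferred from $\boldsymbol{\tau}$ to $\boldsymbol{v}$ via the continuity of the $\cott$-traces that define $\mathbb V_{k+3}^{\cott}(\boldsymbol{r}_1;\mathbb S\cap\mathbb T)$.

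Finally, exactness at $\mathbb V_{k}^{\div}(\boldsymbol{r}_2;\mathbb S\cap\mathbb T)$ follows by a global dimension count via Euler's formula. Setting $C_i=C_{i,1}-C_{i,2}$, where $C_{i,1}$ is the sum of local DoFs of $\mathbb V_{k+4}(\boldsymbol{r}_0;\mathbb R^3)$ and $\mathbb V_{k}^{\div}(\boldsymbol{r}_2;\mathbb S\cap\mathbb T)$ on one $i$-simplex and $C_{i,2}$ is that for $\mathbb V_{k+3}^{\cott}(\boldsymbol{r}_1;\mathbb S\cap\mathbb T)$ and $\mathbb V_{k-1}(\boldsymbol{r}_3;\mathbb R^3)$, the target identity is
$$
C_0|\Delta_0(\mathcal{T}_h)|+C_1|\Delta_1(\mathcal{T}_h)|+C_2|\Delta_2(\mathcal{T}_h)|+C_3|\Delta_3(\mathcal{T}_h)|=\dim{\rm CK}=10.
$$
The bubble conformal elasticity complex \eqref{polybubbleCEcomplex3d} gives $C_3=-10$; direct binomial arithmetic under $\boldsymbol{r}_0=\boldsymbol{r}_1+1$, $\boldsymbol{r}_2=\boldsymbol{r}_1\ominus3$, $\boldsymbol{r}_3=\boldsymbol{r}_2\ominus1$ yields $C_0=10$ and $C_1=-10$; and on a single tetrahedron the polynomial conformal elasticity complex \eqref{polyconformalElascomplex3d} supplies the constraint $4C_0+6C_1+4C_2+C_3=\dim{\rm CK}=10$, forcing $C_2=10$. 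Euler's formula \eqref{eulerformula} then closes the identity. The main obstacle will be the careful accounting for $C_1$ and $C_2$, because the edge and face contributions of $\mathbb V_{k+3}^{\cott}(\boldsymbol{r}_1;\mathbb S\cap\mathbb T)$ carry the intricate traces $\tr_1^{\cott}$, $\tr_2^{\cott}$, $\tr_3^{\cott}$ and $\tr_{e,i}^{\cott}$, making the bookkeeping more delicate than in the Hessian case even though no new ideas are required beyond careful enumeration.
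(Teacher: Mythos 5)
Your proposal is correct and follows essentially the same route as the paper: reduce exactness to the kernel identity at the $\cott$ stage, the div stability \eqref{eq:divSTstability}, and the global dimension count $C_0|\Delta_0|+C_1|\Delta_1|+C_2|\Delta_2|+C_3|\Delta_3|=10$, with $C_3=-10$ from the bubble complex \eqref{polybubbleCEcomplex3d}, $C_0$ and $C_1$ by direct computation, $C_2$ extracted from the single-tetrahedron constraint supplied by the polynomial complex \eqref{polyconformalElascomplex3d}, and Euler's formula to close. The only difference is that you spell out the kernel identity, which the paper dismisses as immediate; your sketch of that step is consistent with the paper's framework.
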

\begin{proof}
By the definition of the finite element spaces, it is easy to see that \eqref{femCEcomplex3d} is a complex, and $\mathbb V_{k+3}^{\cott}(\boldsymbol{r}_1;\mathbb S\cap\mathbb T)\cap\ker(\cott)=\dev\defm\mathbb V_{k+4}(\boldsymbol{r}_0;\mathbb R^3)$. Thanks to the div stability \eqref{eq:divSTstability}, we only need to prove
\begin{align}
\label{eq:CEfemcomplexdim}
\dim\mathbb V_{k+4}(\boldsymbol{r}_0;\mathbb R^3)&-\dim\mathbb V_{k+3}^{\cott}(\boldsymbol{r}_1;\mathbb S\cap\mathbb T)\\
\notag
& +\dim\mathbb V_{k}^{\div}(\boldsymbol{r}_2;\mathbb S\cap\mathbb T)-\dim\mathbb V_{k-1}(\boldsymbol{r}_3;\mathbb R^3)=10.
\end{align}

Corresponding to the left-hand side of \eqref{eq:CEfemcomplexdim}, let $C_i=C_{i,1}-C_{i,2}$ for $i=0,1,2,3$, where $C_{i,1}$ is the sum of the number of DoFs of $\mathbb V_{k+4}(\boldsymbol{r}_0;\mathbb R^3)$ and $\mathbb V_{k}^{\div}(\boldsymbol{r}_2;\mathbb S\cap\mathbb T)$ on one $i$-dimensional simplex, and $C_{i,2}$ is the sum of the number of DoFs of $\mathbb V_{k+3}^{\cott}(\boldsymbol{r}_1;\mathbb S\cap\mathbb T)$ and $\mathbb V_{k-1}(\boldsymbol{r}_3;\mathbb R^3)$ on one $i$-dimensional simplex. Then \eqref{eq:CEfemcomplexdim} is equivalent to 
\begin{equation}\label{eq:CEfemcomplexdimC}
C_0|\Delta_0(\mathcal{T}_h)|+C_1|\Delta_1(\mathcal{T}_h)|+C_2|\Delta_2(\mathcal{T}_h)|+C_3|\Delta_3(\mathcal{T}_h)|=10.
\end{equation}
By the bubble complex \eqref{polybubbleCEcomplex3d}, $C_3=-10$.
We compute $C_0$ and $C_1$ as follows
\begin{align*}
C_0&=3{r_0^{\texttt{v}}+3\choose3}-5{r_1^{\texttt{v}}+3\choose3}+5{r_2^{\texttt{v}}+3\choose3}-3{r_3^{\texttt{v}}+3\choose3}=10, \\
C_1&= \frac{1}{2}(r_0^e+1)(r_0^e+2)(3k+2r_0^e-6r_0^{\texttt{v}}+9) \\
&\quad -\frac{5}{6}(r_1^e+1)(r_1^e+2)(3k+2r_1^e-6r_1^{\texttt{v}}+6) -[r_1^e=0](4k-8r_1^{\texttt{v}}+15) \\
&\quad -[r_1^e=1](3k-6r_1^{\texttt{v}}+12)+\frac{5}{6}(r_2^e+1)(r_2^e+2)(3k+2r_2^e-6r_2^{\texttt{v}}-3) \\
&\quad -\frac{1}{2}(r_3^e+1)(r_3^e+2)(3k+2r_3^e-6r_3^{\texttt{v}}-6) =-10.
\end{align*}
Notice that
\begin{align*}
4C_0+6C_1+4C_2+C_3&=\dim\mathbb P_{k+4}(T;\mathbb R^3)-\dim\mathbb P_{k+3}(T;\mathbb S\cap\mathbb T)\\
&\quad +\dim\mathbb P_{k}(T;\mathbb S\cap\mathbb T)-\dim\mathbb P_{k-1}(T;\mathbb R^3)=10,
\end{align*}
in which we have used the polynomial conformal elasticity complex \eqref{polyconformalElascomplex3d}.
Hence, $C_2=10$.
Therefore, \eqref{eq:CEfemcomplexdimC} follows from the Euler's formula \eqref{eulerformula}.
\end{proof}

\begin{example}\rm 
Taking $\boldsymbol{r}_0=(7,1,0)^{\intercal}$, $\boldsymbol{r}_1=\boldsymbol{r}_0-1$, $\boldsymbol{r}_2=\boldsymbol{r}_1\ominus3$, $\boldsymbol{r}_3=\boldsymbol{r}_2\ominus1$, and $k\geq11$, we obtain the finite element conformal elasticity complex
\begin{equation*}
{\rm CK} \hookrightarrow
\begin{pmatrix}
	 7\\
	 1\\
	 0
\end{pmatrix}
\xrightarrow{\dev\defm}
\begin{pmatrix}
 6\\
 0\\
 -1
\end{pmatrix}
\xrightarrow{\cott}
\begin{pmatrix}
	3\\
	-1\\
	-1
\end{pmatrix}
\xrightarrow{\div} 
\begin{pmatrix}
	 2\\
	 -1\\
	 -1
\end{pmatrix} \to 0,
\end{equation*}
which is less smooth than the complex in \cite{HuLinShi2023}.
\end{example}

\begin{remark}\rm
Let $\boldsymbol{r}_0, \boldsymbol{r}_1, \boldsymbol{r}_2$ and $\boldsymbol{r}_3$ be smooth vectors satisfying
\begin{equation*}
\boldsymbol{r}_0\geq(7,1,0)^{\intercal},  \quad 
\boldsymbol{r}_1=\boldsymbol{r}_0-1\geq(6,0,-1)^{\intercal},\quad \boldsymbol{r}_2\geq\boldsymbol{r}_1\ominus3,\quad \boldsymbol{r}_3\geq\boldsymbol{r}_2\ominus1.
\end{equation*}
We can apply the tilde operation in \cite{ChenHuang2025,ChenHuang2024} to complex \eqref{femCEcomplex3d} to derive the following exact finite element conformal elasticity complex with inequality constraints for sufficiently large $k$
\begin{align*}
{\rm CK}\xrightarrow{\subset} \mathbb V_{k+4}(\boldsymbol{r}_0;\mathbb R^3)&\xrightarrow{\dev\defm} \mathbb V_{k+3}^{\cott}(\boldsymbol{r}_1,\boldsymbol{r}_2;\mathbb S\cap\mathbb T)\xrightarrow{\cott} \mathbb V_{k}^{\div}(\boldsymbol{r}_2,\boldsymbol{r}_3;\mathbb S\cap\mathbb T) \\
\notag
&\xrightarrow{\div} \mathbb V_{k-1}(\boldsymbol{r}_3;\mathbb R^3)\xrightarrow{}0,
\end{align*}
where
\begin{align*}
\mathbb V_{k+3}^{\cott}(\boldsymbol{r}_1,\boldsymbol{r}_2;\mathbb S\cap\mathbb T)&:=\{\boldsymbol{\tau}\in\mathbb V_{k+3}^{\cott}(\boldsymbol{r}_1;\mathbb S\cap\mathbb T): \cott\boldsymbol{\tau}\in\mathbb V_{k}^{\div}(\boldsymbol{r}_2;\mathbb S\cap\mathbb T)\}, \\
\mathbb V_{k}^{\div}(\boldsymbol{r}_2,\boldsymbol{r}_3;\mathbb S\cap\mathbb T)&:=\{\boldsymbol{\tau}\in\mathbb V_{k}^{\div}(\boldsymbol{r}_2;\mathbb S\cap\mathbb T): \div\boldsymbol{\tau}\in\mathbb V_{k-1}(\boldsymbol{r}_3;\mathbb R^3)\}.
\end{align*}
Taking $\boldsymbol{r}_0=(7,1,0)^{\intercal}$, $\boldsymbol{r}_1=\boldsymbol{r}_0-1$, $\boldsymbol{r}_2=(3,0,-1)^{\intercal}$, $\boldsymbol{r}_3=\boldsymbol{r}_2\ominus1$, and $k\geq11$, we obtain the finite element conformal elasticity complex
\begin{equation*}
{\rm CK} \hookrightarrow
\begin{pmatrix}
	 7\\
	 1\\
	 0
\end{pmatrix}
\xrightarrow{\dev\defm}
\begin{pmatrix}
 6\\
 0\\
 -1
\end{pmatrix}
\xrightarrow{\cott}
\begin{pmatrix}
	3\\
	0\\
	-1
\end{pmatrix}
\xrightarrow{\div} 
\begin{pmatrix}
	 2\\
	 -1\\
	 -1
\end{pmatrix} \to 0,
\end{equation*}
which has been recently constructed in \cite{HuLinShi2023}.
\end{remark}

\appendix 

\section{The number of DoFs for $H(\cott;\mathbb S\cap\mathbb T)$-conforming finite element}\label{apdx:uinsol}
In this appendix, we count the number of DoFs \eqref{eq:cottSTdofs} for the $H(\cott;\mathbb S\cap\mathbb T)$-conforming finite element.

\begin{lemma}\label{lem:cottSTfemDoFsdim}
Let smoothness vectors $(\boldsymbol{r}_0, \boldsymbol{r}_1, \boldsymbol{r}_2)$ satisfy \eqref{eq:rCE} with $r_1^f=-1,0,1$, and $k\geq 2r_2^{\texttt{v}}+5$. 
The number of DoFs \eqref{eq:cottSTdofs} equals $\dim\mathbb P_{k+3}(T;\mathbb S\cap\mathbb T)$.
\end{lemma}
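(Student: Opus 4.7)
The plan is to count the DoFs in \eqref{eq:cottSTdofs} simplex by simplex and reduce the verification to a dimension identity implicit in the earlier complexes. Since the interior block \eqref{eq:3dCrcottSTfemdofT1} contributes exactly $\dim\mathbb B_{k+3}^{\cott}(\boldsymbol r_1;\mathbb S\cap\mathbb T)$, it is enough to show that the total count on the vertices, edges and faces equals $\dim\mathbb P_{k+3}(T;\mathbb S\cap\mathbb T) - \dim\mathbb B_{k+3}^{\cott}(\boldsymbol r_1;\mathbb S\cap\mathbb T)$.

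First I would evaluate $\dim\mathbb B_{k+3}^{\cott}(\boldsymbol r_1;\mathbb S\cap\mathbb T)$ via the exactness of the bubble conformal elasticity complex \eqref{polybubbleCEcomplex3d} from Theorem~\ref{thm:bubbleCEcomplex3d}, which yields
\[
  \dim\mathbb B_{k+3}^{\cott} = \dim\mathbb B_{k+4}(\boldsymbol r_0;\mathbb R^3) + \dim\mathbb B_k^{\div}(\boldsymbol r_2;\mathbb S\cap\mathbb T) - \dim\mathbb B_{k-1}(\boldsymbol r_3;\mathbb R^3) + 10,
\]
where the middle term is evaluated using the geometric decomposition established in the proof of Lemma~\ref{lm:divST}. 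The vertex block \eqref{eq:3dCrcottSTfemdofV1} contributes $5\binom{r_1^v+3}{3}$ per vertex, and the edge blocks \eqref{eq:3dCrcottSTfemdofE1}--\eqref{eq:3dCrcottSTfemdofE5} are directly computed as one-dimensional bubble dimensions. For the face contribution, the normal-derivative block \eqref{eq:3dCrcottSTfemdofF1} and the extra trace block \eqref{eq:3dCrcottSTfemdofF2} are sums of face-bubble dimensions, while \eqref{eq:3dCrcottSTfemdofF3}--\eqref{eq:3dCrcottSTfemdofF4} collapse through the two-dimensional bubble de Rham complex \eqref{eq:femderhambubblecomplex2d} and the bubble conformal div\,div complex \eqref{eq:femCdivdivbubblecomplex2d}. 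The block \eqref{eq:3dCrcottSTfemdofF5} is handled by the rotational identification $\mathbb B_{k+1}^{\rot_f\rot_f}((\boldsymbol r_1\ominus 2)_+;\mathbb S\cap\mathbb T) \cong \mathbb B_{k+1}^{\div_f\div_f}((\boldsymbol r_1\ominus 2)_+;\mathbb S\cap\mathbb T)$, whose dimension is pinned down in \eqref{eq:20240317}.

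The main obstacle is the case split: the trace DoFs \eqref{eq:3dCrcottSTfemdofE2}--\eqref{eq:3dCrcottSTfemdofE5} and \eqref{eq:3dCrcottSTfemdofF2}--\eqref{eq:3dCrcottSTfemdofF4} switch on and off according to whether $r_1^e \in \{0,1\}$, $r_2^e = -1$, or $r_1^f \in \{-1,0,1\}$, so each regime must be tabulated separately. I would consolidate the boundary totals into per-simplex counts $c_0, c_1, c_2, c_3$ and close the argument by verifying the integer identity
\[
  4c_0 + 6c_1 + 4c_2 + c_3 = \dim\mathbb P_{k+3}(T;\mathbb S\cap\mathbb T) = 5\binom{k+6}{3},
\]
which, combined with the polynomial conformal elasticity complex \eqref{polyconformalElascomplex3d} and the per-simplex counts already computed in the proof of exactness of \eqref{femCEcomplex3d}, reduces to a finite number of polynomial identities in $k$ and the components of $\boldsymbol r_0, \boldsymbol r_1, \boldsymbol r_2, \boldsymbol r_3$ that can be checked by direct algebra in each subregime.
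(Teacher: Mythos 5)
Your overall strategy is viable and the ingredients you invoke are all available at this point in the paper, but you organize the computation quite differently from the paper's proof. You propose to compute the \emph{absolute} DoF count in every regime: evaluate $\dim\mathbb B_{k+3}^{\cott}(\boldsymbol r_1;\mathbb S\cap\mathbb T)$ from the Euler characteristic of the exact bubble complex \eqref{polybubbleCEcomplex3d}, tabulate each vertex/edge/face block per regime, and verify the resulting polynomial identities by direct algebra. The paper instead anchors the count at a single base case ($r_1^f=1$, where the trace DoFs \eqref{eq:3dCrcottSTfemdofE2}--\eqref{eq:3dCrcottSTfemdofE5} and \eqref{eq:3dCrcottSTfemdofF2}--\eqref{eq:3dCrcottSTfemdofF4} all vanish and the characterization \eqref{eq:cottbubblerf1} gives $\dim\mathbb B_{k+3}^{\cott}$ in closed form), and then handles every other regime by showing that the \emph{difference} between the count for $\boldsymbol r_1$ and the count for a neighbouring, already-verified smoothness vector $\bar{\boldsymbol r}_1$ is zero. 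The telescoping argument buys a lot: the differences localize (a change in $r_1^f$ or $r_1^e$ only perturbs face and edge blocks, and those perturbations cancel against the change in $\dim\mathbb B_{k+3}^{\cott}$ computed from \eqref{polybubbleCEcomplex3d}), so each step is a short identity rather than a full tabulation. Your direct route would produce the same conclusion but with considerably messier arithmetic in the low-smoothness regimes.

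Two cautions. First, you lean on ``the per-simplex counts already computed in the proof of exactness of \eqref{femCEcomplex3d}''; that proof comes logically \emph{after} this lemma (exactness of \eqref{femCEcomplex3d} needs the unisolvence in Lemma~\ref{lem:cottSTfemunisol}, which needs this dimension count), so you cannot import those $C_i$ here without circularity --- you must compute the boundary blocks from scratch, as you in fact describe elsewhere in your plan. Second, for a lemma whose entire content is an arithmetic identity, a proof that ends with ``can be checked by direct algebra in each subregime'' has deferred all of the actual work; to be complete you would need to carry out the tabulation in each of the regimes $r_1^f\in\{-1,0,1\}$, $r_1^e\in\{0,1,2\}$ or $\geq 3$, $r_2^e=-1$ or not, exactly as the paper does in its Steps 1--5.
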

\begin{proof}
\step 1 Consider case $r_1^f=1$, then $\boldsymbol{r}_1\geq(7,3,1)^{\intercal}$ as $\boldsymbol{r}_0\geq(8,4,2)^{\intercal}$. Consequently, DoFs \eqref{eq:3dCrcottSTfemdofE2}-\eqref{eq:3dCrcottSTfemdofE5} and DoFs \eqref{eq:3dCrcottSTfemdofF2}-\eqref{eq:3dCrcottSTfemdofF4} disappear. The number of DoFs \eqref{eq:3dCrcottSTfemdofV1}-\eqref{eq:3dCrcottSTfemdofF5} is $5\dim\mathbb P_{k+3}(T)-5\dim\mathbb B_{k+3}(\boldsymbol{r}_1)+8\dim\mathbb B_{k+1}(f;\boldsymbol r_1-2)$. By \eqref{eq:cottbubblerf1},
\begin{equation*}
\dim\mathbb B_{k+3}^{\cott}(\boldsymbol{r}_1;\mathbb S\cap\mathbb T)=5\dim\mathbb B_{k+3}(\boldsymbol{r}_1)-8\dim\mathbb B_{k+1}(f;\boldsymbol r_1-2).
\end{equation*} 
Thus, the number of DoFs \eqref{eq:cottSTdofs} equals $\dim\mathbb P_{k+3}(T;\mathbb S\cap\mathbb T)$.

\step 2 Consider case $r_1^f=-1,0$ and $r_1^e\geq3$. Let $\bar{\boldsymbol{r}}_1=(r_1^{\texttt{v}}, r_1^e, 1)^{\intercal}$ and $\bar{\boldsymbol{r}}_0=\bar{\boldsymbol{r}}_1+1$.
Thanks to the bubble complex \eqref{eq:femCdivdivbubblecomplex2d} and the bubble complex \eqref{polybubbleCEcomplex3d}, the difference between the number of DoFs \eqref{eq:cottSTdofs} with smooth vector $\boldsymbol{r}_1$ and that with smooth vector $\bar{\boldsymbol{r}}_1$ is
\begin{align*}
&4\dim\mathbb B_{k+2}^{\div_f\div_f}(\boldsymbol{r}_1-1;\mathbb S\cap\mathbb T)-20\dim\mathbb B_{k+2}(f;\boldsymbol{r}_1-1) \\
&-12[r_1^f=-1]\dim\mathbb B_{k+3}(f;\boldsymbol{r}_1) +3\dim\mathbb B_{k+4}(\boldsymbol{r}_0)-3\dim\mathbb B_{k+4}(\bar{\boldsymbol{r}}_0),
\end{align*}
by $\dim\mathbb B_{k+2}^{\div_f\div_f}(\boldsymbol{r}_1-1;\mathbb S\cap\mathbb T)=2\dim\mathbb B_{k+2}(f;\boldsymbol{r}_1-1)$, which equals $0$.
Again, the number of DoFs \eqref{eq:cottSTdofs} equals $\dim\mathbb P_{k+3}(T;\mathbb S\cap\mathbb T)$.

\step 3 Consider case $r_1^f=0$ and $r_1^e=1,2$. Let $\bar{\boldsymbol{r}}_1=(r_1^{\texttt{v}}, 3, 0)^{\intercal}$ and $\bar{\boldsymbol{r}}_0=\bar{\boldsymbol{r}}_1+1$.
Notice that
\begin{equation*}
\dim\mathbb B_{k+3}(f;\boldsymbol{r}_1)-\dim\mathbb B_{k+3}(f;\bar{\boldsymbol{r}}_1) = 3\dim\mathbb B_{k}(e;r_2^{\texttt{v}}) + 3[r_1^e=1]\dim\mathbb B_{k+1}(e;r_1^{\texttt{v}}-2),
\end{equation*}
\begin{equation*}
\dim\mathbb B_{k+4}(f;\boldsymbol{r}_0)-\dim\mathbb B_{k+4}(f;\bar{\boldsymbol{r}}_0) = 3\dim\mathbb B_{k}(e;r_2^{\texttt{v}}) + 3[r_1^e=1]\dim\mathbb B_{k+1}(e;r_1^{\texttt{v}}-2),
\end{equation*}
\begin{equation*}
\dim\mathbb B_{k+1}^{\rot_f\rot_f}((\boldsymbol r_1\ominus2)_+;\mathbb S\cap\mathbb T)-\dim\mathbb B_{k+1}^{\rot_f\rot_f}(\bar{\boldsymbol r}_1-2;\mathbb S\cap\mathbb T) = 3\dim\mathbb B_{k}(e;r_2^{\texttt{v}}).
\end{equation*}
By bubble complex \eqref{polybubbleCEcomplex3d}, we have
\begin{align*}
\dim\mathbb B_{k+3}^{\cott}(\boldsymbol{r}_1;\mathbb S\cap\mathbb T)-\dim\mathbb B_{k+3}^{\cott}(\bar{\boldsymbol{r}}_1;\mathbb S\cap\mathbb T)&= 3\dim\mathbb B_{k+4}(\boldsymbol{r}_0)- 3\dim\mathbb B_{k+4}(\bar{\boldsymbol{r}}_0) \\
&=18\dim\mathbb B_{k}(e;r_2^{\texttt{v}}).
\end{align*}
Thanks to the bubble complex \eqref{eq:femCdivdivbubblecomplex2d} and the bubble complex \eqref{polybubbleCEcomplex3d}, using the last four identities, the difference between the number of DoFs \eqref{eq:cottSTdofs} with smooth vector $\boldsymbol{r}_1$ and that with smooth vector $\bar{\boldsymbol{r}}_1$ is
\begin{align*}
 -102\dim\mathbb B_{k}(e;r_2^{\texttt{v}}) - 72[r_1^e=1]\dim\mathbb B_{k+1}(e;r_1^{\texttt{v}}-2)& \quad\quad \textrm{ (difference on edges) }\\
 + 84\dim\mathbb B_{k}(e;r_2^{\texttt{v}}) + 72[r_1^e=1]\dim\mathbb B_{k+1}(e;r_1^{\texttt{v}}-2)& \quad\quad \textrm{ (difference on faces) } \\
 + 18\dim\mathbb B_{k}(e;r_2^{\texttt{v}})& =0.
\end{align*}

\step 4 Consider case $r_1^f=-1$ and $r_1^e=1,2$.
Let $\bar{\boldsymbol{r}}_1=(r_1^{\texttt{v}}, r_1^e, 0)^{\intercal}$ and $\bar{\boldsymbol{r}}_0=\bar{\boldsymbol{r}}_1+1$.
Thanks to the bubble complex \eqref{polybubbleCEcomplex3d}, the difference between the number of DoFs \eqref{eq:cottSTdofs} with smooth vector $\boldsymbol{r}_1$ and that with smooth vector $\bar{\boldsymbol{r}}_1$ is
\begin{equation*}
-12\dim\mathbb B_{k+3}(f;\boldsymbol{r}_1) +3\dim\mathbb B_{k+4}(\boldsymbol{r}_0)-3\dim\mathbb B_{k+4}(\bar{\boldsymbol{r}}_0)=0.
\end{equation*}

\step 5 Finally, consider case $r_1^f=-1$ and $r_1^e=0$. Let $\bar{\boldsymbol{r}}_1=(r_1^{\texttt{v}}, 1, -1)^{\intercal}$ and $\bar{\boldsymbol{r}}_0=\bar{\boldsymbol{r}}_1+1$.
The differences between the number of DoFs \eqref{eq:cottSTdofs} with smooth vector $\boldsymbol{r}_1$ and that with smooth vector $\bar{\boldsymbol{r}}_1$ on simplexes in different dimensions are
\begin{itemize}[leftmargin=0.5cm]
\item at four vertices: $0$,
\item on six edges: 
\begin{equation*}
6\dim\mathbb B_{k+2}(e;r_1^{\texttt{v}}-1)- 60\dim\mathbb B_{k+2}(e;r_1^{\texttt{v}}-1)=- 54\dim\mathbb B_{k+2}(e;r_1^{\texttt{v}}-1),
\end{equation*}
\item on four faces: 
\begin{align*}
&\quad 8\dim\mathbb B_{k+3}(f;\boldsymbol{r}_1)-8\dim\mathbb B_{k+3}(f;\bar{\boldsymbol{r}}_1)+4\dim\mathbb B_{k+4}(f;\boldsymbol{r}_0)-4\dim\mathbb B_{k+4}(f;\bar{\boldsymbol{r}}_0) \\
&=36\dim\mathbb B_{k+2}(e;r_1^{\texttt{v}}-1),
\end{align*}
\item in tetrahedron $T$: 
\begin{align*}
&\quad\dim\mathbb B_{k+3}^{\cott}(\boldsymbol{r}_1;\mathbb S\cap\mathbb T)-\dim\mathbb B_{k+3}^{\cott}(\bar{\boldsymbol{r}}_1;\mathbb S\cap\mathbb T)\\
&= 3\dim\mathbb B_{k+4}(\boldsymbol{r}_0)- 3\dim\mathbb B_{k+4}(\bar{\boldsymbol{r}}_0) =18\dim\mathbb B_{k+2}(e;r_1^{\texttt{v}}-1).
\end{align*}
\end{itemize}
Consequentially, these differences sum to zero.
Therefore, the number of DoFs \eqref{eq:cottSTdofs} equals $\dim\mathbb P_{k+3}(T;\mathbb S\cap\mathbb T)$.
\end{proof}

\bibliographystyle{abbrv}
\bibliography{paper}
\end{document}